\theoremstyle{plain} 
\newtheorem{thm}{Theorem}[section]
\newtheorem{prop}[thm]{Proposition}
\newtheorem{lemma}[thm]{Lemma}
\newtheorem{cor}[thm]{Corollary}
\theoremstyle{remark}
\theoremstyle{definition}
\newtheorem{defn}[thm]{Definition}
\newtheorem{notn}[thm]{Notation}
\newtheorem{rmk}[thm]{Remark}
\renewcommand{\mod}[1]{
{\ifmmode\text{\rm\ (mod~$#1$)}
\else\discretionary{}{}{\hbox{\!\!}}\rm(mod~$#1$)\fi}}
\newcommand{\ord}{\mathop{\rm ord}\nolimits}
\newcommand{\Gal}{\textup{Gal}}
\newcommand{\sgn}{\textup{sgn}}
\newcommand{\uu}{\mathcal U}
\newcommand{\ff}{\mathcal F}
\renewcommand{\aa}{\mathcal A}
\newcommand{\ee}{\mathcal E}
\newcommand{\N}{{\mathbb N}}
\newcommand{\Z}{{\mathbb Z}}
\newcommand{\Q}{{\mathbb Q}}
\newcommand{\C}{{\mathbb C}}
\newcommand{\bo}{{\mathit{O}}}
\newcommand{\Na}{{\mathcal N_{a}}}
\newcommand{\Tr}{{\rm Tr}}
\newcommand{\id}{{\mathbbm{1}}}
\newcommand{\schinzeln}{{\ell}}
\newcommand{\schinzelm}{{m\ell}}
\newcommand{\Palm}{P_{a,\ell,m}}
\newcommand{\Pali}{P_{a,\ell,1}}
\newcommand{\newa}{\alpha}
\newcommand{\newb}{\beta}
\title{Refinements of Artin's primitive root conjecture}
\author{Leo Goldmakher}
\address{Department of Mathematics and Statistics, Williams College, Williamstown, MA, USA 01267}
\email{Leo.Goldmakher@williams.edu}
\author{Greg Martin}
\address{Department of Mathematics, University of British Columbia, Vancouver V6T 1Z2, Canada}
\email{gerg@math.ubc.ca}
\author{Paul Péringuey}
\address{Department of Mathematics, University of British Columbia, Vancouver V6T 1Z2, Canada}
\email{peringuey@math.ubc.ca}
\begin{document}
\selectlanguage{english}

\dedicatory{Dedicated to John Friedlander on the occasion of his $\Bigl(70+O(1)\Bigr)^\text{th}$ birthday.}

\begin{abstract}
A famous conjecture of Artin asserts that any integer~$a$ that is neither $-1$ nor a square should be a primitive root (mod~$p$) for a positive proportion of primes~$p$. Moreover, using a heuristic argument, Artin guessed an explicit formula for the proportion; this formula is well-supported by computations and is known to hold on a generalized Riemann hypothesis, but remains open.
In this paper we propose several conjectures that capture the finer properties of the distribution of the order of~$a$ (mod~$p$) as~$p$ varies over primes; these assertions contain Artin's original conjecture as a special case. We prove these conjectures assuming the generalized Riemann hypothesis, as well as weaker versions unconditionally.
\end{abstract}
\maketitle

\thispagestyle{empty}

\numberwithin{equation}{section}

\section{Introduction}

Given an integer~$a$ and a prime~$p$, let $\ord_p(a)$ denote the multiplicative order of~$a$ modulo~$p$. Exploring the distribution of $\ord_p(a)$, for~$a$ fixed as~$p$ varies, is a classical subject:

in 1927, Emil Artin proposed a heuristic argument that for any fixed integer $a \neq -1$ that is not a perfect square, $\ord_p(a) = p-1$ for a positive proportion of primes~$p$.
His key insight was that this problem is most naturally viewed through the lens of algebraic number theory.
Artin knew that $\ord_p(a) = p-1$ if and only if~$p$ fails to split completely in the cyclotomic-Kummer extension $\Q(\sqrt[q]{a},\zeta_q)$ for all primes~$q$.
On the other hand, the prime ideal theorem implies that the density of those primes that split completely in $\Q(\sqrt[q]{a},\zeta_q)$ is
${1}/{[\Q(\sqrt[q]{a},\zeta_q) : \Q]} = {1}/{q(q-1)}$.
Making the seemingly innocuous assumption that the events ``$p$ does not split completely in $\Q(\sqrt[q]{a},\zeta_q)$'' are independent, Artin predicted that the proportion of primes~$p$ for which~$a$ is a primitive root should be
\[
\prod_{q \text{ prime}}
 \left( 1 - \frac{1}{q(q-1)} \right)
\approx
0.3739558,
\]
which is now known as Artin's constant.

Extensive computational work by Emma Lehmer and Derrick H. Lehmer provided evidence that Artin's conjecture was correct for some values of~$a$ but not for others, prompting Artin to write
``I was careless but the machine caught up with me.''
He realized what was going wrong in his original heuristic: for all prime $a \equiv 1 \mod 4$ we have $\sqrt a \in \Q(\zeta_a)$, whence
$\Q(\sqrt[2]{a},\zeta_2) \subseteq \Q(\sqrt[a]{a},\zeta_a)$. In particular, any $p$ that does not split in $\Q(\sqrt[2]{a},\zeta_2)$ cannot split in $\Q(\sqrt[a]{a},\zeta_a)$ either, so the events ``$p$ does not split completely in $\Q(\sqrt[q]{a},\zeta_q)$'' are not independent after all.

Artin predicted that there were no other dependences among splitting in the fields $\Q(\sqrt[q]{a},\zeta_q)$ whenever~$a$ is prime.
In the preface to his collected works, Artin's former students Lang and Tate wrote down a modified version of Artin's conjecture
(working independently, Heilbronn made the same conjecture in 1968).
To state the corrected version of Artin's conjecture, we require the following notation:

\begin{defn} \label{s and d def}
For any nonzero rational number~$g$, define $s(g)$ to be the {\em squarefree kernel} of $g$, which is the sign of~$g$ times the product of all primes dividing~$g$ to an odd power, or equivalently the unique squarefee integer such that $gs(g)$ is the square of a rational number, or equivalently the unique squarefee integer such that $\Q(\sqrt g) = \Q(\sqrt{s(g)})$. Also define $\mathfrak{d}(g)$ to be the \emph{discriminant} of $\Q(\sqrt g)$, so that
\[
\mathfrak{d}(g)=\begin{cases}
s(g),&\text{if }s(g)\equiv1\mod{4},\\
4s(g),&\text{otherwise.}
\end{cases}
\]
\end{defn}

Artin's conjecture, which can be stated for rational numbers as well (since there are only finitely many primes for which the multiplicative order is not defined), was proved by Hooley~\cite{Hooley} under the assumption of the generalized Riemann hypothesis for the Dedekind zeta functions of the cyclotomic-Kummer extensions mentioned earlier. It takes the following form if we assume that~$a$ is not a perfect power (see Theorem~\ref{definition general Artin itself} below for a more general version):

\begin{thm} \label{OG Artin}
Assume GRH. If $a\in\Q$ is not a perfect power, then
\begin{equation} \label{OG Artin density}
\frac{1}{\pi(x)} \#\{p \leq x \colon a \text{ is a primitive root (mod $p$)}\} \sim F_a \prod_q \biggl( 1-\frac1{q(q-1)} \biggr),
\end{equation}
where
\[
F_a=\begin{cases}
 1,& \text{if }\mathfrak{d}(a) \not\equiv1\mod{4},\\
 \displaystyle 1-\prod\limits_{q \mid \mathfrak{d}(a)} \frac{-1}{q^2-q-1},& \text{if }\mathfrak{d}(a) \equiv1\mod{4}.
 \end{cases}
\]
\end{thm}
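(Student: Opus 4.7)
The plan is to adapt Hooley's classical approach. For each squarefree $n\geq 1$, let $K_n := \Q(\sqrt[n]{a},\zeta_n)$; then for any prime $p$ outside a finite set depending on $a$, the condition ``$a$ is a primitive root mod $p$'' is equivalent to ``$p$ does not split completely in $K_q$ for any prime $q$,'' since splitting in $K_q$ encodes both $p\equiv 1\mod{q}$ and $a$ being a $q$-th power mod $p$. Because for coprime $q,r$ one has $K_q\cdot K_r = K_{qr}$ (using Bezout to recover $a^{1/(qr)}$ from $a^{1/q}$ and $a^{1/r}$), inclusion--exclusion yields
\[
\#\{p\leq x : a \text{ is a primitive root }\mod p\} = \sum_{n\geq 1}\mu(n)\,\pi_{K_n}(x) + O(1),
\]
where $\pi_{K_n}(x)$ counts rational primes $p\leq x$ that split completely in $K_n$.

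The next step is to split this sum at parameters $\xi_1 := \tfrac16\log x$ and $\xi_2 := \sqrt{x}/\log^2 x$. For $n\leq\xi_1$, the effective Chebotarev density theorem under GRH gives
\[
\pi_{K_n}(x) = \frac{\li(x)}{[K_n:\Q]} + O\bigl(\sqrt{x}\log(|d_{K_n}|x)\bigr),
\]
and summing the error over this range yields $O(\sqrt{x}\log^2 x)=o(\pi(x))$. For $n>\xi_2$, the elementary observation that splitting in $K_n$ forces $p\equiv 1\mod{n}$ (and $a$ to be an $n$-th power mod $p$) bounds the contribution by $O(x/\xi_2)=o(\pi(x))$. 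The medium range $\xi_1<n\leq\xi_2$ is the technical crux: effective Chebotarev is too expensive on the high-degree fields $K_n$, while trivial sieving is too weak; Hooley's refinement shows that splitting in $K_n$ implies splitting in cleverly chosen proper subfields, so that GRH applied to these smaller-degree extensions, combined with divisor-sum estimates, gives an $o(\pi(x))$ bound on the medium contribution.

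It remains to evaluate the limiting density $\sum_n \mu(n)/[K_n:\Q]$. For odd squarefree $n$, $\Q(\sqrt[n]{a})$ and $\Q(\zeta_n)$ are linearly disjoint over $\Q$, so $[K_n:\Q]=n\phi(n)$; this accounts for the generic Artin product $\prod_q(1-\tfrac{1}{q(q-1)})$. When $2\mid n$, a degree drop by a factor of $2$ occurs exactly when $\sqrt{a}\in\Q(\zeta_n)$, which by Kronecker--Weber is equivalent to $\mathfrak{d}(a)\mid n$. If $\mathfrak{d}(a)\not\equiv 1\mod{4}$ then $4\mid\mathfrak{d}(a)$, so no squarefree $n$ satisfies $\mathfrak{d}(a)\mid n$ and $F_a=1$. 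If $\mathfrak{d}(a)\equiv 1\mod{4}$, setting $D=\mathfrak{d}(a)$ (odd squarefree) and isolating the correction as a Möbius sum restricted to $n$ with $2D\mid n$, a direct Euler factorisation computation produces
\[
\sum_n \frac{\mu(n)}{[K_n:\Q]} = \biggl(1-\prod_{q\mid D}\frac{-1}{q^2-q-1}\biggr)\prod_q\biggl(1-\frac{1}{q(q-1)}\biggr),
\]
matching the stated formula for $F_a$. The principal obstacle is the medium-range estimate; the degree computation, though requiring care with cyclotomic entanglements, reduces to standard facts about quadratic subfields of cyclotomic fields.
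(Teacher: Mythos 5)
Your density evaluation in the last paragraph is correct and matches the paper's expression~\eqref{looks like degrees} and the case $h=1$, $m=1$ of Proposition~\ref{proposition degree}: for squarefree $n$ and $a$ not a perfect power, $[K_n:\Q]=n\varphi(n)/\varepsilon$ with $\varepsilon=2$ exactly when $2\mid n$ and $\mathfrak d(a)\mid n$, and the Euler factorisation you sketch does yield $F_a\prod_q(1-1/q(q-1))$. The overall plan (inclusion--exclusion over the fields $K_n$, GRH Chebotarev, a split into ranges) is also the right skeleton.

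The genuine gap is in how you decompose the Möbius sum. You truncate by the \emph{size} of $n$, splitting $\sum_{n\ge1}\mu(n)\pi_{K_n}(x)$ at $\xi_1=\tfrac16\log x$ and $\xi_2=\sqrt x/\log^2 x$. Hooley's truncation (and the paper's, via Lemma~\ref{cumulative lemma}) is by \emph{smoothness}: the main sum runs over $\ell\mid Q_\xi$ (products of primes $\le\xi$, possibly much larger than $\xi$ themselves), and the entire error is a union bound over single primes $q>\xi$, namely $\sum_{\xi<q\le x}\pi_{K_q}(x)$. These are not the same thing, and the difference matters. Your ``medium range'' $\xi_1<n\le\xi_2$ involves a genuine Möbius-weighted sum over composite squarefree $n$. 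The device you invoke --- replacing $\pi_{K_n}(x)$ by $\pi_{K_q}(x)$ for a well-chosen prime $q\mid n$ --- does not control it, because for a fixed prime $q>\xi_1$ there are $\asymp\xi_2/q$ squarefree $n\le\xi_2$ with $q\mid n$, so $\sum_{\xi_1<n\le\xi_2}|\mu(n)|\pi_{K_n}(x)$ acquires an extra factor $\asymp\xi_2/q$ relative to $\sum_{q>\xi_1}\pi_{K_q}(x)$ and is not $o(\pi(x))$ even under GRH. One would need either to exploit cancellation in $\mu(n)$ (which you do not discuss) or to switch to the smoothness truncation.

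Separately, your large-range bound is off by a square. The elementary argument counts, for each $m<x/\xi_2$, primes $p$ with $a^m\equiv1\pmod p$ via $\Omega(a^m-1)\ll m\log|a|$; summing over $m<x/\xi_2$ gives $O((x/\xi_2)^2)$, not $O(x/\xi_2)$. With your choice $\xi_2=\sqrt x/\log^2 x$ this is $O(x\log^4 x)$, which is larger than $\pi(x)$ and useless. Hooley (and Proposition~\ref{proposition unconditional small xi}) cuts the elementary range at $\sqrt x\log x$ so that $(x/\xi_2)^2\ll x/\log^2 x$, and bridges the remaining window $(\sqrt x/\log^B x,\sqrt x\log x]$ with Brun--Titchmarsh, a step your sketch omits. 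Fixing both points --- replacing size truncation by smoothness truncation and reinstating the Brun--Titchmarsh layer with the correct cut points --- recovers Hooley's argument.
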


\begin{rmk}
A different expression for the density on the right-hand side of equation~\eqref{OG Artin density}, which is simpler to write down but harder to use, is
\begin{equation} \label{looks like degrees}
\sum_{\ell=1}^\infty \frac{\mu(\ell)}{[\Q(\sqrt[\ell]{a},\zeta_\ell)] : \Q]}
\end{equation}
\end{rmk}

Our goal in the present work is to formulate several conjectures which subsume Artin's conjecture and produce predictions about finer aspects of the distribution of $\ord_p(a)$ as~$p$ varies. 
As a prototype of our results, consider the simplest case of Artin's conjecture, namely that the proportion of primes~$p$ for which~$2$ is a primitive root is Artin's constant $\prod_q \Bigl(1-{1}/{q(q-1)}\Bigr)$.
Consider the deformation
$\prod_q \Bigl(1-(1-z)/{q(q-1)}\Bigr)$ of this product, which admits a power series in~$z$ whose constant term is precisely Artin's constant; in other words, the constant term measures the density of primes~$p$ for which 
$(p-1)/{\ord_p(2)} = 1$.
The starting point of our investigation is the observation that the other power series coefficients also have relatively simple interpretations in terms of $\ord_p(2)$: the coefficient of~$z^n$ is the density of those primes~$p$ for which $(p-1)/{\ord_p(2)}$ has precisely~$n$ distinct prime factors.
Below, we will generalize this observation considerably (to rational $a$ and to other possible deformations of the product), prove our generalizations on GRH, and obtain weaker versions of our conjectures unconditionally.

\medskip

Throughout this article,~$p$ and~$q$ denote prime numbers, and~$a$ denotes a rational number not equal to~$-1$, $0$, or~$1$.
We define $\nu_p(a)$ to be the unique integer~$\nu$ such that $a = p^\nu \frac mn$ where~$p$ divides neither~$m$ nor~$n$. In other words, if~$p$ divides the numerator of~$a$ (including~$a$ itself if~$a$ is an integer) then $\nu_p(a)$ is the power to which it divides the numerator, while if~$p$ divides the denominator of~$a$ then $-\nu_p(a)$ is the power to which it divides the denominator.
We use the standard notation $\omega(n)$ for the number of distinct prime factors of~$n$ and $\Omega(n)$ for the number of prime factors of~$n$ counted with multiplicity.

\subsection{Statement of results: generating functions}

We compare $p-1$ with $\ord_p(a)$ in three ways. The first way is to count the number of distinct prime factors of their quotient, namely $\omega\Bigl( (p-1)/\ord_p(a) \Bigr)$, which is the same as the number of distinct prime factors of the index of the subgroup $\langle a\rangle$ in $(\Z/p\Z)^*$. As this quantity is a nonnegative integer, it makes sense to frame its limiting distribution in terms of a generating function; our first theorem gives an exact formula for that generating function.

\begin{thm}\label{theorem generating function omega of quotient h=1}
Assume GRH. If $a\in\Q$ is not a perfect power, then for any $z\in\C$ with $|z|\leq 1$,
\[
\frac{1}{\pi(x)} \sum_{\substack{p\le x \\ \nu_p(a)=0}} z^{\omega((p-1)/\ord_p(a))}
=
F^{\omega/}_a(z) \prod_q \biggl( 1+\frac{z-1}{q(q-1)} \biggr) + \bo\biggl(\frac{x\log\log x}{(\log x)^{2}}\biggr)
\]
where 
\[
F^{\omega/}_a(z)=\begin{cases}
1, & \text{if } \mathfrak{d}(a) \not\equiv 1\mod{4},\\
1+\displaystyle\prod_{q \mid 2\mathfrak{d}(a)} \frac{z-1}{z+q^2-q-1} ,
& \text{if } \mathfrak{d}(a) \equiv 1\mod{4}.
\end{cases}
\] 
\end{thm}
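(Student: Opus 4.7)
The plan starts from the combinatorial identity
\[
z^{\omega(n)} = \prod_{q\mid n}\bigl(1+(z-1)\bigr) = \sum_{\substack{d\mid n\\ d\text{ sqfree}}} (z-1)^{\omega(d)},
\]
applied with $n=(p-1)/\ord_p(a)$. Exchanging the order of summation, the quantity to estimate becomes
\[
\sum_{\substack{p\le x\\ \nu_p(a)=0}} z^{\omega((p-1)/\ord_p(a))}
 = \sum_{d\text{ sqfree}} (z-1)^{\omega(d)}\,\#\Bigl\{p\le x:\, d\mid \tfrac{p-1}{\ord_p(a)},\ \nu_p(a)=0\Bigr\}.
\]
The translation already exploited by Artin and Hooley is that for $\nu_p(a)=0$, the condition $d\mid (p-1)/\ord_p(a)$ is equivalent to $a$ being a $d$-th power modulo~$p$, which in turn is equivalent to~$p$ splitting completely in the cyclotomic-Kummer extension $L_d:=\Q(\sqrt[d]{a},\zeta_d)$.

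I would then truncate at $d\le D(x):=\log x$ and apply the effective Chebotarev density theorem of Lagarias--Odlyzko under GRH to each $L_d$, so the count of splitting primes equals $\li(x)/[L_d:\Q]$ up to $\bo\bigl(\sqrt{x}\,[L_d:\Q]\log(dx)\bigr)$; weighted by $|z-1|^{\omega(d)}\le 2^{\omega(d)}$ and summed over $d\le\log x$, this Chebotarev error totals only $\sqrt{x}\,(\log x)^{\bo(1)}$ and is negligible. To identify the main term, I compute $[L_d:\Q]$ for squarefree $d$. Since $a$ is not a perfect power, $[\Q(\sqrt[d]{a}):\Q]=d$; the intersection $\Q(\sqrt[d]{a})\cap\Q(\zeta_d)$, being an abelian subfield of $\Q(\sqrt[d]{a})$, must equal $\Q$ unless $2\mid d$ and $\Q(\sqrt{a})\subseteq\Q(\zeta_d)$, in which case it equals $\Q(\sqrt{a})$. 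By Kronecker--Weber the latter is equivalent to $|\mathfrak{d}(a)|\mid d$. Hence $[L_d:\Q]=d\varphi(d)$ except when $d$ is even squarefree with $|\mathfrak{d}(a)|\mid d$, in which case it is $d\varphi(d)/2$. If $\mathfrak{d}(a)\not\equiv 1\pmod 4$, then $4\mid |\mathfrak{d}(a)|$, which prevents $|\mathfrak{d}(a)|$ from dividing any squarefree $d$, so the degree is always $d\varphi(d)$ in that case.

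Assembling the Dirichlet series: in the first case,
\[
\sum_{d\text{ sqfree}}\frac{(z-1)^{\omega(d)}}{d\varphi(d)}=\prod_q\biggl(1+\frac{z-1}{q(q-1)}\biggr),
\]
matching the theorem with $F^{\omega/}_a(z)=1$. In the second case ($\mathfrak{d}(a)\equiv 1\pmod 4$, so $|\mathfrak{d}(a)|$ is odd and squarefree), the degree drops by $2$ precisely when $2|\mathfrak{d}(a)|\mid d$; splitting the sum into the "no drop" and "drop" parts gives the unmodified Euler product plus an additional
\[
\prod_{q\mid 2\mathfrak{d}(a)}\frac{z-1}{q(q-1)}\cdot\prod_{q\nmid 2\mathfrak{d}(a)}\biggl(1+\frac{z-1}{q(q-1)}\biggr).
\]
Dividing both pieces through by the unmodified product yields the multiplier $1+\prod_{q\mid 2\mathfrak{d}(a)}\frac{z-1}{z+q^2-q-1}=F^{\omega/}_a(z)$, as required.

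The main obstacle I anticipate is matching the advertised error $x\log\log x/(\log x)^2$ quantitatively. The tail of the Dirichlet series, $\li(x)\sum_{d>\log x}|z-1|^{\omega(d)}/(d\varphi(d))$, is bounded by a Mertens/partial-summation estimate using $\sum_{d\le t}2^{\omega(d)}\asymp t\log t$; this yields a tail of order $\log\log x/\log x$, which after multiplication by $\li(x)$ produces exactly the claimed error at the cut-off $D=\log x$, with the Chebotarev contribution absorbed. The hypothesis $|z|\le 1$ is essential here, since it keeps $|z-1|^{\omega(d)}\le 2^{\omega(d)}$ and hence ensures absolute convergence of the Dirichlet series defining the main term.
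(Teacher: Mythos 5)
Your plan reproduces the skeleton of the paper's argument: expand $z^{\omega(n)}$ via the squarefree-divisor identity, translate each condition $d\mid(p-1)/\ord_p(a)$ into complete splitting in $\Q(\sqrt[d]{a},\zeta_d)$, apply an effective GRH Chebotarev estimate, compute the degrees $[\Q(\sqrt[d]{a},\zeta_d):\Q]$, and package the main term as an Euler product times a rational correction factor. The degree analysis and the closed-form manipulation leading to $F^{\omega/}_a(z)$ are both correct and essentially what the paper does in Propositions~\ref{proposition degree} and~\ref{proposition main term omega of quotient} (the paper reaches the degree formula via Schinzel's lemma rather than the abelian-subfield/Kronecker--Weber argument, but for squarefree $d$ with $m=1$ the two routes agree).

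The gap is in the treatment of the truncation. You split
$\sum_{d} (z-1)^{\omega(d)} P_d(x)$
into $d\le D(x)$ and $d>D(x)$, handle the first piece with Chebotarev, and then bound only
$\pi(x)\sum_{d>D(x)}|z-1|^{\omega(d)}/(d\varphi(d))$,
i.e.\ the tail of the \emph{main-term Dirichlet series}. That quantity must indeed be estimated (it is the cost of completing the sum over $d\le D$ to a sum over all $d$, which is what Proposition~\ref{proposition combined after error terms} does), but it is not the same as the tail of the \emph{actual} sum, $\sum_{d>D(x)} (z-1)^{\omega(d)} P_d(x)$. For $d>\log x$ the counting function $P_d(x)$ is not well-approximated by $\pi(x)/(d\varphi(d))$---the Chebotarev error $\sqrt{x}\log(dx)$ swamps the shrinking main term---and naively bounding $|z-1|^{\omega(d)}\le 2^{\omega(d)}$ makes the sum over large $d$ diverge, since for each $p$ there can be exponentially many squarefree divisors of $(p-1)/\ord_p(a)$ exceeding $D(x)$. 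This is precisely the difficulty that Hooley's method is engineered to avoid: the paper's Lemma~\ref{cumulative lemma} shows that if one compares the target quantity (tracking \emph{all} prime divisors) with the smoothed version that only tracks prime powers up to $\xi$, the discrepancy is controlled by a sum of $P_{q^k}(x)$ over \emph{prime powers} $q^k\in(\xi,x]$, not over general integers $d>\xi$. That error is then dispatched by Proposition~\ref{proposition unconditional small xi} (the $\Omega(\prod_m(a^m-1))$ trick for $q^k>\sqrt{x}\log x$, plus Brun--Titchmarsh for intermediate sizes) and Proposition~\ref{proposition smaller xi under GRH}. Your proposal skips this machinery entirely, and without it the truncation step is unjustified, so the proof as written does not close.
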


\begin{rmk}
The condition ``$a$ is not a perfect power'' in the statement of Theorem~\ref{theorem generating function omega of quotient h=1} can be relaxed, but the factor in front of the infinite product becomes more complicated in the general case. The same remark applies to Theorems~\ref{theorem generating function Omega of quotient h=1} and~\ref{theorem generating function omega minus omega h=1} below.
The most general versions of these three theorems appear later as Theorems~\ref{theorem generating function omega of quotient}, \ref{theorem generating function Omega of quotient}, and~\ref{theorem generating function omega minus omega}.
\end{rmk}

\begin{rmk}
If we set $z=0$ in Theorem~\ref{theorem generating function omega of quotient h=1}, we recover Artin's conjecture as given in Theorem~\ref{OG Artin} (see Remark~\ref{z=0 remark} below for a longer discussion of this reduction). The same remark applies to Theorem~\ref{theorem generating function Omega of quotient h=1} below.
\end{rmk}

We give concrete examples of these generating functions for three specific values of~$a$. Technically these examples are a corollary of Theorem~\ref{theorem generating function omega of quotient} below, although the first and last cases do follow from Theorem~\ref{theorem generating function omega of quotient h=1}.

\begin{cor} \label{corollary generating function omega of quotient a=3,4,5}
Assuming GRH, we have the following three special cases:
\begin{itemize}[itemindent=8mm]
\item[$a=3{:}$\quad]
 $\displaystyle
 \sum_{\substack{p\le x \\ p\ne3}} z^{\omega((p-1)/\ord_p(3))}=\pi(x)\prod\limits_{\substack{q}}\biggl(1+\frac{z-1}{q(q-1)}\biggr)+\bo\biggl(\frac{x\log\log x}{(\log x)^{2}}\biggr),
 $
\item[$a=4{:}$\quad]
 $\displaystyle
 \sum_{\substack{p\le x \\ p\ne2}} z^{\omega((p-1)/\ord_p(4))}=\pi(x)\frac{2z}{z+1}\prod\limits_{\substack{q}}\biggl(1+\frac{z-1}{q(q-1)}\biggr)+\bo\biggl(\frac{x\log\log x}{(\log x)^{2}}\biggr),
 $
\item[$a=5{:}$\quad]
 $\displaystyle
 \sum_{\substack{p\le x \\ p\ne5}} z^{\omega((p-1)/\ord_p(5))}=\pi(x)\frac{2z^2 + 18z + 20}{z^2 + 20z + 19}\prod\limits_{\substack{q}}\biggl(1+\frac{z-1}{q(q-1)}\biggr)+\bo\biggl(\frac{x\log\log x}{(\log x)^{2}}\biggr).
 $
\end{itemize}
\end{cor}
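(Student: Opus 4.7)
The strategy for $a=3$ and $a=5$ is to read off $F^{\omega/}_a(z)$ from Theorem~\ref{theorem generating function omega of quotient h=1} by computing the discriminant $\mathfrak{d}(a)$ via Definition~\ref{s and d def} and substituting. For $a=4$, which is a perfect power and hence excluded by the hypotheses of Theorem~\ref{theorem generating function omega of quotient h=1}, one must invoke the more general Theorem~\ref{theorem generating function omega of quotient} promised later in the paper; this is where I expect the only real obstacle to lie.

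For $a=3$: since $s(3)=3\equiv 3\pmod 4$, Definition~\ref{s and d def} gives $\mathfrak{d}(3)=12\not\equiv 1\pmod 4$. Hence $F^{\omega/}_3(z)=1$, and Theorem~\ref{theorem generating function omega of quotient h=1} yields the stated formula immediately.

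For $a=5$: since $s(5)=5\equiv 1\pmod 4$, we have $\mathfrak{d}(5)=5\equiv 1\pmod 4$, so the nontrivial branch of $F^{\omega/}_a$ applies. The primes dividing $2\mathfrak{d}(5)=10$ are $2$ and $5$, so
\[
F^{\omega/}_5(z)
 = 1+\frac{(z-1)^2}{(z+2^2-2-1)(z+5^2-5-1)}
 = 1+\frac{(z-1)^2}{(z+1)(z+19)}
 = \frac{2z^2+18z+20}{z^2+20z+19},
\]
the last equality being a brief algebraic simplification. Substituting into Theorem~\ref{theorem generating function omega of quotient h=1} produces the claimed asymptotic.

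For $a=4$: the hypothesis of Theorem~\ref{theorem generating function omega of quotient h=1} fails because $4=2^2$, so either one quotes the more general Theorem~\ref{theorem generating function omega of quotient} (which builds in a correction factor for perfect powers and from which $\frac{2z}{z+1}$ should drop out directly) or one reduces by hand to the case $a=2$, as follows. For odd $p$, $\ord_p(4)=\ord_p(2)/\gcd(\ord_p(2),2)$, so $(p-1)/\ord_p(4)$ equals $(p-1)/\ord_p(2)$ when $\ord_p(2)$ is odd and equals $2(p-1)/\ord_p(2)$ when $\ord_p(2)$ is even. A careful accounting of when the extra factor of~$2$ actually increases $\omega$ then expresses the $a=4$ generating function as a combination of generating functions for $a=2$ conditioned on the $2$-adic valuation of $\ord_p(2)$; since $s(2)=2$ and $\mathfrak{d}(2)=8\not\equiv 1\pmod 4$ we have $F^{\omega/}_2(z)=1$, and the parity bookkeeping should be what produces the factor $\frac{2z}{z+1}$. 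Two sanity checks support this: at $z=0$ we get density $0$, consistent with the fact that $4$ is never a primitive root modulo an odd prime (being a square), and at $z=1$ we recover total mass $\pi(x)$.
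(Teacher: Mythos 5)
Your proposal is correct and follows the paper's intended route: for $a=3,5$ you compute $\mathfrak{d}(a)$ and read off $F^{\omega/}_a(z)$ from Theorem~\ref{theorem generating function omega of quotient h=1}, and for $a=4$ (a perfect power) you correctly defer to the general Theorem~\ref{theorem generating function omega of quotient} — this is exactly what the paper indicates just before the corollary. The only thing worth tightening for $a=4$ is to actually evaluate Definition~\ref{definition generating function omega of quotient} rather than to speculate: with $h=2$, $a_0=b_0=2$, one gets $H^{\omega/}_4(z)=1$ (empty product over odd $q\mid h$) and, from the ``$2\mid h$ and $a>0$'' case of equation~\eqref{C omega/ a z definition}, $F^{\omega/}_4(z)=2z/(z+1)$; the alternative hand-reduction to $a=2$ is not needed and is left unverified in your sketch.
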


\begin{rmk}
Note that the case $a=4$ is uninteresting in the original context of Artin's conjecture, since a perfect square is never a primitive root modulo any odd prime. This fact is reflected in the second generating function in Corollary~\ref{corollary generating function omega of quotient a=3,4,5}, since the factor~$2z$ on the right-hand side ensures that there are no terms of the form~$z^0$ in the generating function. However, even in this case it is still interesting to examine the full distribution of the statistic $\omega\Bigl( (p-1)/\ord_p(4) \Bigr)$, and the generating function accomplishes this goal. A similar remark applies to Corollary~\ref{corollary generating function Omega of quotient a=3,4,5} below.
\end{rmk}

Our second way of comparing $p-1$ with $\ord_p(a)$ is the same as the first except that we count the prime factors of $(p-1)/\ord_p(a)$ with multiplicity. The formula becomes more complicated in a way that depends upon the factorization of~$a$; the following notation, which we will use throughout this paper, provides us with the needed specificity.

\begin{notn} \label{notation beginning}
Given $a\in\Q\setminus\{-1,0,1\}$, the symbols $h,a_0,b_0,c_0$ will always have the following meanings:
 \begin{itemize}
 \item We write $a=\pm a_0^h$ where $a_0$ is positive and not a perfect power, so that $h$ is the largest integer for which $|a|$ is an $h$th power.
 \item We write $a_0=b_0c_0^2$ where $b_0 = s(a_0)$ is squarefree.
 \end{itemize}
\end{notn}

\begin{thm}\label{theorem generating function Omega of quotient h=1}
Assume GRH. If $a\in\Q$ is not a perfect power, then for any $z\in\C$ with $|z|\leq 1$,
\begin{align*}\sum_{\substack{p\le x \\ \nu_p(a)=0}} z^{\Omega((p-1)/\ord_p(a))}&=\pi(x)F^{\Omega}_a(z)\prod\limits_{\substack{q}}\biggl(1+\frac{(z-1)q}{(q-1)(q^2-z)}\biggr)+\bo\biggl(\frac{x\log\log x}{(\log x)^{2}}\biggr).
\end{align*}
where 
\[
F^{\Omega}_a(z)=1+\delta(a)(z-1)^{\omega(2b_0)}\prod\limits_{q|2b_0}\left(\frac{q}{z+q^3-q^2-q}\right),
\]
with
\[
\delta(a)=\begin{cases}
 1,& \text{if }\sgn(a)b_0 \equiv 1\mod{4},\\
 z/4,& \text{if }\sgn(a)b_0 \equiv 3\mod{4},\\
 z^2/16,& \text{if }b_0 \equiv 2\mod{4}.
\end{cases}
\]
\end{thm}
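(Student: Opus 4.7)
The plan is to expand $z^{\Omega(m)}$ as a Dirichlet convolution: writing $z^{\Omega(m)} = \sum_{n \mid m} f_z(n)$ where $f_z$ is the multiplicative function with $f_z(q^k) = (z-1)z^{k-1}$ for $k \geq 1$, one obtains
\[
\sum_{\substack{p \le x \\ \nu_p(a)=0}} z^{\Omega((p-1)/\ord_p(a))} = \sum_{n=1}^\infty f_z(n) \cdot \pi_a(x;n),
\]
where $\pi_a(x;n) := \#\{p \leq x : p\nmid a,\ n \mid (p-1)/\ord_p(a)\}$. The condition $n \mid (p-1)/\ord_p(a)$ (for $p\nmid na$) is equivalent to $p$ splitting completely in the cyclotomic-Kummer field $K_n := \Q(\zeta_n, \sqrt[n]{a})$, so the problem reduces to evaluating $\pi_a(x;n)$ and reassembling the weighted sum.

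The GRH-conditional effective Chebotarev density theorem gives $\pi_a(x;n) = \pi(x)/[K_n:\Q] + O(\sqrt{x}\log(n|a|x))$. Following Hooley's strategy, I would split the sum at a threshold $y \asymp \log x$: for $n \leq y$ the accumulated GRH error is comfortably $o(x/(\log x)^2)$, while the contribution from $n > y$ to the \emph{main} series $\pi(x)\sum_{n}f_z(n)/[K_n:\Q]$ is controlled using the bound $|f_z(n)| \leq 2^{\omega(n)}$ (valid for $|z| \leq 1$) together with $[K_n:\Q] \gg n\phi(n)$; for the actual tail $\sum_{n>y}f_z(n)\pi_a(x;n)$, I would majorize $\pi_a(x;n) \leq \#\{p \leq x : n \mid p-1\}$ and invoke Brun--Titchmarsh, combined with the Mertens-type observation that $(p-1)/\ord_p(a)$ rarely admits a prime-power factor exceeding $\log x$. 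Altogether this yields the claimed error $O(x\log\log x /(\log x)^2)$.

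It remains to evaluate $\sum_{n=1}^\infty f_z(n)/[K_n:\Q]$ as an Euler product. Since $a$ is not a perfect power, the Kummer degree formula yields $[K_n:\Q] = n\phi(n)$ unless $\Q(\sqrt{a}) \subseteq \Q(\zeta_n)$, in which case $[K_n:\Q] = n\phi(n)/2$; the latter occurs exactly when the conductor $|\mathfrak{d}(a)|$ of $\Q(\sqrt a)$ divides $n$. Decomposing
\[
\sum_{n=1}^\infty \frac{f_z(n)}{[K_n:\Q]} = \sum_{n=1}^\infty \frac{f_z(n)}{n\phi(n)} + \sum_{\substack{n \geq 1 \\ |\mathfrak{d}(a)|\,\mid\, n}} \frac{f_z(n)}{n\phi(n)},
\]
the first sum factors, via multiplicativity and the geometric series $\sum_{k\geq 0}(z/q^2)^k$ in each local factor, into the product $\prod_q(1 + (z-1)q/[(q-1)(q^2-z)])$ appearing in the statement. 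The second sum is also multiplicative, but at primes $q \mid |\mathfrak{d}(a)|$ its local factor is restricted to $k \geq v_q(|\mathfrak{d}(a)|)$. Taking the ratio of the second sum to the first (and noting that at odd primes $q \mid b_0$ the ratio of local factors simplifies to $(z-1)q/(z+q^3-q^2-q)$) yields precisely the claimed correction $\delta(a)(z-1)^{\omega(2b_0)}\prod_{q \mid 2b_0} q/(z+q^3-q^2-q)$. The three cases of $\delta(a)$ then arise from the three possible values $v_2(|\mathfrak{d}(a)|) \in \{0,2,3\}$ matching the sub-cases of Notation~\ref{notation beginning}: direct computation of the $q=2$ local factor gives $1$, $z/4$, and $z^2/16$ respectively.

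The main technical obstacle is the tail estimate over $n > y$. Unlike in the $\omega$-version (Theorem~\ref{theorem generating function omega of quotient h=1}), where $f_z$ is supported on squarefrees, here $f_z$ is supported on all positive integers, so controlling the tail demands a careful synthesis of Brun--Titchmarsh with Mertens-type bounds on primes $q$ such that $q^k \mid (p-1)/\ord_p(a)$ for some $k \geq 1$. Everything else — the effective Chebotarev step, the decomposition of the main series, and the local Euler-factor simplifications — is routine, with the three-way case split for $\delta(a)$ amounting to bookkeeping in the $q=2$ local factor.
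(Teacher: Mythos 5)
Your Dirichlet-convolution setup is sound and matches the paper's: writing $z^{\Omega(m)} = \sum_{n\mid m} f_z(n)$ with $f_z(q^k) = (z-1)z^{k-1}$ is exactly the paper's $g(m) = z^{\Omega(m)}(1-z^{-1})^{\omega(m)}$, and your Euler-product evaluation of $\sum_n f_z(n)/[K_n:\Q]$ arrives at the correct final constants. However, there are two genuine problems.

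\textbf{The tail estimate is the central gap.} You propose to truncate at $n\le y$ and bound $\sum_{n>y}|f_z(n)|\,\pi_a(x;n)$ via $\pi_a(x;n)\le\pi(x;n,1)$ and Brun--Titchmarsh. But $\sum_{y<n<x}2^{\omega(n)}/\phi(n)$ is $\asymp(\log x)^2$, so this route gives a bound of size $\gg x\log x$ regardless of how $y$ is chosen; and if you try to salvage it by swapping the order of summation, the inner sum $\sum_{n\mid m_p,\,n>y}f_z(n)$ does not vanish even for the ``nice'' primes with all prime-power factors of $m_p:=(p-1)/\ord_p(a)$ below $y$. The paper avoids this altogether by truncating over divisors $\ell\mid Q_\xi$ (the lcm of integers up to $\xi$) rather than over $n\le y$; the point of its Lemma~\ref{cumulative lemma} is that $\sum_{\ell\mid Q_\xi}g(\ell)P_\ell(x)$ equals $\sum_p z^{\Omega(\gcd(m_p,Q_\xi))}$ \emph{exactly}, and then a telescoping argument across one prime power at a time shows that the discrepancy between this and the desired $\sum_p z^{\Omega(m_p)}$ is controlled by $\sum_{\xi<q^k\le x}P_{q^k}(x)$ --- a sum over \emph{prime powers} only, which the Mertens-type observation you gesture at can indeed control. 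Your proposal mentions that observation but lacks the combinatorial identity that makes it the bound on the whole error; without it, the Brun--Titchmarsh step cannot close.

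\textbf{The degree formula is mis-stated, producing an inconsistency at $q=2$.} You write that $[K_n:\Q]=n\phi(n)/2$ ``exactly when'' $\mathfrak{d}(a)\mid n$. For $h=1$ this is the condition from Proposition~\ref{proposition degree} \emph{only when $v_2(\mathfrak{d}(a))\ge1$}; when $\mathfrak{d}(a)$ is odd (the case $\sgn(a)b_0\equiv1\pmod4$) the correct condition is $2\mid n$ \emph{and} $\mathfrak{d}(a)\mid n$, i.e.\ $2\mathfrak{d}(a)\mid n$. Under your stated ``second sum'' $\sum_{\mathfrak{d}(a)\mid n}f_z(n)/(n\phi(n))$, the local factor at $q=2$ is unrestricted in this case, so the ratio of local factors at $q=2$ is $1$; to reconcile with the theorem's formula (where the $q=2$ contribution to $\prod_{q\mid 2b_0}$ is $2(z-1)/(z+2)$) you would be forced to take $\delta(a)=(z+2)/(2(z-1))$, not $\delta(a)=1$ as stated. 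In other words your declared $\delta$-values are correct but the decomposition you describe would not produce them for the $v_2(\mathfrak{d}(a))=0$ case. The fix is to replace the divisibility condition by $\operatorname{lcm}(2,\mathfrak{d}(a))\mid n$, which is precisely what the paper's $\varepsilon_a(\ell,\ell)$ from Proposition~\ref{proposition degree} encodes.
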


As before, we give concrete examples of these generating functions for three specific values of~$a$. 

\begin{cor} \label{corollary generating function Omega of quotient a=3,4,5}
Assuming GRH, we have the following three special cases:
\begin{itemize}[itemindent=8mm]
\item[$a=3{:}$\quad]
 $\displaystyle
 \sum_{\substack{p\le x \\ p\ne3}} z^{\Omega((p-1)/\ord_p(3))}=\pi(x)\frac{3z^3 - 4z^2 + 37z + 60}{2z^2 + 34z + 60}\prod\limits_{\substack{q}}\biggl(1+\frac{(z-1)q}{(q-1)(q^2-z)}\biggr)+\bo\biggl(\frac{x\log\log x}{(\log x)^{2}}\biggr),
 $
\item[$a=4{:}$\quad]
 $\displaystyle
 \sum_{\substack{p\le x \\ p\ne2}} z^{\Omega((p-1)/\ord_p(4))}=\pi(x)\frac{z^3 - z^2 + 12z}{4z + 8}\prod\limits_{\substack{q}}\biggl(1+\frac{(z-1)q}{(q-1)(q^2-z)}\biggr)+\bo\biggl(\frac{x\log\log x}{(\log x)^{2}}\biggr),
 $
\item[$a=5{:}$\quad]
 $\displaystyle
 \sum_{\substack{p\le x \\ p\ne5}} z^{\Omega((p-1)/\ord_p(5))}=\pi(x)\frac{11z^2 + 77z + 200}{z^2 + 97z + 190}\prod\limits_{\substack{q}}\biggl(1+\frac{(z-1)q}{(q-1)(q^2-z)}\biggr)+\bo\biggl(\frac{x\log\log x}{(\log x)^{2}}\biggr).
 $
\end{itemize}
\end{cor}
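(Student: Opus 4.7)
The plan is to obtain each of the three cases by direct substitution into the relevant theorem: Theorem \ref{theorem generating function Omega of quotient h=1} handles $a=3$ and $a=5$ (neither of which is a perfect power), while $a=4=2^2$ is a perfect power and therefore falls outside its scope, requiring instead the more general Theorem \ref{theorem generating function Omega of quotient} referenced in the remarks above. In all three cases the infinite product factor on the right-hand side is identical; only the prefactor $F^{\Omega}_a(z)$ depends on~$a$, so the computation reduces to evaluating this prefactor.

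For the non-power cases, the recipe from Theorem \ref{theorem generating function Omega of quotient h=1} has three ingredients: the squarefree kernel $b_0 = s(a)$ together with the count $\omega(2b_0)$; the residue of $\sgn(a)b_0$ modulo~$4$, which determines $\delta(a)$; and the finite product $\prod_{q\mid 2b_0} q/(z+q^3-q^2-q)$. For $a=3$ we have $b_0=3$, $\omega(6)=2$, and $3\equiv 3\pmod 4$, so $\delta(3)=z/4$, giving
\[
F^{\Omega}_3(z)=1+\tfrac{z}{4}(z-1)^2\cdot\tfrac{2}{z+2}\cdot\tfrac{3}{z+15}=\frac{3z^3-4z^2+37z+60}{2z^2+34z+60}
\]
after clearing the common denominator. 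The case $a=5$ is analogous: $b_0=5\equiv 1\pmod 4$ forces $\delta(5)=1$, and the same mechanical simplification yields $(11z^2+77z+200)/(z^2+97z+190)$.

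The only genuine obstacle is $a=4$, for which one must invoke the general Theorem \ref{theorem generating function Omega of quotient}; its prefactor has to accommodate both the exponent $h=2$ in $4=2^2$ and the fact that $b_0=2\equiv 2\pmod 4$, and this is where the formula departs from the $h=1$ recipe. A useful sanity check on the predicted answer $(z^3-z^2+12z)/(4z+8)$ is that a nonzero perfect square fails to be a primitive root modulo every odd prime, so $\Omega((p-1)/\ord_p(4))\ge 1$ for every relevant~$p$; consequently $F^{\Omega}_4(z)$ must vanish at $z=0$, which is visible from the factorization $z^3-z^2+12z=z(z^2-z+12)$. Once the general theorem is in hand, all three cases reduce to the same algebraic bookkeeping illustrated for $a=3$ above.
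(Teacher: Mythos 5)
Your approach is exactly what the paper intends: substitute into Theorem~\ref{theorem generating function Omega of quotient h=1} for $a=3,5$ and into the general Theorem~\ref{theorem generating function Omega of quotient} for $a=4$. The $a=3$ computation you exhibit is correct: with $b_0=3$, $\omega(2b_0)=2$, $\sgn(a)b_0\equiv 3\pmod 4$ forcing $\delta(3)=z/4$, the prefactor is $1+\frac{z}{4}(z-1)^2\cdot\frac{2}{z+2}\cdot\frac{3}{z+15}$, and clearing denominators gives the claimed polynomial quotient. The $a=5$ case, though only asserted by analogy, does indeed work: $b_0=5\equiv1\pmod4$ gives $\delta(5)=1$, and $1+(z-1)^2\cdot\frac{2}{z+2}\cdot\frac{5}{z+95}$ simplifies to $(11z^2+77z+200)/(z^2+97z+190)$.

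The gap is the $a=4$ case. You correctly identify it as falling outside Theorem~\ref{theorem generating function Omega of quotient h=1} and note that the exponent $h=2$ and the residue $b_0=2\pmod4$ must be handled via Theorem~\ref{theorem generating function Omega of quotient}, but you never carry out the substitution---you offer only the sanity check that the prefactor must vanish at $z=0$. This is the most delicate of the three cases precisely because Definition~\ref{definition generating function Omega of quotient} involves several cross-cancelling pieces. For the record, with $h=2$, $a_0=b_0=2$, $\gamma=b_0/(2,b_0)=1$, one finds $H^\Omega_2(z)=\frac{3z(2-z)}{(2-z)(2+z)}=\frac{3z}{z+2}$, $I^\Omega_2(1,z)=1$, $\tau^\Omega_+(4,z)=z^2/4$ (the case $b_0\equiv2\pmod4$, $2\parallel h$), and the denominator $4z-z^2-4(z-1)(z/2)^{\nu_2(h)}$ simplifies to $3z(2-z)$; the factor $(z-2)(z-4)$ cancels against $(2-z)(4-z)$, leaving $F^\Omega_4(2,z)=1+\frac{z(z-1)}{12}=\frac{z^2-z+12}{12}$. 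Multiplying by $H^\Omega_2(z)$ gives $\frac{z^3-z^2+12z}{4z+8}$, confirming the stated answer. Your sanity check (the zero at $z=0$) is a genuine and useful consistency test, but on its own it does not establish the coefficient formula; the substitution needs to be performed.
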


Our third and final way of comparing $p-1$ with $\ord_p(a)$ is to compute the number of distinct primes dividing one but not the other, namely $\omega(p-1) - \omega(\ord_p(a))$. Note that this third statistic is not directly connected to Artin's conjecture, as $\omega(p-1) - \omega(\ord_p(a))$ can equal~$0$ without~$a$ being a primitive root modulo~$p$ (as the example $p=17$ and $a=2$ shows). However the limiting distribution of $\omega(p-1) - \omega(\ord_p(a))$ remains interesting in its own right.

\begin{thm}\label{theorem generating function omega minus omega h=1}
Assume GRH. If $a\in\Q$ is not a perfect power, then for any $z\in\C$ with $|z|\leq 1$,
\begin{align*}\sum_{\substack{p\le x \\ \nu_p(a)=0}} z^{\omega(p-1)-\omega(\ord_p(a))}=\pi(x)F^{\omega-}_a(z)\prod\limits_{\substack{q}}\biggl(1+\frac{z-1}{q^2-1}\biggr)+\bo\biggl(\frac{x\log\log x}{(\log x)^{2}}\biggr).
\end{align*}
where 
\[
F^{\omega-}_a(z)=1+\delta(a)(z-1)^{\omega(2b_0)}\prod\limits_{q|2b_0}\left(\frac{1}{q^2+z-2}\right),
\]
with
\[\delta(a)=\begin{cases}
 1,& \text{if }\sgn(a)b_0 \equiv 1\mod{4},\\
 -1/2,& \text{if }\sgn(a)b_0 \equiv 3\mod{4},\\
 -1/8,& \text{if }b_0 \equiv 2\mod{4}.
\end{cases}
\]
\end{thm}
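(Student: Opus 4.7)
The plan is to follow the strategy used to establish Theorems \ref{theorem generating function omega of quotient h=1} and \ref{theorem generating function Omega of quotient h=1}: expand the generating function as a linear combination of splitting-field indicators, apply GRH-Chebotarev, and then simplify. The starting point is the identity
\[
z^{\omega(p-1)-\omega(\ord_p(a))} = \prod_q \bigl(1 + (z-1)\,\id[q \mid p-1,\ q \nmid \ord_p(a)]\bigr) = \sum_{L \text{ squarefree}} (z-1)^{\omega(L)}\,\id\bigl[L \mid p-1,\ \gcd(L,\ord_p(a))=1\bigr].
\]
The critical difference from the proof of Theorem \ref{theorem generating function omega of quotient h=1} is that the condition ``$q \nmid \ord_p(a)$'' is strictly stronger than ``$a$ is a $q$-th power modulo $p$'': rather than corresponding to splitting in the single field $\Q(\sqrt[q]{a},\zeta_q)$, it requires $a$ to be a $q^{v_q(p-1)}$-th power, an event whose natural ``level'' depends on $p$.

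To dissolve this $p$-dependence I would stratify by the $q$-adic valuations $m_q = v_q(p-1)$ for each prime $q \mid L$, using the identity
\[
\id\bigl[v_q(p-1) = m,\ q \nmid \ord_p(a)\bigr] = \id\bigl[p \text{ splits in } \Q(\sqrt[q^m]{a},\zeta_{q^m})\bigr] - \id\bigl[p \text{ splits in } \Q(\sqrt[q^m]{a},\zeta_{q^{m+1}})\bigr],
\]
valid for each $m \geq 1$. Multiplying across $q \mid L$ rewrites the main indicator as an alternating sum of splitting conditions in cyclotomic-Kummer extensions of the form $\Q(\sqrt[\ell]{a},\zeta_{\ell k})$ supported on the primes of $L$, which fits the framework of the densities $\Palm$ already developed in the paper.

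Applying GRH-Chebotarev to each of these splitting indicators (truncating the sums at $Y = (\log x)^A$ for an appropriate constant $A$) and collecting the Chebotarev and tail errors yields the stated $\bo(x \log\log x/(\log x)^2)$. For the main term, the local factor at a prime $q \nmid 2b_0$ telescopes as
\[
\sum_{m \geq 1}\biggl(\frac{1}{q^{2m-1}(q-1)} - \frac{1}{q^{2m}(q-1)}\biggr) = \sum_{m \geq 1}\frac{1}{q^{2m}} = \frac{1}{q^2-1},
\]
and reassembling with the $(z-1)^{\omega(L)}$ weights produces the advertised infinite product $\prod_q \bigl(1 + (z-1)/(q^2-1)\bigr)$.

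The main obstacle, and the origin of $F^{\omega-}_a(z)$, is the bookkeeping at the primes $q \in \{2\}\cup\{q : q \mid b_0\}$: here the degree $[\Q(\sqrt[q^m]{a},\zeta_{q^m}):\Q]$ gets halved once $\mathfrak{d}(a) \mid q^m$, because then the quadratic subfield $\Q(\sqrt{s(a)})$ is already contained in $\Q(\zeta_{q^m})$. The three residue conditions $\sgn(a)b_0 \equiv 1 \pmod 4$, $\sgn(a)b_0 \equiv 3 \pmod 4$, and $b_0 \equiv 2 \pmod 4$ dictate how deep into the $2$-adic tower one must climb before this halving takes effect, and this is precisely what produces the three values of $\delta(a) \in \{1, -1/2, -1/8\}$. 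Carefully computing these local corrections at $q \mid 2b_0$ and verifying that they collapse into the closed form $F^{\omega-}_a(z) = 1 + \delta(a)(z-1)^{\omega(2b_0)}\prod_{q \mid 2b_0} 1/(q^2 + z - 2)$ is the technical heart of the proof, and it mirrors the case analysis already carried out for Theorems \ref{theorem generating function omega of quotient h=1} and \ref{theorem generating function Omega of quotient h=1}.
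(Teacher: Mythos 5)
Your approach is correct and essentially coincides with the paper's. Your prime-by-prime telescoping identity for $\id\bigl[v_q(p-1)=m,\ q\nmid\ord_p(a)\bigr]$, once multiplied over $q\mid L$ and summed over the valuation vectors $(m_q)_{q\mid L}$, reproduces exactly the paper's decomposition $\sum_\ell(z-1)^{\omega(\ell)}\sum_{m\mid\ell}\mu(m)\,\Palm(x)$ (which the paper instead derives via the abstract ``disjoint properties'' combinatorial lemma applied to $h(m)=z^{\omega(m)}$); the remaining ingredients---GRH-effective Chebotarev together with the degree and discriminant bounds for $\Q(\sqrt[\ell]{a},\zeta_{m\ell})$, and the Euler-product/case-analysis computation of $F_a^{\omega-}(z)$ at the primes dividing $2b_0$---then proceed just as you sketch.
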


We again give concrete examples of these generating functions for the same three specific values of~$a$.

\begin{cor} \label{corollary generating function omega minus omega a=3,4,5}
Assuming GRH, we have the following three special cases:
\begin{itemize}[itemindent=8mm]
\item[$a=3{:}$\quad]
 $\displaystyle
 \sum_{\substack{p\le x \\ p\ne3}} z^{\omega(p-1)-\omega(\ord_p(3))}=\pi(x)\frac{z^2+20z+27}{2z^2+18z+28}\prod\limits_{\substack{q}}\biggl(1+\frac{z-1}{q^2-1}\biggr)+\bo\biggl(\frac{x\log\log x}{(\log x)^{2}}\biggr),
 $
\item[$a=4{:}$\quad]
 $\displaystyle
 \sum_{\substack{p\le x \\ p\ne2}} z^{\omega(p-1)-\omega(\ord_p(4))}=\pi(x)\frac{7z+5}{4(z+2)}\prod\limits_{\substack{q}}\biggl(1+\frac{z-1}{q^2-1}\biggr)+\bo\biggl(\frac{x\log\log x}{(\log x)^{2}}\biggr),
 $
\item[$a=5{:}$\quad]
 $\displaystyle
 \sum_{\substack{p\le x \\ p\ne5}} z^{\omega(p-1)-\omega(\ord_p(5))}=\pi(x)\frac{2z^2+23z+47}{z^2+25z+46}\prod\limits_{\substack{q}}\biggl(1+\frac{z-1}{q^2-1}\biggr)+\bo\biggl(\frac{x\log\log x}{(\log x)^{2}}\biggr).
 $
\end{itemize}
\end{cor}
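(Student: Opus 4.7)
The plan is to invoke Theorem~\ref{theorem generating function omega minus omega h=1} directly for $a=3$ and $a=5$, since neither is a perfect power, and to invoke the general-$h$ version referenced in the remark after Theorem~\ref{theorem generating function omega of quotient h=1} (promised later as Theorem~\ref{theorem generating function omega minus omega}) for $a=4=2^2$. In every case the task reduces to a finite arithmetic computation: read off $h$, $a_0$, $b_0$, $c_0$ from Notation~\ref{notation beginning}, determine $\delta(a)$ from the residue class of $\sgn(a)b_0$ modulo~$4$, and simplify the explicit finite product defining $F_a^{\omega-}(z)$.

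For $a=3$ we have $h=1$ and $a_0=b_0=3$; since $\sgn(a)b_0=3\equiv 3\mod{4}$ we get $\delta(3)=-1/2$, while the primes dividing $2b_0=6$ are $2$ and~$3$, so
\[
F_3^{\omega-}(z) = 1 - \frac{(z-1)^2}{2(z+2)(z+7)} = \frac{z^2+20z+27}{2z^2+18z+28}.
\]
For $a=5$ we have $h=1$ and $a_0=b_0=5$; since $5\equiv 1\mod{4}$ we get $\delta(5)=1$, and the primes dividing~$10$ are $2$ and~$5$, so
\[
F_5^{\omega-}(z) = 1 + \frac{(z-1)^2}{(z+2)(z+23)} = \frac{2z^2+23z+47}{z^2+25z+46}.
\]
Substituting these into Theorem~\ref{theorem generating function omega minus omega h=1} produces exactly the expressions claimed in the corollary for $a=3$ and~$a=5$.

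The case $a=4$ is qualitatively different: since $h=2$, Theorem~\ref{theorem generating function omega minus omega h=1} does not apply directly, and the prefactor in the general-$h$ theorem is more intricate because the cyclotomic-Kummer tower $\Q(\sqrt[\ell]{4},\zeta_\ell)$ collapses onto a subfield of $\Q(\sqrt[\ell]{2},\zeta_\ell)$ for many $\ell$. The main obstacle is therefore the bookkeeping at the ``bad'' primes dividing $2hb_0=8$, where the local densities differ from the generic factor $1+(z-1)/(q^2-1)$ present in the Euler product. Once the general prefactor has been unwound at these finitely many primes, a routine manipulation of rational functions should produce the claimed $\frac{7z+5}{4(z+2)}$, with no further analytic input beyond the GRH-dependent estimate already used for $a=3$ and $a=5$.
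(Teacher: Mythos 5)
Your computations for $a=3$ and $a=5$ are correct and match what the paper intends: both have $h=1$, so Theorem~\ref{theorem generating function omega minus omega h=1} applies directly, and the arithmetic you carried out (reading off $\delta(a)$ from the class of $\sgn(a)b_0$ modulo $4$ and simplifying the finite product over $q\mid 2b_0$) produces exactly the rational prefactors in the statement. The paper itself only asserts that these are special cases of the general theorem, so your level of detail for these two is more than adequate.

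For $a=4$, however, you have correctly identified \emph{which} theorem is needed but have not actually carried out the computation; you simply assert that ``a routine manipulation of rational functions should produce the claimed $\tfrac{7z+5}{4(z+2)}$.'' That leaves a third of the corollary unverified. The assertion does turn out to be true, but the unwinding of Definition~\ref{definition generating function omega minus omega} is not entirely obvious and deserves to be shown. With $a=4$ one has $h=2$, $a_0=b_0=2$, $\gamma = b_0/(2,b_0)=1$, $\nu_2(h)=1$, and $a>0$ with $b_0\equiv2\pmod4$ and $2\parallel h$, giving $\tau_+^{\omega-}(4)=-1/4$ and $I_h^{\omega-}(1,z)=1$. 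Then
\[
H_2^{\omega-}(z)=\frac{2^2-1+(z-1)(2+1-2^0)}{2^2+z-2}=\frac{2z+1}{z+2},
\qquad
g(\infty)=1+(z-1)\Bigl(1-\tfrac13\Bigr)=\frac{2z+1}{3},
\]
so
\[
F_4^{\omega-}(h,z)=1+\frac{z-1}{g(\infty)}\cdot\frac13\cdot\Bigl(-\frac14\Bigr)
=1-\frac{z-1}{4(2z+1)}=\frac{7z+5}{4(2z+1)},
\]
and the factor $2z+1$ cancels in $H_2^{\omega-}(z)F_4^{\omega-}(h,z)=\frac{7z+5}{4(z+2)}$, confirming the claim. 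A minor imprecision: you speak of bad primes dividing $2hb_0=8$, but since $8$ has only the prime factor $2$, there is really just a single bad prime here. In short, your approach is the same as the paper's, but the $a=4$ step needs to be completed rather than waved at.
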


All of these results and their generalizations are proved together in stages. In
Section~\ref{combinatorics sec} we axiomatize the combinatorial setup used by Hooley in his proof of Artin's conjecture, and then apply that generalization to each of our three statistics. The resulting formulas involve degrees of arbitrary cyclotomic-Kummer extensions of the form $\Bigl[\Q\Big(\sqrt[\ell]{a},\zeta_{m\ell}\Big):\Bbb Q\Bigr]$, which we work out in
Section~\ref{degree section}. Again following Hooley's techniques,
in Section~\ref{error section} we establish adequate bounds for all the error terms that arise during our strategy. Finally, in
Section~\ref{hard proofs section} we convert the main terms in these asymptotic formulas from infinite sums involving the degrees of these extensions (of which equation~\eqref{looks like degrees} is an example) into the infinite products that appear in our theorems.

\subsection{Statement of results: individual densities}

A generating function for a nonnegative integer-valued statistic is valuable if we can extract from it information about its coefficients. We introduce the notation
\begin{align}
\sum_{n=0}^\infty D_a^{\omega/}(n;x) z^n &= \frac1{\pi(x)} \sum_{\substack{p\le x \\ \nu_p(a)=0}} z^{\omega((p-1)/\ord_p(a))} \notag \\
\sum_{n=0}^\infty D_a^{\Omega}(n;x) z^n &= \frac1{\pi(x)} \sum_{\substack{p\le x \\ \nu_p(a)=0}} z^{\Omega((p-1)/\ord_p(a))} \label{defining coefficients}\\
\sum_{n=0}^\infty D_a^{\omega-}(n;x) z^n &= \frac1{\pi(x)} \sum_{\substack{p\le x \\ \nu_p(a)=0}} z^{\omega(p-1)-\omega(\ord_p(a))}. \notag
\end{align}
for the coefficients of the polynomials on the right-hand sides, and their limiting values (assuming they exist)
\begin{align}
 D_a^{\omega/}(n) &= \lim_{x\to\infty} D_a^{\omega/}(n;x) \notag \\
 D_a^{\Omega}(n) &= \lim_{x\to\infty} D_a^{\Omega}(n;x) \label{definition_D_a(n)} \\
 D_a^{\omega-}(n) &= \lim_{x\to\infty} D_a^{\omega-}(n;x). \notag
\end{align}
So for example, $D_a^{\omega/}(n)$ is the density of the set of primes~$p$ for which $\omega((p-1)/\ord_p(a)) = n$. Assuming GRH, our theorems show that these densities do indeed exist for all $a\notin\{-1,0,1\}$.
Moreover, our generating function formulas allow numerical evaluation of the densities $D_a^{\omega/}(n)$, $D_a^{\Omega}(n)$, and $D_a^{\omega-}(n)$ to high precision (see Section~\ref{numerical section} for details and examples).

From their generating functions, we can also employ techniques from complex analysis to deduce decay rates for these densities. The following result is proved in Section~\ref{decay section}):

\begin{cor} \label{decay rate corollary}
Assume GRH. For any $a\in\Q\setminus\{-1,0,1\}$:
\begin{enumerate}
\item[(a)] $D_a^{\Omega}(n) = R_a 4^{-n} + O_a(9^{-n})$, where
\[
R_a = \tfrac32 H_h^\Omega(4) \Bigl( 1+I_h^\Omega(\gamma,4) \Bigr) \prod_{q\ge3} \biggl( 1 + \frac{3q}{(q-1)(q^2-4)} \biggr)
\]
and $H^{\Omega}_h(z)$ and $I^{\Omega}_h(\gamma,z)$ are as in Definition~\ref{definition generating function Omega of quotient} below.
\item[(b)] Both $D_a^{\omega/}(n)\le n^{-(2-o(1))n}$ and $D_a^{\omega-}(n)\le n^{-(2-o(1))n}$ (with the implied constants depending on~$a$), with each of these upper bounds being attained infinitely often.
\end{enumerate}
\end{cor}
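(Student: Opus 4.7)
The plan is to extract asymptotics for the Taylor coefficients $D_a^{\Omega}(n)$, $D_a^{\omega/}(n)$, and $D_a^{\omega-}(n)$ directly from the explicit generating functions supplied by Theorems~\ref{theorem generating function Omega of quotient h=1}, \ref{theorem generating function omega of quotient h=1}, and~\ref{theorem generating function omega minus omega h=1} (with the general $h>1$ case furnished by Theorems~\ref{theorem generating function Omega of quotient}, \ref{theorem generating function omega of quotient}, and~\ref{theorem generating function omega minus omega}). The two parts require qualitatively different complex-analytic inputs: part~(a) is a standard singularity analysis for a meromorphic function, while part~(b) rests on an order-of-growth computation for entire functions.

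For part~(a), write $G_a^{\Omega}(z) = F_a^{\Omega}(z)\prod_{q}(1+(z-1)q/((q-1)(q^2-z)))$, so that $D_a^{\Omega}(n) = [z^n]G_a^{\Omega}(z)$. The $q$th factor of the infinite product has a simple pole at $z=q^2$, giving potential singularities at $z=4, 9, 25, \ldots$. The rational factor $F_a^{\Omega}(z)$ has poles at $z=-q^3+q^2+q$ for $q\mid 2b_0$; the only one inside $|z|<15$ is the $q=2$ pole at $z=-2$, but the $q=2$ factor of the infinite product equals $(z+2)/(4-z)$, whose zero at $z=-2$ cancels that pole exactly. Consequently $G_a^{\Omega}(z)$ is meromorphic on $|z|<15$ with only two simple poles, at $z=4$ and $z=9$. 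I would then apply Cauchy's integral formula
\[
D_a^{\Omega}(n) = \frac{1}{2\pi i}\oint_{|z|=r} G_a^{\Omega}(z)\,\frac{dz}{z^{n+1}}
\]
for some $9<r<15$, collect the residues at $z=4$ and $z=9$, and bound the resulting contour integral by $O_a(r^{-n})$. Computing the residue at $z=4$ gives $\tfrac{3}{2}F_a^{\Omega}(4)\prod_{q\ge 3}(1+3q/((q-1)(q^2-4)))$, and matching $F_a^{\Omega}(4) = H_h^{\Omega}(4)(1+I_h^{\Omega}(\gamma,4))$ against the explicit factorization in Definition~\ref{definition generating function Omega of quotient} recovers the stated formula for $R_a$.

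For part~(b), the crucial structural observation is that $G_a^{\omega/}(z)$ and $G_a^{\omega-}(z)$ are both entire. For $G_a^{\omega/}$, the $q$th factor of the product equals $(z+q^2-q-1)/(q(q-1))$, and its zero at $z=-(q^2-q-1)$ cancels precisely the simple pole of $F_a^{\omega/}$ at the same point whenever $q\mid 2b_0$; an analogous cancellation using the factor $(z+q^2-2)/(q^2-1)$ applies to $G_a^{\omega-}$. The remaining zeros of each entire function lie at points $z\asymp -q^2$ for primes $q\nmid 2b_0$, so by the prime number theorem the zero-counting function satisfies $n(r)\sim \pi(\sqrt{r})\sim 2\sqrt{r}/\log r$, and Jensen's formula then shows that the order of each function is exactly $\tfrac{1}{2}$. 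The standard correspondence between the order of an entire function and the decay of its Taylor coefficients (namely $\limsup_n n^{1/\rho}|a_n|^{1/n}<\infty$, together with the converse inequality for functions of positive type) yields $D_a^{\omega/}(n)^{1/n}\le Cn^{-2}$ for all large $n$ and $D_a^{\omega/}(n)^{1/n}\ge cn^{-2}$ infinitely often, which is exactly the statement $D_a^{\omega/}(n)\le n^{-(2-o(1))n}$ attained infinitely often; the argument for $D_a^{\omega-}$ is identical.

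The main technical obstacle I expect is the pole–zero bookkeeping in the general $h>1$ case, where the factors $F_a^{\Omega}(z)$, $F_a^{\omega/}(z)$, and $F_a^{\omega-}(z)$ in the corresponding full theorems acquire additional multiplicative structure that must be tracked carefully to confirm both the cancellations and the precise constant $R_a$; once these are verified, both parts reduce to classical complex-analytic facts.
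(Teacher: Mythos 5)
Your overall structure matches the paper's almost exactly: for part (a) you use singularity analysis (Cauchy's formula plus residue extraction at the nearest poles), which the paper packages as Lemma~\ref{two poles lemma} and applies after verifying the pole--zero cancellations in Lemma~\ref{4 and 9}; for part (b) you use the classical correspondence between the order of an entire function and the decay of its Maclaurin coefficients, which is exactly what the paper does via Lemma~\ref{growth rate lemma}.

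Two remarks. First, you flag but do not carry out the pole--zero bookkeeping for general $h>1$; the paper works through this case by case in Lemmas~\ref{4 and 9} and~\ref{growth rate lemma}. Second, and more substantively, the claim that ``Jensen's formula then shows that the order of each function is exactly $\tfrac12$'' is imprecise. Jensen's formula converts the zero count $n(r)\sim 2\sqrt r/\log r$ into a \emph{lower} bound on $\max_{|z|=r}\log|f(z)|$, hence a lower bound on the order; this suffices for the ``attained infinitely often'' half of (b), but it does not by itself give the \emph{upper} bound on the order needed to conclude $D_a^{\omega/}(n)\le n^{-(2-o(1))n}$. To pin down the order you need either Borel's theorem on canonical products (for a genus-$0$ product with absolutely convergent $\sum 1/|z_n|$ and no exponential factor, the order equals the exponent of convergence of the zeros), or the paper's direct estimate $\log\max_{|z|=r}|f(z)|\asymp_a \sqrt r/\log r$. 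Either fix is routine, but as written the Jensen step establishes only half of part (b).
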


These densities represent the limiting probability that the given statistics take a certain value if we choose a prime~$p$ ``at random''; in other words, for each statistic (and each value of~$a$) there is a nonnegative integer-valued random variable whose distribution function is the same as the sequence of densities. Such a random variable has an expectation, which we can interpret as the average value of the statistic when we sample it over randomly chosen primes. We compute exact values and numerical approximations for these expectations in Section~\ref{subsection expectations}.

All of the above results are conditional on GRH (just as Hooley's proof of Artin's conjecture is); however, we are able to obtain some unconditional results. More precisely, we can derive some unconditional inequalities for these densities and for the expectations of the three statistics. The following corollary gives two examples:

\begin{cor} \label{unconditional inequality corollary k=0}
Let $a$ be a rational number that is not a perfect power.
Unconditionally,
\begin{align*}
\#\bigl\{ p\le x, \, \nu_p(a)=0\colon \ord_p(a) = p-1 \bigr\} &\le \biggl( F_a \prod_q \biggl( 1-\frac1{q(q-1)} \biggr) + o(1) \biggr) \pi(x) \\
\#\bigl\{ p\le x, \, \nu_p(a)=0\colon \omega(\ord_p(a)) = \omega(p-1) \bigr\} &\le \biggl( F_a^{\omega-}(0) \prod_q \biggl( 1-\frac1{q^2-1} \biggr) + o(1) \biggr) \pi(x).
\end{align*}
in the notation of Theorems~\ref{OG Artin} and~\ref{theorem generating function omega minus omega h=1}.
All implied constants may depend on~$a$.
\end{cor}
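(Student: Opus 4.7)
The plan is a Hooley-style truncated sieve, applied only in the upper-bound direction. The key observation is that only a finite segment of the relevant inclusion-exclusion needs to be controlled, and this is feasible unconditionally via the effective Chebotarev density theorem (equivalently, Landau's prime ideal theorem); GRH is used in Hooley's method only to handle the infinite tail, and hence to obtain asymptotic equality rather than just an upper bound.

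For the first inequality, the event $\ord_p(a)=p-1$ with $\nu_p(a)=0$ is equivalent to ``for every prime $q$, $p$ does not split completely in $K_q:=\Q(\sqrt[q]{a},\zeta_q)$.'' Fixing a parameter $y$ and relaxing to the weaker condition ``for every $q\le y$'' gives a set that contains the set of primitive-root primes, and hence an upper-bound count. Möbius inversion converts the truncated count into a finite alternating sum
\[
\sum_{\substack{\ell\text{ squarefree}\\ q\mid\ell\Rightarrow q\le y}}\mu(\ell)\,\pi_{K_\ell}(x),\qquad K_\ell:=\Q(\sqrt[\ell]{a},\zeta_\ell),
\]
where $\pi_{K_\ell}(x)$ counts primes $p\le x$ splitting completely in $K_\ell$. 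The unconditional effective Chebotarev theorem gives $\pi_{K_\ell}(x)=\pi(x)/[K_\ell:\Q]+o(\pi(x))$, uniformly for $\ell\le y$ provided $y=y(x)$ grows slowly enough (e.g.\ $y\le\log\log x$). Taking $x\to\infty$ and then $y\to\infty$, the partial Möbius sum converges to $\sum_\ell\mu(\ell)/[K_\ell:\Q]$ (absolutely convergent since $1/[K_\ell:\Q]\ll\ell^{-2+o(1)}$), which by the purely algebraic identity underlying Theorem~\ref{OG Artin} equals $F_a\prod_q(1-1/(q(q-1)))$.

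For the second inequality, $\omega(\ord_p(a))=\omega(p-1)$ is equivalent to: for every prime $q$, the ``bad'' event $B_q:=\{q\mid p-1\text{ and }q\nmid\ord_p(a)\}$ fails. The analogous truncation to $q\le y$ and inclusion-exclusion yield an upper bound as a finite alternating sum $\sum_T(-1)^{|T|}\#\{p\le x:B_q\text{ holds for all }q\in T\}$. Unlike the first case, $B_q$ is not a single splitting condition; however, decomposing by $k=\nu_q(p-1)\ge 1$ writes $B_q=\bigsqcup_k\{\nu_q(p-1)=k,\ a\in(\Fpx)^{q^k}\}$, and each of these events is the difference of splitting conditions in the Kummer-cyclotomic fields $\Q(\sqrt[q^k]{a},\zeta_{q^k})$ and $\Q(\sqrt[q^k]{a},\zeta_{q^{k+1}})$. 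Since $k\le\log x/\log q$ when $p\le x$, these $k$-sums are finite, and $\bigcap_{q\in T}B_q$ expands into a finite alternating sum of splitting counts in various fields $\Q(\sqrt[L']{a},\zeta_{L''})$, each estimated by Chebotarev. Taking $y\to\infty$ and applying the algebraic identity at the heart of Theorem~\ref{theorem generating function omega minus omega h=1} at $z=0$ then produces the bound $F_a^{\omega-}(0)\prod_q(1-1/(q^2-1))$.

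The principal obstacle is uniform control of the accumulated Chebotarev error over the $2^{\pi(y)}$ subsets $T$ (and, for the second inequality, an additional refinement over exponents $k_q$). Since the discriminants of the fields $K_\ell$ grow polynomially in $\ell$, this forces $y=y(x)$ to grow sufficiently slowly; but any such growth suffices because the main-term series converge absolutely. For the second inequality, the additional combinatorial bookkeeping over the $k_q$-refinements does not change the essential method: each refined event still corresponds to a single Kummer-cyclotomic splitting condition, to which the effective prime ideal theorem applies directly.
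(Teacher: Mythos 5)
Your overall strategy — truncate the sieve to primes $q\le y$, relax the event to get an upper bound, apply unconditional Chebotarev (Lagarias--Odlyzko / Landau), then let $y\to\infty$ using the absolute convergence of the limiting density series — is exactly the approach the paper takes, just presented directly for $k=0$ rather than as the $k=0$ specialization of the general inequality $\limsup_x\sum_{n\le k}D_a^{\omega/}(n;x)\le\sum_{n\le k}\tilde D_a^{\omega/}(n)$. The paper packages the truncation as a generating function for $\omega_\xi((p-1)/\ord_p(a))$, with $\omega_\xi(n)=\#\{q\mid n:q\le\xi\}$, and uses monotonicity of $\omega_\xi$ in $\xi$; setting $z=0$ recovers your inclusion-exclusion. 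Your decomposition of $B_q$ over $k=\nu_q(p-1)$ into differences of splitting conditions in $\Q(\sqrt[q^k]{a},\zeta_{q^k})$ and $\Q(\sqrt[q^k]{a},\zeta_{q^{k+1}})$ is likewise what the paper's disjoint-properties framework (Proposition~\ref{proposition difference of omega}, with the inner $\sum_{m\mid\ell}\mu(m)$) is doing.

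There is a quantitative gap, though, which matters for making the unconditional Chebotarev step actually close. You assert that the discriminants of $K_\ell=\Q(\sqrt[\ell]{a},\zeta_\ell)$ grow polynomially in $\ell$; in fact $\log|\Delta_{K_\ell}|\asymp_a[K_\ell:\Q]\log\ell\asymp\ell^2\log\ell$ (Proposition~\ref{propdiscriminant}), so $|\Delta_{K_\ell}|$ is super-exponential. Since Lagarias--Odlyzko produces an error of shape $\pi\bigl(x^{1-c|\Delta_{K_\ell}|^{-1+1/n_{K_\ell}}}\bigr)+x\exp(-c'n_{K_\ell}^{-1/2}(\log x)^{1/2})$, this error is $o(\pi(x))$ only for $\ell$ at most roughly $\log\log\log x$. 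But the M\"obius sum you write runs over squarefree $\ell$ whose prime factors are all $\le y$, and these $\ell$ range up to the primorial $\prod_{q\le y}q\approx e^y$ — not up to $y$. So ``uniformly for $\ell\le y$'' is not what is needed, and your example $y\le\log\log x$ fails (it would allow $\ell$ up to roughly $\log x$). The paper takes $\xi=\log\log\log\log x$, so that the largest $\ell$ appearing (namely $Q_\xi\ll(\log\log\log x)^{1.04}$) is within the admissible range; this also automatically controls the accumulated error over the $\le Q_\xi$ subsets, a point you flag as the ``principal obstacle'' but justify by appealing to convergence of the main term, which is a separate issue from the error-term accumulation. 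With the discriminant estimate and the parameter $y$ corrected to a quadruple logarithm, your outline matches the paper's argument.
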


The first statement recovers a result of Vinogradov and Van der Waall from the 1970s, as described in~\cite{Moree} (just before the start of Section~6.1).
We give generalizations of this corollary in Theorems~\ref{welcome both lim sup and tilde notation. sigh} and~\ref{welcome both lim sup and tilde notation. no sigh} below.
We note that assuming GRH, all of these inequalities become asymptotic formulas.

\section{Extending Hooley's combinatorics} \label{combinatorics sec}

In Hooley's conditional proof of Artin's conjecture~\cite{Hooley}, he began with a combinatorial decomposition (similar to an inclusion--exclusion sieve) based on the primes dividing $p-1$ that could obstruct~$a$ being a primitive root modulo~$p$. Our work requires a similar combinatorial setup, with the additional complexity of needing a decomposition involving all powers of primes, rather than just the primes themselves. Moreover, the relationship among the ``obstructions'' indexed by powers of a given prime is important to the combinatorial setup. For these reasons, in Sections~\ref{Cumulative sec} and~\ref{Disjoint sec} we will describe these combinatorial setups in an axiomatic way, with the hope that other authors will find their generality useful. Afterwards, we will apply these setups to our three sets of counting functions in Section~\ref{three applications sec}.

Throughout Section~\ref{combinatorics sec}, when~$h(m)$ is a multiplicative function, we let
\begin{equation} \label{inverses}
g(m) = \sum_{d\mid m} \mu(d) h(m/d) \quad\text{and}\quad g^*(m) = \sum_{\substack{d\mid m \\ (d,m/d) = 1}} (-1)^{\omega(d)} h(m/d)
\end{equation}
denote its Dirichlet inverse and its unitary inverse, respectively. Also, for any positive real number~$\xi$, we define~$Q_\xi$ to be the least common multiple of all positive integers less than or equal to~$\xi$.

Furthermore, throughout Section~\ref{combinatorics sec}, let $\uu\subset\N$ be the ``universe'' of numbers under consideration.
For every prime power $q^k$, let $\ff_{q^k}$ denote a subset of $\uu$.
For later convenience, we also define $\ff_1=\uu$. We can think of $\ff_{q^k}$ as being a property that a natural number might or might not have; the letter $\ff$ is chosen to help us think of the numbers in $\ff_{q^k}$ as having ``failed'' a test administered by~$q^k$. We assume that every infinite intersection of various $\ff_{q^k}$ is empty, so that any given integer $n$ has only finitely many of the properties~$\ff_{q^k}$.

\subsection{Cumulative properties} \label{Cumulative sec}

It might be the case that the collection $\{\ff_{q^k}\}$ has the property that $\ff_{q^k} \subseteq \ff_{q^{k-1}}$ for all prime powers~$q^k$, that is, numbers in our universe~$\uu$ with property $\ff_{q^k}$ automatically have property~$\ff_{q^{k-1}}$ (and thus all properties~$\ff_{q^j}$ with $0\le j\le k$, property~$\ff_1$ being automatic). We call such collections ``cumulative'', and for all positive integers $\ell$ we use the following notation:
\begin{itemize}
\item We define new sets $\aa_\ell = \bigcap_{q^k\mid\ell} \ff_{q^k}$ (with the interpretation $\aa_1=\uu$), so that a number $n$ has property~$\aa_\ell$ if and only if it has property $\ff_{q^k}$ for all prime powers $q^k\mid\ell$.
Note that such integers have {\em at least} those properties (hence the letter $\aa$) but might have further properties as well, whether from higher powers of existing primes or from new primes.
\item We also define $A_\ell$ to be the indicator function of $\aa_\ell$ (that is, the function defined by $A_\ell(n)=1$ if $n\in\aa_\ell$ and $A_\ell(n)=0$ if $n\notin\aa_\ell$).
\item Furthermore, we define $\ee_\ell = \aa_\ell \setminus \bigcup_{q^k\nmid\ell} \ff_{q^k}$, so that a number in $\ee_\ell$ has {\em exactly} the properties~$\ff_{q^k}$ for all prime powers $q^k\mid\ell$ and no other properties (hence the letter~$\ee$).
\item Define $e(n)$ to be the unique number such that $n\in\ee_{e(n)}$, which exists for all $n$ by the infinite-intersection property and the cumulative property of the $\ff_{q^k}$; note that by cumulativeness, $n\in\aa_\ell$ if and only if $\ell\mid e(n)$. Further, define $e_\xi(n) = \gcd(e(n),Q_\xi)$, which we think of as recording which tests up to~$\xi$ the number~$n$ fails.
\item Finally, define
\[
N_\xi(x) = \sum_{\substack{n\le x \\ n\in \uu}} h(e_\xi(n))
\quad\text{and}\quad
P_\ell(x) = \#\{ n\le x \colon n\in \aa_\ell \} = \sum_{\substack{n\le x \\ n\in \uu}} A_\ell(n).
\]
\end{itemize}
In practice, $N_y(x)$ will be the exact quantity we are interested in, while the~$P_\ell(x)$ will be quantities for which asymptotic estimates are available.
The following lemma will allow us to estimate $N_\xi(x)$ in terms of these functions $P_\ell(x)$ as well as the Dirichlet inverse $g(m)$ of $h(m)$.

\begin{lemma}\label{cumulative lemma}
Assume that the collection $\{\ff_{q^k}\}$ is cumulative.
Let~$h(m)$ be a multiplicative function bounded in absolute value by~$1$. For any positive real numbers $\xi<y$,
\begin{equation*} 
N_y(x) = \sum_{\ell\mid Q_\xi} g(\ell) P_\ell(x) + O\bigg(\sum_{\xi < q^k \le y} P_{q^k}(x) \bigg).
\end{equation*}
\end{lemma}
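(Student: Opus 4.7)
The plan is to split the lemma into two parts: the exact identity $N_\xi(x)=\sum_{\ell\mid Q_\xi}g(\ell)P_\ell(x)$ (valid for any positive~$\xi$), and a bound $|N_y(x)-N_\xi(x)|\ll\sum_{\xi<q^k\le y}P_{q^k}(x)$ coming from the fact that the integrand $h(e_y(n))-h(e_\xi(n))$ is $O(1)$ and vanishes unless $n$ fails some ``new'' test $\ff_{q^k}$ with $\xi<q^k\le y$.

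For the exact identity, I would use that $g$ being the Dirichlet inverse of $h$ amounts to the formula $h(m)=\sum_{d\mid m}g(d)$. Applied to $m=e_\xi(n)$, summed over $n\in\uu\cap[1,x]$, and with the order of summation reversed, this yields
\[
N_\xi(x)=\sum_{d\mid Q_\xi}g(d)\cdot\#\{n\le x:n\in\uu,\ d\mid e_\xi(n)\}.
\]
When $d\mid Q_\xi$, the condition $d\mid e_\xi(n)=\gcd(e(n),Q_\xi)$ collapses to $d\mid e(n)$, and cumulativeness turns this into $n\in\aa_d$. So the inner count is precisely $P_d(x)$. The same identity holds with $y$ replacing $\xi$, so the lemma reduces to controlling $N_y(x)-N_\xi(x)$.

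For that control, I would argue pointwise. Since $|h|\le1$, one has $|h(e_y(n))-h(e_\xi(n))|\le2$, and the difference is $0$ whenever $e_y(n)=e_\xi(n)$. The key claim is that whenever $e_y(n)\ne e_\xi(n)$, there exists a prime power $q^k$ with $\xi<q^k\le y$ and $n\in\ff_{q^k}$. To verify this, I would pick a prime~$q$ realizing $v_q(e_y(n))>v_q(e_\xi(n))$ and set $k=v_q(e_y(n))$: the relation $e_y(n)\mid Q_y$ forces $q^k\le y$, while the strict inequality forces $v_q(e_\xi(n))$ to equal the cap $v_q(Q_\xi)=\lfloor\log_q\xi\rfloor$ (otherwise $v_q(e_\xi(n))=v_q(e(n))\ge v_q(e_y(n))$, a contradiction), so $k\ge\lfloor\log_q\xi\rfloor+1$ and $q^k>\xi$. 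Cumulativeness then gives $q^k\mid e(n)$ and hence $n\in\aa_{q^k}=\ff_{q^k}$. Summing the bound $|h(e_y(n))-h(e_\xi(n))|\le2$ over the set of $n$ with $e_y(n)\ne e_\xi(n)$ and estimating its size by a union bound over the witnessing prime powers completes the proof.

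The main obstacle is not conceptual but bookkeeping: carefully checking with $q$-adic valuations that the witnessing prime power in the error step genuinely lands in the half-open interval $(\xi,y]$, rather than merely being bounded by $y$ or exceeding $\xi$ alone. Once this is secured, the lemma is a clean marriage of Dirichlet (Möbius-style) inversion with an inclusion-exclusion bound, in the spirit of Hooley's original sieve.
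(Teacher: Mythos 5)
Your proof is correct and matches the paper's strategy: the exact identity $N_\xi(x)=\sum_{\ell\mid Q_\xi}g(\ell)P_\ell(x)$ via M\"obius inversion together with the observation that $e_\xi(n)\mid e_y(n)$, so $n$ contributes to $N_y(x)-N_\xi(x)$ only if $n\in\aa_{q^k}=\ff_{q^k}$ for some prime power $q^k\in(\xi,y]$. The one cosmetic difference is in how you package the error step: you run a direct union bound over the set $\{n\colon e_y(n)\ne e_\xi(n)\}$, whereas the paper partitions $(\xi,y]$ into intervals each containing a single prime power and telescopes the differences $N_\theta(x)-N_\eta(x)$; both yield $O\bigl(\sum_{\xi<q^k\le y}P_{q^k}(x)\bigr)$, and your $q$-adic valuation bookkeeping locating the witnessing prime power in the half-open interval is sound.
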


\begin{proof}
Given the definition~\eqref{inverses} of $g(m)$, M\"obius inversion yields
\begin{equation} \label{apres Mobius}
h(e_\xi(n)) = \sum_{\ell\mid e_\xi(n)} g(\ell) = \sum_{\substack{\ell\mid e(n) \\ \ell\mid Q_\xi}} g(\ell) = \sum_{\substack{n\in\aa_\ell \\ \ell\mid Q_\xi}} g(\ell) = \sum_{\ell\mid Q_\xi} g(\ell) A_\ell(n),
\end{equation}
and then summing over $n\le x$ with $n\in\uu$ gives
\begin{equation} \label{apres summing}
N_\xi(x) = \sum_{\ell\mid Q_\xi} g(\ell) P_\ell(x).
\end{equation}

Next suppose that $\eta<\theta$ and that there is exactly one prime power~$q^k$ in the interval $(\eta,\theta]$. In this situation, $h(e_\theta(n)) = h(e_\eta(n))$ when $n\notin \aa_{q^k}$, and thus
\[
N_\theta(x) - N_\eta(x) = \sum_{\substack{n\le x \\ n\in \uu}} \Big( h(e_\theta(n)) - h(e_\eta(n)) \Big) = \sum_{\substack{n\le x \\ n\in \aa_{q^k}}} \Big( h(e_\theta(n)) - h(e_\eta(n)) \Big).
\]
Since the summand is at most~$2$ in absolute value, this identity gives
\[
N_\theta(x) - N_\eta(x) \ll \sum_{\substack{n\le x \\ n\in \aa_{q^k}}} 1 = P_{q^k}(x).
\]

If we now partition $(\xi,y]$ into intervals of the form $(\eta,\theta]$ that each contain exactly one of the prime powers~$q^k$ in $(\xi,y]$, summing over the resulting partition yields
\begin{equation*}
N_y(x) - N_\xi(x) \ll \sum_{\xi < q^k \le y} P_{q^k}(x),
\end{equation*}
which establishes the lemma when combined with the identity~\eqref{apres summing}.
\end{proof}

\subsection{Disjoint properties} \label{Disjoint sec}

On the other hand, it might be the case that the collection~$\{\ff_{q^k}\}$ has the property that $\ff_{q^j} \cap \ff_{q^k} = \emptyset$ for all primes~$p$ and all distinct positive integers~$j$ and~$k$, that is, numbers with the property~$\ff_{q^j}$ cannot also have the property~$\ff_{q^k}$ for any positive $j\ne k$. We call such collections ``disjoint'', and for all positive integers~$\ell$ we use the following notation:
\begin{itemize}
\item We define new sets $\aa^*_\ell = \bigcap_{q^k\parallel \ell} \ff_{q^k}$ (with the interpretation $\aa^*_1=\uu$), so that a number $n$ has property~$\aa^*_\ell$ if and only if it has property~$\ff_{q^k}$ for all prime powers~$q^k$ exactly dividing~$\ell$. (Recall that $q^k\parallel\ell$ means that $q^k\mid\ell$ but $q^{k+1}\nmid\ell$.) Note that such integers have {\em at least} those properties but might have further properties as well from powers of other primes, although not from other powers of primes already dividing~$\ell$ due to disjointness.
\item We also define $A^*_\ell$ to be the indicator function of $\aa^*_\ell$ (that is, the function defined by $A^*_\ell(n)=1$ if $n\in\aa^*_\ell$ and $A^*_\ell(n)=0$ if $n\notin\aa^*_\ell$).
\item Furthermore, we define $\ee^*_\ell = \aa^*_\ell \setminus \bigcup_{q^k\nparallel\ell} \ff_{q^k}$, so that a number in $\ee^*_\ell$ has {\em exactly} the properties~$\ff_{q^k}$ for all prime powers~$q^k\parallel\ell$ and no other properties.
\item Define $e^*(n)$ to be the unique number such that $n\in\ee^*_{e^*(n)}$, which exists for all $n$ by the infinite-intersection property and the disjoint property of the $\ff_{q^k}$; note that by disjointness, $n\in\ee^*_\ell$ if and only if $\ell$ is a unitary divisor of $e^*(n)$, that is, if and only if $\ell\mid e^*(n)$ and $(\ell,e^*(n)/\ell)=1$. Further, define $e^*_\xi(n) = \gcd(e^*(n),Q_\xi)$.
\item Finally, define
\[
N^*_\xi(x) = \sum_{\substack{n\le x \\ n\in \uu}} h(e^*_\xi(n))
\quad\text{and}\quad
P^*_\ell(x) = \#\{ n\le x \colon n\in \aa^*_\ell \} = \sum_{\substack{n\le x \\ n\in \uu}} A^*_\ell(n).
\]
\end{itemize}
The following lemma will allow us to estimate $N^*_\xi(x)$ in terms of these functions~$P^*_\ell(x)$ as well as the unitary inverse $g^*(m)$ of~$h(m)$.

\begin{lemma}\label{disjoint lemma}
Assume that the collection~$\{\ff_{q^k}\}$ is disjoint.
Let $h(m)$ be a multiplicative function bounded by~$1$ in absolute value. For any positive real numbers $\xi<y$,
\begin{equation*}
N^*_y(x) = \sum_{\ell\mid Q_\xi} g^*(\ell) P^*_\ell(x) + O\bigg( \sum_{\xi < q^k \le y} P^*_{q^k}(x) \bigg).
\end{equation*}
\end{lemma}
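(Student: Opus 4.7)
The plan is to mirror the proof of Lemma~\ref{cumulative lemma} step by step, replacing ordinary Möbius inversion with its unitary analog. The first ingredient I would establish is the unitary inversion identity $h(m)=\sum_{\ell\parallel m}g^{*}(\ell)$: starting from the definition $g^{*}(m)=\sum_{d\parallel m}(-1)^{\omega(d)}h(m/d)$ in \eqref{inverses}, one checks that the unitary convolution of the constant function $1$ with $(-1)^{\omega}$ is $[m=1]$ (because $m$ has $2^{\omega(m)}$ unitary divisors and $\sum_{d\parallel m}(-1)^{\omega(d)}=(1-1)^{\omega(m)}$), so unitary-convolving both sides of the definition of $g^{*}$ with the constant $1$ recovers $h$.

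Next I would apply this identity to $m=e^{*}_{\xi}(n)$ and rewrite the sum via the disjointness hypothesis. By the disjointness of the $\ff_{q^{k}}$, each prime appears in $e^{*}(n)$ to at most one exponent, so unitary divisors of $e^{*}_{\xi}(n)=\gcd(e^{*}(n),Q_{\xi})$ are exactly those $\ell\mid Q_{\xi}$ that are unitary divisors of $e^{*}(n)$; and $\ell\parallel e^{*}(n)$ is in turn equivalent to $n\in\aa^{*}_{\ell}$, i.e.\ to $A^{*}_{\ell}(n)=1$. This yields
\[
h\bigl(e^{*}_{\xi}(n)\bigr)=\sum_{\ell\parallel e^{*}_{\xi}(n)}g^{*}(\ell)=\sum_{\ell\mid Q_{\xi}}g^{*}(\ell)A^{*}_{\ell}(n),
\]
and summing over $n\le x$ with $n\in\uu$ gives $N^{*}_{\xi}(x)=\sum_{\ell\mid Q_{\xi}}g^{*}(\ell)P^{*}_{\ell}(x)$, which is the main term of the claimed identity.

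The final step is to pass from $N^{*}_{\xi}(x)$ to $N^{*}_{y}(x)$. I would partition $(\xi,y]$ into subintervals $(\eta,\theta]$ each containing exactly one prime power $q^{k}$ and observe that, by disjointness, if $n\notin\aa^{*}_{q^{k}}$ then $q^{k}\nmid e^{*}(n)$, so $e^{*}_{\theta}(n)=e^{*}_{\eta}(n)$. Using $|h|\le 1$ gives $\bigl|h(e^{*}_{\theta}(n))-h(e^{*}_{\eta}(n))\bigr|\le 2$ on $\aa^{*}_{q^{k}}$ and $0$ elsewhere, so
\[
\bigl|N^{*}_{\theta}(x)-N^{*}_{\eta}(x)\bigr|\ll P^{*}_{q^{k}}(x),
\]
and telescoping over the partition bounds $|N^{*}_{y}(x)-N^{*}_{\xi}(x)|$ by the claimed error term. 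Combining this with the identity for $N^{*}_{\xi}(x)$ yields the lemma.

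The main obstacle I expect is the bookkeeping in the unitary analog of the Möbius step, since unitary divisibility is less robust under gcd than ordinary divisibility: in the cumulative case one has the clean equivalence $\ell\mid e_{\xi}(n)\iff \ell\mid e(n)$ and $\ell\mid Q_{\xi}$, whereas here one must use disjointness to rule out the pathology in which a prime $q$ satisfies $q^{v_{q}(e^{*}(n))}>\xi$ while $q\le\xi$, so that $q$ would contribute a ``wrong'' unitary prime power to $\gcd(e^{*}(n),Q_{\xi})$. The disjointness hypothesis on $\{\ff_{q^{k}}\}$ is exactly what prevents this mismatch and makes the inversion identity hold as stated.
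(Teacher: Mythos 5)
Your proposal mirrors the paper's argument step by step: derive the unitary inversion $h(m)=\sum_{\ell\parallel m}g^*(\ell)$, use it to express $N^*_\xi(x)$ as $\sum_{\ell\mid Q_\xi}g^*(\ell)P^*_\ell(x)$, and telescope from $\xi$ to $y$. Structurally it is the same proof.

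There is, however, a gap, sitting exactly where you identify ``the main obstacle'' and then claim to have resolved it. You assert that ``if $n\notin\aa^*_{q^k}$ then $q^k\nmid e^*(n)$,'' and that disjointness ``rules out the pathology'' of a prime $q\le\xi$ with $q^{\nu_q(e^*(n))}>\xi$. Neither statement is true. The condition $n\notin\aa^*_{q^k}$ only means $q^k\nparallel e^*(n)$, which is perfectly compatible with $q^{k+1}\mid e^*(n)$ (and hence $q^k\mid e^*(n)$); disjointness only guarantees that, for each prime $q$, at most one exponent $j$ has $n\in\ff_{q^j}$, and places no upper bound on $j$. So the pathology you describe can certainly occur. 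When it does, the $q$-part of $e^*_\xi(n)=\gcd(e^*(n),Q_\xi)$ is the ``wrong'' prime power $q^{\nu_q(Q_\xi)}$ rather than $q^{\nu_q(e^*(n))}$; then the unitary divisors of $e^*_\xi(n)$ are not the same as the unitary divisors $\ell$ of $e^*(n)$ with $\ell\mid Q_\xi$, and the identity $N^*_\xi(x)=\sum_{\ell\mid Q_\xi}g^*(\ell)P^*_\ell(x)$ fails. The telescoping step has the same flaw: if $\nu_q(e^*(n))>k$ then $e^*_\theta(n)=q\,e^*_\eta(n)$ even though $n\notin\aa^*_{q^k}$, so the $n$ contributing to $N^*_\theta(x)-N^*_\eta(x)$ lie in $\bigcup_{j\ge k}\aa^*_{q^j}$, not just $\aa^*_{q^k}$. (The paper's own proof asserts the same conclusions without elaboration and is open to the same objection; the issue is harmless in the paper's applications, where $y=x$ and the universe consists of primes $p\le x$ with $\ff_{q^k}\subseteq\{p:q^k\mid p-1\}$, so $P^*_{q^j}(x)=0$ once $q^j\ge x$ and the extra error terms vanish.) To prove the lemma at its stated level of generality, you should either add the hypothesis that $P^*_{q^j}(x)=0$ for $q^j>y$, or argue more directly: show that $\sum_{\ell\mid Q_\xi}g^*(\ell)P^*_\ell(x)=\sum_n h\bigl(\textstyle\prod_{q^a\parallel e^*(n),\ q^a\le\xi}q^a\bigr)$ by grouping unitary divisors according to which prime-power parts of $e^*(n)$ they contain, and then bound the difference from $N^*_y(x)$, which gives an error of the form $\sum_{q\le y,\ q^j>\xi}P^*_{q^j}(x)$.
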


\begin{proof}
In the context of unitary Dirichlet convolution, the function $(-1)^{\omega(n)}$ is the analogue of the M\"obius function (see~\cite[Theorem 2.5]{Cohen}):
\begin{equation} \label{unitary}
g^*(m) = \sum_{\substack{d\mid m \\ (d,m/d)=1}} (-1)^{\omega(d)} h(m/d) \iff h(m) = \sum_{\substack{d\mid m \\ (d,m/d)=1}} g^*(d).
\end{equation}
By this unitary inversion formula,
\begin{equation} \label{apres unitary}
h(e^*_\xi(n)) = \sum_{\substack{\ell\mid e^*_\xi(n) \\ (\ell,e^*_\xi(n)/\ell)=1}} g^*(\ell) = \sum_{\substack{\ell\mid e^*(n) \\ (\ell,e^*_\xi(n)/\ell)=1 \\\ell\mid Q_\xi}} g^*(\ell) = \sum_{\substack{n\in\aa^*_\ell \\ \ell\mid Q_\xi}} g^*(\ell) = \sum_{\ell\mid Q_\xi} g^*(\ell) A^*_\ell(n);
\end{equation}
and then summing over $n\le x$ with $n\in\uu$ yields
\begin{equation} \label{apres summing disjoint}
N^*_\xi(x) = \sum_{\ell\mid Q_\xi} g^*(\ell) P^*_\ell(x).
\end{equation}

Next suppose that $\eta<\theta$ and that there is exactly one prime power~$q^k$ in the interval $(\eta,\theta]$. In this situation, $h(e^*_\theta(n)) = h(e^*_\eta(n))$ when $n\notin \aa^*_{q^k}$, and thus
\[
N^*_\theta(x) - N^*_\eta(x) = \sum_{\substack{n\le x \\ n\in \uu}} \Big( h(e^*_\theta(n)) - h(e^*_\eta(n)) \Big) = \sum_{\substack{n\le x \\ n\in \aa^*_{q^k}}} \Big( h(e^*_\theta(n)) - h(e^*_\eta(n)) \Big).
\]
Since the summand is at most~$2$ in absolute value, this identity gives
\[
N^*_\theta(x) - N^*_\eta(x) \ll \sum_{\substack{n\le x \\ n\in \aa^*_{q^k}}} 1 = P^*_{q^k}(x).
\]

If we now partition $(\xi,y]$ into intervals of the form $(\eta,\theta]$ that each contain exactly one of the prime powers~$q^k$ in $(\xi,y]$, summing over the resulting partition yields
\begin{equation*}
N_y(x) - N_\xi(x) \ll \sum_{\xi < q^k \le y} P^*_{q^k}(x),
\end{equation*}
which establishes the lemma when combined with the identity~\eqref{apres summing disjoint}.
\end{proof}

We remark that both formulas in Lemmas~\ref{cumulative lemma} and~\ref{disjoint lemma} involve finite sums of finite counting functions (for any given~$\xi$, $x$, and~$y$), and so no convergence arguments are needed.

\subsection{Our three applications} \label{three applications sec}

In Hooley's conditional proof of Artin's conjecture for~$a$, his method amounts to applying Lemma~\ref{cumulative lemma} in the case where $h(m)$ is the indicator function of $m=1$, the universe~$\uu$ is the set of primes not dividing the numerator or denominator of~$a$ (we write $\nu_p(a)=0$ for this condition), and
\begin{equation} \label{Fq def}
\ff_{q} = \{ p\in\uu \colon p\equiv1\mod q,\, a \text{ is a $q$th power}\mod p \}, \quad\text{while } \ff_{q^k} = \emptyset \text{ for } k\ge2.
\end{equation}
For our refinements of Artin's conjecture, we will use $h(m) = z^{\omega(m)}$ or $h(m) = z^{\Omega(m)}$, and these sets $\ff_q$ or variants thereof, in such a way that the functions $N_\xi(x)$ treated in Lemmas~\ref{cumulative lemma} and~\ref{disjoint lemma} become the left-hand sides of the asymptotic formulas in Theorems~\ref{theorem generating function omega of quotient h=1}, \ref{theorem generating function Omega of quotient h=1}, and~\ref{theorem generating function omega minus omega h=1}.

\begin{prop}\label{proposition omega of quotient}
Let $a\in\Q\setminus\{-1,0,1\}$ and let~$z$ be a complex number with $|z|\leq 1$. For any positive real numbers $\xi<x$,
\begin{multline*}
\sum_{\substack{p\le x \\ \nu_p(a)=0}} z^{\omega((p-1)/\ord_p(a))} = \sum_{\ell\mid Q_\xi} \mu^2(\ell) (z-1)^{\omega(\ell)} \#\{ p\le x \colon \nu_p(a)=0,\, \ell \mid (p-1)/\ord_p(a) \} \\
+ O\bigg( \sum_{\xi<q\le x} \#\{ p\le x \colon \nu_p(a)=0,\, q \mid (p-1)/\ord_p(a) \} \bigg).
\end{multline*}
\end{prop}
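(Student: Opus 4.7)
The plan is to apply Lemma~\ref{cumulative lemma} with the universe $\uu = \{p \text{ prime}: \nu_p(a) = 0\}$, the cumulative family
\[
\ff_{q^k} = \{p \in \uu : q^k \mid (p-1)/\ord_p(a)\},
\]
and the bounded multiplicative function $h(m) = z^{\omega(m)}$ (which satisfies $|h|\le 1$ since $|z|\le 1$). Cumulativity $\ff_{q^k}\subseteq\ff_{q^{k-1}}$ is immediate. One then checks that $\aa_\ell = \{p \in \uu : \ell \mid (p-1)/\ord_p(a)\}$, that $e(p) = (p-1)/\ord_p(a)$, and that choosing $y = x$ forces $e_y(p) = e(p)$ for every prime $p\le x$ (because every prime power dividing $(p-1)/\ord_p(a)$ also divides $p-1\le x\le Q_x$). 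Hence $N_y(x)$ is exactly the left-hand side of the proposition, and $P_\ell(x)$ is precisely the counting function appearing on the right.

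Next I would compute the Dirichlet inverse of $h$ from the identity $g(q^k) = h(q^k) - h(q^{k-1})$, which yields $g(q) = z-1$ and $g(q^k) = z-z=0$ for $k\ge 2$, so that $g(\ell) = \mu^2(\ell)(z-1)^{\omega(\ell)}$ is supported on squarefree $\ell$. Substituting this into the conclusion of Lemma~\ref{cumulative lemma} produces the claimed main term, with an error of size $\sum_{\xi < q^k \le x} P_{q^k}(x)$.

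The one genuine obstacle is sharpening the error from a sum over prime powers to the sum over primes only, as the proposition requires. I would accomplish this by revisiting the telescoping step in the proof of Lemma~\ref{cumulative lemma} for our particular choice $h = z^{\omega}$. Partition $(\xi, x]$ into sub-intervals $(\eta, \theta]$ each containing a single prime power $q^k$; the contribution vanishes whenever $k\ge 2$, for the following reason. For $n \in \aa_{q^k}$, cumulativity forces $q^{k-1} \mid e(n)$, and because $q^{k-1}$ is either below $\xi$ or already occurs as an earlier prime power in the partition, we have $q^{k-1}\le \eta$ and hence $q \mid e_\eta(n)$. Then $e_\theta(n) = q\cdot e_\eta(n)$ in $q$-adic valuations, so $\omega(e_\theta(n)) = \omega(e_\eta(n))$ and $h(e_\theta(n)) - h(e_\eta(n)) = 0$. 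Only the intervals containing a new prime $q$ contribute, each with $|h(e_\theta(n)) - h(e_\eta(n))| = |z^{\omega(e_\eta(n))}(z-1)|\le 2$, giving the required error $\ll \sum_{\xi < q \le x} P_q(x)$.
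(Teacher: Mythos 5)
Your proof is correct, but it takes a slightly different route from the paper's, and the difference is instructive. The paper applies Lemma~\ref{cumulative lemma} with Hooley's sets $\ff_{q} = \{p : q\mid (p-1)/\ord_p(a)\}$ and $\ff_{q^k}=\emptyset$ for $k\ge 2$; this choice is cumulative, makes $P_{q^k}(x)=0$ for $k\ge 2$, forces $P_\ell(x)=0$ for non-squarefree $\ell$, and makes $e(p)$ equal the radical of $(p-1)/\ord_p(a)$ (which still has the right $\omega$). Consequently the error term $\sum_{\xi<q^k\le y}P_{q^k}(x)$ from Lemma~\ref{cumulative lemma} collapses to a sum over primes alone, with no additional work. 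You instead keep the ``full'' sets $\ff_{q^k}=\{p : q^k\mid(p-1)/\ord_p(a)\}$ (the same ones used later for Proposition~\ref{proposition Omega of quotient}), which makes $e(p)=(p-1)/\ord_p(a)$ exactly but leaves $P_{q^k}(x)$ nonzero for $k\ge 2$. The black-box lemma then gives an error over all prime powers, which would be larger than the stated bound, so you are forced to re-open the telescoping argument inside the lemma's proof and show that, for $h=z^{\omega}$, the intervals containing a proper prime power $q^k$ contribute nothing because $\omega(e_\theta(n))=\omega(e_\eta(n))$ for $n\in\aa_{q^k}$. That observation is correct (the key point that $q^{k-1}\le\eta$ so $q\mid e_\eta(n)$ is exactly right), and it reproduces the paper's error term, but it requires a specialized argument the paper's choice of $\ff$'s makes unnecessary. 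The trade-off is that your version uses the same family $\{\ff_{q^k}\}$ uniformly across Propositions~\ref{proposition omega of quotient} and~\ref{proposition Omega of quotient}, whereas the paper tailors the $\ff$'s per proposition so that Lemma~\ref{cumulative lemma} can be invoked as a black box each time.

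One small imprecision in your write-up: the justification that $e_x(p)=e(p)$ should read that $(p-1)/\ord_p(a)\le p-1<x$, so every prime power dividing it is $<x$ and hence divides $Q_x$; saying the prime powers divide ``$p-1\le x\le Q_x$'' conflates an integer with the divisor lattice of $Q_x$. The conclusion is nonetheless correct.
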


\begin{proof}
In preparation for applying Lemma~\ref{cumulative lemma}, we use the same universe $\uu = \{p\colon \nu_p(a)=0\}$ and subsets~\eqref{Fq def} as Hooley did (noting that these $\ff_{q^k}$ are indeed cumulative), but we use the strongly multiplicative function $h(m)$ with $h(q^k) = z$ for all prime powers~$p^k$. (In fact, since $\ff_{q^k} = \emptyset$ for $k\ge2$, the values of $h(q^k)$ on those proper prime powers are actually irrelevant.)
We calculate that $g(q) = z-1$ while $g(q^k)=0$ for $k\ge2$, and so $g(m) = \mu^2(m) (z-1)^{\omega(m)}$.

Crucially, since $p\in\ff_q$ if and only if $q\mid (p-1)/\ord_p(a)$, we see that $e(p)$ is the largest squarefree divisor of $(p-1)/\ord_p(a)$. Since $e_x(p) = e(p)$ for all $p\le x$, we deduce that $h(e_x(p)) = z^{\omega((p-1)/\ord_p(a))}$. Moreover, when~$\ell$ is squarefree we see that $p\in \aa_\ell$ if and only if $\ell \mid (p-1)/\ord_p(a)$, and thus
\[
P_\ell(x) = \#\{ p\le x \colon \nu_p(a)=0,\, \ell \mid (p-1)/\ord_p(a) \}
\quad\text{when $\ell$ is squarefree}
\]
and $P_\ell(x) = 0$ when $\ell$ is not squarefree.
The proposition now follows immediately from Lemma~\ref{cumulative lemma}.
\end{proof}

\begin{rmk}
In Proposition~\ref{proposition Omega of quotient}, taking the limit as $z\to0$ (or equivalently setting $z=0$ with the convention that $0^0=1$) recovers Hooley's formula
\begin{multline*}
\#\{ p\le x \colon \nu_p(a)=0,\, (p-1)/\ord_p(a)=1 \} = \sum_{\ell\mid Q_\xi} \mu^2(\ell) (-1)^{\omega(\ell)} \#\{ p\le x \colon \nu_p(a)=0,\, \ell \mid (p-1)/\ord_p(a) \} \\
+ O\bigg( \sum_{\xi<q\le x} \#\{ p\le x \colon \nu_p(a)=0,\, q \mid (p-1)/\ord_p(a) \} \bigg).
\end{multline*}
\end{rmk}

\begin{prop}\label{proposition Omega of quotient}
Let $a\in\Q\setminus\{-1,0,1\}$ and let~$z$ be a complex number with $|z|\leq 1$. For any positive real numbers $\xi<x$,
\begin{multline*}
\sum_{\substack{p\le x \\ \nu_p(a)=0}} z^{\Omega((p-1)/\ord_p(a))} = \sum_{\ell\mid Q_\xi} z^{\Omega(\ell)} (1-z^{-1})^{\omega(\ell)} \#\{ p\le x \colon \nu_p(a)=0,\, \ell \mid (p-1)/\ord_p(a) \} \\
+ O\bigg( \sum_{\xi<q^k\le x} \#\{ p\le x \colon \nu_p(a)=0,\, q^k \mid (p-1)/\ord_p(a) \} \bigg).
\end{multline*}
\end{prop}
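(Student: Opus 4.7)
The plan is to apply Lemma~\ref{cumulative lemma} in essentially the same spirit as the proof of Proposition~\ref{proposition omega of quotient}, with two modifications. First, upgrade the cumulative collection so that higher prime powers are no longer empty: with the universe $\uu = \{p\colon \nu_p(a) = 0\}$, set
\[
\ff_{q^k} = \{p \in \uu \colon q^k \mid (p-1)/\ord_p(a)\}
\]
for every prime power~$q^k$. This collection is visibly cumulative since $q^k \mid n$ implies $q^{k-1}\mid n$. Second, replace Hooley's strongly multiplicative choice of~$h$ by the completely multiplicative function $h(m) = z^{\Omega(m)}$, which still satisfies $|h(m)|\le 1$ whenever $|z|\le 1$.

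Chasing through the definitions in Section~\ref{Cumulative sec} then identifies the derived objects: by cumulativeness one has $\aa_\ell = \{p \in \uu \colon \ell \mid (p-1)/\ord_p(a)\}$, so that $P_\ell(x)$ is exactly the counting function appearing on the right-hand side of the proposition. An analogous check shows $e(p) = (p-1)/\ord_p(a)$; since this quantity divides $p-1 \le x-1$, every prime power dividing it is at most~$x$, and hence $e_x(p) = e(p)$, giving $h(e_x(p)) = z^{\Omega((p-1)/\ord_p(a))}$ for all $p\le x$. In particular $N_x(x)$ is the left-hand side of the desired identity.

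It only remains to compute the function~$g$ defined in~\eqref{inverses}. Since $\mu$ and~$h$ are both multiplicative, so is~$g$. On prime powers the M\"obius sum collapses to its first two terms, giving $g(q^k) = h(q^k) - h(q^{k-1}) = z^k - z^{k-1} = z^{k-1}(z-1)$, and multiplicativity then yields
\[
g(\ell) = \prod_{q^k \parallel \ell} z^{k-1}(z-1) = z^{\Omega(\ell) - \omega(\ell)}(z-1)^{\omega(\ell)} = z^{\Omega(\ell)}(1 - z^{-1})^{\omega(\ell)}.
\]
Substituting into Lemma~\ref{cumulative lemma} with $y = x$ then produces exactly the asserted formula, with the error term $\sum_{\xi < q^k \le x} P_{q^k}(x)$ matching the one in the proposition. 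No substantive obstacle arises here; the only step requiring any care is the algebraic manipulation that rewrites $g(\ell)$ in the $z^{\Omega(\ell)}(1-z^{-1})^{\omega(\ell)}$ form in which the proposition is stated.
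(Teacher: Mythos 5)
Your proof is correct and takes essentially the same approach as the paper: same universe, same completely multiplicative $h(m)=z^{\Omega(m)}$, same computation of $g$, and an application of Lemma~\ref{cumulative lemma} with $y=x$. The only cosmetic difference is that you define $\ff_{q^k}$ directly as $\{p\in\uu\colon q^k\mid(p-1)/\ord_p(a)\}$, while the paper uses the equivalent formulation $\{p\in\uu\colon p\equiv1\mod{q^k},\ a\text{ is a $q^k$th power}\mod p\}$ (which anticipates the later translation into splitting conditions for cyclotomic-Kummer extensions).
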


\begin{proof}
In preparation once again for applying Lemma~\ref{cumulative lemma}, we use the same universe $\uu = \{p\colon \nu_p(a)=0\}$ but the new sets
\begin{equation*}
\ff_{q^k} = \{ p\in\uu \colon p\equiv1\mod{q^k},\, a \text{ is a $q^k$th power}\mod p \},
\end{equation*}
which we note are again cumulative. We also change to the completely multiplicative function $h(m)$ with $h(q^k) = z^k$, so that $g(q^k) = z^k-z^{k-1} = z^k (1-z^{-1})$ and therefore $g(m) = z^{\Omega(m)} (1-z^{-1})^{\omega(m)}$.

In this case, we see that $e(p) = e_x(p)$ is exactly $(p-1)/\ord_p(a)$, and therefore $h(e_x(p)) = z^{\Omega((p-1)/\ord_p(a))}$. Moreover, we see that
\[
P_\ell(x) = \#\{ p\le x \colon \nu_p(a)=0,\, \ell \mid (p-1)/\ord_p(a) \}
\]
now regardless of whether $\ell$ is squarefree. The proposition now follows immediately from Lemma~\ref{cumulative lemma}.
\end{proof}

In the proof of the next proposition (as well as later in this paper), we use the notation $\id(S)$ to denote the indicator function of the statement~$S$, so that $\id(S)=1$ if~$S$ is true and $\id(S)=0$ if~$S$ is false.

\begin{prop}\label{proposition difference of omega}
Let $a\in\Q\setminus\{-1,0,1\}$, $z\in\C$ with $|z|\leq 1$, $x$ and $\xi$ positive real numbers with $\xi<x$, then
\begin{align*}
\sum_{\substack{p\le x \\ \nu_p(a)=0}} z^{\omega(p-1) - \omega(\ord_p(a))} &= \sum_{\ell\mid Q_\xi} (z-1)^{\omega(\ell)}\sum\limits_{m\mid \ell}\mu(m) \#\Big\{ p\le x \colon \nu_p(a)=0,\, \ell\mid (p-1)/\ord_p(a),\ m\ell\mid (p-1)\Big\}\\
&+ O\bigg( \sum_{\xi<q^k\le x} \#\Big\{ p\le x \colon \nu_p(a)=0,\, q^k \mid (p-1)/\ord_p(a),\, q^k\parallel (p-1) \Big\} \bigg).
\end{align*}
\end{prop}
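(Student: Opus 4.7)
The strategy is to apply Lemma~\ref{disjoint lemma} to a disjoint family designed to pick out the primes~$q$ that divide~$p-1$ but not $\ord_p(a)$, and then to rewrite the resulting unitary counting functions via a short M\"obius inversion over the primes of~$\ell$. Take universe $\uu=\{p\colon \nu_p(a)=0\}$ and for every prime power $q^k$ define
\[
\ff_{q^k}=\{p\in\uu \colon q^k\parallel p-1 \text{ and } q^k\mid(p-1)/\ord_p(a)\}.
\]
These sets are disjoint in the sense of Section~\ref{Disjoint sec}, since $q^k\parallel p-1$ holds for at most one~$k$. The key observation is that a prime~$q$ contributes to $\omega(p-1)-\omega(\ord_p(a))$ exactly when $q\mid p-1$ and $q\nmid\ord_p(a)$; letting $q^k\parallel p-1$, this occurs if and only if $q^k\mid(p-1)/\ord_p(a)$, i.e.\ if and only if $p\in\ff_{q^k}$. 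Therefore the number of primes~$q$ for which $p$ lies in some $\ff_{q^k}$ equals $\omega(p-1)-\omega(\ord_p(a))$.

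Now choose~$h$ strongly multiplicative with $h(q^k)=z$ for all $k\geq 1$, so that $h(m)=z^{\omega(m)}$ and $h(e^*(p))=z^{\omega(p-1)-\omega(\ord_p(a))}$. Since every prime power in $e^*(p)$ divides $p-1\le x$, we have $e^*_x(p)=e^*(p)$, so $N^*_x(x)$ equals the left-hand side of the proposition. The unitary inverse of~$h$ satisfies $h(q^k)=g^*(1)+g^*(q^k)$, so $g^*(q^k)=z-1$ for all $k\geq 1$ and hence $g^*(\ell)=(z-1)^{\omega(\ell)}$ by multiplicativity. Lemma~\ref{disjoint lemma} with $y=x$ then gives
\[
\sum_{\substack{p\le x \\ \nu_p(a)=0}} z^{\omega(p-1)-\omega(\ord_p(a))} = \sum_{\ell\mid Q_\xi} (z-1)^{\omega(\ell)} P^*_\ell(x) + O\bigg(\sum_{\xi<q^k\le x} P^*_{q^k}(x)\bigg),
\]
and the error term is already exactly the one claimed since $P^*_{q^k}(x)$ is the number of $p\le x$ in~$\ff_{q^k}$.

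What remains is to rewrite $P^*_\ell(x)$, which counts $p\in\uu$ such that $q^k\parallel p-1$ and $q^k\mid(p-1)/\ord_p(a)$ for every $q^k\parallel\ell$. The combined divisibility conditions amount to $\ell\mid(p-1)/\ord_p(a)$ (which forces $\ell\mid p-1$), while the ``exactness'' requirements $q^{k_q+1}\nmid p-1$ for each prime~$q\mid\ell$ (where $q^{k_q}\parallel\ell$) can be encoded by an inclusion--exclusion: negate each such requirement one at a time over squarefree $m\mid\mathrm{rad}(\ell)$. The hard (or most error-prone) part of the argument is verifying that this inclusion--exclusion collapses cleanly to the stated form
\[
P^*_\ell(x)=\sum_{m\mid\ell}\mu(m)\,\#\bigl\{p\le x\colon \nu_p(a)=0,\ \ell\mid(p-1)/\ord_p(a),\ m\ell\mid p-1\bigr\},
\]
using that $\mu$ restricts the sum to squarefree~$m$, and that ``$\ell\mid p-1$ together with $q^{k_q+1}\mid p-1$ for each $q\mid m$'' is precisely ``$m\ell\mid p-1$'' once $m$ is squarefree and supported on primes of~$\ell$. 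Substituting into the display above yields the proposition. It is worth noting that attempting to use the cumulative framework of Lemma~\ref{cumulative lemma} with $\ff_q=\{p\colon q\mid p-1,\,q\nmid\ord_p(a)\}$ produces a genuinely different formula, because $\ell\mid(p-1)/\ord_p(a)$ is strictly weaker than $\gcd(\ell,\ord_p(a))=1$ whenever~$\ell$ shares a prime with $\ord_p(a)$; the disjoint framework is essential for arriving at the form claimed.
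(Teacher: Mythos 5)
Your proposal is correct and follows essentially the same route as the paper: the same disjoint family $\ff_{q^k}$ (your condition $q^k\mid(p-1)/\ord_p(a)$ is precisely the paper's ``$a$ is a $q^k$th power modulo~$p$'' once $q^k\mid p-1$), the same strongly multiplicative $h$ with $g^*(m)=(z-1)^{\omega(m)}$, and the same rewriting of $P^*_\ell(x)$ via a M\"obius sum over $m\mid\ell$. The paper phrases that last step as $\id\bigl((\ell,(p-1)/\ell)=1\bigr)=\sum_{m\mid(\ell,(p-1)/\ell)}\mu(m)$ while you do the inclusion--exclusion directly over the exactness conditions $q^{k_q+1}\nmid p-1$; these are the same computation, and your closing remark about why the cumulative framework would not work is a correct observation.
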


\begin{proof}
We now prepare to apply Lemma~\ref{disjoint lemma}, using the same universe $\uu = \{p\colon \nu_p(a)=0\}$ as before but now defining
\begin{equation*}
\ff_{q^k} = \{ p\in\uu \colon q^k\parallel (p-1),\, a \text{ is a $q^k$th power}\mod p \},
\end{equation*}
which we note are disjoint. We revert to the strongly multiplicative function $h(m)$ with $h(q^k) = z$ for all $k\ge1$, and we calculate that $g^*(q^k) = z-1$ and thus $g^*(m) = (z-1)^{\omega(m)}$.

Crucially, we see that $e^*(p) = e^*_x(p)$ is the largest divisor of $(p-1)/\ord_p(a)$ that is a unitary divisor of $p-1$. It follows that $\omega(e^*_x(p)) = \omega(p-1) - \omega(\ord_p(a))$ and therefore $h(e^*_x(p)) = z^{\omega(p-1) - \omega(\ord_p(a))}$. Furthermore,
\[
P^*_\ell(x) = \#\Big\{ p\le x \colon \nu_p(a)=0,\, \ell \mid (p-1)/\ord_p(a),\, (\ell, (p-1)/\ell) = 1\Big\}.
\]
However, when $\ell\mid(p-1)$, note that
\[
\id\Bigl( \bigl( \ell,(p-1)/\ell \bigr) = 1 \Bigr) = \sum_{m\mid ( \ell,(p-1)/\ell )} \mu(m) = \sum_{m\mid\ell} \mu(m) \id\Bigl( m\ell\mid(p-1) \Bigr);
\]
consequently,
\[
P^*_\ell(x) = \sum_{m\mid\ell} \mu(m) \#\Big\{ p\le x \colon \nu_p(a)=0,\, \ell \mid (p-1)/\ord_p(a),\, m\ell\mid(p-1) \Big\}.
\]
The proposition now follows immediately from Lemma~\ref{disjoint lemma}, once we note (for the error term) that a prime power~$q^k$ dividing $p-1$ satisfies $(q^k, (p-1)/q^k)$ if and only if $q^k\parallel(p-1)$.
\end{proof}

\section{The connection to cyclotomic-Kummer extensions} \label{degree section}

In light of the propositions in the previous section, it is important to study the quantity
\begin{equation} \label{palm counts}
\Palm(x)=\#\Big\{ p\le x \colon \nu_p(a)=0,\, \ell\mid (p-1)/\ord_p(a),\ m\ell\mid (p-1)\Big\}.
\end{equation}
This is the quantity appearing in the main term of Proposition~\ref{proposition difference of omega}; moreover, the special case $m=1$ is also the quantity appearing in the main terms of Propositions~\ref{proposition omega of quotient} and~\ref{proposition Omega of quotient}). We immediately rewrite this quantity as
\begin{equation} \label{palm fields}
\Palm(x)=\#\Big\{ p\le x \colon \nu_p(a)=0,\, \text{$p$ splits completely in $\Q\bigl( \sqrt[\ell]{a},\zeta_{m\ell} \bigr)$} )\Big\}.
\end{equation}
This approach is the one used by Hooley~\cite{Hooley}, although he needed only the special case where~$\ell$ is squarefree and $m=1$. The fact that these two quantities are identical follows from~\cite[Proposition 8]{Peringuey}; in that work this fact is stated under the assumption that~$\ell$ is squarefree (since that was the only case required for the application), but the proof therein actually works for arbitrary~$\ell$.

The expression~\eqref{palm fields} for $\Palm(x)$ can be estimated using the Chebotarev density theorem (indeed, in this particular case of splitting completely, it can be derived directly from the prime ideal theorem). We take exactly this approach in Section~\ref{palm asymptotic section}; but first we need to understand the degrees (and bound the discriminants) of the cyclotomic-Kummer extensions $\Q\bigl( \sqrt[\ell]{a},\zeta_{m\ell} \bigr)$; we address these two requirements in Sections~\ref{degree section} and~\ref{discriminant section}, respectively.

\subsection{Degrees of cyclotomic-Kummer extensions} \label{degree section}

In the case $m=1$, the degree of the pure Kummer extension $\Q\bigl( \sqrt[\ell]{a},\zeta_{\ell} \bigr)$ was computed by Wagstaff (\cite[Propostion 4.1]{Wagstaff}). For the cyclotomic-Kummer extensions $\Q\bigl( \sqrt[\ell]{a},\zeta_{m\ell} \bigr)$ with~$\ell$ squarefree, the degree was computed by the third author~\cite{Peringuey}. In this section we carry out the analogous calculation for arbitrary~$\ell$ and $m$. To do so, we first need to generalize a result of Schinzel~\cite[Lemma~4]{Schinzel}.

\begin{lemma}\label{lem_schinzel}
Fix positive integers~$m$ and~$\schinzeln$.
A rational number~$\beta$ is of the form $\gamma^\schinzeln$ for some $\gamma\in\Q(\zeta_\schinzelm)$ if and only if one of the following sets of conditions is satisfied:
 \begin{itemize}
 \item $\schinzeln\equiv 1\mod{2}$ and $\beta=c^\schinzeln$ for some $c\in\Q$.
 \item$\schinzeln\equiv 0\mod{2}$, $\beta>0$, and $\beta=c^{\schinzeln/2}$ for some $c\in\Q$ such that $\sqrt{c}\in\Q(\zeta_\schinzelm)$.
 \item$\schinzeln\equiv 0\mod{2}$, $\beta<0$, $2\mid m$, and $\beta=-c^{\schinzeln/2}$ for some $c\in\Q$ such that $\sqrt{c}\in\Q(\zeta_\schinzelm)$.
 \item$\schinzeln\equiv 2\mod{4}$, $\beta<0$, $2\nmid m$, and $\beta=c^{\schinzeln/2}$ for some $c\in\Q$ such that $\sqrt{c}\in\Q(\zeta_\schinzelm)$.
 \item$\schinzeln\equiv 4\mod{8}$, $\beta<0$, $2\nmid m$, and $\beta=-(2c)^{\schinzeln/2}$ for some $c\in\Q$ such that $\sqrt{c}\in\Q(\zeta_\schinzelm)$.
 \end{itemize}
\end{lemma}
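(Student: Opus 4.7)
The plan is to prove the two directions separately. Sufficiency is a case-by-case verification, while necessity is the substantive part and rests on a Galois-theoretic analysis inside the abelian extension $\Q(\zeta_{m\ell})/\Q$.

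For the ``if'' direction, I would exhibit an explicit $\gamma\in\Q(\zeta_{m\ell})$ with $\gamma^\ell=\beta$ in each case. Cases 1, 2, and 4 take $\gamma=c$ or $\gamma=\sqrt c$ directly (in case 4 the oddness of $\ell/2$ forces $c<0$, so $\sqrt c$ is imaginary, but $\sqrt c\in\Q(\zeta_{m\ell})$ is assumed). For case 3, the hypothesis $2\mid m$ gives $\zeta_{2\ell}\in\Q(\zeta_{m\ell})$, and $\gamma:=\zeta_{2\ell}\sqrt c$ satisfies $\gamma^\ell=\zeta_{2\ell}^\ell c^{\ell/2}=-c^{\ell/2}=\beta$. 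For case 5, the condition $\ell\equiv 4\mod{8}$ ensures $i\in\Q(\zeta_{m\ell})$ (since $4\mid\ell\mid m\ell$), and $\gamma:=(1+i)\sqrt c$ gives $\gamma^4=-4c^2$; the oddness of $\ell/4$ then yields $\gamma^\ell=(-4c^2)^{\ell/4}=-(2c)^{\ell/2}=\beta$. In each case one checks that $\gamma$ really lies in $\Q(\zeta_{m\ell})$ using only the root-of-unity conditions imposed by the parities of $\ell$ and $m$.

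For the ``only if'' direction, suppose $\gamma\in\Q(\zeta_{m\ell})$ and $\gamma^\ell=\beta$ is a nonzero rational; I would split on the parity of $\ell$. When $\ell$ is odd, I claim $\beta$ is already an $\ell$-th power in $\Q$: since $\zeta_\ell\in\Q(\zeta_{m\ell})$, all roots of $X^\ell-\beta$ lie in $\Q(\zeta_{m\ell})$, so the real root $\beta^{1/\ell}$ does too; if $\beta$ were not a $p$-th power in $\Q$ for some prime $p\mid\ell$, then Capelli's theorem would realize $\Q(\beta^{1/p})$ as a degree-$p$ abelian extension sitting inside a cyclotomic field, forcing $\zeta_p\in\Q(\beta^{1/p})$, which contradicts $[\Q(\zeta_p):\Q]=p-1<p$. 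For $\ell$ even, I would set $c:=\gamma^2\in\Q(\zeta_{m\ell})$, observe that $c^{\ell/2}=\beta$, apply the odd-exponent conclusion to the odd part of $\ell/2$ to reduce to a pure $2$-power exponent, and then carry out a parity case analysis on $v_2(\ell)$ and $v_2(m)$ together with the sign of $\beta$. Tracking which of the quadratic subfields $\Q(\sqrt c)$, $\Q(i)$, $\Q(\sqrt 2)$, $\Q(\sqrt{-2})$ embed in $\Q(\zeta_{m\ell})$ then pins down cases 2--5.

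The main obstacle is the bookkeeping for $\ell$ even with $m$ odd, specifically the boundary between $\ell\equiv 2\mod{4}$ and $\ell\equiv 4\mod{8}$. The subtle point is that $\gamma$ is only determined up to multiplication by an $\ell$-th root of unity, and only certain combinations of $\sqrt c$ with such roots of unity actually lie in $\Q(\zeta_{m\ell})$; the unexpected factor $(2c)^{\ell/2}$ in case 5 is the same phenomenon as Schinzel's exceptional $-4c^4$ case in the reducibility theorem for $X^n-a$, and is the real reason the statement needs a fifth case. Handling it cleanly is where invoking Schinzel's original Lemma 4 as a black box for the base case $m=1$ is most useful.
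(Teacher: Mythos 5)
Your sufficiency constructions are all correct, and your treatment of the odd-$\ell$ (and, implicitly, positive-$\beta$) part of necessity is essentially the paper's argument in different clothing: both exploit that a real subfield of the abelian field $\Q(\zeta_{m\ell})$ is normal over $\Q$, so an irreducible $X^{n}-b$ with a root there can have degree at most $2$ (you phrase this via Capelli and $\zeta_p\notin\R$; the paper writes $\beta=\sgn(\beta)\beta_1^t$ with $\beta_1$ positive and not a perfect power and concludes $\ell\mid 2t$). The difference that matters is what you pivot on afterwards. The paper's choice of pivot, the exponent $t$, immediately produces a \emph{rational} $c=\beta_1^{2t/\ell}$ with $\sqrt{c}=\beta_1^{t/\ell}\in\Q(\zeta_{m\ell})$, i.e.\ lands directly in the form the lemma asserts. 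Your pivot $c:=\gamma^2$ does not: $\gamma^2$ is an element of $\Q(\zeta_{m\ell})$ with no reason to be rational, so the identity $c^{\ell/2}=\beta$ does not yet say anything of the required shape, and "apply the odd-exponent conclusion to the odd part of $\ell/2$" only tells you $\beta$ is a $u$-th power of a rational for $u$ the odd part of $\ell$; the entire content of cases 2--5 lives in the residual $2$-power analysis, which your sketch defers rather than performs.

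That deferred step is a genuine gap, and your proposed escape hatch does not close it. Schinzel's Lemma 4 describes $\ell$-th powers lying in $\Q(\zeta_\ell)$; here $\gamma$ lies in the strictly larger field $\Q(\zeta_{m\ell})$, and the lemma you are proving \emph{is} the extension from $m=1$ to general $m$, so invoking Lemma 4 "as a black box for the base case" is circular in effect: it gives no information about elements of $\Q(\zeta_{m\ell})\setminus\Q(\zeta_\ell)$. The tool the paper actually uses in the hard case ($\beta<0$, $\ell$ even, $m$ odd) is Schinzel's Lemma 3, the criterion for $\sqrt{s}\in\Q(\zeta_n)$ in terms of $s\mid n$ together with congruences of $n$ modulo $4$ and $8$; applied to $\sqrt{\beta_1}\in\Q(\zeta_{2m\ell})\setminus\Q(\zeta_{m\ell})$ it forces either $m\ell\equiv2\mod{4}$ with $s(\beta_1)\equiv3\mod 4$ (yielding case 4) or $m\ell\equiv4\mod 8$ with $2\mid s(\beta_1)$ (yielding case 5 with its $-(2c)^{\ell/2}$). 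Your instinct that the answer is governed by which of $\Q(\sqrt{c})$, $\Q(i)$, $\Q(\sqrt{\pm2})$ embed in $\Q(\zeta_{m\ell})$ is the right one, but until that bookkeeping is actually executed on a rational quantity (the squarefree kernel of $\beta_1$, not $\gamma^2$), cases 4 and 5 are asserted rather than proved.
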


\begin{proof}
It is easy to check that each set of conditions does imply that~$\beta$ has the required form. For the converse, we suppose that~$\beta$ is indeed of the form $\gamma^\schinzeln$ with $\gamma\in\Q(\zeta_\schinzelm)$. We write $\beta=\sgn(\beta) \beta_1^t$ where~$\beta_1$ a positive rational number that is not a perfect power.

First suppose that $\sgn(\beta)=1$. Then $\beta_1^{t/\schinzeln}\in\Q(\zeta_\schinzelm)$ and thus $\Q(\beta_1^{t/\schinzeln})$ is normal over~$\Q$. Furthermore, the polynomial
$X^{\schinzeln/(\schinzeln,t)}-\beta_1^{t/(\schinzeln,t)}$ is irreducible over~$\Q$ and has $\beta_1^{t/\schinzeln}$ as one of its roots, and therefore all its $\schinzeln/(\schinzeln,t)$ roots (which are equally spaced around a circle in~$\C$) must lie in $\Q(\beta_1^{t/\schinzeln})$. But $\Q(\beta_1^{t/\schinzeln})$ is a real subfield of~$\Q(\zeta_\schinzelm)$, which forces $X^{\schinzeln/(\schinzeln,t)}-\beta_1^{t/(\schinzeln,t)}$ to have at most two roots; it follows that $\schinzeln/(\schinzeln,t) \le 2$, which implies that $\schinzeln\mid2t$.
\begin{itemize}
\item If $\schinzeln\equiv 1\mod{2}$, then $\schinzeln\mid t$ and so $c=\beta_1^{t/\schinzeln}\in\Q$ satisfies $\beta=c^\schinzeln$.
\item If $\schinzeln\equiv 0\mod{2}$, then $c=\beta_1^{2t/\schinzeln}\in\Q$ satisfies $\beta=c^{\schinzeln/2}$ and $\sqrt{c} = \beta_1^{t/\schinzeln}\in\Q(\zeta_\schinzelm)$.
\end{itemize}
Now suppose that $\sgn(\beta)=-1$. Then $\zeta_{2\schinzeln}\beta_1^{t/\schinzeln}\in \Q(\zeta_\schinzelm)$ where~$\zeta_{2\schinzeln}$ is a primitive $2\schinzeln$th root of unity.
\begin{itemize}
\item If $\schinzeln\equiv 1\mod{2}$, then $\zeta_{2\schinzeln}\in\Q(\zeta_{\schinzeln})\subset \Q(\zeta_{\schinzelm})$, so $\beta_1^{t/\schinzeln}\in \Q(\zeta_{\schinzelm})$ and hence (as in the first case) $c=\beta_1^{t/\schinzeln}\in\Q$ satisfies $\beta=c^\schinzeln$.
\item
If $\schinzeln\equiv 0\mod{2}$ and $2\mid m$, then $\zeta_{2\schinzeln}\in \Q(\zeta_{\schinzelm})$, so $\beta_1^{t/\schinzeln}\in \Q(\zeta_{\schinzelm})$ and hence (as in the first case) $c=\beta_1^{2t/\schinzeln}\in\Q$ satisfies $\beta=c^{\schinzeln/2}$ and $\sqrt{c} = \beta_1^{t/\schinzeln}\in\Q(\zeta_\schinzelm)$.
\end{itemize}
The remaining case is where $\sgn(\beta)=-1$ and $\schinzeln\equiv 0\mod{2}$ but $2\nmid m$.
Then $\zeta_{2\schinzeln}\notin \Q(\zeta_{\schinzelm})$ so $\beta_1^{t/\schinzeln}\notin \Q(\zeta_{\schinzelm})$. On the other hand. $\beta_1^{t/\schinzeln}\in \Q(\zeta_{2\schinzelm})$, so $\schinzeln\mid2t$ as in the first case; but since $\schinzeln\mid t$ would imply $\beta_1^{t/\schinzeln}\in\Q\subset\Q(\zeta_{\schinzelm})$, we conclude that $t\equiv \frac{\schinzeln}{2}\mod{\schinzeln}$. It follows that $\sqrt{\beta_1}\in \Q(\zeta_{2\schinzelm})$ and $\sqrt{\beta_1}\notin \Q(\zeta_{\schinzelm})$.

Writing $s(\beta_1)$ for the squarefree kernel of~$\beta_1$, we deduce from~\cite[Lemma~3]{Schinzel} that $s(\beta_1)\mid \schinzelm$ and that either $\schinzelm\equiv2\mod{4}$ and $s(\beta_1)\equiv 3\mod{4}$, or $\schinzelm\equiv4\mod{8}$ and $2\mid s(\beta_1)$. By the same lemma:
\begin{itemize}
\item when $\schinzelm\equiv2\mod{4}$ and $s(\beta_1)\equiv 3\mod{4}$, we also have $\schinzeln\equiv2\mod{4}$, and $c=-\beta_1^{2t/\schinzeln}\in\Q$ satisfies $\beta=c^{\schinzeln/2}$ and $\sqrt{c} \in\Q(\zeta_\schinzelm)$;
\item when $\schinzelm\equiv4\mod{8}$ and $2\mid s(\beta_1)$, we also have $\schinzeln\equiv4\mod{8}$, and $c=-(\beta_1)^{2t/\schinzeln}/2\in\Q$ satisfies $\beta=-(2c)^{\schinzeln/2}$ and $\sqrt{c} \in\Q(\zeta_\schinzelm)$.
\qedhere
\end{itemize}
\end{proof}

We are now able to generalize Wagstaff's argument. Recall the notation from Definition~\ref{s and d def} and Notation~\ref{notation beginning}.

\begin{prop}\label{proposition degree}
Let $a\in\Q\setminus\{-1,0,1\}$. Let~$\ell$ and~$m$ be positive integers, and set $\ell'=\ell/(\ell,h)$. Let $\zeta_{m\ell}$ be a primitive $(m\ell)$th root of unity. 
Then $\displaystyle \left[\Q\Big(\sqrt[\ell]{a},\zeta_{m\ell}\Big):\Q\right]=\frac{\ell'\varphi(m\ell)}{\varepsilon_a(m\ell,\ell)}$,
where $\varepsilon_a(m\ell,\ell)$ is defined as follows:
 \begin{itemize}
 \item If $a>0$, then $\varepsilon_a(m\ell,\ell)=
 \begin{cases}
 2,&\text{if } 2\mid\ell'\text{ and }\mathfrak{d}(a_0)\mid m\ell,\\
1,&\text{otherwise.}
 \end{cases}$
 \item If $a<0$ and $\ell$ is odd, then $\varepsilon_a(m\ell,\ell)=1$.
 \item If $a<0$ and $\ell$ is even but $\ell'$ is odd, then $\varepsilon_a(m\ell,\ell)= 
 \begin{cases}
 1,&\text{if } 2\mid m,\\
1/2,&\text{otherwise.}
 \end{cases}$
 \item If $a<0$ and $\ell'\equiv 2\mod{4}$, then $\varepsilon_a(m\ell,\ell)=
 \begin{cases}
 2,&\text{if } 2\mid m \text{ and } \mathfrak{d}(a_0)\mid m\ell,\\
 2,&\text{if } 2\nmid m \text{ and } \ell\equiv 2\mod{4}\text{ and } \mathfrak{d}(-a_0)\mid m\ell,\\
 2,&\text{if } 2\nmid m \text{ and } \ell\equiv 4\mod{8}\text{ and } \mathfrak{d}(2a_0)\mid m\ell,\\
1,&\text{otherwise.}
 \end{cases}$
 \item If $a<0$ and $4\mid \ell'$, then $\varepsilon_a(m\ell,\ell)=
 \begin{cases}
 2,&\text{if }\mathfrak{d}(a_0)\mid m\ell,\\
1,&\text{otherwise.}
 \end{cases}$ 
 \end{itemize}
\end{prop}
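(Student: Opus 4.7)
The plan is to compute the degree by the tower
\[
\bigl[\Q(\sqrt[\ell]{a},\zeta_{m\ell}):\Q\bigr] = \varphi(m\ell) \cdot \bigl[\Q(\sqrt[\ell]{a},\zeta_{m\ell}):\Q(\zeta_{m\ell})\bigr],
\]
reducing everything to an explicit Kummer calculation. Since $\zeta_\ell\in\Q(\zeta_{m\ell})$, the upper extension is Kummer, so by Kummer theory its degree equals $\ell/d$, where $d$ is the largest divisor of $\ell$ with $a\in\Q(\zeta_{m\ell})^d$. The entire proposition amounts to identifying this maximal $d$ and verifying that $\ell/d=\ell'/\varepsilon_a(m\ell,\ell)$.

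First I would reduce from $a$ to $a_0$. Writing $a=\pm a_0^h$, one checks that $a\in\Q(\zeta_{m\ell})^d$ if and only if $\pm a_0\in\Q(\zeta_{m\ell})^{d/(d,h)}$ (where the sign is $(\pm 1)^h$ when $h$ is odd and is $+1$ when $h$ is even, absorbed into a root of unity already present). The standard argument uses that the map $x\mapsto x^{h/(d,h)}$ is a bijection on $(\ell/d)$th roots of unity, all of which lie in $\Q(\zeta_{m\ell})$. Consequently the optimal $d$ has the form $d=(h,\ell)d^\ast$ with $d^\ast\mid\ell'$, and the Kummer degree is $\ell'/d^\ast$, which is the desired shape. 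The job is then to pin down the maximal $d^\ast$.

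Second, I would invoke the generalized Schinzel lemma (Lemma~2.7) with $\beta=\pm a_0$ and exponent $\schinzeln=d^\ast$. Because $a_0$ is not a perfect power in $\Q$, each of the five conclusions of Lemma~2.7 forces $c$ itself to be a perfect power unless $\schinzeln$ is either $1$ or $2$ (in the positive case) or $2$, $4 \pmod 8$, etc.\ in the negative case. This collapses $d^\ast$ to lie in $\{1,2\}$, which is why $\varepsilon_a\in\{1/2,1,2\}$. The condition ``$\sqrt{c}\in\Q(\zeta_{m\ell})$'' appearing in Lemma~2.7 is equivalent, by the conductor-discriminant characterization of quadratic subfields of cyclotomic fields, to $\mathfrak{d}(c)\mid m\ell$; applying this to $c=a_0$, $c=-a_0$, or $c=-2a_0$ (as dictated by the relevant sub-case of Lemma~2.7) produces the three divisibility conditions $\mathfrak{d}(a_0)\mid m\ell$, $\mathfrak{d}(-a_0)\mid m\ell$, and $\mathfrak{d}(2a_0)\mid m\ell$ appearing in the statement. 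The case $a<0$, $\ell$ even, $\ell'$ odd, $m$ odd requires a separate remark: here $\sqrt[\ell]{a}=\zeta_{2\ell}\sqrt[\ell]{|a|}$ but $\zeta_{2\ell}\notin\Q(\zeta_{m\ell})$, so adjoining $\sqrt[\ell]{a}$ to $\Q(\zeta_{m\ell})$ actually forces in an extra quadratic piece from $\zeta_{2\ell}$, manifesting as $\varepsilon_a=1/2$.

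The main obstacle is purely organizational: matching the five conclusions of Lemma~2.7 to the five bulleted cases in the proposition while tracking the parities of $h$, $\ell$, $\ell'$, and $m$ and the sign of $a$ simultaneously. In particular, when $a<0$, $\ell'\equiv 2\pmod 4$, and $m$ is odd, the two alternative normalizations $c=-a_0$ (from $\schinzeln\equiv 2\pmod 4$) and $c=-2a_0$ (from $\schinzeln\equiv 4\pmod 8$) must be shown to be mutually exclusive and to correspond precisely to $\ell\equiv 2\pmod 4$ versus $\ell\equiv 4\pmod 8$; this follows because $d^\ast\mid\ell'$ and $d^\ast$ is constrained modulo $8$ by $\ell\bmod 8$. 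Once this bookkeeping is resolved, each individual case is a one-line deduction from Lemma~2.7 and the quadratic-subfield criterion, and the formula $\ell'\varphi(m\ell)/\varepsilon_a(m\ell,\ell)$ follows by multiplying by $\varphi(m\ell)$.
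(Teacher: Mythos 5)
Your proposal is correct and mirrors the paper's argument: both pass through the degree tower, identify the Kummer degree over $\Q(\zeta_{m\ell})$ via an extremal exponent computation (the paper phrases it as the minimal $n$ with $a^n\in(\Q(\zeta_{m\ell})^\times)^\ell$, while you phrase it dually as the maximal $d$ with $a\in(\Q(\zeta_{m\ell})^\times)^d$, a trivial equivalence since $\mu_\ell\subset\Q(\zeta_{m\ell})$), and then feed the result into Lemma~\ref{lem_schinzel} together with the discriminant criterion for quadratic subfields of cyclotomic fields. The paper simply carries out the ensuing case analysis in more detail, splitting into minimal even and odd exponents $n_e$ and $n_o$ rather than reducing to $a_0$ first.
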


\begin{proof}

Set $L=\Q\Big(\zeta_{m\ell}\Big)$.
We start by observing that
\begin{align*}
\left[\Q\Big(\sqrt[\ell]{a},\zeta_{m\ell}\Big):\Q\right] &= \left[\Q\Big(\sqrt[\ell]{a},\zeta_{m\ell}\Big):L\right] \times \left[L:\Q\right] = \left[\Q\Big(\sqrt[\ell]{a},\zeta_{m\ell}\Big):L\right] \varphi({m\ell}).
\end{align*}
Furthermore, if we set $n = \min\Bigl\{j\in\N \colon a^{j/\ell}\in L \Bigr\}$, we claim that $\left[\Q\Big(\sqrt[\ell]{a},\zeta_{m\ell}\Big):L\right] = n$.
To see this, note that $n\mid \ell$ and that~$a$ has order~$n$ in $L^* / \left(L^*\right)^n$, and thus $[L(\sqrt[n]a):L]=n$ by~\cite[Lemma~2.1]{Birch}. But now $[L(\sqrt[\ell]{a}):L]\le n$ since $(\sqrt[\ell]{a})^n \in L$, and also $L(\sqrt[n]{a})\subset L(\sqrt[\ell]{a})$, and therefore $[L(\sqrt[\ell]{a}):L]=n$ as claimed.

It remains to show that~$n$, which is the smallest positive integer such that $a^n=\gamma^\ell$ for some $\gamma\in L$, equals $\ell'/\varepsilon_a(m\ell,\ell)$. To do so we determine the smallest even and smallest odd such numbers~$n$, which we denote by~$n_e$ and~$n_o$, respectively; then $n=n_e$ if~$n_o$ is not defined, while $n = \min\{n_e,n_o\}$ otherwise.
We use Lemma~\ref{lem_schinzel} to carry out these computations. We do the computations completely in the case $a<0$; the case $a>0$ is simpler and analogous.

We first note that $a^{n_e}>0$ since $n_e$ is even. If~$\ell$ is odd then $n_e=2\ell'$. Suppose that~$\ell$ is even, so that $\ell/(\ell,2h)\mid n_e$ and $\sqrt{a_0^{2hn_e/\ell}}\in L$ (by Lemma~\ref{lem_schinzel}).
\begin{itemize}
\item If $2\mid 2hn_e/\ell$, then $n_e=\ell'$ if $\ell'$ is even and $n_e=2\ell'$ if $\ell'$ is odd.
\item If $2\nmid2hn_e/\ell$, then $\sqrt{a_0^{2hn_e/\ell}}\in L$ if and only if $\nu_2(\ell)>\nu_2(h)+1$ and $\mathfrak{d}(a_0)\mid {m\ell}$.
\end{itemize}
We conclude that
\[
n_e=
\begin{cases}
 \ell'/2, & \text{if } \nu_2(\ell)>\nu_2(h)+1 \text{ and } \mathfrak{d}(a_0)\mid {m\ell},\\ 
2\ell', & \text{if } \nu_2(\ell)\leq \nu_2(h),\\
\ell', & \text{otherwise}.
\end{cases}
\]

Now we note that $a^{n_o}<0$ since~$n_o$ is odd. If~$\ell$ is odd then $n_e=\ell'$. Suppose that~$\ell$ is even. If $2\mid m$, then $\ell/(\ell,2h)\mid n_o$ and therefore $\nu_2(\ell)\leq\nu_2(h)+1$ (or else $n_o$ would be even). In other words,
\[
n_o=
\begin{cases}
 \ell', & \text{if } \nu_2(\ell)\leq\nu_2(h),\\
 \ell'/2, & \text{if } \nu_2(\ell)=\nu_2(h)+1 \text{ and } \mathfrak{d}(a_0)\mid {m\ell}.
\end{cases}
\]
If $\ell\equiv 2\mod{4}$ and $2\nmid m$, then
\[
n_o=
 \ell'/2 \text{if } \nu_2(\ell)=\nu_2(h)+1 \text{ and } \mathfrak{d}(-a_0)\mid {m\ell}
\]
and otherwise~$n_0$ is undefined.
Finally, if $\ell\equiv 4\mod{8}$ and $2\nmid m$, then
\[
n_o= \ell'/2 \text{if } \nu_2(\ell)=\nu_2(h)+1 \text{ and } \mathfrak{d}(2a_0)\mid {m\ell}
\]
and otherwise~$n_0$ is undefined.

In all cases we see that $n=\min\{n_e,n_o\}$ (or $n=n_e$ if~$n_0$ is undefined) matches the value of $\ell'/\varepsilon_a(m\ell,\ell)$ that arises from the definition of $\varepsilon_a(m\ell,\ell)$, which completes the proof of the proposition.
\end{proof}

\subsection{Bounding discriminants of cyclotomic-Kummer extensions} \label{discriminant section}
 
Using the formula for the degree computed in Proposition~\ref{proposition degree}, we are now able to provide the following upper bound for the associated discriminant.
 
\begin{prop}\label{propdiscriminant}
Let $m$ and $\schinzeln$ be integers and let $a$ be a nonzero rational number. Let $K=\Q(\sqrt[\schinzeln]{a},\zeta_\schinzelm)$, where $\zeta_\schinzelm$ is a primitive $\schinzelm$th root of unity. Then the discriminant $\Delta_K$ of~$K$ satisfies
\[
\log |\Delta_K| \ll_a [K:\Q]\log \schinzelm.
\]
\end{prop}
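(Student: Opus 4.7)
The plan is to bound the $p$-adic valuation $v_p(\Delta_K)$ one rational prime at a time and then sum only over primes that actually ramify. A preliminary observation is that $K=\Q(\sqrt[\ell]{a},\zeta_{m\ell})$ is Galois over $\Q$: since $\zeta_\ell\in\Q(\zeta_{m\ell})\subset K$, the field $K$ automatically contains every conjugate $\zeta_\ell^j\sqrt[\ell]{a}$ of $\sqrt[\ell]{a}$. Consequently, for any prime $\mathfrak{p}$ of $K$ above a rational prime $p$, the ramification index $e_\mathfrak{p}$ divides $[K:\Q]$, and in particular $v_p(e_\mathfrak{p})\le v_p([K:\Q])$.

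The main analytic input I would use is the standard Dedekind bound on the local different, $v_\mathfrak{p}(\mathfrak{d}_{K/\Q})\le e_\mathfrak{p}-1+v_\mathfrak{p}(e_\mathfrak{p})$. Writing $v_\mathfrak{p}(e_\mathfrak{p})=e_\mathfrak{p}\,v_p(e_\mathfrak{p})$, multiplying by $f_\mathfrak{p}$, and summing over $\mathfrak{p}\mid p$ using $\sum_{\mathfrak{p}\mid p}f_\mathfrak{p}e_\mathfrak{p}=[K:\Q]$ would produce the clean uniform inequality
\[
v_p(\Delta_K)\;\le\;[K:\Q]\bigl(1+v_p([K:\Q])\bigr),
\]
valid for every rational prime $p$.

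To finish, I would identify which primes can ramify. A prime $p\nmid m\ell$ is unramified in $\Q(\zeta_{m\ell})$, and Kummer theory (applicable because $\zeta_\ell\in\Q(\zeta_{m\ell})$) then shows that if additionally $p\nmid a$ then every prime $\mathfrak{p}$ of $\Q(\zeta_{m\ell})$ over $p$ has $v_\mathfrak{p}(a)=0$ and remains unramified in $K$. Thus the rational primes ramifying in $K$ all divide $am\ell$. Summing the local bound over these primes and invoking $\sum_p v_p(n)\log p=\log n$ would yield
\[
\log|\Delta_K|\;\le\;[K:\Q]\Bigl(\log\operatorname{rad}(am\ell)+\log[K:\Q]\Bigr)\;\ll_a\;[K:\Q]\log(m\ell),
\]
where the implied constant absorbs $\log|\mathrm{num}(a)\mathrm{den}(a)|$, and where $\log[K:\Q]\ll\log(m\ell)$ follows from the crude estimate $[K:\Q]\le\ell\,\varphi(m\ell)\le(m\ell)^2$ supplied by Proposition~\ref{proposition degree}. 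The only spot that needs delicate handling is the wild-ramification case of Dedekind's inequality; the rest of the argument is straightforward bookkeeping, so I expect no genuine obstacle.
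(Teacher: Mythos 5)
Your proof is correct, but it takes a genuinely different route from the paper's. The paper proceeds by direct linear algebra: it writes down an explicit $\Q$-basis of $K$ built from $\kappa=(a')^{1/\ell'}$ and the roots of unity, computes the trace form matrix, observes it is block-diagonal after reordering, and reads off the bound $|\Delta_K|\le |a'|^{[K:\Q]}(m\ell)^{2[K:\Q]}$ from the resulting determinant. Your argument instead works locally: you use the Galois property of $K/\Q$ to control $e_\mathfrak{p}$, invoke Dedekind's bound $v_\mathfrak{p}(\mathfrak{d}_{K/\Q})\le e_\mathfrak{p}-1+v_\mathfrak{p}(e_\mathfrak{p})$ to get the clean uniform estimate $v_p(\Delta_K)\le[K:\Q](1+v_p([K:\Q]))$ at every rational prime, restrict attention to $p\mid am\ell$ via the ramification criterion for the cyclotomic layer and Kummer theory for the top layer, and sum. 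Both arguments land in the same place, $\log|\Delta_K|\ll_a[K:\Q]\log(m\ell)$. The paper's computation is elementary and self-contained but requires exhibiting an explicit basis and tracking a determinant; yours leans on standard machinery (the different bound, ramification in Kummer extensions) and in exchange is conceptually tighter and would extend painlessly to other composita of abelian and Kummer pieces. I see no gap: the valuation identity $v_\mathfrak{p}(e_\mathfrak{p})=e_\mathfrak{p}v_p(e_\mathfrak{p})$ and the bookkeeping $\sum_\mathfrak{p}f_\mathfrak{p}e_\mathfrak{p}=[K:\Q]$, $\sum_p v_p(n)\log p=\log n$ are used correctly, and the crude degree bound $[K:\Q]\le\ell\varphi(m\ell)\le(m\ell)^2$ suffices to absorb the $\log[K:\Q]$ term.
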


\begin{proof}
We know that $[K:\Q]=\varphi(\schinzelm)\schinzeln'$ with $\schinzeln' = [K:\Q(\zeta_\schinzelm)]$. Let $a'$ be the unique $\schinzeln'$-free integer such that $a/a'$ is a perfect $\schinzeln'$th power, and set $\kappa=(a')^{1/\schinzeln'}$.
Then 
\[
\{\kappa^d \zeta_\schinzelm^g\colon 0\leq d<\schinzeln',\, 0\leq g< \schinzelm,\, (g,\schinzelm)=1\}
\]
forms a basis of $K$ over $\Q$.
Moreover, any $\sigma\in\Gal(K;\Q)$ is determined by $\sigma(\kappa)$ and $\sigma(\zeta_\schinzelm)$:
\begin{itemize}
\item $\sigma(\kappa)$ is a root of $X^{\schinzeln'}-a'$, so $\sigma(\kappa)=\kappa\zeta_\schinzelm^{\lambda \schinzelm/\schinzeln'}$ with $0\leq \lambda< \schinzeln'$;
\item $\sigma(\zeta_\schinzelm)$ is a primitive $\schinzelm$th root of unity, so $\sigma(\zeta_\schinzelm)=\zeta_\schinzelm^\mu$ with $0\leq \mu < \schinzelm$ and $(\mu,\schinzelm)=1$.
\end{itemize}
Then~\cite[Proposition 2.6.3]{Samuel} tells us that
\begin{equation} \label{big matrix}
\Delta_K=\det \biggl( (\Tr_{K/\Q}(\kappa^{d_1}\zeta_\schinzelm^{g_1}\kappa^{d_2}\zeta_\schinzelm^{g_2}))_{\substack{0\leq d_1,d_2 < \schinzeln'\\ 0\leq g_1,g_2<\schinzelm\\ (g_1g_2,\schinzelm)=1}} \biggr).
\end{equation}
For given $\delta$ and $\gamma$ with $(\gamma,\schinzelm)=1$, we compute
\begin{align*}
\Tr(\kappa^\delta\zeta_\schinzelm^\gamma)&=\sum\limits_{0\leq\lambda <\schinzeln'}\sum\limits_{\substack{0\leq \mu<\schinzelm\\ (\mu,\schinzelm)=1}}(\kappa\zeta_\schinzelm^{\lambda \schinzelm/\schinzeln'})^\delta(\zeta_\schinzelm^\mu)^\gamma\\
&=\sum\limits_{0\leq\lambda <\schinzeln'}\kappa^\delta\zeta_\schinzelm^{\lambda\delta \schinzelm/\schinzeln'}\sum\limits_{\substack{0\leq \mu<\schinzelm\\ (\mu,\schinzelm)=1}}\zeta_\schinzelm^{\mu\gamma} =\begin{cases}
\schinzeln'\kappa^\delta\sum\limits_{\substack{0\leq \mu<\schinzelm\\ (\mu,\schinzelm)=1}}\zeta_\schinzelm^{\mu\gamma}, & \text{if}\ \schinzeln'\mid \delta,\\
0, & \text{otherwise}.
\end{cases}
\end{align*}
It follows that if we define
\[
T = \Bigl( \Tr_{\Q(\zeta_\schinzelm)/\Q}(\zeta_\schinzelm^{g_1}\zeta_\schinzelm^{g_2}) \Bigr)_{\substack{0\leq g_1,g_2<\schinzelm\\ (g_1g_2,\schinzelm)=1}},
\]
then the matrix in equation~\eqref{big matrix}
is block diagonal (after reordering its rows and columns) with one block equal to $\schinzeln'T$ and $\schinzeln'-1$ blocks equal to $a'\schinzeln'T$.
Consequently,
\begin{align*}
|\Delta_K|&=\Bigl(|a'|^{\schinzeln'-1}\left(\schinzeln'\right)^{\schinzeln'}\Bigr)^{\varphi(\schinzelm)}(\det T)^{\schinzeln'}\leq|a'|^{\varphi(\schinzelm)\schinzeln'}\left(\schinzeln'\right)^{\varphi(\schinzelm)\schinzeln'}\schinzelm^{\varphi(\schinzelm)\schinzeln'}
\leq|a'|^{[K:\Q]}\schinzelm^{2[K:\Q]}
\end{align*}
since $\schinzeln'\mid \schinzeln\mid \schinzelm$. We conclude that
\[
\log |\Delta_K|\leq 2[K:\Q]\log \schinzelm + [K:\Q]\log|a'| \ll_a [K:\Q]\log \schinzelm
\]
as desired.
\end{proof}

\subsection{Asymptotic behavior of $\Palm(x)$} \label{palm asymptotic section}

We now return to the estimation of $\Palm(x)$ from equation~\eqref{palm fields}.

\begin{thm} \label{thm_asymptoticprimesatisfyingstuff}
Assume GRH. Let $a\in\Q\setminus\{-1,0,1\}$, and let~$\ell$ and~$m$ be positive integers. Then
\[
\Palm(x) = \frac{\pi(x)}{\Bigl[ \Q(\sqrt[\ell]{a},\zeta_{m\ell}):\Q \Bigr]} + \bo_a\Bigl( \sqrt{x}\log(x\ell) \Bigr) = \frac{\varepsilon_a(m\ell,\ell)\cdot(\ell,h)}{\ell\varphi(m\ell)} \pi(x) + \bo_a\Bigl( \sqrt{x}\log(x\ell) \Bigr),
\]
where~$\varepsilon_a(m\ell,\ell)$ and~$h$ were defined in Proposition~\ref{proposition degree} and Notation~\ref{notation beginning}, respectively.
\end{thm}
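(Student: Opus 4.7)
The plan is to reduce the counting of $\Palm(x)$ to the effective Chebotarev density theorem (under GRH) applied to the cyclotomic-Kummer extension $L = \Q(\sqrt[\ell]{a},\zeta_{m\ell})$ of $\Q$. Observe first that $L/\Q$ is Galois: since $\zeta_\ell\in L$, all $\ell$th roots of $a$ lie in $L$. By the reformulation~\eqref{palm fields} (which uses that the condition $\nu_p(a)=0$, $\ell\mid(p-1)/\ord_p(a)$, and $m\ell\mid p-1$ is equivalent to $p$ splitting completely in $L$), we have
\[
\Palm(x)=\#\{p\le x\colon p\text{ splits completely in }L\}+O_a(\log(m\ell)),
\]
the $O_a(\log(m\ell))$ term absorbing the finitely many primes ramified in $L$ or dividing the numerator or denominator of~$a$.

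Next I would apply the Lagarias--Odlyzko effective Chebotarev theorem conditional on GRH, taking the conjugacy class to be $\{1\}$; this gives
\[
\#\{p\le x\colon p\text{ splits completely in }L\} = \frac{\pi(x)}{[L:\Q]} + O\!\left(\sqrt{x}\left(\frac{\log|\Delta_L|}{[L:\Q]}+\log x\right)\right).
\]
Feeding in the discriminant bound $\log|\Delta_L|\ll_a [L:\Q]\log(m\ell)$ from Proposition~\ref{propdiscriminant}, the error becomes $O_a\bigl(\sqrt{x}(\log(m\ell)+\log x)\bigr)=O_a\bigl(\sqrt{x}\log(xm\ell)\bigr)$. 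Substituting the explicit formula $[L:\Q]=\ell\varphi(m\ell)/\bigl(\varepsilon_a(m\ell,\ell)(\ell,h)\bigr)$ supplied by Proposition~\ref{proposition degree} then yields the closed form on the right-hand side of the theorem.

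To upgrade the error from $O_a\bigl(\sqrt{x}\log(xm\ell)\bigr)$ to the sharper $O_a\bigl(\sqrt{x}\log(x\ell)\bigr)$ claimed, I split on the size of $m$. If $m\le x$ then $\log(m\ell)\le\log(x\ell)$ and the bound is immediate. If instead $m>x$, then any prime $p\le x$ with $m\ell\mid p-1$ would need $p\ge m\ell+1>x$, forcing $\Palm(x)=0$; moreover $[L:\Q]\gg_a\varphi(m\ell)\gg m/\log\log m$, so the purported main term $\pi(x)/[L:\Q]\ll_a 1$ is trivially absorbed in the $\sqrt{x}\log(x\ell)$ error, and the asymptotic holds vacuously.

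The heavy lifting for this theorem has already been done in Propositions~\ref{proposition degree} and~\ref{propdiscriminant}, so the remaining obstacle is essentially bookkeeping: verifying the hypotheses for Lagarias--Odlyzko, accounting correctly for the handful of ramified/bad primes, and tracking how the $a$-dependence propagates through the discriminant bound. The only mildly subtle step is the case split described in the previous paragraph, which is needed to convert $\log(xm\ell)$ into $\log(x\ell)$ by exploiting the trivial vanishing of $\Palm(x)$ once $m$ exceeds $x$.
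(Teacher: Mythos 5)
Your argument follows essentially the same route as the paper's: both reduce to an effective Chebotarev count under GRH in the extension $K=\Q(\sqrt[\ell]{a},\zeta_{m\ell})$, then plug in the degree formula from Proposition~\ref{proposition degree} and the discriminant bound from Proposition~\ref{propdiscriminant}. The one structural difference is that the paper invokes Lang's effective prime ideal theorem for $\pi_K(x)$ and then cites Hooley's equation~(17) for the conversion from the prime-ideal count to the count of rational primes splitting completely, whereas you use a split-prime form of effective Chebotarev directly, which sidesteps that conversion. Both are routine and equivalent; yours is marginally cleaner bookkeeping. The more substantive point is your case split on $m$ versus $x$. Feeding the bound $\log|\Delta_K|\ll_a[K:\Q]\log(m\ell)$ into either form of the GRH Chebotarev theorem gives an error of shape $\sqrt{x}\log(xm\ell)$, not $\sqrt{x}\log(x\ell)$; the paper's equation~\eqref{equation_googd_GRH2} writes $\log(x\ell)$ without comment. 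For $m\le x$ this is harmless since $\log(xm\ell)\ll\log(x\ell)$, and for $m>x$ your observation that $\Palm(x)$ vanishes trivially (because $m\ell\mid p-1$ forces $p>x$) while the main term $\pi(x)/[K:\Q]\ll 1$ is the honest way to close the gap. In the paper's applications $m\mid\ell$ always holds, so $\log(m\ell)\le2\log\ell$ and the issue never arises in practice, but since the theorem is stated for arbitrary positive integers $\ell,m$ your case split is exactly the missing justification.
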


\begin{proof}
Lang~\cite{Lang} established, assuming GRH, the following effective prime ideal theorem for a number field~$K$ (a special case of a Chebotarev density theorem with effective error term): uniformly in~$K$,
\begin{equation} \label{equation_googd_GRH}
\pi_K(x)=\pi(x)+\bo\left(\sqrt{x}\log(x^{[K:\Q]}|\Delta_K|)\right),
\end{equation}
where $\pi_K(x) = \#\{ \mathfrak p\colon N(\mathfrak p) \le x\}$ is the prime ideal counting function for~$K$ (and $\pi(x) = \pi_\Q(x)$ is the ordinary prime counting function) and $\Delta_K$ is the discriminant of~$K$. If we take $K = \Q(\sqrt[\ell]{a},\zeta_{m\ell})$, then Proposition~\ref{propdiscriminant} implies that
\begin{equation} \label{equation_googd_GRH2}
\pi_{\Q(\sqrt[\ell]{a},\zeta_{m\ell})}(x) = \pi(x)+\bo \Bigl( \Bigl[ \Q(\sqrt[\ell]{a},\zeta_{m\ell}):\Q \Bigr] \sqrt{x}\log(x\ell) \Bigr).
\end{equation}
Finally, $\Palm(x)$ is almost exactly $\pi_{\Q(\sqrt[\ell]{a},\zeta_{m\ell})}(x) / \Bigl[ \Q(\sqrt[\ell]{a},\zeta_{m\ell}):\Q \Bigr]$; the precise relationship, with a suitably bounded error term that is majorized by the error term above, works here exactly the same way as in Hooley's work~\cite[Section 4, equation~(17)]{Hooley}. We thus deduce that
\[
\Palm(x) = \frac{\pi(x)}{\Bigl[ \Q(\sqrt[\ell]{a},\zeta_{m\ell}):\Q \Bigr]} + \bo_a\Bigl( \sqrt{x}\log(x\ell) \Bigr),
\]
and the second equality follows from Proposition~\ref{proposition degree}.
\end{proof}

\section{Dealing with error terms} \label{error section}

We first deal with the error terms in Propositions~\ref{proposition omega of quotient}, \ref{proposition Omega of quotient}, and~\ref{proposition difference of omega}; we may concentrate on the error term in Proposition~\ref{proposition Omega of quotient}, as it is an upper bound for the error terms in the other two propositions. Our goal is to lower~$\xi$ enough so that summing the error term of Theorem~\ref{thm_asymptoticprimesatisfyingstuff} is admissible. In order to control this error term, we use the same techniques as used by Hooley~\cite{Hooley}.

First we will see how low we can get $\xi$ without using GRH. Using a combinatorial argument and the Brun--Titchmarsh theorem we obtain the following proposition.

\begin{prop}\label{proposition unconditional small xi}
Let $a\in\Q\setminus\{-1,0,1\}$.
For all $B\ge0$,
\[
\sum_{\frac{\sqrt{x}}{(\log x)^B}<q^k\le x} \#\{ p\le x \colon \nu_p(a)=0,\, q^k \mid (p-1)/\ord_p(a) \}\ll_{a,B} \frac{x\log\log x}{\log^2 x}.
\]
\end{prop}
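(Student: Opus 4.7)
The approach follows Hooley's original treatment: split the outer sum at $\xi:=\sqrt{x}\log x$, writing it as $S_A+S_B$ where $S_A$ ranges over prime powers $q^k\in\bigl(\sqrt{x}/(\log x)^B,\,\xi\bigr]$ and $S_B$ over $q^k\in(\xi,\,x]$. The two ranges call for distinct techniques, both standard ingredients of Hooley's argument.

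\emph{Middle range $S_A$.} The condition $q^k\mid(p-1)/\ord_p(a)$ implies $p\equiv1\mod{q^k}$. Dropping the order condition and applying the Brun--Titchmarsh inequality (valid since $q^k\le\sqrt{x}\log x$ ensures $\log(x/q^k)\asymp\log x$) gives
\[
\#\bigl\{p\le x\colon p\equiv1\mod{q^k}\bigr\}\ll\frac{x}{\varphi(q^k)\log(x/q^k)}\ll\frac{x}{q^k\log x}.
\]
Summing and invoking Mertens' theorem,
\[
S_A\ll\frac{x}{\log x}\sum_{q^k\in(\sqrt{x}/(\log x)^B,\,\sqrt{x}\log x]}\frac{1}{q^k}\ll_B\frac{x\log\log x}{(\log x)^2},
\]
using the short computation $\log\log(\sqrt{x}\log x)-\log\log(\sqrt{x}/(\log x)^B)\ll_B \log\log x/\log x$, plus the observation that proper prime powers ($k\ge 2$) contribute only $O(x^{-1/4+o(1)})$ to the inner sum and are therefore absorbable.

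\emph{Large range $S_B$.} Write $a=r/s$ in lowest terms. For each pair $(p,q^k)$ contributing to $S_B$, set $d:=(p-1)/q^k$; since $q^k>\sqrt{x}\log x$ we have $d<\sqrt{x}/\log x$, and the condition $q^k\mid(p-1)/\ord_p(a)$ becomes $\ord_p(a)\mid d$, i.e., $p\mid r^d-s^d$ (licit because $\nu_p(a)=0$). The map $(p,q^k)\mapsto(p,d)$ is injective since $q^k=(p-1)/d$ is recovered, so
\[
S_B\le\sum_{d<\sqrt{x}/\log x}\#\bigl\{p\colon p\mid r^d-s^d\bigr\}\le\sum_{d<\sqrt{x}/\log x}\omega(r^d-s^d)\ll_a\sum_{d<\sqrt{x}/\log x}d\ll\frac{x}{(\log x)^2},
\]
invoking the trivial estimate $\omega(n)\le\log|n|/\log 2$ and $|r^d-s^d|\ll_a|a|^d$.

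\emph{Main obstacle.} The delicate range is $S_B$: once $q^k$ is comparable to $x$ the Brun--Titchmarsh bound collapses, so density-in-progressions estimates are powerless. Hooley's combinatorial substitute---pinning $p$ down as a divisor of the concrete integer $r^d-s^d$ of size $O_a(|a|^d)$ and counting its at-most-$O_a(d)$ prime factors---is the essential nonanalytic input and is precisely what keeps the proposition unconditional.
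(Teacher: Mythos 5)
Your proof is correct and follows essentially the same route as the paper's: split at $\sqrt{x}\log x$, bound the middle range by dropping the order condition and applying Brun--Titchmarsh plus a Mertens-type estimate, and handle the large range by counting distinct prime divisors of $r^d-s^d$ for $d<\sqrt{x}/\log x$. The only cosmetic difference is that you apply Mertens' second theorem directly to $\sum 1/q^k$ while the paper routes through $\sum(\log q^k)/q^k$; both yield the same bound.
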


\begin{proof}
For the largest prime powers, we see by writing $p-1=q^km$ that
\begin{align*}
\sum_{\sqrt{x}\log x<q^k\le x} & \#\{ p\le x \colon \nu_p(a)=0,\, q^k \mid (p-1)/\ord_p(a) \} \\
&= \sum_{\sqrt{x}\log x<q^k\le x} \#\{ p\le x \colon \nu_p(a)=0,\, q^k \mid (p-1),\, \ord_p(a) \mid (p-1)/q^k \} \\
&= \sum_{\sqrt{x}\log x<q^k\le x} \#\{ p\le x \colon \nu_p(a)=0,\, q^k \mid (p-1),\, a^{(p-1)/q^k} \equiv1\mod p \} \\
&\le \sum_{m<\sqrt{x}/\log x} \#\{ p\le x \colon \nu_p(a)=0,\, a^m \equiv1\mod p \} \\
&\le \Omega \bigg( \prod_{m<\sqrt{x}/\log x} (a^m-1) \bigg)
\le \frac1{\log 2} \log \bigg( \prod_{m\le \sqrt{x}/\log x} (a^m-1) \bigg) < \frac{\log a}{\log 2} \sum_{m<\sqrt{x}/\log x} m \ll_a \frac{x}{\log^2 x}.
\end{align*}
This is entirely correct if $a$ is a positive integer; if $a=n/d$, then we should replace $a^m-1$ with $n^m-d^m$, but this only affects the implicit constant in the final bound.

For medium-sized prime powers,
\begin{align*}
\sum_{\frac{\sqrt{x}}{(\log x)^B}<q^k\le \sqrt{x}\log x} \#\{ p\le x \colon \nu_p(a)=0,\, q^k \mid (p-1)/\ord_p(a) \}
&\le \sum_{\frac{\sqrt{x}}{(\log x)^B}<q^k\le \sqrt{x}\log x} \#\{ p\le x \colon q^k \mid (p-1) \} \\
&= \sum_{\frac{\sqrt{x}}{(\log x)^B}<q^k\le \sqrt{x}\log x} \pi(x;q^k,1).
\end{align*}
Applying Brun--Titchmarsh, we obtain
\begin{align*}
\sum_{\frac{\sqrt{x}}{(\log x)^B}<q^k\le \sqrt{x}\log x} \#\{ p\le x \colon \nu_p(a)=0,\, q^k \mid (p-1)/\ord_p(a) \}
&\ll \sum_{\frac{\sqrt{x}}{(\log x)^B}<q^k\le \sqrt{x}\log x} \frac x{\varphi(q^k)\log(x/q^k)} \\
&\ll \frac x{\log x} \sum_{\frac{\sqrt{x}}{(\log x)^B}<q^k\le \sqrt{x}\log x} \frac 1{q^k} \\
&\ll \frac x{\log^2 x} \sum_{\frac{\sqrt{x}}{(\log x)^B}<q^k\le \sqrt{x}\log x} \frac{\log(q^k)}{q^k} \\
&\ll_B \frac x{\log^2 x} \bigl( \log\log x + O(1) \bigr)
\end{align*}
by Mertens's formula.
\end{proof}

Assuming GRH we can get $\xi$ even lower (exactly as in~\cite{Hooley}).

\begin{prop}\label{proposition smaller xi under GRH}
Assume GRH. Let $a\in\Q\setminus\{-1,0,1\}$ and $B\ge2$. For all $\xi\ll(\log x)^{B-1}$,
\[
\sum\limits_{\xi\leq q^k<\frac{\sqrt{x}}{\left(\log x\right)^B}}\#\left(p\leq x\colon \nu_p(a)=0,\, q^k\mid (p-1)/\ord_p(a)\right) \ll_B \frac{x}{\xi\log x}.
\]
\end{prop}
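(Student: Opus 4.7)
The plan is to apply Theorem~\ref{thm_asymptoticprimesatisfyingstuff} (with $m=1$ and $\ell=q^k$) to each term in the sum, then sum the resulting main and error contributions separately over prime powers $q^k\in[\xi,Y]$, where $Y=\sqrt{x}/(\log x)^B$. The engineering behind this split is that GRH provides a square-root error in Theorem~\ref{thm_asymptoticprimesatisfyingstuff}, which is admissible exactly up to the cutoff $Y$; above $Y$ one instead defers to Proposition~\ref{proposition unconditional small xi}.

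For the main term, Proposition~\ref{proposition degree} yields $\bigl[\Q(\sqrt[q^k]{a},\zeta_{q^k}):\Q\bigr]=\ell'\varphi(q^k)/\varepsilon_a(q^k,q^k)$ with $\ell'=q^k/(q^k,h)$. Since $(q^k,h)\le h$ and $\varepsilon_a\in\{1/2,1,2\}$, this degree is $\gg_a q^k\varphi(q^k)\gg_a q^{2k}$. Summing gives
\[
\sum_{\xi\le q^k\le Y}\frac{\pi(x)}{\bigl[\Q(\sqrt[q^k]{a},\zeta_{q^k}):\Q\bigr]} \ll_a \pi(x)\sum_{q^k\ge\xi}\frac{1}{q^{2k}} \le \pi(x)\sum_{n\ge\xi}\frac{1}{n^2} \ll \frac{x}{\xi\log x},
\]
which is already of the size claimed in the proposition.

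For the error term, each application of Theorem~\ref{thm_asymptoticprimesatisfyingstuff} contributes $\bo_a\bigl(\sqrt{x}\log(xq^k)\bigr)=\bo_a(\sqrt{x}\log x)$, using $q^k\le\sqrt{x}$. The number of prime powers not exceeding $Y$ is $\pi(Y)+O(\sqrt{Y})\ll Y/\log Y \ll \sqrt{x}/(\log x)^{B+1}$, since $\log Y=\tfrac12\log x+O(\log\log x)$. Consequently, the total error incurred is
\[
\sum_{\xi\le q^k\le Y}\bo_a(\sqrt{x}\log x) \ll_a \frac{\sqrt{x}\log x\cdot\sqrt{x}}{(\log x)^{B+1}} = \frac{x}{(\log x)^B}.
\]
The hypothesis $\xi\ll(\log x)^{B-1}$ is precisely what is needed to rewrite this as $\ll x/(\xi\log x)$, matching the target bound.

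The only substantive input is the GRH-conditional asymptotic of Theorem~\ref{thm_asymptoticprimesatisfyingstuff}; the rest is elementary summation, and no serious obstacle is expected. The interesting feature is that the main-term and error-term bounds both equal $x/(\xi\log x)$ exactly when $\xi$ sits at the threshold $(\log x)^{B-1}$, so the choice of cutoff $Y=\sqrt{x}/(\log x)^B$ is sharply calibrated to the statement.
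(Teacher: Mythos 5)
Your proof is correct and follows essentially the same route as the paper's: apply the GRH version of the Chebotarev estimate (Theorem~\ref{thm_asymptoticprimesatisfyingstuff}) with $m=1$, $\ell=q^k$, sum the main term using the $\gg_a q^{2k}$ lower bound for the degree to get $\ll x/(\xi\log x)$, and sum the $O_a(\sqrt{x}\log x)$ error over the $\ll\sqrt{x}/(\log x)^{B+1}$ prime powers below the cutoff, at which point the hypothesis $\xi\ll(\log x)^{B-1}$ absorbs it into the main term. The only cosmetic difference is that the paper writes the degree as the generic $q^{2k-1}(q-1)$ while you track the $(q^k,h)$ and $\varepsilon_a$ factors explicitly and note they are $a$-bounded; both arguments are sound.
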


\begin{proof}
We apply Theorem~\ref{thm_asymptoticprimesatisfyingstuff} with $\ell=q^k$ and $m=1$ to obtain
\begin{align*}
\sum_{\xi\leq q^k<\frac{\sqrt{x}}{\left(\log x\right)^B}}\#\left(p\leq x\colon \nu_p(a)=0,\, q^k\mid (p-1)/\ord_p(a)\right) & =\sum_{\xi\leq q^k<\frac{\sqrt{x}}{\left(\log x\right)^B}}\left(\frac{\pi(x)}{q^{2k-1}(q-1)}+\bo\bigl(\sqrt x\log x\bigr)\right) \\
&\ll \frac{x}{\log x}\sum\limits_{\xi\leq q^k<\frac{\sqrt{x}}{\left(\log x\right)^A} }\frac{1}{q^{2k}} + \frac{x}{(\log x)^B} \\
&< \frac{x}{\log x}\sum\limits_{n>\xi}\frac{1}{n^2} + \frac{x}{(\log x)^B} \ll \frac{x}{\xi\log x} + \frac{x}{(\log x)^B},
\end{align*}
and the first error term dominates by the assumption on~$\xi$.
\end{proof}

When we incorporate Theorem~\ref{thm_asymptoticprimesatisfyingstuff} into Propositions~\ref{proposition omega of quotient}, \ref{proposition Omega of quotient}, and~\ref{proposition difference of omega}, a new error term will appear from summing the error term from Theorem~\ref{thm_asymptoticprimesatisfyingstuff} over divisors of~$Q_\xi$, which we recall is the least common multiple of all the integers up to~$\xi$. The following proposition bounds these errors.

\begin{prop}\label{proposition error sum chebotarev error term combined}
Let $x\ge3$ and $A>0$ be real numbers. Uniformly for $\xi\le \frac12\log x$ and $|z|\leq A$,
\begin{align*}
\sum\limits_{\ell\mid Q_\xi}\mu^2(\ell)|z-1|^{\omega(\ell)}\sqrt{x}\log(x\ell) &\le \bigl(\sqrt x\log x\bigr)(2+A)^\xi \\
\sum\limits_{\ell\mid Q_\xi}|z|^{\Omega(\ell)}|1-z^{-1}|^{\omega(\ell)}\sqrt{x}\log(x\ell) &\le \bigl(\sqrt x\log x\bigr) (2+A)^{4\xi} \\
\sum\limits_{\ell\mid Q_\xi}|z-1|^{\omega(\ell)}\sum\limits_{m\mid \ell}\mu^2(m) \sqrt{x}\log(xm\ell) &\le \bigl(\sqrt x\log x\bigr)(3+2A)^\xi.
\end{align*}
\end{prop}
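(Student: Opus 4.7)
The plan is to treat all three sums uniformly by first bounding the $\log(x\ell)$ or $\log(xm\ell)$ factor by $O(\log x)$ and then reducing each remaining coefficient sum to an Euler product over the primes $q\le\xi$. The hypothesis $\xi\le\tfrac12\log x$, combined with Chebyshev's elementary bound $\log Q_\xi=\psi(\xi)\ll\xi$, gives $\log(x\ell)\le\log x+\log Q_\xi\ll\log x$ uniformly for $\ell\mid Q_\xi$, and similarly $\log(xm\ell)\ll\log x$ for $m,\ell\mid Q_\xi$. Consequently each of the three sums reduces to bounding a coefficient sum, multiplied by $\sqrt{x}\log x$.

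For the first sum, only squarefree~$\ell$ contribute, so by multiplicativity
\[
\sum_{\ell\mid Q_\xi}\mu^2(\ell)|z-1|^{\omega(\ell)} = \prod_{q\le\xi}\bigl(1+|z-1|\bigr) \le (2+A)^{\pi(\xi)} \le (2+A)^{\xi},
\]
using $|z-1|\le A+1$ and the trivial bound $\pi(\xi)\le\xi$. For the third sum, the identity $\sum_{m\mid\ell}\mu^2(m)=2^{\omega(\ell)}$ collapses it to $\sum_{\ell\mid Q_\xi}(2|z-1|)^{\omega(\ell)}=\prod_{q\le\xi}\bigl(1+2|z-1|\,v_q(Q_\xi)\bigr)$. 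Bernoulli's inequality $1+va\le(1+a)^v$ (valid for $v\ge1$, $a\ge0$) then bounds each local factor by $(1+2|z-1|)^{v_q(Q_\xi)}$, so the total product is at most $(1+2|z-1|)^{\Omega(Q_\xi)}\le(3+2A)^{\xi}$, since $\Omega(Q_\xi)$, the number of prime powers not exceeding~$\xi$, satisfies $\Omega(Q_\xi)\le\xi$.

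The second sum is the most delicate, because its summand is multiplicative over all divisors of~$Q_\xi$ (not just squarefree ones), and the factor $|1-z^{-1}|$ becomes singular as $z\to0$. I would rewrite
\[
|z|^{\Omega(\ell)}|1-z^{-1}|^{\omega(\ell)} = |z|^{\Omega(\ell)-\omega(\ell)}|z-1|^{\omega(\ell)}
\]
to sidestep the singularity, so that the local factor at~$q$ becomes $1+|z-1|\sum_{j=0}^{v_q(Q_\xi)-1}|z|^j$. The key local estimate is the inequality
\[
1+|z-1|\sum_{j=0}^{v-1}|z|^j \le (1+|z-1|)^v,
\]
which follows by replacing each $|z|^j$ with $(1+|z-1|)^j$ (using the triangle inequality $|z|\le1+|z-1|$) and telescoping the resulting geometric series against the factor $(w-1)$, with $w=1+|z-1|$. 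Multiplying over $q\le\xi$ then yields an Euler-product bound of $(1+|z-1|)^{\Omega(Q_\xi)}\le(2+A)^{\Omega(Q_\xi)}\le(2+A)^{\xi}\le(2+A)^{4\xi}$. This local inequality is the main technical step; everything else is routine multiplicative analysis together with the uniform control on $\log Q_\xi$ afforded by $\xi\le\tfrac12\log x$.
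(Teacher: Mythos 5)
Your proof is correct, and while the overall framework (control $\log Q_\xi$ by Chebyshev, then bound a multiplicative coefficient sum by an Euler product over $q\le\xi$) matches the paper, you handle the second and third sums in a genuinely different and arguably cleaner way. For the second sum, your telescoping inequality $1+|z-1|\sum_{j=0}^{v-1}|z|^j\le(1+|z-1|)^v$ (obtained by replacing $|z|$ with $w=1+|z-1|\ge|z|$ and summing the geometric series) gives the bound $(2+A)^{\Omega(Q_\xi)}\le(2+A)^\xi$ directly, which is both stronger than the stated $(2+A)^{4\xi}$ and valid for arbitrary $A>0$; the paper instead bounds each local factor by $1+(A+1)\frac{\log\xi}{\log q}\max\{1,A\}$ and then appeals to a computational verification to absorb constants into $(2+A)^{4\xi}$, a route that is messier and, as written, only tight when $A\le1$. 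For the third sum, since $\ell$ ranges over \emph{all} divisors of $Q_\xi$ (not only squarefree ones), the Euler factor at $q$ is really $1+v_q(Q_\xi)\cdot 2|z-1|$; your use of Bernoulli's inequality $1+va\le(1+a)^v$ to reduce this to $(1+2|z-1|)^{v_q}$ and then to $(3+2A)^{\Omega(Q_\xi)}\le(3+2A)^\xi$ is the careful way to do this, whereas the paper writes the product as though each prime contributed only $(1+2|z-1|)$, silently dropping the $v_q$ factor (the final bound happens to be right anyway). In short: same overall strategy, but your local estimates at each prime are more robust.
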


\begin{proof}
We first note the Chebyshev-style inequalities $\psi(t) = \sum_{n\le t} \Lambda(n) < 1.04t$ \cite[Theorem~12]{RS} and $\pi(t) < 1.26 t/\log t$ \cite[Corollary~1]{RS}, both valid for $t>1$. In particular, $Q_\xi = e^{\psi(\xi)} < e^{1.04\cdot\frac12\log x} = x^{0.52}$, and thus $\log(x\ell)\ll\log x$ and $\log(xm\ell)\ll\log x$ in these sums. It remains to estimate the sums over~$\ell$. For the first estimate,
\begin{align*}
\sum_{\ell\mid Q_\xi}\mu^2(\ell)|z-1|^{\omega(\ell)} = \prod_{p\mid Q_\ell} (1+|z-1|) \le \prod_{p\le\xi} (2+A) \le (2+A)^\xi.
\end{align*}
For the second estimate, let $v_p = \lfloor \frac{\log\xi}{\log p} \rfloor$ be the power of~$p$ dividing~$Q_\xi$; then
\begin{align*}
\sum_{\ell\mid Q_\xi} |z|^{\Omega(\ell)}|1-z^{-1}|^{\omega(\ell)} &= \prod_{p\mid Q_\xi} \Bigl( 1 + |z-1| (1 + |z| + \cdots + |z|^{v_p-1}) \Bigr) \\
&\le \prod_{p\le\xi} \biggl( 1+(A+1)\frac{\log\xi}{\log p} \max\{1,A\} \biggr) \\
&\le \prod_{p\le\xi} \biggl( 1+\frac{(A+1)^2}{\log 2} \biggr)^{\log\xi} \\
&\le \biggl( 1+\frac{(A+1)^2}{\log 2} \biggr)^{1.26\xi} \le (2+A)^{4\xi},
\end{align*}
where the last inequality can be verified computationally.
Finally, for the third estimate,
\begin{align*}
\sum_{\ell\mid Q_\xi} |z-1|^{\omega(\ell)}\sum\limits_{m\mid \ell}\mu^2(m) &= \sum_{\ell\mid Q_\xi} (2|z-1|)^{\omega(\ell)} = \prod_{p\mid Q_\ell} (1+2|z-1|) \le \prod_{p\le\xi} (3+2A) \le (3+2A)^\xi.
\qedhere
\end{align*}
\end{proof}

Finally we are left with the sum over divisors of $Q_\xi$ of the main term in Theorem~\ref{thm_asymptoticprimesatisfyingstuff}. But in order to be able to evaluate our different coefficients we would like the sum to be over all integers. It is easy to see that all integers smaller than $\xi$ divide $Q_\xi$; therefore by replacing our sum over divisors of $\xi$ by a sum over all integers, we are introducing an error which is at most the sum over all integers bigger than $\xi$. First a lemma, letting $\sigma(k)$ be the sum of the positive divisors of~$k$.

\begin{lemma} \label{WO lemma}
For $\xi\ge2$, we have $\displaystyle \sum_{k>\xi} \frac{2^{\omega(k)}\sigma(k)}{k^2\varphi(k)} \ll \frac{\log\xi}\xi$.
\end{lemma}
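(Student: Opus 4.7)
My plan is a two-stage reduction to the elementary tail bound
\[
\sum_{n > z} \frac{2^{\omega(n)}}{n^2} \ll \frac{\log z}{z}, \qquad z \ge 2,
\]
which follows from the asymptotic $\sum_{n \le x} 2^{\omega(n)} \sim (6/\pi^2)\, x\log x$ (itself immediate from the Dirichlet convolution identity $2^{\omega(n)} = \sum_{d\mid n}\mu^2(d)$) by partial summation: the $(\log z)/z$-sized contributions from the boundary term $-S(z)/z^2$ and from $2\int_z^\infty S(t)/t^3\,dt$ do not cancel but combine with a positive net coefficient.

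The first reduction introduces the factor $1/\varphi$ via the identity $m/\varphi(m) = \sum_{e\mid m}\mu^2(e)/\varphi(e)$. Interchanging summation, substituting $m = en$, and using the elementary bound $2^{\omega(en)}\le 2^{\omega(e)}\cdot 2^{\omega(n)}$, one obtains
\[
\sum_{m > y}\frac{2^{\omega(m)}}{m\varphi(m)} \;\le\; \sum_{e \ge 1}\frac{\mu^2(e)\,2^{\omega(e)}}{e^2\varphi(e)}\sum_{n > y/e}\frac{2^{\omega(n)}}{n^2}.
\]
Applying the base estimate to the inner sum (for $e \le y/2$) and splitting $\log(y/e) = \log y - \log e$, the main term becomes $(\log y/y) \sum_e \mu^2(e)\, 2^{\omega(e)}/(e\varphi(e))$; this outer Euler product converges (with local factors $1 + O(1/p^2)$), as does $\sum_e \mu^2(e)\,2^{\omega(e)} \log e /(e\varphi(e))$, so the final bound is $\sum_{m > y}2^{\omega(m)}/(m\varphi(m)) \ll (\log y)/y$.

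The second reduction introduces the factor $\sigma(k)$ via $\sigma(k)/k = \sum_{d\mid k} 1/d$. Interchanging summation, writing $k = dm$, and using the inequalities $\varphi(dm) \ge \varphi(d)\varphi(m)$ and $2^{\omega(dm)} \le 2^{\omega(d)}\cdot 2^{\omega(m)}$, one finds
\[
\sum_{k > \xi}\frac{2^{\omega(k)}\sigma(k)}{k^2\varphi(k)} \;\le\; \sum_d \frac{2^{\omega(d)}}{d^2\varphi(d)}\sum_{m > \xi/d}\frac{2^{\omega(m)}}{m\varphi(m)}.
\]
Substituting the previous step's estimate and using the analogous convergence of $\sum_d 2^{\omega(d)}/(d\varphi(d))$ and $\sum_d 2^{\omega(d)} \log d /(d\varphi(d))$, the contribution from $d \le \xi/2$ is $O((\log\xi)/\xi)$; the small contributions from $\xi/2 < d \le \xi$ (where the inner tail is $O(1)$) and from $d > \xi$ are each easily seen to be $O(1/\xi)$.

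The main obstacle is arranging the two reductions so that no spurious $\log$ or $\log\log$ factor is introduced in the final bound: a crude pointwise estimate $\sigma(k)/\varphi(k) \ll (\log\log k)^2$ would produce a superfluous $(\log\log\xi)^2$, so it is essential to separate the $\sigma(k)/k$ and $m/\varphi(m)$ factors via the divisor-sum identities above before passing to any multiplicative inequalities.
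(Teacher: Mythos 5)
Your proof is correct, but it takes a genuinely different and more elementary route than the paper. The paper's proof treats $f(k) = 2^{\omega(k)}\sigma(k)/\varphi(k)$ directly as a single multiplicative function, verifies the hypotheses $f(p) = 2 + O(1/p)$ and $f(p^r) \le 8$, invokes the Wirsing--Odoni mean value theorem (cited as~\cite[Prop.~4]{FMS}) to get $\sum_{k\le t} f(k) \sim C_f\, t\log t$, and then finishes with a single summation by parts. You instead peel off the two arithmetic weights one at a time via the convolution identities $\sigma(k)/k = \sum_{d\mid k} 1/d$ and $m/\varphi(m) = \sum_{e\mid m}\mu^2(e)/\varphi(e)$, reducing everything to the self-contained tail estimate $\sum_{n>z} 2^{\omega(n)}/n^2 \ll (\log z)/z$, which you derive from $2^{\omega} = \mu^2 * 1$ and partial summation. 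The submultiplicativity inequalities you use in the interchanges, $2^{\omega(dm)} \le 2^{\omega(d)}2^{\omega(m)}$ and $\varphi(dm) \ge \varphi(d)\varphi(m)$, are correct, and the outer Euler-type sums $\sum_e \mu^2(e)2^{\omega(e)}(\log e)^j/(e\varphi(e))$ and $\sum_d 2^{\omega(d)}(\log d)^j/(d\varphi(d))$ ($j=0,1$) do converge as claimed since their local factors are $1 + O(1/p^2)$. The tradeoff: the paper's argument is shorter once Wirsing--Odoni is available as a black box, while yours is longer but avoids any mean-value theorem for general multiplicative functions. Your closing remark about the pitfall of using $\sigma(k)/\varphi(k) \ll (\log\log k)^2$ pointwise is also correct and is a legitimate reason to favor the convolution decomposition.
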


\begin{proof}
Define the nonnegative multiplicative function $f(k) = 2^{\omega(k)}\sigma(k)/\varphi(k)$, and set $F(t) = \sum_{k\le t} f(k)$. We note that $f(p) = 2(p+1)/(p-1) = 2 + O(\frac1p)$ and thus $\sum_{p\le t} f(p) = 2\pi(t) + O(\log\log t)$. Furthermore, $f(p^r) = 2(p^{r+1}-1)/p^{r-1}(p-1)^2 < 2p^2/(p-1)^2 \le 8$ fot all prime powers~$p^r$. It follows from the Wirsing--Odoni method (see for example~\cite[Proposition~4]{FMS}) that $F(t) \sim C_f t\log t$ for some positive constant~$C_f$. Summing by parts, we deduce that
\begin{align*}
\sum_{k>\xi} \frac{2^\omega(k)\sigma(k)}{k^2\varphi(k)} &= \int_\xi^\infty \frac{dF(t)}{t^2} = \frac{F(\xi)}{\xi^2} + 2\int_\xi^\infty \frac{F(t)}{t^3}\,dt \ll \frac{\xi\log\xi}{\xi^2} + \int_\xi^\infty \frac{t\log t}{t^3}\,dt \ll \frac{\log\xi}{\xi}
\end{align*}
as claimed.
\end{proof}

\begin{prop}\label{proposition combined after error terms}
Let $a\in\Q\setminus\{-1,0,1\}$ and $\xi\ge2$. Uniformly for $|z|\le1$,
\begin{align*}
\sum_{\ell\mid Q_\xi} \frac{\mu^2(\ell) (z-1)^{\omega(\ell)}}{[\Q(\sqrt[\ell]{a},\zeta_\ell):\Q]} &= \sum_{\ell=1}^\infty \frac{\mu^2(\ell) (z-1)^{\omega(\ell)}}{[\Q(\sqrt[\ell]{a},\zeta_\ell):\Q]} + O_a\biggl( \frac{\log\xi}\xi \biggr) \\
\sum_{\ell\mid Q_\xi} \frac{z^{\Omega(\ell)} (1-z^{-1})^{\omega(\ell)}}{[\Q(\sqrt[\ell]{a},\zeta_\ell):\Q]} &= \sum_{\ell=1}^\infty \frac{z^{\Omega(\ell)} (1-z^{-1})^{\omega(\ell)}}{[\Q(\sqrt[\ell]{a},\zeta_\ell):\Q]} + O_a\biggl( \frac{\log\xi}\xi \biggr) \\
\sum_{\ell\mid Q_\xi} (z-1)^{\omega(\ell)} \sum_{m\mid \ell} \frac{\mu(m)}{[\Q(\sqrt[\ell]{a},\zeta_{m\ell}):\Q]} &= \sum_{\ell=1}^\infty (z-1)^{\omega(\ell)}\sum_{m\mid \ell} \frac{\mu(m)}{[\Q(\sqrt[\ell]{a},\zeta_{m\ell}):\Q]} + O_a\biggl( \frac{\log\xi}\xi \biggr).
\end{align*}
\end{prop}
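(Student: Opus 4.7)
The plan is to exploit a simple observation: every positive integer $\ell\le\xi$ divides $Q_\xi$ (the least common multiple of $1,2,\ldots,\lfloor\xi\rfloor$), so the difference between the finite sum and the infinite sum in each identity is supported on $\ell>\xi$. It therefore suffices to show, uniformly for $|z|\le 1$, that each of the three tails $\sum_{\ell>\xi}(\cdots)$ is $O_a(\log\xi/\xi)$.

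First I would use Proposition~\ref{proposition degree} to bound degrees from below: since $\varepsilon_a(m\ell,\ell)\in\{1/2,1,2\}$ and $\ell'=\ell/(\ell,h)\ge\ell/h$, we obtain
\[
\frac{1}{[\Q(\sqrt[\ell]{a},\zeta_{m\ell}):\Q]}=\frac{\varepsilon_a(m\ell,\ell)}{\ell'\varphi(m\ell)}\le\frac{2h}{\ell\,\varphi(m\ell)},
\]
with $h$ depending only on $a$. For the $z$-dependent coefficients, $|z|\le 1$ yields $|z-1|^{\omega(\ell)}\le 2^{\omega(\ell)}$ directly, and we rewrite $z^{\Omega(\ell)}(1-z^{-1})^{\omega(\ell)}=z^{\Omega(\ell)-\omega(\ell)}(z-1)^{\omega(\ell)}$ so that its absolute value is also at most $2^{\omega(\ell)}$.

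For the third identity the inner $m$-sum requires one extra input. Because $m\mid\ell$ forces $m\ell$ and $\ell$ to share the same set of prime divisors, we have the clean identity $\varphi(m\ell)=m\varphi(\ell)$. Consequently
\[
\sum_{m\mid\ell}\frac{|\mu(m)|}{[\Q(\sqrt[\ell]{a},\zeta_{m\ell}):\Q]}\le\frac{2h}{\ell\varphi(\ell)}\sum_{\substack{m\mid\ell\\ \mu^2(m)=1}}\frac{1}{m}=\frac{2h}{\ell\varphi(\ell)}\prod_{p\mid\ell}\Bigl(1+\tfrac{1}{p}\Bigr)\le\frac{2h\,\sigma(\ell)}{\ell^2\,\varphi(\ell)},
\]
where the last inequality uses that $\sigma(p^k)/p^k$ is increasing in $k$.

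Combining all estimates, each of the three tails is majorized, up to a constant depending only on $a$, by $\sum_{\ell>\xi}2^{\omega(\ell)}\sigma(\ell)/(\ell^2\varphi(\ell))$ (using $\sigma(\ell)/\ell\ge 1$ to subsume the first two, simpler cases into the third); Lemma~\ref{WO lemma} then delivers the advertised $O_a(\log\xi/\xi)$. The main---and quite mild---obstacle is precisely the manipulation of the inner $m$-sum: one must spot the identity $\varphi(m\ell)=m\varphi(\ell)$ and the inequality $\prod_{p\mid\ell}(1+1/p)\le\sigma(\ell)/\ell$ in order to match the exact weight around which Lemma~\ref{WO lemma} was designed.
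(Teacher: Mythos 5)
Your proposal is correct and follows essentially the same route as the paper: write the discrepancy as a tail over $\ell>\xi$, lower-bound the degree via Proposition~\ref{proposition degree}, exploit $\varphi(m\ell)=m\varphi(\ell)$ for $m\mid\ell$, and land exactly on the weight $2^{\omega(\ell)}\sigma(\ell)/(\ell^2\varphi(\ell))$ that Lemma~\ref{WO lemma} handles. The only cosmetic difference is in the inner $m$-sum: the paper simply drops $|\mu(m)|\le1$ to get $\sum_{m\mid\ell}1/m=\sigma(\ell)/\ell$, whereas you keep the squarefree constraint and compare $\prod_{p\mid\ell}(1+1/p)$ to $\sigma(\ell)/\ell$; both yield the same bound.
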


\begin{proof}
For the first claim, we start by noting that all integers up to~$\xi$ divide~$Q_\xi$, and thus
\begin{align*}
\sum_{\ell\mid Q_\xi} \frac{\mu^2(\ell) (z-1)^{\omega(\ell)}}{[\Q(\sqrt[\ell]{a},\zeta_\ell):\Q]} &= \sum_{\ell=1}^\infty \frac{\mu^2(\ell) (z-1)^{\omega(\ell)}}{[\Q(\sqrt[\ell]{a},\zeta_\ell):\Q]} + O\biggl( \sum_{\ell\in\N\setminus Q_\xi} \frac{\mu^2(\ell) |z-1|^{\omega(\ell)}}{[\Q(\sqrt[\ell]{a},\zeta_\ell):\Q]} \biggr) \\
&= \sum_{\ell=1}^\infty \frac{\mu^2(\ell) (z-1)^{\omega(\ell)}}{[\Q(\sqrt[\ell]{a},\zeta_\ell):\Q]} + O_a\biggl( \sum_{\ell>\xi} \frac{\mu^2(\ell) 2^{\omega(\ell)}}{\ell\varphi(\ell)} \biggr)
\end{align*}
by Proposition~\ref{proposition degree}. The error term is majorized by the sum in Lemma~\ref{WO lemma} since $\sigma(k)\ge k$, which establishes the first claim.
For the second claim, we similarly write
\begin{align*}
\sum_{\ell\mid Q_\xi} \frac{z^{\Omega(\ell)} (1-z^{-1})^{\omega(\ell)}}{[\Q(\sqrt[\ell]{a},\zeta_\ell):\Q]} &= \sum_{\ell=1}^\infty \frac{z^{\Omega(\ell)} (1-z^{-1})^{\omega(\ell)}}{[\Q(\sqrt[\ell]{a},\zeta_\ell):\Q]} + O\biggl( \sum_{\ell\in\N\setminus Q_\xi} \frac{|z|^{\Omega(\ell)-\omega(\ell)} |z-1|^{\omega(\ell)}}{[\Q(\sqrt[\ell]{a},\zeta_\ell):\Q]} \biggr) \\
&= \sum_{\ell=1}^\infty \frac{z^{\Omega(\ell)} (1-z^{-1})^{\omega(\ell)}}{[\Q(\sqrt[\ell]{a},\zeta_\ell):\Q]} + O_a\biggl( \sum_{\ell>\xi} \frac{2^{\omega(\ell)}}{\ell\varphi(\ell)} \biggr)
\end{align*}
since $|z|\le1$, and again an appeal to Lemma~\ref{WO lemma} establishes the claim.
Finally, for the third claim we write
\begin{align*}
\sum_{\ell\mid Q_\xi} (z-1)^{\omega(\ell)} \sum_{m\mid \ell} \frac{\mu(m)}{[\Q(\sqrt[\ell]{a},\zeta_{m\ell}):\Q]} &= \sum_{\ell=1}^\infty (z-1)^{\omega(\ell)} \sum_{m\mid \ell} \frac{\mu(m)}{[\Q(\sqrt[\ell]{a},\zeta_{m\ell}):\Q]} \\
&\qquad{}+ O\biggl( \sum_{\ell\in\N\setminus Q_\xi} |z-1|^{\omega(\ell)} \sum_{m\mid \ell} \frac{|\mu(m)|}{[\Q(\sqrt[\ell]{a},\zeta_{m\ell}):\Q]} \biggr);
\end{align*}
the error term can be estimated (using Proposition~\ref{proposition degree} again) as
\[
\ll_a \sum_{\ell>\xi} |z-1|^{\omega(\ell)} \sum_{m\mid \ell} \frac{|\mu(m)|}{\ell\varphi(m\ell)} = \sum_{\ell>\xi} \frac{|z-1|^{\omega(\ell)}}{\ell\varphi(\ell)} \sum_{m\mid \ell} \frac{|\mu(m)|}m \le \sum_{\ell>\xi} \frac{2^{\omega(\ell)}}{\ell\varphi(\ell)} \sum_{m\mid \ell} \frac1m = \sum_{\ell>\xi} \frac{2^{\omega(\ell)}}{\ell\varphi(\ell)} \frac{\sigma(\ell)}\ell,
\]
which is again sufficient by Lemma~\ref{WO lemma}.
\end{proof}

\begin{prop} \label{proposition all done but the main terms}
Let $a\in\Q\setminus\{-1,0,1\}$ and $x\ge3$. Uniformly for $|z|\le1$,
\begin{align*}
\sum_{\substack{p\le x \\ \nu_p(a)=0}} z^{\omega((p-1)/\ord_p(a))} &= \pi(x) \sum_{\ell=1}^\infty \frac{\mu^2(\ell) (z-1)^{\omega(\ell)}}{[\Q\left(\sqrt[\ell]{a},\zeta_{\ell}\right):\Q]} + \bo_a\biggl( \frac{x\log\log x}{\log^2 x} \biggr) \\
\sum_{\substack{p\le x \\ \nu_p(a)=0}} z^{\Omega((p-1)/\ord_p(a))} &= \pi(x) \sum_{\ell=1}^\infty \frac{z^{\Omega(\ell)} (1-z^{-1})^{\omega(\ell)}}{[\Q\left(\sqrt[\ell]{a},\zeta_{\ell}\right):\Q]} + \bo_a\biggl( \frac{x\log\log x}{\log^2 x} \biggr) \\
\sum_{\substack{p\le x \\ \nu_p(a)=0}} z^{\omega(p-1) - \omega(\ord_p(a))} &= \pi(x) \sum_{\ell=1}^\infty (z-1)^{\omega(\ell)}\sum\limits_{m\mid \ell} \frac{\mu(m)}{[\Q\left(\sqrt[\ell]{a},\zeta_{m\ell}\right):\Q]} + \bo_a\biggl( \frac{x\log\log x}{\log^2 x} \biggr).
\end{align*}
\end{prop}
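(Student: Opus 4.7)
The plan is to combine the combinatorial decompositions of Propositions~\ref{proposition omega of quotient}, \ref{proposition Omega of quotient}, and~\ref{proposition difference of omega} with the effective asymptotic for $\Palm(x)$ given in Theorem~\ref{thm_asymptoticprimesatisfyingstuff}, and then to control all resulting error terms using the estimates established earlier in Section~\ref{error section}. The three identities have essentially the same structure, so I would treat them in parallel, the only differences being the choice of weights $(\mu^2(\ell)(z-1)^{\omega(\ell)}$, $z^{\Omega(\ell)}(1-z^{-1})^{\omega(\ell)}$, or $(z-1)^{\omega(\ell)}\sum_{m\mid\ell}\mu(m))$ and the specific range of $q^k$ appearing in the tail error term.

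First I would fix a parameter $\xi$ (to be chosen at the end) and substitute Theorem~\ref{thm_asymptoticprimesatisfyingstuff} into the main term of each of Propositions~\ref{proposition omega of quotient}--\ref{proposition difference of omega}. Each counting function $\#\{p\le x:\nu_p(a)=0,\,\ell\mid(p-1)/\ord_p(a),\,m\ell\mid(p-1)\}$ is thereby replaced by $\pi(x)/[\Q(\sqrt[\ell]{a},\zeta_{m\ell}):\Q]$ together with a Chebotarev error of size $O_a(\sqrt{x}\log(x\ell))$. This produces three error terms in each identity: the original tail sum $\sum_{\xi<q^k\le x}\#\{\ldots\}$ inherited from Section~\ref{combinatorics sec}, the new Chebotarev contribution summed against the combinatorial weights over $\ell\mid Q_\xi$, and the error incurred by extending the finite sum over divisors of $Q_\xi$ to the full series over $\ell\ge1$. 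The first is controlled by Proposition~\ref{proposition smaller xi under GRH} on the range $\xi\le q^k\le\sqrt{x}/(\log x)^B$ (this is the only place where GRH enters this step) and by Proposition~\ref{proposition unconditional small xi} on the complementary range $\sqrt{x}/(\log x)^B<q^k\le x$; the second by Proposition~\ref{proposition error sum chebotarev error term combined}; and the third by Proposition~\ref{proposition combined after error terms}.

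Finally I would choose $\xi=c\log x$ for a sufficiently small constant $c=c(a)>0$. For such $\xi$, Proposition~\ref{proposition error sum chebotarev error term combined} gives a Chebotarev contribution of order $\sqrt{x}(\log x)\,C^{c\log x}=O(x^{1/2+c\log C})$, which is a negligible power of $x$ once $c$ is taken small enough; Proposition~\ref{proposition smaller xi under GRH} gives a tail sum of order $x/(\xi\log x)\ll x/\log^2 x$; and Proposition~\ref{proposition combined after error terms} gives a series-extension error of order $\pi(x)\log\xi/\xi\ll x\log\log x/\log^2 x$. The last of these dominates and matches the advertised error term. The main obstacle is the balancing act in the last step: the Chebotarev error grows exponentially in $\xi$ (and the base constant is worst in the $\Omega$-case, where Proposition~\ref{proposition error sum chebotarev error term combined} produces $(2+A)^{4\xi}$ rather than $(2+A)^{\xi}$ or $(3+2A)^{\xi}$), while the tail sum and the series-extension error decay only like $1/\xi$, so one has to verify case by case that a common window $\xi=c\log x$ actually exists. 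Once this is confirmed, the remaining work is straightforward bookkeeping.
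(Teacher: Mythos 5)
Your proposal is correct and follows essentially the same strategy as the paper's proof: substitute Theorem~\ref{thm_asymptoticprimesatisfyingstuff} into Propositions~\ref{proposition omega of quotient}--\ref{proposition difference of omega}, control the three resulting error contributions with Propositions~\ref{proposition unconditional small xi}, \ref{proposition smaller xi under GRH}, \ref{proposition error sum chebotarev error term combined}, and~\ref{proposition combined after error terms}, and take $\xi$ a small constant multiple of $\log x$ (the paper fixes $\xi=\frac19\log x$, which accommodates the worst-case exponential $3^{4\xi}=x^{4\log 3/9}<x^{0.49}$ from the $\Omega$-case you correctly singled out). The only small inaccuracy is the parenthetical that Proposition~\ref{proposition smaller xi under GRH} is "the only place where GRH enters this step" — Theorem~\ref{thm_asymptoticprimesatisfyingstuff}, which you substitute for the counting functions, is itself conditional on GRH — but this does not affect the argument.
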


\begin{proof}
By adjusting the implicit constants if necessary, we may assume that $x\ge e^{18}$. Set $\xi=\frac19\log x$, so that $\xi\ge2$. We concentrate on establishing the middle formula, beginning with
Proposition~\ref{proposition Omega of quotient} which says that
\begin{align}
\sum_{\substack{p\le x \\ \nu_p(a)=0}} & z^{\Omega((p-1)/\ord_p(a))} = \sum_{\ell\mid Q_\xi} z^{\Omega(\ell)} (1-z^{-1})^{\omega(\ell)} \#\{ p\le x \colon \nu_p(a)=0,\, \ell \mid (p-1)/\ord_p(a) \} \notag \\
&\qquad{}+ O\bigg( \sum_{\xi<q^k\le x} \#\{ p\le x \colon \nu_p(a)=0,\, q^k \mid (p-1)/\ord_p(a) \} \bigg) \notag \\
&= \sum_{\ell\mid Q_\xi} z^{\Omega(\ell)} (1-z^{-1})^{\omega(\ell)} \#\{ p\le x \colon \nu_p(a)=0,\, \ell \mid (p-1)/\ord_p(a) \} + O_a\biggl( \frac{x\log\log x}{\log^2x} \biggr)
\label{concentrate}
\end{align}
by Propositions~\ref{proposition unconditional small xi} and~\ref{proposition smaller xi under GRH} with $B=2$. The main term can be written, using equations~\eqref{palm counts} and~\eqref{palm fields} and Theorem~\ref{thm_asymptoticprimesatisfyingstuff}, as
\begin{align*}
\sum_{\ell\mid Q_\xi} & z^{\Omega(\ell)} (1-z^{-1})^{\omega(\ell)} \#\{ p\le x \colon \nu_p(a)=0,\, \ell \mid (p-1)/\ord_p(a) \} = \sum_{\ell\mid Q_\xi} z^{\Omega(\ell)} (1-z^{-1})^{\omega(\ell)} \Pali(x) \\
&= \sum_{\ell\mid Q_\xi} z^{\Omega(\ell)} (1-z^{-1})^{\omega(\ell)} \biggl( \frac{\pi(x)}{\Bigl[ \Q(\sqrt[\ell]{a},\zeta_{m\ell}):\Q \Bigr]} + \bo_a\Bigl( \sqrt{x}\log(x\ell) \Bigr) \biggr) \\
&= \sum_{\ell\mid Q_\xi} z^{\Omega(\ell)} (1-z^{-1})^{\omega(\ell)} \frac{\pi(x)}{\Bigl[ \Q(\sqrt[\ell]{a},\zeta_{m\ell}):\Q \Bigr]} + \bo_a\Bigl( \sqrt{x}\log x\cdot 3^{4\xi} \Bigr)
\end{align*}
by Proposition~\ref{proposition error sum chebotarev error term combined} with $A=1$. Since $3^{4\xi} = \exp(4\xi\log 3) \le \exp(4\cdot\frac19\log x\cdot\log 3) \le \exp(0.49\log x) = x^{0.49}$, this last error term can be absorbed into the one in equation~\eqref{concentrate}.
Finally, Proposition~\ref{proposition combined after error terms} allows us to rewrite this last main term as
\begin{align*}
\pi(x) \sum_{\ell\mid Q_\xi} \frac{z^{\Omega(\ell)} (1-z^{-1})^{\omega(\ell)}}{[\Q(\sqrt[\ell]{a},\zeta_\ell):\Q]} &= \pi(x) \sum_{\ell=1}^\infty \frac{z^{\Omega(\ell)} (1-z^{-1})^{\omega(\ell)}}{[\Q(\sqrt[\ell]{a},\zeta_\ell):\Q]} + O_a\biggl( \pi(x) \frac{\log\xi}\xi \biggr) \\
&= \pi(x) \sum_{\ell=1}^\infty \frac{z^{\Omega(\ell)} (1-z^{-1})^{\omega(\ell)}}{[\Q(\sqrt[\ell]{a},\zeta_\ell):\Q]} + O_a\biggl( \frac{x\log\log x}{\log^2 x} \biggr).
\end{align*}
Inserting these estimates back into equation~\eqref{concentrate} establishes the middle claim of the proposition. The proofs of the first and last claims are exactly the same, using the other parts of the propositions and theorems referenced above.
\end{proof}

\section{Main terms and the general theorems} \label{hard proofs section}

We are now ready to state and prove our most general results, which are Theorems~\ref{theorem generating function omega of quotient}, \ref{theorem generating function Omega of quotient}, and~\ref{theorem generating function omega minus omega} below. It is straightforward to check that Theorems~\ref{theorem generating function omega of quotient h=1}, \ref{theorem generating function Omega of quotient h=1}, and~\ref{theorem generating function omega minus omega h=1}, respectively, are the special cases of these three general results where $h=1$.

The conclusion of Proposition~\ref{proposition all done but the main terms} already reflects all the work we have done to bound the error terms in the desired asymptotic formulas, and so it remains only to make the main terms more explicit. If the field extension degree $[\Q(\sqrt[\ell]{a},\zeta_{m\ell}):\Q]$ were always equal to $\ell\phi(m\ell)$, then expressing each main term as an Euler product would take only a few lines (and there would be no need for correction factors depending on~$a$ in the statements of the theorems). However, Proposition~\ref{proposition degree} shows that the truth is more complicated, most notably because of the $\varepsilon_a(\ell,m\ell)$ term; the general strategy we adopt to handle these terms appears in equation~\eqref{epsilon-1} below.

In the beginning of this section, we fully state all three general results (which are complicated enough to require several auxiliary pieces of notation). The proofs of Theorems~\ref{theorem generating function omega of quotient}, \ref{theorem generating function Omega of quotient}, and~\ref{theorem generating function omega minus omega} will be given afterwards in Sections~\ref{5.1}, \ref{5.2}, and~\ref{5.3}, respectively. For the purposes of comparison, we first give the analogous general statement for Artin's primitive root conjecture, as established by Hooley~\cite{Hooley}. Throughout this section we fix a rational number $a\notin\{-1,0,1\}$, and we use the quantities $h,a_0,b_0,c_0$ from Notation~\ref{notation beginning} without further comment, as well as the function $\mathfrak{d}(a)$ from Definition~\ref{s and d def}.

\begin{defn} \label{definition general Artin itself}
Given $a\in\Q\setminus\{-1,0,1\}$, define
\[
 H_a=\begin{cases}
 \displaystyle \prod\limits_{q\mid {h}/{2}} \frac{q^2-2q}{q^2-q-1}, &\text{if }a<0\text{ and }2\mid h,\\
 \displaystyle \prod\limits_{q\mid h} \frac{q^2-2q}{q^2-q-1}, & \text{otherwise},
 \end{cases}
\]
and
\[
F_a=\begin{cases}
 \displaystyle 1-\prod\limits_{\substack{q\mid \mathfrak{d}(a) \\ q\nmid h}}\biggl(\frac{-1}{q^2-q-1}\biggr)\prod\limits_{q\mid (\mathfrak{d}(a),h)}\biggl(\frac{-1}{q-2}\biggr),& \text{if }\mathfrak{d}(a)\equiv1\mod{4},\\
 1,& \text{if }\mathfrak{d}(a)\not\equiv1\mod{4}.
\end{cases}
\]
\end{defn}

\noindent
In this notation, Hooley conditionally established Artin's conjecture in the following form:

\begin{thm} \label{theorem general Artin itself}
Assume GRH. Let $a\in\Q\setminus\{-1,0,1\}$ such that~$a$ is not a square. Let $N_a(x)$ be the number of primes up to~$x$ for which~$a$ is a primitive root. Then for $x\ge3$,
\[
\frac1{\pi(x)}\Na(x) = F_aH_a\prod\limits_q \biggl(1-\frac{1}{q(q-1)}\biggr) + \bo_a\biggl(\frac{\log\log x}{\log x}\biggr),
\]
in the notation of Definition~\ref{definition general Artin itself}.
\end{thm}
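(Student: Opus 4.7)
The plan is to reduce Theorem \ref{theorem general Artin itself} to the generating-function asymptotics already established in Proposition \ref{proposition all done but the main terms}. The key observation is that $a$ is a primitive root modulo $p$ precisely when $(p-1)/\ord_p(a)=1$, which is equivalent to $\omega((p-1)/\ord_p(a))=0$. Accordingly, I would specialize the first formula of Proposition \ref{proposition all done but the main terms} at $z=0$. Since $(z-1)^{\omega(\ell)}\big|_{z=0}=(-1)^{\omega(\ell)}$ and $\mu^2(\ell)(-1)^{\omega(\ell)}=\mu(\ell)$, this yields
\[
\frac{N_a(x)}{\pi(x)} = \sum_{\ell=1}^\infty \frac{\mu(\ell)}{[\Q(\sqrt[\ell]{a},\zeta_\ell):\Q]} + O_a\!\left(\frac{\log\log x}{\log x}\right),
\]
after absorbing the $O(1)$ primes dividing the numerator or denominator of $a$ into the error term. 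So the theorem reduces to identifying the value of the infinite sum.

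Next, I would apply Proposition \ref{proposition degree} with $m=1$ to express the degree as $[\Q(\sqrt[\ell]{a},\zeta_\ell):\Q] = \ell\,\varphi(\ell)/\bigl(\varepsilon_a(\ell,\ell)\cdot(\ell,h)\bigr)$, transforming the sum into
\[
\sum_{\ell=1}^\infty \frac{\mu(\ell)\,\varepsilon_a(\ell,\ell)\,(\ell,h)}{\ell\,\varphi(\ell)}.
\]
Following the strategy flagged at the start of Section \ref{hard proofs section}, I would write $\varepsilon_a(\ell,\ell)=1+\bigl(\varepsilon_a(\ell,\ell)-1\bigr)$ and split the sum into a main piece and a correction piece. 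The main piece $\sum_\ell \mu(\ell)(\ell,h)/(\ell\varphi(\ell))$ is supported on squarefree $\ell$ and factors as the Euler product
\[
\prod_{q\nmid h}\!\left(1-\frac{1}{q(q-1)}\right)\prod_{q\mid h}\!\left(1-\frac{1}{q-1}\right) \;=\; H_a^{+}\!\prod_{q}\!\left(1-\frac{1}{q(q-1)}\right),
\]
where $H_a^{+}$ agrees with $H_a$ in the positive case; the extra factor $\prod_{q\mid h/2}(\cdot)/\prod_{q\mid h}(\cdot)$ when $a<0$ and $2\mid h$ arises from the case $\ell$ even, $\ell'$ odd in Proposition \ref{proposition degree}, where $\varepsilon_a=1/2$ modifies the local factor at $q=2$.

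The correction piece $\sum_\ell \mu(\ell)(\varepsilon_a(\ell,\ell)-1)(\ell,h)/(\ell\varphi(\ell))$ is supported on $\ell$ divisible by the squarefree parts of $\mathfrak{d}(a_0)$ (or of $-a_0$ or $2a_0$ in the $a<0$ subcases), and one checks that these divisibility constraints force $\mathfrak{d}(a)\equiv1\pmod 4$ for the sum to be nonzero. Factoring out $\mathfrak{d}(a)$ and separating primes $q\mid\mathfrak{d}(a)$ into those with $q\mid h$ and those with $q\nmid h$ produces the local factors $-1/(q^2-q-1)$ and $-1/(q-2)$ that define $F_a-1$, and the remaining primes contribute the same infinite product as before. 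Adding the two pieces gives exactly $F_aH_a\prod_q(1-1/q(q-1))$.

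The main obstacle is the bookkeeping in the $a<0$ regime: Proposition \ref{proposition degree} splits into five subcases depending on the 2-adic valuations of $\ell$, $\ell'$, and $h$, and the discriminants $\mathfrak{d}(a_0)$, $\mathfrak{d}(-a_0)$, $\mathfrak{d}(2a_0)$ each appear in different subcases. Verifying that these local contributions combine correctly to match the definitions of $H_a$ and $F_a$ requires a careful case analysis at the prime $q=2$, but no new analytic ingredient beyond what Sections \ref{combinatorics sec}--\ref{error section} already supply — the remaining work is purely an identity verification of an Euler product.
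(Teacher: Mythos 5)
Your proposal is correct and is essentially the paper's route, as flagged in Remark~\ref{z=0 remark}: set $z=0$ in the first formula of Proposition~\ref{proposition all done but the main terms}, then evaluate $\sum_\ell \mu(\ell)/[\Q(\sqrt[\ell]{a},\zeta_\ell):\Q]$ via Proposition~\ref{proposition degree} and the splitting $\varepsilon_a = 1 + (\varepsilon_a-1)$. The paper packages this as the $z=0$ specialization of Proposition~\ref{proposition main term omega of quotient} rather than redoing the Euler-product computation directly at $z=0$, but the case analysis is the same.
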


We now state our three general results in full detail.

\begin{defn} \label{definition generating function omega of quotient}
Given $a\in\Q\setminus\{-1,0,1\}$, define
\[
H^{\omega/}_a(z) = 
\prod_{\substack{q\mid h\\ q\neq 2}} \frac{q(q+z-2)}{q^2-q+z-1}.
\]
Further define
\[
f_a^{\omega/}(z) = \biggl( \prod_{\substack{q\mid 2b_0 \\ q \nmid h}} \frac{z-1}{q^2-q+z-1} \biggr) \biggl( \prod_{{q\mid (b_0,h)}} \frac{z-1}{q+z-2} \biggr)
\]
and
\begin{equation} \label{C omega/ a z definition}
F^{\omega/}_a(z)=\begin{cases}
1, & \text{if }2\nmid h \text{ and } \sgn(a)b_0 \not\equiv 1\mod{4},\\
1+f_a^{\omega/}(z) ,& \text{if }2\nmid h \text{ and } \sgn(a)b_0 \equiv 1\mod{4},\\
2z/(z+1), & \text{if }2\mid h \text{ and } a>0,\\
1, & \text{if }2\mid h \text{ and } a<0.
\end{cases}
\end{equation} 
\end{defn}

\begin{thm}\label{theorem generating function omega of quotient}
Assume GRH. Let $a\in\Q\setminus\{-1,0,1\}$ and $x\ge3$. Uniformly for $|z|\le1$,
\begin{equation} \label{equation generating function omega of quotient}
\frac{1}{\pi(x)} \sum_{\substack{p\le x \\ \nu_p(a)=0}} z^{\omega((p-1)/\ord_p(a))}
=
F^{\omega/}_a(z) H^{\omega/}_a(z) \prod_q \biggl( 1 + \frac{z-1}{q(q-1)} \biggr)
+ O_a\biggl( \frac{\log\log x}{\log x} \biggr)
\end{equation}
in the notation of Definition~\ref{definition generating function omega of quotient}.
\end{thm}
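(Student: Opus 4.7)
The plan is to start from Proposition~\ref{proposition all done but the main terms}, which reduces the theorem (modulo an acceptable error term) to proving the purely algebraic identity
\[
S(z) := \sum_{\ell=1}^\infty \frac{\mu^2(\ell)(z-1)^{\omega(\ell)}}{[\Q(\sqrt[\ell]{a},\zeta_\ell):\Q]} = F_a^{\omega/}(z)\,H_a^{\omega/}(z)\prod_q\biggl(1+\frac{z-1}{q(q-1)}\biggr).
\]
The factor $\mu^2(\ell)$ restricts the sum to squarefree $\ell$, so $\nu_2(\ell')\le 1$ and the case analysis of Proposition~\ref{proposition degree} (specialised to $m=1$) collapses substantially: the subcases $4\mid\ell'$ and $\ell\equiv 4\mod 8$ never arise, and any condition of the form $\mathfrak{d}(c)\mid\ell$ can only be satisfied when $\mathfrak{d}(c)$ is odd and squarefree.

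My strategy is to first compute the ``baseline'' contribution $S_0(z)$ obtained by pretending $\varepsilon_a(\ell,\ell)\equiv 1$. Since $(\ell,h)/(\ell\varphi(\ell))$ is multiplicative on squarefree $\ell$, this baseline factors as an Euler product
\[
S_0(z) = \prod_q\biggl(1+\frac{z-1}{q(q-1)}\biggr)\prod_{q\mid h}\frac{q(q+z-2)}{q^2-q+z-1}.
\]
When $2\mid h$, the $q=2$ factor of the second product equals $\tfrac{2z}{z+1}$; splitting it off identifies the rest of $S_0(z)$ with $H_a^{\omega/}(z)\prod_q(1+(z-1)/(q(q-1)))$, and when $2\nmid h$ the second product is already $H_a^{\omega/}(z)$. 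The remaining work is to account for the correction $S(z)-S_0(z)$ case by case using Proposition~\ref{proposition degree}.

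For $2\nmid h$ and $\sgn(a)b_0\not\equiv 1\mod 4$, the relevant discriminants $\mathfrak{d}(\pm a_0)$ are each divisible by~$4$, so no squarefree $\ell$ triggers $\varepsilon_a=2$; hence $S(z)=S_0(z)$, matching $F_a^{\omega/}=1$. For $2\nmid h$ and $\sgn(a)b_0\equiv 1\mod 4$, the condition $\varepsilon_a(\ell,\ell)=2$ becomes exactly $2b_0\mid\ell$, and isolating the Euler factors at primes dividing $2b_0$ will give the correction as
\[
\prod_{q\mid 2b_0}\frac{G(q)-1}{G(q)}\cdot S_0(z), \qquad G(q) = 1 + \frac{(z-1)(q,h)}{q(q-1)};
\]
a short computation using $(2b_0,h)=(b_0,h)$ then identifies this prefactor with $f_a^{\omega/}(z)$. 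For $2\mid h$ with $a>0$, every squarefree $\ell$ yields $\varepsilon_a=1$, so $S(z)=S_0(z)$ matches $F_a^{\omega/}=2z/(z+1)$ through the $q=2$ factor already singled out. The subtlest case is $2\mid h$ with $a<0$: here Proposition~\ref{proposition degree} gives $\varepsilon_a=1/2$ for even squarefree $\ell$, and splitting $S(z)$ by parity of $\ell$ (writing $\ell=2k$ in the even part and noting $(2k,h)=2(k,h)$ since $2\mid h$) produces a prefactor $\tfrac{z+1}{2}$ times the Euler product restricted to odd primes; this should cancel the $\tfrac{2}{z+1}$ needed to re-include $q=2$, yielding $F_a^{\omega/}=1$. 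The main obstacle throughout is the bookkeeping in Proposition~\ref{proposition degree}'s case analysis; once each case produces a clean description of $\varepsilon_a(\ell,\ell)$ on squarefree $\ell$, the algebraic matching with $F_a^{\omega/}H_a^{\omega/}$ is a routine manipulation of Euler products.
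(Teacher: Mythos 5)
Your proposal is correct and follows essentially the same route as the paper: reduce to the purely algebraic identity via Proposition~\ref{proposition all done but the main terms}, compute the baseline Euler product $S_0(z)$ (the sum with $\varepsilon_a\equiv 1$, which is exactly the paper's $S_0^{\omega/}(1)$ from Lemma~\ref{lemma main term omega of quotient}), and then evaluate the correction case by case from Proposition~\ref{proposition degree} with $m=1$, exploiting the fact that squarefree~$\ell$ collapses the case analysis. The only cosmetic difference is in the final case ($2\mid h$, $a<0$), where you split $S(z)$ directly by parity of~$\ell$ rather than writing it as $S_0^{\omega/}(1) - \tfrac12 S_0^{\omega/}(2)$ as the paper does; these are the same computation.
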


\begin{rmk} \label{z=0 remark}
Note that when $z=0$, the infinite product in Theorem~\ref{theorem generating function omega of quotient} becomes $\prod_q \Bigl( 1-\frac{1}{q(q-1)} \Bigr)\approx 0.3739\ldots$ which is precisely Artin's constant. Indeed, the special case $z=0$ of Theorem~\ref{theorem generating function omega of quotient} is exactly Artin's conjecture as stated in Theorem~\ref{theorem general Artin itself}, since $\omega\Bigl( (p-1)/\ord_p(a) \Bigr) = 0$ precisely when~$a$ is a primitive root modulo~$p$ (and thanks to the convention $0^0=1$ in generating functions).\
The same comments apply to Theorem~\ref{theorem generating function Omega of quotient} below, since $\Omega\Bigl( (p-1)/\ord_p(a) \Bigr) = 0$ if and only if $\omega\Bigl( (p-1)/\ord_p(a) \Bigr) = 0$.
\end{rmk}

\begin{defn}\label{definition generating function Omega of quotient}
Given $a\in\Q\setminus\{-1,0,1\}$, set $\gamma=b_0/(2,b_0)$. Define
\begin{align*}
 H^\Omega_h(z) &= \prod\limits_{q\mid h}\left(\frac{q^4-2q^3+2qz-z^2-(z-1)(q-1)z^{\nu_q(h)}q^{-\nu_q(h)+2}}{(q-z)(q^3-q^2-q+z)}\right) \\
 I^\Omega_h(\gamma,z) &= \frac{(z-1)^{\omega(\gamma)}(\gamma,h)}{\gamma\varphi(\gamma)}\prod\limits_{q\mid\gamma}\left(\frac{(q-1)(q^3-qz-(q-1)z^{\nu_q(h/(h,q))+1}q^{-\nu_q(h/(h,q))+1})}{q^4-2q^3+2qz-z^2-(z-1)(q-1)z^{\nu_q(h)}q^{-\nu_q(h)+2}}\right),
\end{align*}
and note that $I^\Omega_h(1,z) = 1$.
Further define
\begin{align*}
\tau^\Omega_+(a,z) &= \begin{cases}
z/2, &\text{if }b_0\equiv 3\mod{4}\text{ and }2\nmid h,\\
z^2/8, &\text{if }b_0\equiv 2\mod{4}\text{ and }2\nmid h,\\
z^2/4, &\text{if }b_0\equiv 2\mod{4}\text{ and }2\mid\mid h,\\
z^{\nu_2(h)}/2^{\nu_2(h)-1}, &\text{otherwise}
\end{cases} \\
\tau^\Omega_-(a,z) &= \begin{cases}
2, &\text{if }b_0\equiv 3\mod{4}\text{ and }2\nmid h,\\
z^2/8, &\text{if }b_0\equiv 2\mod{4}\text{ and }2\nmid h,\\
z, &\text{if }b_0\equiv 2\mod{4}\text{ and }2\mid\mid h,\\
(z/2)^{\nu_2(h)+1}, &\text{otherwise.}
\end{cases}
\end{align*}
Finally, define
\begin{equation} \label{secretly g(infty) Omega}
F^\Omega_a(h,z) = 1+\frac{(z-1)(z-2)(z-4)}{4z-z^2-4(z-1)(z/2)^{\nu_2(h)}} \begin{cases}
\dfrac{1}{4-z}I^\Omega_h(\gamma,z)\tau^\Omega_+(a,z), &\text{if }a>0,\\
\dfrac{1-(z/2)^{\nu_2(h)}}{z-2}+\dfrac{1}{4-z}I^\Omega_h(\gamma,z)\tau^\Omega_-(a,z), &\text{if }a<0.
\end{cases}
\end{equation}
\end{defn}

\begin{thm} \label{theorem generating function Omega of quotient}
Assume GRH. Let $a\in\Q\setminus\{-1,0,1\}$ and $x\ge3$. Uniformly for $|z|\le1$,
\begin{equation} \label{equation generating function Omega of quotient}
\frac1{\pi(x)} \sum_{\substack{p\le x \\ \nu_p(a)=0}} z^{\Omega\left((p-1)/\ord_p(a)\right)}  = H^\Omega_h(z)F^\Omega_a(h,z)\prod\limits_{q}\biggl(1+\frac{q(z-1)}{(q-1)(q^2-z)}\biggl) + \bo_a\biggl( \frac{\log\log x}{\log x} \biggr)
\end{equation}
in the notation of Definition~\ref{definition generating function Omega of quotient}.
\end{thm}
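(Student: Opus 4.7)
The plan is to pick up where Proposition \ref{proposition all done but the main terms} leaves off: dividing the middle asymptotic formula there by $\pi(x)$ turns its error into the claimed $O_a(\log\log x/\log x)$, so the entire remaining task is the purely algebraic identity
\[
S(z) := \sum_{\ell=1}^\infty \frac{z^{\Omega(\ell)}(1-z^{-1})^{\omega(\ell)}}{[\Q(\sqrt[\ell]{a},\zeta_\ell):\Q]} = H^\Omega_h(z)\,F^\Omega_a(h,z)\prod_q\left(1+\frac{q(z-1)}{(q-1)(q^2-z)}\right).
\]

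First I would apply Proposition \ref{proposition degree} with $m=1$ to write $[\Q(\sqrt[\ell]{a},\zeta_\ell):\Q] = \ell'\varphi(\ell)/\varepsilon_a(\ell,\ell)$ with $\ell' = \ell/(\ell,h)$, and rewrite the numerator via $z^{\Omega(\ell)}(1-z^{-1})^{\omega(\ell)} = (z-1)^{\omega(\ell)}z^{\Omega(\ell)-\omega(\ell)}$. Splitting $\varepsilon_a(\ell,\ell) = 1 + \eta_a(\ell)$ decomposes $S(z) = S_0(z) + S_1(z)$, where
\[
S_0(z) = \sum_{\ell=1}^\infty \frac{(z-1)^{\omega(\ell)}z^{\Omega(\ell)-\omega(\ell)}(\ell,h)}{\ell\,\varphi(\ell)}
\]
is a multiplicative sum corresponding to the generic case $\varepsilon_a \equiv 1$, while $S_1(z)$ collects the correction, which by Proposition \ref{proposition degree} is supported on $\ell$ satisfying very specific divisibility and parity constraints.

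To evaluate $S_0(z)$ as an Euler product, I would compute the local factor at each prime $q$ by summing over $k = \nu_q(\ell)\ge 0$, splitting the sum at $k=\nu_q(h)$ since $(q^k,h) = q^{\min(k,\nu_q(h))}$ changes behaviour there. The two resulting geometric series combine (for $q \nmid h$) to $1 + \frac{q(z-1)}{(q-1)(q^2-z)}$, and (for $q \mid h$) to this same factor multiplied by the local factor of $H^\Omega_h(z)$. This reduces to a short algebraic simplification over the common denominator $(q-1)(q^2-z)(q-z)$ and yields $S_0(z) = H^\Omega_h(z)\prod_q\bigl(1+\frac{q(z-1)}{(q-1)(q^2-z)}\bigr)$.

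The main obstacle is $S_1(z)$. The five cases of Proposition \ref{proposition degree} each restrict $\ell$ to be simultaneously (i) divisible by one of $\mathfrak{d}(a_0)$, $\mathfrak{d}(-a_0)$, or $\mathfrak{d}(2a_0)$, and (ii) such that $\ell/(\ell,h)$ lies in a prescribed residue class mod $4$. For each restriction I would write $\ell = D\cdot k$ with $D$ the forced divisor, isolate the contribution of primes dividing $D$ (these assemble into the prefactors $\tau^\Omega_\pm(a,z)$ and $I^\Omega_h(\gamma,z)$), and observe that the residual sum over $k$ with $(k,D)$ suitably controlled completes to the Euler product already obtained for $S_0$. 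The delicate bookkeeping splits further along the sign of $a$, the residue of $b_0$ modulo $4$, and whether $\nu_2(h)$ is $0$, $1$, or $\ge 2$, explaining the piecewise structure of $\tau^\Omega_\pm$. A key subtlety is that in the case $a<0$, $\ell$ even, $\ell'$ odd one has $\varepsilon_a = 1/2$ rather than $2$, producing a negative $\eta_a = -1/2$; this is what creates the sign differences between $\tau^\Omega_+$ and $\tau^\Omega_-$. Once these contributions are repackaged one obtains $S_1(z) = H^\Omega_h(z)\bigl(F^\Omega_a(h,z) - 1\bigr)\prod_q\bigl(1+\frac{q(z-1)}{(q-1)(q^2-z)}\bigr)$, and adding this to $S_0(z)$ completes the proof.
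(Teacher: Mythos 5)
Your proposal is correct and follows essentially the same route as the paper's own proof: it reduces to the algebraic identity via Proposition~\ref{proposition all done but the main terms}, then uses the decomposition $\varepsilon_a=1+(\varepsilon_a-1)$ from equation~\eqref{epsilon-1} to split off the corrected Euler product $S_0$ from the correction sum $S_1$, and finally handles $S_1$ by the same case analysis on $\sgn(a)$, $\nu_2(\ell)$, and the residue of $b_0$ modulo $4$. The paper packages the repeated computation of sums with a fixed forced divisor $\gamma$ and constrained $\nu_2(\ell)$ into the auxiliary Lemma~\ref{lemma main term Omega} (via $S^\Omega_h(\alpha,\beta,\gamma,z)$), which is the formal version of your ``write $\ell=D\cdot k$ and isolate the primes dividing $D$'' step; this is an organizational rather than a substantive difference.
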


\begin{defn}\label{definition generating function omega minus omega}
Given $a\in\Q\setminus\{-1,0,1\}$, set $\gamma={b_0}/{(2,b_0)}$.
Define
\begin{align*}
 H_h^{\omega-}(z)&=\prod\limits_{\substack{q|h}}\left(\frac{q^2-1+(z-1)(q+1-q^{-\nu_q(h)+1})}{q^2+z-2}\right)\\
 I_h^{\omega-}(\gamma,z)&=\frac{(z-1)^{\omega(\gamma)}(\gamma,h)}{\gamma}\prod\limits_{\substack{ q\mid \gamma}}\left(\frac{q+1-q^{-\nu_q(h/(h,q))}}{q^2-1+(z-1)(q+1-q^{-\nu_q(h)+1})}\right).
\end{align*}
Further define
\begin{align*}
\tau_+^{\omega-}(a) &= \begin{cases}
-1/2, &\text{if }b_0\equiv 3\mod{4}\text{ and }2\nmid h,\\
-1/8, &\text{if }b_0\equiv 2\mod{4}\text{ and }2\nmid h,\\
-1/4, &\text{if }b_0\equiv 2\mod{4}\text{ and }2\mid\mid h,\\
1/2^{\nu_2(h)}, &\text{otherwise}
\end{cases} \\
\tau_-^{\omega-}(a) &= \begin{cases}
1, &\text{if }b_0\equiv 3\mod{4}\text{ and }2\nmid h,\\
-1/8, &\text{if }b_0\equiv 2\mod{4}\text{ and }2\nmid h,\\
1/2, &\text{if }b_0\equiv 2\mod{4}\text{ and }2\mid\mid h,\\
-1/2^{\nu_2(h)+1}, &\text{otherwise.}
\end{cases}
\end{align*}
Finally, define
\begin{equation} \label{secretly g(infty)}
F_a^{\omega-}(h,z) = 1+\frac{z-1}{1+(z-1)( 1-2^{1-\nu_2(h)}/3)} \begin{cases}
\frac{1}{3}I_h^{\omega-}(\gamma,z)\tau_+^{\omega-}(a), &\text{if }a>0,\\
2^{-\nu_2(h)}-1+\frac{1}{3}I_h^{\omega-}(\gamma,z)\tau_-^{\omega-}(a), &\text{if }a<0.
\end{cases}
\end{equation}
\end{defn}

\begin{thm}\label{theorem generating function omega minus omega}
Assume GRH. Let $a\in\Q\setminus\{-1,0,1\}$ and $x\ge3$. Uniformly for $|z|\le1$,
\begin{equation} \label{equation generating function omega minus omega}
\frac1{\pi(x)} \sum_{\substack{p\le x \\ \nu_p(a)=0}} z^{\omega(p-1)-\omega(\ord_p(a))}  = H_h^{\omega-}(z)F_a^{\omega-}(h,z)\prod\limits_{\substack{q}}\biggl(1+\frac{z-1}{q^2-1}\biggl) + \bo_a\biggl( \frac{\log\log x}{\log x} \biggr)
\end{equation}
in the notation of Definition~\ref{definition generating function omega minus omega}.
\end{thm}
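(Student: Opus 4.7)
The plan is to start from the main-term formula in Proposition~\ref{proposition all done but the main terms}: everything reduces to showing
\[
\sum_{\ell=1}^\infty (z-1)^{\omega(\ell)}\sum_{m\mid\ell}\frac{\mu(m)}{[\Q(\sqrt[\ell]{a},\zeta_{m\ell}):\Q]} = H_h^{\omega-}(z)\,F_a^{\omega-}(h,z)\prod_q\left(1+\frac{z-1}{q^2-1}\right).
\]
Using Proposition~\ref{proposition degree}, we replace $1/[\Q(\sqrt[\ell]{a},\zeta_{m\ell}):\Q]$ by $\varepsilon_a(m\ell,\ell)(\ell,h)/(\ell\,\varphi(m\ell))$ and split $\varepsilon_a(m\ell,\ell)=1+(\varepsilon_a(m\ell,\ell)-1)$. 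This decomposes the sum into a trivial piece and a correction piece.

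For the trivial piece, the key observation is that $\varphi(m\ell)=m\varphi(\ell)$ whenever $m\mid\ell$, which yields $\sum_{m\mid\ell}\mu(m)/\varphi(m\ell) = \varphi(\ell)^{-1}\prod_{q\mid\ell}(1-1/q) = 1/\ell$. The remaining sum $\sum_\ell (z-1)^{\omega(\ell)}(\ell,h)/\ell^2$ has a multiplicative summand and factors as an Euler product. Primes $q\nmid h$ contribute $1+(z-1)/(q^2-1)$, while primes $q\mid h$ with $v=\nu_q(h)$ contribute $1+(z-1)(q+1-q^{1-v})/(q^2-1)$, obtained by summing $\sum_{k\ge 1}q^{\min(k,v)}/q^{2k}$ in two stages. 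Comparing the two expressions, the ratio of the local factor at $q\mid h$ to its ``generic'' counterpart $1+(z-1)/(q^2-1)$ is exactly the Euler factor defining $H_h^{\omega-}(z)$, so the trivial piece equals $H_h^{\omega-}(z)\prod_q(1+(z-1)/(q^2-1))$.

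For the correction piece, the starting point is that in Proposition~\ref{proposition degree} the discrepancy $\varepsilon_a(m\ell,\ell)-1$ vanishes unless a divisibility condition of the form $\mathfrak{d}(a_0)\mid m\ell$, $\mathfrak{d}(-a_0)\mid m\ell$, or $\mathfrak{d}(2a_0)\mid m\ell$ holds, coupled with parity constraints on $\ell'=\ell/(\ell,h)$ and on $m$. These conditions force every odd prime dividing $b_0$, and in several sub-cases also the prime~$2$, to appear in $m\ell$, and the resulting constrained sum over $\ell$ and $m$ is again multiplicative once one restricts to $(\ell,m)$ satisfying the condition. Summing over the odd primes dividing $\gamma=b_0/(2,b_0)$ produces precisely the factor $I_h^{\omega-}(\gamma,z)$, after dividing by the corresponding local factor at each such prime in the trivial Euler product. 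The case distinction on $\sgn(a)$ and on $b_0 \bmod 4$ in the definition of $\tau_\pm^{\omega-}(a)$ reflects the sub-cases of Proposition~\ref{proposition degree}, and the denominator $1+(z-1)(1-2^{1-\nu_2(h)}/3)$ of $F_a^{\omega-}(h,z)-1$ is exactly the combined $q=2$ local factor of the trivial main product, namely $(1+(z-1)/3)$ times the $q=2$ factor of $H_h^{\omega-}(z)$. Dividing the correction piece by $H_h^{\omega-}(z)\prod_q(1+(z-1)/(q^2-1))$ then yields $F_a^{\omega-}(h,z)-1$, completing the identity.

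The main obstacle will be the case analysis at the prime~$2$, where the five cases in Proposition~\ref{proposition degree} (organised by $\sgn(a)$ and by the residue of $\ell'$ modulo~$2$ and~$4$) must be reconciled with the three cases defining $\tau_\pm^{\omega-}(a)$ and the further split between $a>0$ and $a<0$ inside $F_a^{\omega-}(h,z)$. I would handle this by first dispatching the odd primes dividing $b_0$ with a single uniform computation, then treating the prime~$2$ by a careful enumeration of the allowed $(\nu_2(\ell),\nu_2(m))$ pairs in each sub-case, and collecting the geometric-series contributions over the unconstrained powers of~$2$. The expression for $\tau_-^{\omega-}(a)$ with $a<0$ involves an additional summand $2^{-\nu_2(h)}-1$ inside $F_a^{\omega-}(h,z)$ that arises from the $\varepsilon_a=1/2$ case of Proposition~\ref{proposition degree}, and keeping track of this contribution, which enters with opposite sign relative to the $\varepsilon_a=2$ contributions, is the most delicate piece of the bookkeeping.
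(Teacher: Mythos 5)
Your proposal is correct and follows essentially the same route as the paper: reduce to the main-term identity via Proposition~\ref{proposition all done but the main terms}, split $\varepsilon_a(m\ell,\ell)=1+(\varepsilon_a(m\ell,\ell)-1)$ as in equation~\eqref{epsilon-1}, collapse the inner sum over $m$ using $\varphi(m\ell)=m\varphi(\ell)$ to factor the trivial piece as an Euler product yielding $H_h^{\omega-}(z)\prod_q(1+\frac{z-1}{q^2-1})$, and then perform the case analysis on $\sgn(a)$, the power of $2$, and $b_0\bmod 4$ for the correction piece. The paper packages that last case analysis slightly more formally via the auxiliary sums $S_h(\alpha,\beta,\gamma,\eta,\kappa,z)$ and the function $G_h$ of Lemma~\ref{lemma main term omega of minus}, but the computations you outline (isolating $I_h^{\omega-}(\gamma,z)$ from the odd primes, identifying $g(\infty)$ as the combined $q=2$ local factor, and tracking the $\varepsilon_a=1/2$ contribution $2^{-\nu_2(h)}-1$ when $a<0$) are exactly the substance of Proposition~\ref{proposition main term omega minus omega}.
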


In the proofs that follow,
we will use a strategy spurred by the form of Proposition~\ref{proposition degree}, namely writing
\begin{equation} \label{epsilon-1}
\frac1{[\Q(\sqrt[\ell]{a},\zeta_{m\ell}):\Q]} = \frac{(\ell,h)\varepsilon_a(m\ell,\ell)}{\ell\varphi(m\ell)} = \frac{(\ell,h)}{\ell\varphi(m\ell)} + \frac{(\ell,h)}{\ell\varphi(m\ell)}(\varepsilon_a(m\ell,\ell)-1);
\end{equation}
the first summand leads to the main term, while the second summand leads to the correction factors depending on~$a$.

\subsection{The statistic $\omega\Bigl( (p-1)/\ord_p(a) \Bigr)$} \label{5.1}

This statistic is by far the simplest of the three, as having~$\ell$ squarefree discards many cases in Proposition~\ref{proposition degree} and makes the products easier to compute. Throughout we will freely use the notation from Definition~\ref{s and d def}, Notation~\ref{notation beginning}, and Definition~\ref{definition generating function omega of quotient}.

\begin{lemma} \label{lemma main term omega of quotient}
Define
$\displaystyle
S_0^{\omega/}(n) = 
\sum_{\substack{\ell\ge1 \\ n\mid\ell}} \mu^2(\ell)(z-1)^{\omega(\ell)} \frac{(\ell,h)}{\ell \varphi(\ell)}.
$
When $n$ is squarefree,
\begin{equation*}
S_0^{\omega/}(n) 
= 
\prod_{\substack{q\mid n \\ q\nmid h}} \frac{z-1}{q^2-q+z-1} 
\prod_{q\mid(n,h)} \frac{z-1}{(q-1)+(z-1)}
\prod_q \frac{q^2-q+z-1}{q^2-q}
H_a^{\omega/}(z)
\begin{cases}
2z/(z+1), &\text{if } 2\mid h, \\ 1, &\text{if } 2\nmid h.
\end{cases}
\end{equation*}
\end{lemma}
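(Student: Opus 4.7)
The summand $\mu^2(\ell)(z-1)^{\omega(\ell)}(\ell,h)/(\ell\varphi(\ell))$ is multiplicative in~$\ell$ and vanishes unless~$\ell$ is squarefree. Since~$n$ is squarefree, every~$\ell$ with $n\mid\ell$ contributing to $S_0^{\omega/}(n)$ factors uniquely as $\ell = nm$ with $(m,n)=1$ and~$m$ squarefree. My plan is to evaluate $S_0^{\omega/}(n)$ as a convergent Euler product by pulling out the contribution of primes dividing~$n$ and expanding the remaining sum over~$m$ by multiplicativity. Setting $f(q) := (z-1)(q,h)/(q(q-1))$, this gives
\[
S_0^{\omega/}(n) = \prod_{q\mid n} f(q)\prod_{q\nmid n}\bigl(1+f(q)\bigr),
\]
which converges absolutely for $|z|\le 1$ since $|f(q)| \ll 1/q$ (and $\ll 1/q^2$ when $q\nmid h$).

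The next step is a routine computation of each local factor by splitting on the value of $(q,h)$. This produces four sub-cases depending on whether $q\mid n$ and whether $q\mid h$, each contributing an explicit rational expression in~$q$ and~$z$. To match the form in the statement I would then extract the common Euler product $\prod_q (q^2-q+z-1)/(q^2-q)$: the residual factor at $q\mid h$ with $q\nmid n$ becomes $q(q+z-2)/(q^2-q+z-1)$, the residual factor at $q\mid n$ with $q\nmid h$ becomes $(z-1)/(q^2-q+z-1)$, and the residual factor at $q\mid(n,h)$ is $(z-1)q/(q^2-q+z-1)$, which I would further split as $(z-1)/(q+z-2)$ times $q(q+z-2)/(q^2-q+z-1)$ in order to recognise the displayed factors of the lemma.

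The main bookkeeping obstacle is the prime $q=2$: the local factor $q(q+z-2)/(q^2-q+z-1)$ specialises to $2z/(z+1)$ at $q=2$, which is exactly the piecewise multiplier displayed separately in the statement and is therefore excluded from the product defining $H_a^{\omega/}(z)$. Peeling this $q=2$ factor out of the product over $q\mid h$ and reassembling everything yields the claimed formula. This separation is stylistic, anticipating the $a$-dependent correction from the $\varepsilon_a(m\ell,\ell)-1$ term in~\eqref{epsilon-1} (which will be handled in a companion calculation), but it is the only delicate point in an otherwise routine Euler product manipulation.
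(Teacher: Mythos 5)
Your proposal is correct and follows essentially the same route as the paper: both arguments exploit multiplicativity of the summand, decompose $\ell = nm$ with $(m,n)=1$ to obtain $S_0^{\omega/}(n) = \prod_{q\mid n} f(q)\prod_{q\nmid n}(1+f(q))$ with $f(q) = (z-1)(q,h)/(q(q-1))$, and then rewrite this as the universal Euler product $\prod_q (q^2-q+z-1)/(q^2-q)$ times local corrections at primes dividing $n$ or $h$, with the $q=2$ correction $2z/(z+1)$ peeled off separately when $2\mid h$. Your explicit residual factors match the paper's manipulation exactly, so the only difference is presentational.
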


\begin{proof}
If we write $\ell=nk$, then $nk$ must be squarefree to contribute to the sum due to the $\mu^2(\ell)$ factor, which means that $(n,k)=1$. Therefore
\begin{align*}
S_0^{\omega/}(n) &= \sum_{k=1}^\infty \mu^2(nk)(z-1)^{\omega(nk)}\frac{(nk,h)}{nk\varphi(nk)} \\
 &= (z-1)^{\omega(n)} \frac{(n,h)}{n\varphi(n)} \sum_{(k,n)=1} \mu^2(k)(z-1)^{\omega(k)}\frac{(k,h)}{k\varphi(k)} \\
 &= (z-1)^{\omega(n)} \frac{(n,h)}{n\varphi(n)} \prod_{q\nmid n} \biggl (1+\frac{(z-1)(q,h)}{q(q-1)} \biggr).
\end{align*}
The product can be written as
\begin{align*}
\prod_{q\nmid n} \biggl( 1+\frac{(z-1)(q,h)}{q(q-1)} \biggr) 
&= \prod_q \biggl( 1+\frac{z-1}{q(q-1)} \biggr) 
\prod_{q\mid h} \biggl( 1+\frac{z-1}{q(q-1)} \biggr)^{-1} \biggl( 1+\frac{z-1}{q-1} \biggr) \\
&\qquad{}\times \prod_{\substack{q\mid n \\ q\nmid h}} \biggl( 1+\frac{z-1}{q(q-1)} \biggr)^{-1} \prod_{q\mid(n,h)} \biggl( 1+\frac{z-1}{q-1} \biggr)^{-1} .
\end{align*}
We complete the proof by noting that
\[
\prod_{q\mid h} \biggl( 1+\frac{z-1}{q(q-1)} \biggr)^{-1} \biggl( 1+\frac{z-1}{q-1} \biggr)
= H_a^{\omega/}(z) \begin{cases}
2z/(z+1), &\text{if } 2\mid h, \\ 1, &\text{if } 2\nmid h
\end{cases}
\]
and
\begin{align*}
(z-1)^{\omega(n)} \frac{(n,h)}{n\varphi(n)}
\prod_{\substack{q\mid n \\ q\nmid h}} \biggl( 1+\frac{z-1}{q(q-1)} \biggr)^{-1} \prod_{q\mid(n,h)} \biggl( 1+\frac{z-1}{q-1} \biggr)^{-1} 
&= \prod_{\substack{q\mid n \\ q\nmid h}} \frac{z-1}{q^2-q+z-1} 
\prod_{q\mid(n,h)} \frac{z-1}{(q-1)+(z-1)} .
\end{align*}
\end{proof}

The following proposition immediately implies Theorem~\ref{theorem generating function omega of quotient} when combined with Proposition~\ref{proposition all done but the main terms}.

\begin{prop}\label{proposition main term omega of quotient}
Let $a\in\Q\setminus\{-1,0,1\}$. For $|z|\le1$,
\[
\sum_{\ell=1}^\infty \frac{\mu^2(\ell) (z-1)^{\omega(\ell)}}{[\Q(\sqrt[\ell]{a},\zeta_{\ell}):\Q]} = F^{\omega/}_a(z)H^{\omega/}_a(z)
\prod_q \biggl( 1 + \frac{z-1}{q(q-1)} \biggr)
\]
in the notation of Definition~\ref{definition generating function omega of quotient}.
\end{prop}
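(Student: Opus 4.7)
The plan is to combine Proposition~\ref{proposition degree} (specialized to $m=1$) with the decomposition~\eqref{epsilon-1} to reduce the infinite sum to specializations of the formula $S_0^{\omega/}(n)$ from Lemma~\ref{lemma main term omega of quotient}. Since $\mu^2(\ell)$ restricts the sum to squarefree~$\ell$, Proposition~\ref{proposition degree} yields
\[
\frac{1}{[\Q(\sqrt[\ell]{a},\zeta_{\ell}):\Q]} = \frac{(\ell,h)\varepsilon_a(\ell,\ell)}{\ell\varphi(\ell)} = \frac{(\ell,h)}{\ell\varphi(\ell)} + \frac{(\ell,h)\bigl(\varepsilon_a(\ell,\ell)-1\bigr)}{\ell\varphi(\ell)},
\]
so the left-hand side of the proposition splits into a \emph{main part} equal to $S_0^{\omega/}(1)$ and a \emph{correction part} supported on those squarefree~$\ell$ for which $\varepsilon_a(\ell,\ell)\ne1$. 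For the main part, Lemma~\ref{lemma main term omega of quotient} with $n=1$ immediately gives
\[
S_0^{\omega/}(1) = H^{\omega/}_a(z)\prod_q\biggl(1+\frac{z-1}{q(q-1)}\biggr)\cdot\begin{cases} 2z/(z+1),&\text{if }2\mid h,\\ 1,&\text{if }2\nmid h.\end{cases}
\]

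The remainder of the work is a case analysis of the correction part. Since $\ell$ is squarefree, $\nu_2(\ell)\le1$, so the ``$4\mid\ell'$'' branch of Proposition~\ref{proposition degree} never arises, and when $2\mid h$ one checks that $\ell'$ is always odd, collapsing several further branches. I would verify the following. \textbf{(i)}~If $2\nmid h$ and $\sgn(a)b_0\equiv1\mod{4}$, the relevant quadratic discriminant ($\mathfrak{d}(a_0)=b_0$ when $a>0$, or $\mathfrak{d}(-a_0)=-b_0$ when $a<0$) is squarefree; the contributing $\ell$ are precisely those with $2b_0\mid\ell$, each giving $\varepsilon_a(\ell,\ell)=2$, so the correction equals $S_0^{\omega/}(2b_0)$, which Lemma~\ref{lemma main term omega of quotient} evaluates to $f^{\omega/}_a(z)H^{\omega/}_a(z)\prod_q(\cdots)$, exactly matching $F^{\omega/}_a(z)=1+f^{\omega/}_a(z)$. \textbf{(ii)}~If $2\nmid h$ and $\sgn(a)b_0\not\equiv1\mod{4}$, every relevant discriminant is divisible by~$4$ and hence cannot divide a squarefree~$\ell$, so the correction vanishes, matching $F^{\omega/}_a(z)=1$. \textbf{(iii)}~If $2\mid h$ and $a>0$, the condition $2\mid\ell'$ would force $\nu_2(\ell)>\nu_2(h)\ge1$, impossible for squarefree~$\ell$, so the correction vanishes and the main part alone matches $F^{\omega/}_a(z)=2z/(z+1)$. \textbf{(iv)}~If $2\mid h$ and $a<0$, every even squarefree~$\ell$ falls in the ``$a<0$, $\ell$ even, $\ell'$ odd'' branch with $m=1$ odd, yielding $\varepsilon_a(\ell,\ell)=1/2$, so the correction equals $-\tfrac12 S_0^{\omega/}(2)$.

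The main obstacle is the algebraic bookkeeping in case~(iv), where the main and correction parts must combine to match $F^{\omega/}_a(z)=1$. Here Lemma~\ref{lemma main term omega of quotient} with $n=2$ gives $S_0^{\omega/}(2)=\frac{2(z-1)}{z+1}H^{\omega/}_a(z)\prod_q(\cdots)$, and the elementary identity $\frac{2z}{z+1}-\tfrac12\cdot\frac{2(z-1)}{z+1}=1$ produces the required cancellation. Together with the three other cases, this completes the proof.
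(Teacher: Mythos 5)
Your proposal is correct and follows essentially the same route as the paper: split off $\varepsilon_a(\ell,\ell)-1$ via the decomposition~\eqref{epsilon-1}, evaluate the unconstrained part as $S_0^{\omega/}(1)$ via Lemma~\ref{lemma main term omega of quotient}, and then reduce the correction sum $S_1$ to $S_0^{\omega/}(2b_0)$ or $-\tfrac12 S_0^{\omega/}(2)$ (or $0$) through a case analysis driven by the squarefreeness of~$\ell$ and the parity of~$h$. Your four cases (i)--(iv) correspond precisely to the paper's cases, including the key observations that $4\mid\ell'$ never happens for squarefree~$\ell$, that $\mathfrak{d}(\pm a_0)\mid\ell$ with $\ell$ squarefree forces $\sgn(a)b_0\equiv1\pmod4$, and the final identity $\tfrac{2z}{z+1}-\tfrac12\cdot\tfrac{2(z-1)}{z+1}=1$.
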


\begin{proof}
Using the notation of Lemma~\ref{lemma main term omega of quotient},
$\sum_\ell {\mu^2(\ell)(z-1)^{\omega(\ell)}}/{[\Q(\sqrt[\ell]{a},\zeta_{\ell}):\Q]} = S_0^{\omega/}(1) + S_1$ where
\begin{equation} \label{5.10 S1 def}
S_1 = \sum\limits_\ell \mu^2(\ell)(z-1)^{\omega(\ell)}\frac{(\ell,h)}{\ell \varphi(\ell)}\left(\varepsilon_a(\ell,\ell)-1\right)
\end{equation}
in the notation of Proposition~\ref{proposition degree}. By Lemma~\ref{lemma main term omega of quotient} we have simply
\[
S_0^{\omega/}(1) = 
\prod_q \frac{q^2-q+z-1}{q^2-q}
H_a^{\omega/}(z) \begin{cases}
2z/(z+1), &\text{if } 2\mid h, \\ 1, &\text{if } 2\nmid h.
\end{cases}
\]
The evaluation of~$S_1$ depends upon the sign of~$a$ and the parity of~$h$. Throughout this proof it will be helpful to remember that $4\nmid\ell$ due to the presence of the $\mu^2(\ell)$ factor.

First assume that $a>0$. By Proposition~\ref{proposition degree},
\begin{equation} \label{5.10 S_1}
S_1 = \sum\limits_{\substack{\ell\ge1 \\ 2\mid \ell'\\ \mathfrak{d}(a_0)\mid \ell}}\mu^2(\ell)(z-1)^{\omega(\ell)}\frac{(\ell,h)}{\ell\varphi(\ell)} = \id(2\nmid h) \sum\limits_{\substack{2\mid \ell\\ \mathfrak{d}(a_0)\mid \ell}}\mu^2(\ell)(z-1)^{\omega(\ell)}\frac{(\ell,h)}{\ell\varphi(\ell)}.
\end{equation}
In particular, if $a>0$ and $2\mid h$, then $S_1=0$ and therefore
\[
S_0^{\omega/}(1) + S_1 
= \frac{2z}{z+1} H_a^{\omega/}(z) \prod_q \frac{q^2-q+z-1}{q^2-q}
= F_a^{\omega/}(z) H_a^{\omega/}(z) \prod_q \frac{q^2-q+z-1}{q^2-q}
\]
as required.

We examine $S_1$ further in the case $a>0$ and $2\nmid h$.
Since $\ell$ is squarefree, the only way for $\mathfrak{d}(a_0) = \mathfrak{d}(b_0)$ to divide~$\ell$ is for $b_0\equiv1\mod{4}$; thus in this case,
\begin{align} \label{b0+}
S_1 &= \id(b_0\equiv1\mod{4}) \sum_{2b_0\mid \ell} \mu^2(\ell)(z-1)^{\omega(\ell)}\frac{(\ell,h)}{\ell\varphi(\ell)} = \id \Bigl( \sgn(a)b_0\equiv1\mod{4} \Bigr) S_0^{\omega/}(2b_0).
\end{align}
In particular, if $\sgn(a)b_0\not\equiv1\mod{4}$ then again $S_1=0$ and
\[
S_0^{\omega/}(1) + S_1 
= H_a^{\omega/}(z) \prod_q \frac{q^2-q+z-1}{q^2-q}
= F_a^{\omega/}(z) H_a^{\omega/}(z) \prod_q \frac{q^2-q+z-1}{q^2-q}
\]
as required. Finally, if $\sgn(a)b_0\equiv1\mod{4}$ then by Lemma~\ref{lemma main term omega of quotient},
\begin{align*}
S_0^{\omega/}(1) + S_1 
&= H_a^{\omega/}(z) \prod_q \frac{q^2-q+z-1}{q^2-q}
F_a^{\omega/}(z)
\end{align*}
as required.

Next, when $a<0$ and $2\nmid h$, the only case in Proposition~\ref{proposition degree} that results in $\varepsilon_a(\ell,\ell)\ne0$ is when $\ell\equiv\ell'\equiv2\mod4$ and $\mathfrak{d}(-a_0) \equiv 1\mod 4$. This argument yields the same expression for~$S_1$ as in equation~\eqref{b0+} except with~$b_0$ replaced by~$-b_0$; the analogue of the argument following equation~\eqref{b0+} verifies the proposition in this case as well.

Finally, when $a<0$ and $2\mid h$, the only case in Proposition~\ref{proposition degree} that results in $\varepsilon_a(\ell,\ell)\ne0$ is when~$\ell$ is even but~$\ell'$ is odd, in which case $\varepsilon_a(\ell,\ell)-1=-\frac12$. Therefore in this case,
\begin{align*}
S_1 &= -\frac{1}{2} \sum_{\substack{\ell\ge1\\ 2\mid \ell}} \mu^2(\ell)(z-1)^{\omega(\ell)}\frac{(\ell,h)}{\ell\varphi(\ell)} = -\frac12S_0^{\omega/}(2),
\end{align*}
and so by Lemma~\ref{lemma main term omega of quotient},
\begin{align*}
S_0^{\omega/}(1) + S_1 
&= H_a^{\omega/}(z) \frac{2z}{z+1} \prod_q \frac{q^2-q+z-1}{q^2-q}
\cdot \biggl( 1 - \frac12 \prod_{q\mid(2,h)} \biggl( 1 - \frac1{1 + \frac{z-1}{q-1}} \biggr) \biggr) \\
&= H_a^{\omega/}(z) \prod_q \frac{q^2-q+z-1}{q^2-q} \cdot
\frac{2z}{z+1} \biggl( 1 - \frac12 \biggl( 1 - \frac1{1-2(1-z)/2(2-1)} \biggr) \biggr) \\
&= H_a^{\omega/}(z) \prod_q \frac{q^2-q+z-1}{q^2-q} 
\cdot 1
\end{align*}
as required.
\end{proof}

\subsection{The statistic $\Omega\Bigl( (p-1)/\ord_p(a) \Bigr)$} \label{5.2}

The Kummer extensions relevant to this statistic now include non-squarefree values of~$\ell$, which makes their degrees somewhat more complicated. However, we are still using Proposition~\ref{proposition degree} only for $m=1$, which limits this case to one of intermediate difficulty.
Throughout we will freely use the notation from Definition~\ref{s and d def}, Notation~\ref{notation beginning}, and Definition~\ref{definition generating function Omega of quotient}.

\begin{defn} \label{definition main term Omega}
For any positive integer~$\lambda$, define
\begin{align*}
 g^\Omega(\lambda) &= 1+2(1-z^{-1})\sum\limits_{k=1}^{\lambda}z^k \frac{(2^k,h)}{2^{2k}} \\
 &= \begin{cases}
1+\dfrac{2(z-1) \Bigl( 1-(z/2)^{\lambda} \Bigr)}{2-z}, & \text{if } \lambda\leq \nu_2(h), \\
\dfrac{4z-z^2+2(z-1)\left((z-2)2^{\nu_2(h)}(z/4)^\lambda-z^{\nu_2(h)}2^{-\nu_2(h)+1}\right)}{(2-z)(4-z)}, & \text{otherwise}
 \end{cases} \\
g^\Omega(\infty) &= \lim_{\lambda\to\infty} g^\Omega(\lambda) = \frac{4z-z^2-4(z-1)(z/2)^{\nu_2(h)}}{(z-2)(z-4)}.
\end{align*}
(Note that $g^\Omega(\infty)$ appears as a denominator in equation~\eqref{secretly g(infty) Omega}.) 
\end{defn}

\begin{lemma} \label{lemma main term Omega}
Let~$\newa$ be an integer and let $\newb\in\{\alpha,\alpha+1,\alpha+2,\ldots\}\cup\{\infty\}$. For any any positive integer~$h$ and any odd squarefree integer~$\gamma$, write 
\begin{equation*}
 S^\Omega_h(\alpha,\beta,\gamma,z)=\sum\limits_{\substack{\ell\geq 1\\ \gamma\mid\ell\\ \alpha\leq\nu_2(\ell)\leq\beta}}z^{\Omega(\ell)}\Bigl(1-z^{-1}\Bigr)^{\omega(\ell)}\frac{(\ell,h)}{\ell\varphi(\ell)}.
\end{equation*}
Then
\begin{equation*}
 S^\Omega_h(\alpha,\beta,\gamma,z)=\prod\limits_{q}\left(1+\frac{q(z-1)}{(q-1)(q^2-z)}\right)\frac{H^\Omega_h(z)I^\Omega_h(\gamma,z)}{g^\Omega(\infty)} \begin{cases}
g^\Omega(\beta),&\text{if }\alpha=0,\\ g^\Omega(\beta)-g^\Omega(\alpha-1),&\text{otherwise} \end{cases}
\end{equation*}
in the notation of Definitions~\ref{definition generating function Omega of quotient} and~\ref{definition main term Omega}.
\end{lemma}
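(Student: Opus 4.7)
The plan is to exploit the multiplicativity of the summand
\[
f(\ell) := z^{\Omega(\ell)}(1-z^{-1})^{\omega(\ell)}\frac{(\ell,h)}{\ell\varphi(\ell)}
\]
in order to factor $S^\Omega_h(\alpha,\beta,\gamma,z)$ as an Euler product, then check that the resulting local factors assemble into the three pieces $H^\Omega_h(z)$, $I^\Omega_h(\gamma,z)$, and $g^\Omega$ appearing in the conclusion. Writing each $\ell$ uniquely as $\ell = 2^k m$ with $m$ odd, the constraints $\gamma\mid\ell$ (valid since $\gamma$ is odd and squarefree) and $\alpha \le \nu_2(\ell)\le \beta$ decouple completely, so that
\[
S^\Omega_h(\alpha,\beta,\gamma,z) = \Bigl(\sum_{\alpha\le k\le\beta}f(2^k)\Bigr)\Bigl(\sum_{\substack{m\ge 1\\ m\text{ odd}\\ \gamma\mid m}}f(m)\Bigr).
\]

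First I would handle the odd sum. Multiplicativity of $f$ together with the squarefreeness of $\gamma$ gives
\[
\sum_{\substack{m\ge 1\\ m\text{ odd}\\ \gamma\mid m}}f(m) = \prod_{q\mid\gamma}\Bigl(\sum_{k\ge 1}f(q^k)\Bigr)\prod_{\substack{q\text{ odd}\\ q\nmid\gamma}}\Bigl(\sum_{k\ge 0}f(q^k)\Bigr).
\]
Since $f(q^k) = z^k(1-z^{-1})(q^k,h)/(q^{2k-1}(q-1))$ for $k\ge 1$, summing the resulting geometric series for $q\nmid h$ yields $\sum_{k\ge 0}f(q^k) = 1 + q(z-1)/((q-1)(q^2-z))$, which is exactly the generic Euler factor appearing in the statement. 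For the finitely many primes $q\mid h$, the sum $\sum_{k\ge 1}$ must be split at $k=\nu_q(h)$ into a ``head'' geometric series and a ``tail'' geometric series; assembling the two pieces and taking the ratio to the generic local factor reproduces the $q$-factor of $H^\Omega_h(z)$. For each prime $q\mid\gamma$, I would pull out $f(q)$ from $\sum_{k\ge 1}f(q^k)$; the aggregate of these prefactors is $\prod_{q\mid\gamma}f(q) = (z-1)^{\omega(\gamma)}(\gamma,h)/(\gamma\varphi(\gamma))$, which is precisely the prefactor in the definition of $I^\Omega_h(\gamma,z)$. Dividing the remaining $q$-sum by the full sum $\sum_{k\ge 0}f(q^k)$ yields the rational factor under the product in $I^\Omega_h(\gamma,z)$.

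For the dyadic piece, the same splitting at $k=\nu_2(h)$ shows that the partial sums $\sum_{k=0}^\lambda f(2^k)$ match $g^\Omega(\lambda)$ as given in Definition~\ref{definition main term Omega}, so in particular $g^\Omega(\infty)$ is the complete local factor at $q=2$. Checking the two cases $2\mid h$ and $2\nmid h$, one verifies the identity
\[
g^\Omega(\infty) = \Bigl(1+\tfrac{2(z-1)}{4-z}\Bigr)\cdot(\text{$q=2$ factor of }H^\Omega_h(z)),
\]
with the convention that the $q=2$ factor is $1$ when $2\nmid h$. Consequently, dividing $\prod_q(1+q(z-1)/((q-1)(q^2-z)))\cdot H^\Omega_h(z)\cdot I^\Omega_h(\gamma,z)$ by $g^\Omega(\infty)$ strips off exactly the unconstrained dyadic factor, and multiplying back in $g^\Omega(\beta)$ (when $\alpha=0$) or the telescoped difference $g^\Omega(\beta)-g^\Omega(\alpha-1)$ (when $\alpha\ge 1$) reinserts the restricted dyadic sum $\sum_{\alpha\le k\le\beta}f(2^k)$, completing the identification.

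The main obstacle is purely computational: verifying that the explicit rational expressions in Definition~\ref{definition generating function Omega of quotient} for $H^\Omega_h(z)$ and $I^\Omega_h(\gamma,z)$, with their occurrences of $z^{\nu_q(h)}q^{-\nu_q(h)+2}$ and $z^{\nu_q(h/(h,q))+1}q^{-\nu_q(h/(h,q))+1}$, really do match the closed forms produced by the split geometric sums at each prime $q$ dividing $h$ or $\gamma$. Careful algebraic bookkeeping is needed, particularly at $q=2$, where the roles of the $q=2$ factor of $H^\Omega_h$, the dyadic sum $g^\Omega(\beta)$, and the normalizing denominator $g^\Omega(\infty)$ interact and must be tracked simultaneously; no new idea is required beyond this.
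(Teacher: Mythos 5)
Your proposal is correct and follows essentially the same route as the paper's own proof of Lemma~\ref{lemma main term Omega}: both decouple the dyadic part from the odd part by multiplicativity of $f(\ell) = z^{\Omega(\ell)}(1-z^{-1})^{\omega(\ell)}(\ell,h)/(\ell\varphi(\ell))$, identify $\sum_{0\le k\le\lambda}f(2^k)$ with $g^\Omega(\lambda)$, write the odd constrained sum as a product over odd primes (with $\sum_{k\ge1}f(q^k)$ at $q\mid\gamma$ and $\sum_{k\ge0}f(q^k)$ elsewhere), and then recognize the local factors as the generic Euler factor, $H^\Omega_h$, $I^\Omega_h$, and the normalizing $g^\Omega(\infty)$. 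The only cosmetic difference is that you form the restricted Euler product directly, whereas the paper first reparameterizes $\ell\mapsto\gamma\ell$ to pull out the prefactor $(z-1)^{\omega(\gamma)}(\gamma,h)/(\gamma\varphi(\gamma))$ and then Euler-factors the remaining unconstrained odd sum; the algebraic verification deferred at the end is the same in both.
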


\begin{proof}
First we compute the sum over odd $\ell$:
\begin{align*}
 S^\Omega_h(0,0,\gamma,z)&=\sum\limits_{\substack{\ell\geq 1\\ \gamma\mid\ell\\ 2\nmid\ell}}z^{\Omega(\ell)}\Bigl(1-z^{-1}\Bigr)^{\omega(\ell)}\frac{(l,h)}{l\varphi(\ell)}\\
 &=\frac{z^{\Omega(\gamma)}\Bigl(1-z^{-1}\Bigr)^{\omega(\gamma)}(\gamma,h)}{\gamma\varphi(\gamma)}\sum\limits_{\substack{\ell\geq 1\\ 2\nmid\ell}}z^{\Omega(\ell)}\Bigl(1-z^{-1}\Bigr)^{\omega(\ell)-\omega((\ell,\gamma))}\frac{(\ell,\frac{h}{(h,\gamma)})}{\ell \frac{\varphi(\gamma \ell)}{\varphi(\gamma)}}\\
 &=\frac{(z-1)^{\omega(\gamma)}(\gamma,h)}{\gamma\varphi(\gamma)}\prod\limits_{\substack{q\geq 3\\ q\nmid \gamma}}\biggl(1+\frac{(z-1)q}{z(q-1)}\sum\limits_{k=1}^{\infty}\frac{z^k(q^k,h)}{q^{2k}}\biggr)\prod\limits_{\substack{q\geq 3\\ q\mid \gamma}}\biggl(1+\sum\limits_{k=1}^{\infty}\frac{z^k(q^k,\frac{h}{(h,\gamma)})}{q^{2k}}\biggr)\\
  &=\frac{(z-1)^{\omega(\gamma)}(\gamma,h)}{\gamma\varphi(\gamma)g^\Omega(\infty)}\prod\limits_{\substack{q}}\biggl(1+\frac{(z-1)q}{z(q-1)}\sum\limits_{k=1}^{\infty}\frac{z^k(q^k,h)}{q^{2k}}\biggr)\prod\limits_{\substack{q\geq 3\\ q\mid \gamma}} \frac{1+\sum\limits_{k=1}^{\infty}\frac{z^k(q^k,\frac{h}{(h,\gamma)})}{q^{2k}}}{1+\frac{(z-1)q}{z(q-1)}\sum\limits_{k=1}^{\infty}\frac{z^k(q^k,h)}{q^{2k}}} \\
    &=\prod\limits_{q}\biggl(1+\frac{q(z-1)}{(q-1)(q^2-z)}\biggr)\frac{H^\Omega_h(z)I^\Omega_h(\gamma,z)}{g^\Omega(\infty)}.
\end{align*}
Then taking into account the $2$-adic valuation of~$\ell$ in general,
\begin{align*}
S^\Omega_h(\alpha,\beta,\gamma,z) = \sum_{k=\alpha}^\beta S^\Omega_h(k,k,\gamma,z) &= \sum_{k=\alpha}^\beta S^\Omega_h(0,0,\gamma,z) z^k (1-z^{-1}) \frac{(2^k,h)}{2^{2k-1}} \\
&= S^\Omega_h(0,0,\gamma,z) \begin{cases}
g^\Omega(\beta),&\text{if }\alpha=0,\\ g^\Omega(\beta)-g^\Omega(\alpha-1),&\text{otherwise.} \end{cases}
\qedhere
\end{align*}
\end{proof}

The following proposition immediately implies Theorem~\ref{theorem generating function Omega of quotient} when combined with Proposition~\ref{proposition all done but the main terms}.

\begin{prop}\label{proposition main term Omega}
Let $a\in\Q\setminus\{-1,0,1\}$. For $|z|\le1$,
\begin{equation*}
\sum\limits_{\ell\geq 1} \frac{z^{\Omega(\ell)}(1-z^{-1})^{\omega(\ell)}}{[\Q(\sqrt[\ell]{a},\zeta_{\ell}):\Q]} = H^\Omega_h(z)F^\Omega_a(h,z)\prod\limits_{q}\biggl(1+\frac{q(z-1)}{(q-1)(q^2-z)}\biggl)
\end{equation*}
in the notation of Definition~\ref{definition generating function Omega of quotient}.
\end{prop}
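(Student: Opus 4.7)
The plan is to imitate the proof of Proposition~\ref{proposition main term omega of quotient}, using equation~\eqref{epsilon-1} with $m=1$ to split
$$\frac1{[\Q(\sqrt[\ell]a,\zeta_\ell):\Q]}=\frac{(\ell,h)}{\ell\varphi(\ell)}+\frac{(\ell,h)}{\ell\varphi(\ell)}\bigl(\varepsilon_a(\ell,\ell)-1\bigr),$$
and write the left-hand side of the claimed identity as $T_0+T_1$, with $T_j$ the contribution of the $j$th summand. Lemma~\ref{lemma main term Omega} applied with $(\alpha,\beta,\gamma)=(0,\infty,1)$ yields (since $I_h^\Omega(1,z)=1$)
$$T_0=S_h^\Omega(0,\infty,1,z)=H_h^\Omega(z)\prod_q\biggl(1+\frac{q(z-1)}{(q-1)(q^2-z)}\biggr),$$
so it suffices to prove
$$T_1=\bigl(F_a^\Omega(h,z)-1\bigr)\,H_h^\Omega(z)\prod_q\biggl(1+\frac{q(z-1)}{(q-1)(q^2-z)}\biggr).$$

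To evaluate $T_1$ I would run through the five bullets of Proposition~\ref{proposition degree} (specialized to $m=1$). In every case where $\varepsilon_a(\ell,\ell)\neq 1$, the conditions restrict $\nu_2(\ell)$ to an explicit interval $[\alpha,\beta]$ (determined by $\nu_2(h)$ together with one of the 2-adic conditions $2\mid\ell'$, $\ell'\equiv 2\pmod 4$, or $4\mid\ell'$) and restrict the odd part of $\ell$ to be divisible by the odd squarefree kernel $\gamma=b_0/(2,b_0)$ of $a_0$ (arising from $\mathfrak d(\pm a_0)\mid\ell$ or $\mathfrak d(2a_0)\mid\ell$). Each sub-case of $T_1$ therefore equals $c\cdot S_h^\Omega(\alpha,\beta,\gamma,z)$ for an explicit constant $c\in\{1,-\tfrac12\}$, which Lemma~\ref{lemma main term Omega} evaluates as
$$c\cdot\bigl(g^\Omega(\beta)-g^\Omega(\alpha-1)\bigr)\cdot\frac{H_h^\Omega(z)I_h^\Omega(\gamma,z)}{g^\Omega(\infty)}\prod_q\biggl(1+\frac{q(z-1)}{(q-1)(q^2-z)}\biggr).$$

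Concretely, for $a>0$ only the first bullet of Proposition~\ref{proposition degree} contributes (with $c=1$), and summing the three sub-cases indexed by $b_0\bmod 4$ reproduces the $\tau_+^\Omega(a,z)$ piece in~\eqref{secretly g(infty) Omega}. For $a<0$ the ``$\ell$ even, $\ell'$ odd'' bullet (with $c=-\tfrac12$ and $\gamma=1$) produces the $(1-(z/2)^{\nu_2(h)})/(z-2)$ summand via $S_h^\Omega(1,\nu_2(h),1,z)$, while the bullets $\ell'\equiv 2\pmod 4$ and $4\mid\ell'$ jointly produce the $\tau_-^\Omega(a,z)$ piece in exact parallel to the $a>0$ analysis. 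The common prefactor $(z-1)(z-2)(z-4)/[4z-z^2-4(z-1)(z/2)^{\nu_2(h)}]$ appearing in~\eqref{secretly g(infty) Omega} equals $(z-1)/g^\Omega(\infty)$ by Definition~\ref{definition main term Omega}, which matches the $1/g^\Omega(\infty)$ that comes out of Lemma~\ref{lemma main term Omega} once one pulls an additional $1/(4-z)$ or $1/(z-2)$ out of the telescoping difference $g^\Omega(\beta)-g^\Omega(\alpha-1)$.

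The main obstacle is purely bookkeeping: for each sub-case one must correctly identify $(\alpha,\beta,\gamma,c)$ and then verify that the resulting $g^\Omega(\beta)-g^\Omega(\alpha-1)$ collapses to the intended case of $\tau_\pm^\Omega(a,z)$. This collapse is tractable because $g^\Omega(\lambda)$ has two clean closed forms (one for $\lambda\le\nu_2(h)$ and one for $\lambda>\nu_2(h)$), so each telescoping difference simplifies after a cancellation of $(z-2)$ to an expression of the shape $(z-1)(z/2)^{\alpha-1}/(4-z)$ times an elementary rational function of $z$; matching these against the piecewise definitions of $\tau_\pm^\Omega$ in Definition~\ref{definition generating function Omega of quotient} completes the verification.
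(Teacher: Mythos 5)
Your proposal follows the paper's proof essentially verbatim: the split via $\varepsilon_a(\ell,\ell)-1$ into $T_0+T_1$ (the paper's $S_h^\Omega(0,\infty,1,z)+S_1^\Omega$), the evaluation of each piece by Lemma~\ref{lemma main term Omega} with appropriate $(\alpha,\beta,\gamma)$, the observation that for $a>0$ only one bullet of Proposition~\ref{proposition degree} is relevant, and the further split by $\nu_2(\ell)$ for $a<0$ (the paper's $S_{1,1}^\Omega,S_{1,2}^\Omega,S_{1,3}^\Omega$) are all exactly the paper's route, and your identifications of $c\in\{1,-\tfrac12\}$, $\gamma=b_0/(2,b_0)$, and the prefactor $(z-1)/g^\Omega(\infty)$ are correct. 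The only thing left unexecuted is the final case-by-case collapse of the telescoping differences $g^\Omega(\beta)-g^\Omega(\alpha-1)$ into $\tau_\pm^\Omega(a,z)$, which you correctly characterize as mechanical and which the paper carries out in full.
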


\begin{proof}
In the notation of Proposition~\ref{proposition degree}, define
\[
S^\Omega_1=\sum\limits_{\ell\geq 1}\left(\varepsilon_a(\ell,\ell)-1\right) \frac{z^{\Omega(\ell)}(1-z^{-1})^{\omega(\ell)}(\ell,h)}{\ell\varphi(\ell)},
\]
so that
\begin{align*}
\sum\limits_{\ell\geq 1} \frac{z^{\Omega(\ell)}(1-z^{-1})^{\omega(\ell)}}{[\Q(\sqrt[\ell]{a},\zeta_{\ell}):\Q]} &= \sum\limits_{\ell\geq 1} \frac{z^{\Omega(\ell)}(1-z^{-1})^{\omega(\ell)}(\ell,h)}{\ell\varphi(\ell)} + S^\Omega_1 \\
&= S^\Omega_h(0,\infty,1,z) + S_1 = H^\Omega_h(z) \prod_q \biggl( 1+\frac{q(z-1)}{(q-1)(q^2-z)} \biggr) + S^\Omega_1
\end{align*}
by Lemma~\ref{lemma main term Omega}, since $I^\Omega_h(1,z) = 1$. The first term on the right-hand side accounts for the initial~$1$ in the definition~\eqref{secretly g(infty) Omega} of $F_a^\Omega(h,z)$; we now show that~$S^\Omega_1$ accounts for the complicated remainder of that definition.

We first consider the case $a>0$. In this case,
\begin{align*}
 S^\Omega_1&=\sum\limits_{\substack{\ell\geq 1\\ \nu_2(\ell)>\nu_2(h)\\ \mathfrak{d}(a_0)\mid\ell}} \frac{z^{\Omega(\ell)}(1-z^{-1})^{\omega(\ell)}(\ell,h)}{\ell\varphi(\ell)}\\
 &=S^\Omega_h(\max(\nu_2(h)+1,\eta),\infty,\gamma,z)\\
 &=\prod\limits_{q}\left(1+\frac{q(z-1)}{(q-1)(q^2-z)}\right)\frac{H^\Omega_h(z)I^\Omega_h(\gamma,z)}{g^\Omega(\infty)} \begin{cases}
g^\Omega(\infty)-g^\Omega(\eta-1),&\text{if }\eta> \nu_2(h)+1,\\g^\Omega(\infty)-g^\Omega(\nu_2(h)),& \text{otherwise}\end{cases}\\
 &=\prod\limits_{q}\left(1+\frac{q(z-1)}{(q-1)(q^2-z)}\right)\frac{H^\Omega_h(z)I^\Omega_h(\gamma,z)(z-1)}{g^\Omega(\infty)(4-z)} \begin{cases}
2^{-2\eta+\nu_2(h)+3}z^{\eta-1},&\text{if }\eta> \nu_2(h)+1,\\ 2^{-\nu_2(h)+1}z^{\nu_2(h)},& \text{otherwise}\end{cases}\\
 &=\prod\limits_{q}\left(1+\frac{q(z-1)}{(q-1)(q^2-z)}\right)\frac{H^\Omega_h(z)I^\Omega_h(\gamma,z)(z-1)}{g^\Omega(\infty)(4-z)}\tau^\Omega_+(a,z).
\end{align*}

Assume now that $a<0$. We split $S^\Omega_1$ into three parts according to the $2$-adic valuation of $\ell$, to fit the different cases of Proposition~\ref{proposition degree}:
 \begin{align*}
  S^\Omega_{1,1}&=\sum\limits_{\substack{\ell\ge1\\\nu_2(\ell)\leq\nu_2(h)}}\left(\varepsilon_a(\ell,\ell)-1\right) \frac{z^{\Omega(\ell)}(1-z^{-1})^{\omega(\ell)}(\ell,h)}{\ell\varphi(\ell)}\\
  S^\Omega_{1,2}&=\sum\limits_{\substack{\ell\ge1\\\nu_2(\ell)=\nu_2(h)+1}}\left(\varepsilon_a(\ell,\ell)-1\right) \frac{z^{\Omega(\ell)}(1-z^{-1})^{\omega(\ell)}(\ell,h)}{\ell\varphi(\ell)}\\
  S^\Omega_{1,3}&=\sum\limits_{\substack{\ell\ge1\\\nu_2(\ell)\geq\nu_2(h)+2}}\left(\varepsilon_a(\ell,\ell)-1\right) \frac{z^{\Omega(\ell)}(1-z^{-1})^{\omega(\ell)}(\ell,h)}{\ell\varphi(\ell)}.
 \end{align*}
 Starting with $S^\Omega_{1,1}$, we use Proposition~\ref{proposition degree} to write
 \begin{align*}
  S^\Omega_{1,1}&=\frac{-\mathbbm{1}(2\mid h)}{2}\sum\limits_{\substack{\ell\ge1\\1\leq \nu_2(\ell)\leq\nu_2(h)}}\frac{z^{\Omega(\ell)}(1-z^{-1})^{\omega(\ell)}(\ell,h)}{\ell\varphi(\ell)}\\
  &=\frac{-\mathbbm{1}(2\mid h)}{2}S^\Omega_h(1,\nu_2(h),1,z)\\
  &=\frac{-\mathbbm{1}(2\mid h)}{2}\prod\limits_{q}\left(1+\frac{q(z-1)}{(q-1)(q^2-z)}\right)\frac{H^\Omega_h(z)I^\Omega_h(1,z)}{g^\Omega(\infty)}(g^\Omega(\nu_2(h))-1)\\
  &=-\prod\limits_{q}\left(1+\frac{q(z-1)}{(q-1)(q^2-z)}\right)\frac{H^\Omega_h(z)}{g^\Omega(\infty)}(z-1)\left(\frac{1-(z/2)^{\nu_2(h)}}{2-z}\right)
 \end{align*}
 by Lemma~\ref{lemma main term Omega}.
 Similarly, recalling that $\gamma=b_0/(2,b_0)$, Proposition~\ref{proposition degree} implies
\begin{align*}
S^\Omega_{1,2}&=\sum\limits_{\substack{\ell\ge1\\\nu_2(\ell)=\nu_2(h)+1}}\left(\varepsilon_a(\ell,\ell)-1\right) \frac{z^{\Omega(\ell)}(1-z^{-1})^{\omega(\ell)}(\ell,h)}{\ell\varphi(\ell)}\\
  &=\begin{cases}     S^\Omega_h(1,1,\gamma,z),&\text{if } 2\nmid h \text{ and }b_0\equiv 3\mod{4},\\    S^\Omega_h(2,2,\gamma,z),&\text{if } 2\mid\mid h \text{ and }b_0\equiv 2\mod{4},\\      0,&\text{otherwise} \end{cases} \\
  &=\prod\limits_{q}\left(1+\frac{q(z-1)}{(q-1)(q^2-z)}\right)\frac{H^\Omega_h(z)I^\Omega_h(\gamma,z)(z-1)}{g^\Omega(\infty)} \begin{cases}     1/2,&\text{if } 2\nmid h \text{ and }b_0\equiv 3\mod{4},\\    z/4,&\text{if } 2\mid\mid h \text{ and }b_0\equiv 2\mod{4},\\      0,&\text{otherwise.} \end{cases}
 \end{align*}
 Finally,
  \begin{align*}
  S^\Omega_{1,3}&=\sum\limits_{\substack{\ell\ge1\\\nu_2(\ell)\geq\nu_2(h)+2\\ 2^{\nu_2(\mathfrak{d}(a_0))}\mid\ell\\ \gamma\mid \ell}} \frac{z^{\Omega(\ell)}(1-z^{-1})^{\omega(\ell)}(\ell,h)}{\ell\varphi(\ell)}\\
    &=\begin{cases}S^\Omega_h(3,\infty,\gamma,z),&\text{if }2\nmid h \text{ and }b_0\equiv 2\mod{4},\\ S^\Omega_h\Bigl(\nu_2(h)+2,\infty,\gamma,z\Bigr),&\text{otherwise}\end{cases}\\
    &=\prod\limits_{q}\left(1+\frac{q(z-1)}{(q-1)(q^2-z)}\right)\frac{H^\Omega_h(z)I^\Omega_h(\gamma,z)(z-1)}{g^\Omega(\infty)(4-z)} \begin{cases}2^{-3}z^2,&\text{if }2\nmid h \text{ and }b_0\equiv 2\mod{4},\\ 2^{-\nu_2(h)-1}z^{\nu_2(h)+1},&\text{otherwise.}\end{cases}
 \end{align*}
 Adding and regrouping these evaluations results in
 \begin{align*}
  S^\Omega_1&=\prod\limits_{q}\left(1+\frac{q(z-1)}{(q-1)(q^2-z)}\right)\frac{H^\Omega_h(z)(z-1)}{g^\Omega(\infty)}\left(\frac{1-(z/2)^{\nu_2(h)}}{z-2}+\frac{I^\Omega_h(\gamma,z)}{4-z}\tau^\Omega_-(a,z)\right),
 \end{align*}
 which completes the proof of the proposition.
\end{proof}

\subsection{The statistic $\omega(p-1) - \omega\Bigl( \ord_p(a) \Bigr)$} \label{5.3}

This statistic is the most complicated to address, since the cyclotomic-Kummer extensions involved now require the full strength of Proposition~\ref{proposition degree} for all values of~$\ell$ and~$m$.
Throughout we will freely use the notation from Definition~\ref{s and d def}, Notation~\ref{notation beginning}, and Definition~\ref{definition generating function omega minus omega}. Since this is the last case to consider, we decline to include $\omega-$ superscripts in the new notation we introduce along the way.

\begin{defn} \label{definition main term omega of minus}
For any positive integer~$\lambda$, define
\begin{align*}
g(\lambda) &= 1+(z-1)\sum_{k=1}^\lambda \frac{(2^k,h)}{2^{2k}} = \begin{cases}
1+(z-1)\Bigl(1-\frac13(2^{1-\nu_2(h)}+2^{\nu_2(h)-2\lambda}) \Bigr), 
&\text{if }\nu_2(h)<\lambda,\\
1+(z-1)(1-2^{-\lambda}), &\text{if }\nu_2(h)\geq\lambda
\end{cases} \\
g(\infty) &= \lim_{\lambda\to\infty} g(\lambda) = 1+(z-1)( 1-2^{1-\nu_2(h)}/3).
\end{align*}
(Note that $g(\infty)$ appears as a denominator in equation~\eqref{secretly g(infty)}.) Also define
\[
G_h(\newa,\newb,\eta,\kappa,z)=\frac{1}{g(\infty)} \begin{cases}
(-1)^\kappa g(\newb)+\kappa, &\text{if }\eta=\newa=0,\\
(-1)^\kappa (g(\newb)-g(\newa-1)), &\text{if }\eta\leq \newa \text{ and }\newa>0,\\
g(\eta-2)-g(\eta-1)+(-1)^\kappa \left(g(\newb)-g(\eta-1)\right), &\text{if }\eta> \newa \text{ and }\eta\leq \newb+1, \\
0, &\text{otherwise.}
\end{cases}
\]
\end{defn}

\begin{lemma} \label{lemma main term omega of minus}
Let~$\newa$ be an integer and let $\newb\in\{\alpha,\alpha+1,\alpha+2,\ldots\}\cup\{\infty\}$. Fix $\eta\in\{0,2,3\}$ and $\kappa\in\{0,1\}$. For any any positive integer~$h$ and any odd squarefree integer~$\gamma$, write 
\begin{align*}
 S_h(\newa,\newb,\gamma,\eta,\kappa,z)=\sum\limits_{\substack{\ell \ge1\\ \gamma\mid\ell\\ \newa\leq\nu_2(\ell)\leq \newb}}(z-1)^{\omega(\ell)}\frac{(\ell,h)}{\ell\varphi(\ell)} \sum_{\substack{m\mid \ell\\ 2^\kappa\mid m\\ \frac{2^{\eta}}{(2^\eta,\ell)}\mid m}}\frac{\mu(m)}{m}
\end{align*}
Then 
\begin{equation*}
  S_h(\newa,\newb,\gamma,\eta,\kappa,z)=\prod\limits_{\substack{q}}\biggl(1+\frac{z-1}{q^2-1}\biggr)H_h^{\omega-}(z)I_h^{\omega-}(\gamma,z)G_h(\newa,\newb,\eta,\kappa,z)
\end{equation*}
in the notation of Definitions~\ref{definition generating function omega minus omega} and~\ref{definition main term omega of minus}.
\end{lemma}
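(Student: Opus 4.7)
The plan is to separate the double sum into a $2$-adic factor and an odd factor. First I would observe that the inner Möbius sum over $m$ factors across primes $q\mid\ell$, since the divisibility constraint $m\mid\ell$ respects the prime decomposition and squarefreeness (forced by $\mu(m)$) restricts each local exponent to $\{0,1\}$. At every odd prime $q\mid\ell$ the local factor collapses to $1-1/q$, contributing $\varphi(\ell_1)/\ell_1$, where $\ell_1$ denotes the odd part of $\ell$. At $q=2$ the local factor depends only on $\lambda:=\nu_2(\ell)$, $\eta$, and $\kappa$: writing $\sigma=\max(\kappa,\,\eta-\lambda,\,0)$, the contribution equals $1$ when $\sigma=0=\lambda$, equals $1/2$ when $\sigma=0<\lambda$, equals $-1/2$ when $\sigma=1\le\lambda$, and vanishes otherwise.

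Writing $\ell=2^\lambda\ell_1$, I would combine this $\varphi(\ell_1)/\ell_1$ with the denominator $\ell\varphi(\ell)$ to collapse the odd-prime portion of the summand to $(\ell_1,h)/\ell_1^2$. For $\lambda\ge 1$ the remaining $2$-adic piece $(z-1)(2^\lambda,h)/2^{2\lambda-1}$ times the local $2$-factor rewrites, via the telescoping identity $g(\lambda)-g(\lambda-1)=(z-1)(2^\lambda,h)/2^{2\lambda}$, as $\pm(g(\lambda)-g(\lambda-1))$. The sum then factors as
\[
S_h(\alpha,\beta,\gamma,\eta,\kappa,z)=T(\gamma)\cdot\Theta(\alpha,\beta,\eta,\kappa,z),
\]
where $T(\gamma)=\sum_{\ell_1\text{ odd},\,\gamma\mid\ell_1}(z-1)^{\omega(\ell_1)}(\ell_1,h)/\ell_1^2$ and $\Theta$ is a finite signed combination of these telescoping differences, together with an isolated $\lambda=0$ term when $\alpha=0$.

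To evaluate $T(\gamma)$ I would expand as an Euler product over odd primes. A direct computation shows $\sum_{k\ge 1}(q^k,h)/q^{2k}=1/(q^2-1)$ when $q\nmid h$ and $(q+1-q^{-\nu_q(h)+1})/(q^2-1)$ when $q\mid h$; splitting primes according to whether they divide $\gamma$, the resulting local factors match precisely those in $H_h^{\omega-}(z)$ and $I_h^{\omega-}(\gamma,z)$ at odd primes. To handle $q=2$, I would check that the $q=2$ factor of $H_h^{\omega-}(z)$ equals $3g(\infty)/(z+2)$ when $2\mid h$, and that the same expression equals $1$ when $2\nmid h$ (using $g(\infty)=(z+2)/3$ in that case); this identifies the missing $q=2$ contribution as $g(\infty)^{-1}$ times the $q=2$ factor of $\prod_q(1+(z-1)/(q^2-1))H_h^{\omega-}(z)$. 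Altogether,
\[
T(\gamma)=\frac{1}{g(\infty)}\prod_q\biggl(1+\frac{z-1}{q^2-1}\biggr)H_h^{\omega-}(z)\,I_h^{\omega-}(\gamma,z).
\]

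The main obstacle will be matching $\Theta$ against the four cases defining $g(\infty)G_h(\alpha,\beta,\eta,\kappa,z)$; this is a delicate but straightforward case analysis using the $2$-adic values tabulated above. When $\eta=\alpha=0$, the $\lambda=0$ term contributes $1-\kappa$ while the telescoping block contributes $(-1)^\kappa(g(\beta)-g(0))$, summing to $(-1)^\kappa g(\beta)+\kappa$. When $\eta\le\alpha$ and $\alpha>0$, every $\lambda$ in the summation range satisfies $\sigma=\kappa$ (since $\eta-\lambda\le 0$), yielding uniform sign $(-1)^\kappa$ and telescoping to $(-1)^\kappa(g(\beta)-g(\alpha-1))$. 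When $\eta>\alpha$ and $\eta\le\beta+1$, contributions vanish for $\lambda<\eta-1$, the boundary value $\lambda=\eta-1$ contributes $-(g(\eta-1)-g(\eta-2))$ with sign forced to $-1$ by $\sigma=1$, and $\lambda\ge\eta$ contributes $(-1)^\kappa(g(\beta)-g(\eta-1))$; in the remaining range of $\eta$ every term vanishes. Dividing by $g(\infty)$ then matches each branch of $G_h$ as defined, completing the proof.
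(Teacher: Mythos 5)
Your proposal is correct and follows essentially the same structure as the paper's proof: evaluate the inner M\"obius sum multiplicatively, absorb the odd-prime Euler factors into $\varphi(\ell_1)/\ell_1$, use the geometric series $\sum_{k\ge1}(q^k,h)/q^{2k}=(q+1-q^{1-\nu_q(h)})/(q^2-1)$ to identify the odd part with $H_h^{\omega-}I_h^{\omega-}$ (modulo the $q=2$ factor, which you correctly account for via the $g(\infty)$ normalization), and telescope the $2$-adic sum using $g(\lambda)-g(\lambda-1)=(z-1)(2^\lambda,h)/2^{2\lambda}$. The one organizational difference is that the paper introduces the intermediate function $M(\lambda)=\sum_{\gamma\mid\ell,\,\nu_2(\ell)\le\lambda}(z-1)^{\omega(\ell)}\frac{(\ell,h)}{\ell\varphi(\ell)}\prod_{q\mid\ell}(1-\tfrac1q)$ and expresses $S_h$ as a signed combination of its values before evaluating $M(\lambda)=g(\lambda)T(\gamma)$ by Euler products, whereas you factor $S_h=T(\gamma)\cdot\Theta$ at the outset and evaluate each factor independently. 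Since $M(\lambda)=g(\lambda)T(\gamma)$, these are equivalent; your route avoids the bookkeeping of carrying $M$ through the case analysis, at the modest cost of having to argue explicitly that the odd part of the sum is indeed independent of $\lambda$ once $\lambda\ge 1$. As a minor remark: the paper's intermediate display writes $M(\alpha)$ where it should (by the telescoping and by the definition of $G_h$) read $M(\alpha-1)$; your version has this correct, and the paper's final formula is also correct.
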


\begin{proof}
If $\eta-\newb\geq 2$ then trivially $S_h(\newa,\newb,\gamma,\eta,\kappa,z)=G_h(\newa,\newb,\eta,\kappa,z)=0$. We may therefore assume that $\eta-\newb\leq 1$, and we write $\theta(\ell)=\max\{\kappa, \eta-\nu_2(\ell)\}$.

First we compute the sum over divisors of $\ell$:
\begin{align*}
 \sum_{\substack{m\mid \ell\\ 2^\kappa\mid m\\ \frac{2^{\eta}}{(2^\eta,\ell)}\mid m}}\frac{\mu(m)}{m} = \sum_{\substack{m\mid \ell\\ 2^{\theta(\ell)}\mid m}}\frac{\mu(m)}{m}&= (-1)^{\theta(\ell)}\mathbbm{1}\Bigl( \theta(\ell)\leq 1 \Bigr)\mathbbm{1}\Bigl(2^{\theta(\ell)}\mid \ell\Bigr)\prod\limits_{\substack{q\mid \ell}}\biggr(1-\frac{1}{q}\biggr).
\end{align*}
Now $\theta(\ell)>1$ if $\eta-\nu_2(\ell)>1$, so we split our problem according to whether $\eta\leq \newa$ or $\newa<\eta\leq \newb+1$.
First, if $\eta\leq \newa$, then
\begin{align*}
  S_h(\newa,\newb,\gamma,\eta,\kappa,z)&=(-1)^{\kappa}\sum\limits_{\substack{\ell\ge1\\ \gamma\mid\ell \\ \newa\leq\nu_2(\ell)\leq \newb}}\mathbbm{1}(2^{\kappa}\mid \ell)(z-1)^{\omega(\ell)}\frac{(\ell,h)}{\ell\varphi(\ell)}\prod\limits_{\substack{q\mid \ell}}\biggl(1-\frac{1}{q}\biggr)\\
  &=(-1)^{\kappa}\sum\limits_{\substack{\ell\ge1\\ \gamma\mid\ell \\ \newa\leq\nu_2(\ell)\leq \newb}}(z-1)^{\omega(\ell)}\frac{(\ell,h)}{\ell\varphi(\ell)}\prod\limits_{\substack{q\mid \ell}}\biggl(1-\frac{1}{q}\biggr)\\
  &\qquad{}+\mathbbm{1}\Bigl(\kappa=1,\newa=0\Bigr)\sum\limits_{\substack{\ell\ge1\\ \gamma\mid\ell \\ 2\nmid \ell}}(z-1)^{\omega(\ell)}\frac{(\ell,h)}{\ell\varphi(\ell)}\prod\limits_{\substack{q\mid \ell}}\biggl(1-\frac{1}{q}\biggr).
\end{align*}
On the other hand, if $\eta> \newa$ and $\eta \leq \newb+1$, then
\begin{align*}
  S_h(\newa,\newb,\gamma,\eta,\kappa,z)&=\sum\limits_{\substack{\ell\ge1 \\ \gamma\mid\ell\\ \newa\leq\nu_2(\ell)\leq \newb}}(z-1)^{\omega(\ell)}\frac{(\ell,h)}{\ell\varphi(\ell)}(-1)^{\theta(\ell)}\mathbbm{1}\Bigl(2^{\theta(\ell)}\mid \ell\Bigr)\prod\limits_{\substack{q\mid \ell}}\biggl(1-\frac{1}{q}\biggr)\\
  &=-\mathbbm{1}(\eta\geq2)\sum\limits_{\substack{\ell\ge1 \\ \gamma\mid\ell\\ \nu_2(\ell)=\eta-1}}(z-1)^{\omega(\ell)}\frac{(\ell,h)}{\ell\varphi(\ell)}\prod\limits_{\substack{q\mid \ell}}\biggl(1-\frac{1}{q}\biggr)\\
  &\qquad{}+\mathbbm{1}(\eta\leq \newb)(-1)^{\kappa}\sum\limits_{\substack{\ell\ge1 \\ \gamma\mid\ell\\ \eta\leq\nu_2(\ell)\leq \newb}}(z-1)^{\omega(\ell)}\frac{(\ell,h)}{\ell\varphi(\ell)}\prod\limits_{\substack{q\mid \ell}}\biggl(1-\frac{1}{q}\biggr).
  \end{align*}
In both cases, the sum can be expressed in terms of the function
\begin{align*}
 M(\lambda)&=\sum\limits_{\substack{\ell\ge1\\ \gamma\mid\ell \\ \nu_2(\ell)\leq \lambda}}(z-1)^{\omega(\ell)}\frac{(\ell,h)}{\ell\varphi(\ell)}\prod\limits_{\substack{q\mid \ell}}\biggl(1-\frac{1}{q}\biggr)\\
 &=\biggl(1+(z-1)\sum\limits_{k=1}^\lambda \frac{(2^k,h)}{2^{2k}}\biggr)\frac{(z-1)^{\omega(\gamma)}(\gamma,h)}{\gamma^2}\sum\limits_{\substack{\ell\ge1 \\2\nmid\ell}}(z-1)^{\omega(\ell)-\omega((\ell,\gamma))}\frac{(\ell,\frac{h}{(h,\gamma)})}{\ell^2}\\
  &=g(\lambda)\frac{(z-1)^{\omega(\gamma)}(\gamma,h)}{\gamma^2}\prod_{q\nmid 2\gamma}\biggl(1+(z-1)\sum\limits_{k=1}^{\infty}\frac{(q^k,h)}{q^{2k}}\biggr)\prod_{q\mid \gamma}\biggl(1+\sum\limits_{k=1}^{\infty}\frac{(q^k,\frac{h}{(h,q)})}{q^{2k}}\biggr);
\end{align*}
more precisely,
\begin{align*}
 S_h(\newa,\newb,\gamma,\eta,\kappa,z)&=\begin{cases}
(-1)^\kappa M(\newb)+\kappa M(0),&\text{if }\eta=\newa=0,\\
(-1)^\kappa (M(\newb)-M(\newa)),&\text{if } \eta\leq \newa \text{ and } \newa>0,\\
-(M(\eta-1)-M(\eta-2))+(-1)^\kappa (M(\newb)-M(\eta-1)),&\text{if } \eta> \newa \text{ and } \eta\leq \newb+1,\\
0,&\text{otherwise}
\end{cases} \\
&=\frac{(z-1)^{\omega(\gamma)}(\gamma,h)}{\gamma^2g(\infty)}\prod\limits_{\substack{q}}\left(1+(z-1)\sum\limits_{k=1}^{\infty}\frac{(q^k,h)}{q^{2k}}\right)\\
 &\qquad{}\times\prod\limits_{\substack{q\mid \gamma}}\left[\left(1+(z-1)\sum\limits_{k=1}^{\infty}\frac{(q^k,h)}{q^{2k}}\right)^{-1}\left(1+\sum\limits_{k=1}^{\infty}\frac{(q^k,\frac{h}{(h,q)})}{q^{2k}}\right)\right]\\
 &\qquad{}\times\begin{cases}
(-1)^\kappa g(\newb)+\kappa, &\text{if }\eta=\newa=0,\\
(-1)^\kappa (g(\newb)-g(\newa-1)),&\text{if }\eta\leq \newa \text{ and }\newa>0,\\
g(\eta-2)-g(\eta-1)+(-1)^\kappa \left(g(\newb)-g(\eta-1)\right),&\text{if }\eta> \newa \text{ and }\eta\leq \newb+1,\\
0,&\text{otherwise.}
\end{cases}
\end{align*}
After calculating the eventually geometric series
\begin{align*}
 \sum\limits_{k=1}^{\infty}\frac{(p^k,h)}{p^{2k}}&=\frac{p+1-p^{-\nu_p(h)+1}}{p^2-1},
\end{align*}
we obtain
\begin{align*}
 S_0(\newa,\newb,\gamma,\eta,\kappa,z)&=\frac{(z-1)^{\omega(\gamma)}(\gamma,h)}{\gamma g(\infty)}\prod\limits_{\substack{q}}\biggl(1+\frac{z-1}{q^2-1}\biggr)\prod\limits_{\substack{q|h}}\left(\frac{q^2-1+(z-1)(q+1-q^{-\nu_q(h)+1})}{q^2+z-2}\right)\\
 &\qquad{}\times\prod\limits_{\substack{q\mid \gamma}}\left(\frac{q+1-q^{-\nu_q(h/(h,q))}}{q^2-1+(z-1)(q+1-q^{-\nu_q(h)+1})}\right)\\
 &\qquad{}\times\begin{cases}
                (-1)^\kappa g(\newb)+\kappa, &\text{if }\eta=\newa=0,\\
                (-1)^\kappa (g(\newb)-g(\newa-1)),&\text{if }\eta\leq \newa \text{ and }\newa>0,\\
                g(\eta-2)-g(\eta-1)+(-1)^\kappa \left(g(\newb)-g(\eta-1)\right),&\text{if }\eta> \newa \text{ and }\eta\leq \newb+1,\\
                0,&\text{otherwise}
               \end{cases}
\end{align*}
which completes the proof of the lemma.
\end{proof}

The following proposition immediately implies Theorem~\ref{theorem generating function omega minus omega} when combined with Proposition~\ref{proposition all done but the main terms}.

\begin{prop}\label{proposition main term omega minus omega}
Let $a\in\Q\setminus\{-1,0,1\}$. For $|z|\le1$,
\[ \sum\limits_{\ell\ge1}(z-1)^{\omega(\ell)}\frac{(\ell,h)}{\ell\varphi(\ell)}\sum\limits_{m\mid \ell}\varepsilon_a(m\ell,\ell)\frac{\mu(m)}{m}=\prod\limits_{q}\biggl(1+\frac{z-1}{q^2-1}\biggr)H_h^{\omega-}(z)F_a^{\omega-}(h,z),\]
in the notation of Definition~\ref{definition generating function omega minus omega}.
\end{prop}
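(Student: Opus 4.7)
My plan is to mirror the strategies used in Propositions~\ref{proposition main term omega of quotient} and~\ref{proposition main term Omega}: write $\varepsilon_a(m\ell,\ell) = 1 + \bigl( \varepsilon_a(m\ell,\ell)-1 \bigr)$ and split the double sum as $T_0 + T_1$. The ``$1$'' part equals $S_h(0,\infty,1,0,0,z)$ in the notation of Lemma~\ref{lemma main term omega of minus}, since $\sum_{m\mid\ell}\mu(m)/m$ is exactly the inner sum when $\eta=\kappa=0$. The lemma then gives
\[
T_0 = \prod_q \Bigl( 1+\tfrac{z-1}{q^2-1} \Bigr)\, H_h^{\omega-}(z),
\]
using $I_h^{\omega-}(1,z)=1$ and the direct check $G_h(0,\infty,0,0,z) = g(\infty)/g(\infty) = 1$, which recovers the leading ``$1$'' inside $F_a^{\omega-}(h,z)$.

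For the correction $T_1$, the key step is to translate the cases of Proposition~\ref{proposition degree} into parameters for Lemma~\ref{lemma main term omega of minus}. Two elementary observations drive this: for an odd squarefree integer $c$, the condition $c\mid m\ell$ is equivalent to $c\mid\ell$ (since $m\mid\ell$); and the $2$-adic condition $2^\eta\mid m\ell$ is precisely $2^\eta/(2^\eta,\ell)\mid m$, matching the template in the definition of $S_h$. Together with the observation that the conditions on parities of $\ell$ and $\ell' = \ell/(\ell,h)$ become bounds of the form $\alpha\le\nu_2(\ell)\le\beta$, every non-vanishing subcase of $\varepsilon_a(m\ell,\ell)-1$ fits the hypotheses of the lemma. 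For $a>0$ only the first case of Proposition~\ref{proposition degree} contributes, giving $T_1 = S_h(\nu_2(h)+1,\infty,\gamma,\eta_+,0,z)$ with $(\gamma,\eta_+)$ equal to $(b_0,0)$, $(b_0,2)$, or $(b_0/2,3)$ according as $b_0 \equiv 1, 3, 2 \pmod 4$. For $a<0$, following the decomposition used in Proposition~\ref{proposition main term Omega}, I would split $T_1$ into three subsums indexed by $\nu_2(\ell)\le\nu_2(h)$, $\nu_2(\ell)=\nu_2(h)+1$, and $\nu_2(\ell)\ge\nu_2(h)+2$, which correspond bijectively to the three cases of Proposition~\ref{proposition degree} that produce nonzero $\varepsilon_a-1$ (with values $-\tfrac12$ for the first and $1$ for the other two); the first subsum further requires a small inclusion--exclusion in the $\kappa$ parameter to isolate the $2\nmid m$ condition.

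Applying Lemma~\ref{lemma main term omega of minus} to each piece produces the common prefactor $\prod_q(1+(z-1)/(q^2-1))\, H_h^{\omega-}(z)\, I_h^{\omega-}(\gamma,z)$ (with $I_h^{\omega-}(1,z)=1$ in subcases where no $\gamma$-constraint arises) times an explicit rational function of $z$ coming from $G_h$. The remaining work is algebraic: verify that the sum of these pieces, weighted by the corresponding values of $\varepsilon_a-1$, simplifies to $(z-1)/g(\infty)$ times the bracketed expressions in the definition~\eqref{secretly g(infty)} of $F_a^{\omega-}(h,z)$ for each sign of $a$. The principal obstacle is bookkeeping rather than mathematical novelty: tracking the telescoping differences among the values $g(\lambda)$ across the four $b_0\bmod 4$ subcases that feed into $\tau_-^{\omega-}(a)$ in the $a<0$ analysis, and confirming that the boundary $\nu_2(h)=0$ is absorbed automatically (through the factor $1-2^{-\nu_2(h)}$ and its cousins appearing in $g$). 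All the analytic content resides in Lemma~\ref{lemma main term omega of minus}; the rest is careful case analysis.
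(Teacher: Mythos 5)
Your proposal follows the paper's proof essentially verbatim: split via $\varepsilon_a = 1 + (\varepsilon_a - 1)$, recognize the ``$1$'' part as $S_h(0,\infty,1,0,0,z)$ with $G_h(0,\infty,0,0,z)=1$, and feed each nonvanishing branch of $\varepsilon_a(m\ell,\ell)-1$ from Proposition~\ref{proposition degree} into Lemma~\ref{lemma main term omega of minus}, translating $\mathfrak d(\cdot)\mid m\ell$ into the $(\gamma,\eta)$ parameters and using the $\kappa$-inclusion--exclusion to handle $2\nmid m$. The one spot where your outline undersells the work is the $a<0$, $\nu_2(\ell)=\nu_2(h)+1$ stratum: the relevant case of Proposition~\ref{proposition degree} has three distinct subconditions involving three different discriminants $\mathfrak d(a_0)$, $\mathfrak d(-a_0)$, $\mathfrak d(2a_0)$ (hence three separate $\eta$-parameters, as in the paper's $S_{1,2,1},S_{1,2,2},S_{1,2,3}$), with the latter two active only when $2\nmid h$ or $2\parallel h$ respectively; this is not just bookkeeping on a single subsum but a further three-way split, each requiring its own $\kappa$-inclusion--exclusion. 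With that amplification your plan reproduces the paper's argument.
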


\begin{proof}
In the notation of Proposition~\ref{proposition degree}, define
\[
S_1 = \sum\limits_{\ell\ge1}(z-1)^{\omega(\ell)}\frac{(\ell,h)}{\ell\varphi(\ell)}\sum\limits_{m\mid \ell}(\varepsilon_a(m\ell,\ell)-1)\frac{\mu(m)}{m},
\]
so that
\begin{align*}
\sum_{\ell=1}^\infty (z-1)^{\omega(\ell)}\sum\limits_{m\mid \ell} \frac{\mu(m)}{[\Q\left(\sqrt[\ell]{a},\zeta_{m\ell}\right):\Q]} &= \sum\limits_{\ell\ge1}(z-1)^{\omega(\ell)}\frac{(\ell,h)}{\ell\varphi(\ell)}\sum\limits_{m\mid \ell}\frac{\mu(m)}{m} + S_1 \\
&= S_h(0,\infty,1,0,0,z) + S_1 = H_h^{\omega-}(z) \prod\limits_{\substack{q}}\biggl(1+\frac{z-1}{q^2-1}\biggr) + S_1
\end{align*}
by Lemma~\ref{lemma main term omega of minus}, since $I^{\omega-}_h(1,z) = 1$. The first term on the right-hand side accounts for the initial~$1$ in the definition~\eqref{secretly g(infty)} of $F_a^\Omega(h,z)$; we now show that~$S_1$ accounts for the complicated remainder of that definition.

First assume $a>0$. By Proposition~\ref{proposition degree},~$S_1$ is nonzero only when $2\mid\ell'$ and $\mathfrak{d}(a_0)\mid m\ell$. Write $\mathfrak{d}(a_0)=2^\eta \gamma$ where~$\gamma$ is odd and squarefree. Then
 \begin{align*}
  S_1&=S_h(\nu_2(h)+1,\infty,\gamma,\eta,0,z)=\prod\limits_{\substack{q}}\biggl(1+\frac{z-1}{q^2-1}\biggr)H_h^{\omega-}(z)I_h^{\omega-}(\gamma,z)G_h(\nu_2(h)+1,\infty,\eta,0,z)\\
  &=\prod\limits_{\substack{q}}\biggl(1+\frac{z-1}{q^2-1}\biggr)\frac{H_h^{\omega-}(z)I_h^{\omega-}(\gamma,z)}{g(\infty)}
  \begin{cases}
g(\infty)-g(\nu_2(h)),&\text{if }\eta\leq \nu_2(h)+1,\\
g(\eta-2)-2g(\eta-1)+ g(\infty),&\text{if }\eta> \nu_2(h)+1
\end{cases}\\
    &=\prod\limits_{\substack{q}}\biggl(1+\frac{z-1}{q^2-1}\biggr)\frac{H_h^{\omega-}(z)I_h^{\omega-}(\gamma,z)(z-1)}{g(\infty)}
  \begin{cases}
{2^{-\nu_2(h)}}/{3},&\text{if }\eta\leq \nu_2(h)+1,\\
\displaystyle\sum\limits_{k=\eta}^\infty \frac{(2^k,h)}{2^{2k}}-\frac{(2^{\eta-1},h)}{2^{2\eta-2}},&\text{if }\eta> \nu_2(h)+1
\end{cases}\\
      &=\prod\limits_{\substack{q}}\biggl(1+\frac{z-1}{q^2-1}\biggr)\frac{H_h^{\omega-}(z)I_h^{\omega-}(\gamma,z)(z-1)}{g(\infty)}
  \begin{cases}
{2^{-\nu_2(h)}}/{3},&\text{if }\eta\leq \nu_2(h)+1,\\
-{2^{\nu_2(h)-2\eta+3}}/{3},&\text{if }\eta> \nu_2(h)+1
\end{cases}\\
        &=\prod\limits_{\substack{q}}\biggl(1+\frac{z-1}{q^2-1}\biggr)\frac{H_h^{\omega-}(z)I_h^{\omega-}(\gamma,z)(z-1)}{3g(\infty)}
  \begin{cases}
-2^{-1},&\text{if }b_0\equiv 3\mod{4} \text{ and }2\nmid h,\\
-2^{-3},&\text{if }b_0\equiv 2\mod{4} \text{ and }2\nmid h,\\
-2^{-2},&\text{if }b_0\equiv 2\mod{4} \text{ and }2\mid\mid h,\\
2^{-\nu_2(h)},&\text{otherwise.}
\end{cases}
 \end{align*}
 
 Assume now that $a<0$. We split $S_1 = S_{1,1} + S_{1,2} + S_{1,3}$ into three parts according to the $2$-adic valuation of $\ell$, to fit the different cases of Proposition~\ref{proposition degree}:
 \begin{align*}
  S_{1,1}&=\sum\limits_{\substack{\ell\ge1\\\nu_2(\ell)\leq\nu_2(h)}}(z-1)^{\omega(\ell)}\frac{(\ell,h)}{\ell\varphi(\ell)}\sum\limits_{m\mid \ell}(\varepsilon_a(m\ell,\ell)-1)\frac{\mu(m)}{m}\\
  S_{1,2}&=\sum\limits_{\substack{\ell\ge1\\\nu_2(\ell)=\nu_2(h)+1}}(z-1)^{\omega(\ell)}\frac{(\ell,h)}{\ell\varphi(\ell)}\sum\limits_{m\mid \ell}(\varepsilon_a(m\ell,\ell)-1)\frac{\mu(m)}{m}\\
  S_{1,3}&=\sum\limits_{\substack{\ell\ge1\\\nu_2(\ell)\geq\nu_2(h)+2}}(z-1)^{\omega(\ell)}\frac{(\ell,h)}{\ell\varphi(\ell)}\sum\limits_{m\mid \ell}(\varepsilon_a(m\ell,\ell)-1)\frac{\mu(m)}{m}.
 \end{align*}
 Starting with $S_{1,1}$, we use Proposition~\ref{proposition degree} to write:
 \begin{align*}
  S_{1,1}&=\frac{-1}{2}\sum\limits_{\substack{\ell\ge1\\ 2\mid\ell \\\nu_2(\ell)\leq\nu_2(h)}}(z-1)^{\omega(\ell)}\frac{(\ell,h)}{\ell\varphi(\ell)}\sum\limits_{\substack{m\mid \ell\\ 2\nmid m}}\frac{\mu(m)}{m}\\
  &=\frac{-\mathbbm{1}(2\mid h)}{2}\left(S_h(1,\nu_2(h),1,0,0,z)-S_h(1,\nu_2(h),1,0,1,z)\right)\\
  &=\prod\limits_{\substack{q}}\biggl(1+\frac{z-1}{q^2-1}\biggr)\frac{H_h^{\omega-}(z)(z-1)}{g(\infty)}(2^{-\nu_2(h)}-1).
 \end{align*}

Next we turn to $S_{1,2}$. Note that $\gamma={b_0}/{(2,b_0)}$ is an odd squarefree number. Choose $\eta_+,\eta_-,\eta_2\in\{0,2,3\}$ such that $\mathfrak{d}(a_0)=2^{\eta_+}\gamma$, $\mathfrak{d}(-a_0)=2^{\eta_-}\gamma$, and $\mathfrak{d}(2a_0)=2^{\eta_2}\gamma$, where $\mathfrak{d}$ is as in Definition~\ref{s and d def}.
Then, according to Proposition~\ref{proposition degree}, we can split $S_{1,2}$ into three parts as follows:
\begin{align*}
S_{1,2,1}&=\sum\limits_{\substack{\ell\ge1\\ \gamma\mid\ell\\\nu_2(\ell)=\nu_2(h)+1}}(z-1)^{\omega(\ell)}\frac{(\ell,h)}{\ell\varphi(\ell)}\sum\limits_{\substack{m\mid \ell\\ 2\mid m\\ \frac{2^{\eta_+}}{(2^{\eta_+},\ell)}\mid m}}\frac{\mu(m)}{m},\\
S_{1,2,2}&=\sum\limits_{\substack{\ell\ge1\\ \gamma\mid\ell\\\nu_2(\ell)=\nu_2(h)+1\\ \nu_2(\ell)=1}}(z-1)^{\omega(\ell)}\frac{(\ell,h)}{\ell\varphi(\ell)}\sum\limits_{\substack{m\mid \ell\\ 2\nmid m\\ \frac{2^{\eta_-}}{(2^{\eta_-},\ell)}\mid m}}\frac{\mu(m)}{m},\\
S_{1,2,3}&=\sum\limits_{\substack{\ell\ge1\\ \gamma\mid\ell\\\nu_2(\ell)=\nu_2(h)+1\\ \nu_2(\ell)=2}}(z-1)^{\omega(\ell)}\frac{(\ell,h)}{\ell\varphi(\ell)}\sum\limits_{\substack{m\mid \ell\\ 2\nmid m\\ \frac{2^{\eta_2}}{(2^{\eta_2},\ell)}\mid m}}\frac{\mu(m)}{m}.
\end{align*}
These sums can be computed using Lemma~\ref{lemma main term omega of minus}. For $S_{1,2,1}$, we obtain
\begin{align*}
 S_{1,2,1}&=S_h(\nu_2(h)+1,\nu_2(h)+1,\gamma,\eta_+,1,z)\\
 &=\prod\limits_{\substack{q}}\biggl(1+\frac{z-1}{q^2-1}\biggr)H_h^{\omega-}(z)I_h^{\omega-}(\gamma,z)G_h(\nu_2(h)+1,\nu_2(h)+1,\eta_+,1,z)\\
 &=\prod\limits_{\substack{q}}\biggl(1+\frac{z-1}{q^2-1}\biggr)\frac{H_h^{\omega-}(z)I_h^{\omega-}(\gamma,z)(z-1)}{g(\infty)}\begin{cases}
- 2^{-\nu_2(h)-2},&\text{if }\eta_+\leq \nu_2(h)+2,\\
0,&\text{otherwise}\\
 \end{cases}\\
  &=-\mathbbm{1}(\nu_2(h)\geq \eta_+-2)\prod\limits_{\substack{q}}\biggl(1+\frac{z-1}{q^2-1}\biggr)\frac{H_h^{\omega-}(z)I_h^{\omega-}(\gamma,z)(z-1)}{2^{\nu_2(h)+2}g(\infty)}.
\end{align*}
For $S_{1,2,2}$ we obtain
\begin{align*}
 S_{1,2,2}&=\mathbbm{1}(2\nmid h)\left(S_h(1,1,\gamma,\eta_-,0,z)-S_h(1,1,\gamma,\eta_-,1,z)\right)\\
 &=\mathbbm{1}(2\nmid h)\prod\limits_{\substack{q}}\biggl(1+\frac{z-1}{q^2-1}\biggr)H_h^{\omega-}(z)I_h^{\omega-}(\gamma,z)\left(G_h(1,1,\eta_-,0,z)-G_h(1,1,\eta_-,1,z)\right)\\
  &=\mathbbm{1}(2\nmid h)\prod\limits_{\substack{q}}\biggl(1+\frac{z-1}{q^2-1}\biggr)\frac{H_h^{\omega-}(z)I_h^{\omega-}(\gamma,z)}{g(\infty)}\begin{cases}
\frac{z-1}{2},&\text{if }\eta_-\leq 1,\\
0,&\text{otherwise}
\end{cases}\\
  &=\mathbbm{1}(2\nmid h,\eta_-=0)\prod\limits_{\substack{q}}\biggl(1+\frac{z-1}{q^2-1}\biggr)\frac{H_h^{\omega-}(z)I_h^{\omega-}(\gamma,z)(z-1)}{2g(\infty)}.
\end{align*}
Finally, for $S_{1,2,3}$ we obtain
\begin{align*}
S_{1,2,3}&=\mathbbm{1}(2\mid\mid h)\left(S_h(2,2,\gamma,\eta_2,0,z)-S_h(2,2,\gamma,\eta_2,1,z)\right)\\
 &=\mathbbm{1}(2\mid\mid h)\prod\limits_{\substack{q}}\biggl(1+\frac{z-1}{q^2-1}\biggr)H_h^{\omega-}(z)I_h^{\omega-}(\gamma,z)\left(G_h(2,2,\eta_2,0,z)-G_h(2,2,\eta_2,1,z)\right)\\
 &=\mathbbm{1}(2\mid\mid h)\prod\limits_{\substack{q}}\biggl(1+\frac{z-1}{q^2-1}\biggr)\frac{H_h^{\omega-}(z)I_h^{\omega-}(\gamma,z)}{g(\infty)}\begin{cases}
\frac{z-1}{4},&\text{if }\eta_2\leq 2,\\
0,&\text{otherwise}
  \end{cases}\\
 &=\mathbbm{1}(2\mid\mid h,\eta_2\leq 2)\prod\limits_{\substack{q}}\biggl(1+\frac{z-1}{q^2-1}\biggr)\frac{H_h^{\omega-}(z)I_h^{\omega-}(\gamma,z)(z-1)}{4g(\infty)}.
\end{align*}
Adding these last three evaluations together yields
\begin{equation*}
 S_{1,2}=\prod\limits_{\substack{q}}\biggl(1+\frac{z-1}{q^2-1}\biggr)\frac{H_h^{\omega-}(z)I_h^{\omega-}(\gamma,z)(z-1)}{g(\infty)}\begin{cases}
 0,&\text{if }\eta_+=3,\ 2\nmid h,\\
 2^{-3},&\text{if }\eta_+=3,\ 2\mid\mid h,\\
  2^{-2},&\text{if }\eta_+=2,\ 2\nmid h,\\
-2^{-\nu_2(h)-2},&\text{otherwise.}
  \end{cases}
\end{equation*}

Last, we compute that
\begin{align*}
 S_{1,3}&=\sum\limits_{\substack{\ell\ge1\\ \gamma\mid\ell\\ \nu_2(\ell)\geq\nu_2(h)+2}}(z-1)^{\omega(\ell)}\frac{(\ell,h)}{\ell\varphi(\ell)}\sum\limits_{\substack{m\mid \ell\\ \frac{2^{\eta_+}}{(2^{\eta_+},\ell)}\mid m}}\frac{\mu(m)}{m}=S_h(\nu_2(h)+2,\infty,\gamma,\eta_+,0,z)\\
 &=\prod\limits_{\substack{q}}\biggl(1+\frac{z-1}{q^2-1}\biggr)\frac{H_h^{\omega-}(z)I_h^{\omega-}(\gamma,z)}{g(\infty)}\begin{cases}
g(\infty)-2g({\eta_+}-1)+g({\eta_+}-2),&\text{if }\eta_+> \nu_2(h)+2,\\
g(\infty)-g(\nu_2(h)+1),&\text{if }\eta_+\leq \nu_2(h)+2
\end{cases}\\
 &=\prod\limits_{\substack{q}}\biggl(1+\frac{z-1}{q^2-1}\biggr)\frac{H_h^{\omega-}(z)I_h^{\omega-}(\gamma,z)(z-1)}{g(\infty)}\begin{cases}
\sum\limits_{k=3}^\infty {2^{=2k}}-2^{-4},&\text{if }\eta_+=3 \text{ and }2\nmid h,\\
2^{\nu_2(h)}\sum\limits_{k=\nu_2(h)+2}^\infty {2^{-2k}},&\text{otherwise}
\end{cases}\\
 &=\prod\limits_{\substack{q}}\biggl(1+\frac{z-1}{q^2-1}\biggr)\frac{H_h^{\omega-}(z)I_h^{\omega-}(\gamma,z)(z-1)}{g(\infty)}\begin{cases}
-2^{-3}/3,&\text{if }\eta_+=3 \text{ and }2\nmid h,\\
{2^{-\nu_2(h)-2}}/{3},&\text{otherwise.}
\end{cases}
\end{align*}
In total, we have found that
\begin{equation*}
 S_1=\prod\limits_{\substack{q}}\biggl(1+\frac{z-1}{q^2-1}\biggr)\frac{H_h^{\omega-}(z)(z-1)}{g(\infty)}\biggl(2^{-\nu_2(h)}-1+\frac{I_h^{\omega-}(\gamma,z)}{3}\tau_-^{\omega-}(a,z)\biggr),
\end{equation*}
where
\begin{align*}
\tau_-^{\omega-}(a,z)&=3\times\begin{cases}
-2^{-\nu_2(h)-2}+{2^{-\nu_2(h)-2}}/{3},&\text{if }b_0\equiv1\mod{4}\\
2^{-2}+{2^{-2}}/{3},&\text{if }b_0\equiv3\mod{4} \text{ and }2\nmid h\\
-2^{-\nu_2(h)-2}+{2^{-\nu_2(h)-2}}/{3},&\text{if }b_0\equiv3\mod{4} \text{ and }2\mid h\\
\frac{-1}{3\times 2^3},&\text{if }b_0\equiv2\mod{4} \text{ and }2\nmid h\\
2^{-3}+{2^{-3}}/{3},&\text{if }b_0\equiv2\mod{4} \text{ and }2\mid\mid h\\
-2^{-\nu_2(h)-2}+{2^{-\nu_2(h)-2}}/{3},&\text{if }b_0\equiv2\mod{4} \text{ and }4\mid h\\
 \end{cases}\\
 &=\begin{cases}
1,&\text{if }b_0\equiv3\mod{4} \text{ and }2\nmid h\\
-2^{-3},&\text{if }b_0\equiv2\mod{4} \text{ and }2\nmid h\\
2^{-1},&\text{if }b_0\equiv2\mod{4} \text{ and }2\mid\mid h\\
-2^{-\nu_2(h)-1},&\text{otherwise.}
 \end{cases}
\end{align*}
This calculation completes the proof of the proposition.
\end{proof}

\section{Numerical values} \label{numerical section}

Theorems~\ref{theorem generating function omega of quotient}, \ref{theorem generating function Omega of quotient}, and~\ref{theorem generating function omega minus omega} establish the densities of the sets of primes for which $\ord_p(a)$ has a particular relationship with $p-1$.
For example, $D_a^{\omega/}(n)$ is the density of the set of primes~$p$ with $\omega((p-1)/\ord_p(a)) = n$; in light of
equations~\eqref{defining coefficients} and~\eqref{definition_D_a(n)}, we can restate 
Theorem~\ref{theorem generating function omega of quotient} as
\[
\sum_{n=0}^\infty D_a^{\omega/}(n) z^n = F^{\omega/}_a(z) H^{\omega/}_a(z) \prod_q \biggl(1+\frac{z-1}{q(q-1)}\biggr).
\]
In particular, this identity provides us with a way to extract information about the individual densities $D_a^{\omega/}(n)$ from the formula for their generating function on the right-hand side.
The first two factors are simply rational functions of~$z$ that depend on~$a$, while the third factor is an infinite product that takes some attention to understand. We therefore define ``uncorrected'' coefficients in Notation~\ref{f P D notation} below so that, for example,
\[
\sum_{n=0}^\infty D^{\omega/}(n) z^n = \prod_q \biggl(1+\frac{z-1}{q(q-1)}\biggr)
\]
and concentrate on them initially.

In Section~\ref{subsection Marcus Lai} we provide explicit recursive formulas for these uncorrected coefficients. In Section~\ref{subsection numerical data} we use these recursive formulas to calculate several examples of numerical values, both for the uncorrected coefficients and the true coefficients such as $D_a^{\omega/}(n)$. In
Section~\ref{subsection non-recursive} we record a non-recursive formula for the uncorrected coefficients, in the hope that readers might find it useful to adapt to other situations.

\subsection{Derivatives of infinite products and recursive formulas for their coefficients} \label{subsection Marcus Lai}

This approach we take in this section is based on observations of Marcus Lai (private communication), and a version of it appears in~\cite{EM} as well.

\begin{prop} \label{recursive general}
Let $P(z)$ be any function with Maclaurin series
\[
P(z) = \sum_{n=0}^\infty C(n) z^n,
\]
so that $C(n) = P^{(n)}(0)/n!$ for every $n\ge0$. Define
\[
S(0,z) = \log P(z) \quad\text{and}\quad S(n,z) = \frac{d^n}{dz^n} S(0,z)
\]
for all $n\ge1$; and define $S(n) = S(n,0)$ and $\tilde S(n) = S(n)/(n-1)!$. Then for any $n\ge1$,
\begin{align*}
P^{(n)}(z) &= \sum_{k=0}^{n-1} \binom{n-1}k P^{(k)}(z) S(n-k,z).
\end{align*}
In particular, for $n\ge1$,
\[
C(n) = \frac1n \sum_{k=0}^{n-1} C(k) \tilde S(n-k).
\]
\end{prop}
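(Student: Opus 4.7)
The plan is to derive the identity by starting with the logarithmic derivative and iterating via the general Leibniz rule. Specifically, since $S(0,z)=\log P(z)$, differentiating once yields $S(1,z)=P'(z)/P(z)$, which rearranges to the product formula
\[
P'(z)=P(z)\,S(1,z).
\]
This is the seed identity. I would then apply the general Leibniz rule to the $(n-1)$st derivative of both sides of $P'(z)=P(z)S(1,z)$. Since the $j$th derivative of $S(1,z)$ is by definition $S(j+1,z)$, this gives
\[
P^{(n)}(z)=\sum_{k=0}^{n-1}\binom{n-1}{k}P^{(k)}(z)\,S(n-k,z),
\]
which is the first claim of the proposition.

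For the recursive formula for $C(n)$, the next step is simply to evaluate the above identity at $z=0$ and divide by $n!$. Using $C(k)=P^{(k)}(0)/k!$ and the definition $\tilde S(m)=S(m)/(m-1)!$, I would rewrite each term $\binom{n-1}{k}P^{(k)}(0)S(n-k)/n!$ by expanding the binomial coefficient and cancelling factorials:
\[
\frac{1}{n!}\binom{n-1}{k}P^{(k)}(0)\,S(n-k)=\frac{(n-1)!}{n!\,k!\,(n-1-k)!}\cdot k!\,C(k)\cdot (n-k-1)!\,\tilde S(n-k)=\frac{1}{n}\,C(k)\,\tilde S(n-k),
\]
since $(n-k-1)!=(n-1-k)!$ and $(n-1)!/n!=1/n$. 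Summing over $k$ from $0$ to $n-1$ yields the claimed recursion
\[
C(n)=\frac{1}{n}\sum_{k=0}^{n-1}C(k)\,\tilde S(n-k).
\]

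There is no real obstacle here: the proof is a direct application of the Leibniz rule together with careful factorial bookkeeping, and the only subtlety is keeping the indexing of $S(n-k,z)$ consistent with the convention that $S(1,z)$ is already the first derivative of $S(0,z)$. I would make this index shift explicit to avoid off-by-one confusion, and would note at the outset that $P(z)$ is tacitly assumed to have $P(0)\ne0$ so that $\log P(z)$ is analytic near $0$ and all derivatives $S(n,0)$ are well defined; this hypothesis is satisfied in every application to our generating functions because their constant terms are nonzero positive densities.
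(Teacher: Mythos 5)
Your proof is correct and follows essentially the same route as the paper: establish the seed identity $P'(z)=P(z)S(1,z)$, then apply the Leibniz rule (the paper phrases this as ``using the product rule $n-1$ times in a row'') and specialize at $z=0$. Your explicit factorial bookkeeping for the second identity and the remark that $P(0)\neq 0$ is implicitly required are both sound and slightly more detailed than the paper's terse presentation, but there is no substantive difference in approach.
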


\begin{proof}
We first verify that
\[
P'(z) = P(z)\frac{P'(z)}{P(z)} = P(z) \frac d{dz} \log P(z) = P(z) \frac d{dz} S(0,z) = P(z) S(1,z),
\]
which is the case $n=1$ of the first identity. The general case of the first identity now follows from using the product rule $n-1$ times in a row on this initial identity $P'(z) = P(z) S(1,z)$. The second identity follows by plugging in $z=0$ to the first and using the definition $C(n) = \frac1{n!} P^{(n)}(0)$.
\end{proof}

We now apply this result to the (uncorrected) infinite products appearing in Theorems~\ref{theorem generating function omega of quotient}, \ref{theorem generating function Omega of quotient}, and~\ref{theorem generating function omega minus omega}. We caution the reader not to conflate the functions we now define, such as $f_{\omega/}(q,z)$, with earlier similarly named functions such as $f^{\omega/}(z)$ from Definition~\ref{definition generating function omega of quotient}.

\begin{notn} \label{f P D notation}
Define
\[
f_\Omega(q,z) = \frac{q^3-q^2-q+z}{(q-1)(q^2-z)}
\quad\text{and}\quad
f_{\omega/}(q,z) = \bigg( 1 + \frac{z-1}{q(q-1)} \bigg)
\quad\text{and}\quad
f_{\omega-}(q,z) = \bigg( 1 + \frac{z-1}{q^2-1} \bigg).
\]
Then the infinite products appearing in Theorems~\ref{theorem generating function omega of quotient}, \ref{theorem generating function Omega of quotient}, and~\ref{theorem generating function omega minus omega} are
\[
P_\Omega(z) = \prod_q f_\Omega(q,z)
\quad\text{and}\quad
P_{\omega/}(z) = \prod_q f_{\omega/}(q,z)
\quad\text{and}\quad
P_{\omega-}(z) = \prod_q f_{\omega-}(q,z).
\]
We are interested in the coefficients of their Maclaurin series, which we name by
\[
P_\Omega(z) = \sum_{n=0}^\infty D^\Omega(n) z^n
\quad\text{and}\quad
P_{\omega/}(z) = \sum_{n=0}^\infty D^{\omega/}(n) z^n
\quad\text{and}\quad
P_{\omega-}(z) = \sum_{n=0}^\infty D^{\omega-}(n) z^n,
\]
so that
\[
D^\Omega(n) = \frac1{n!} P^{(n)}_\Omega(z)
\quad\text{and}\quad
D^{\omega/}(n) = \frac1{n!} P^{(n)}_{\omega/}(z)
\quad\text{and}\quad
D^{\omega-}(n) = \frac1{n!} P^{(n)}_{\omega-}(z).
\]
(These Maclaurin coefficients are closely related to the ones defined in equation~\eqref{defining coefficients}, except that those prior coefficients also include the effect of the correction factors depending on~$a$.)
Note that
\begin{equation} \label{D at 0}
D^{\omega-}(0) = \prod_q \biggl( 1 - \frac1{q^2-1} \biggr),
\quad\text{and that }
D^\Omega(0) = D^{\omega/}(0) = \prod_q \biggl( 1 - \frac1{q(q-1)} \biggr)
\end{equation}
both equal Artin's constant; Moree~\cite{MoreeWebpage} provides precise numerical approximation of these constants.
\end{notn}

We derive recursive formulas (Proposition~\ref{calc the cs} below) for $D^\Omega(n)$, $D^{\omega/}(n)$, and $D^{\omega-}(n)$ for all $n\ge1$, given in terms of certain quantities $\tilde S_\Omega(n)$, $\tilde S_{\omega/}(n)$, and $\tilde S_{\omega-}(n)$ defined in equation~\eqref{tilde S defns} and evaluated in Lemma~\ref{S values}.

\begin{notn} \label{notation including S tilde}
Define
\[
g_\Omega(0,q,z) = \log f_\Omega(q,z)
\quad\text{and}\quad
g_{\omega/}(0,q,z) = \log f_{\omega/}(q,z)
\quad\text{and}\quad
g_{\omega-}(0,q,z) = \log f_{\omega-}(q,z),
\]
and for $n\ge1$ define
\[
g_\Omega(n,q,z) = \frac{d^n}{dz^n} g_\Omega(0,q,z)
\quad\text{and}\quad
g_{\omega/}(n,q,z) = \frac{d^n}{dz^n} g_{\omega/}(0,q,z)
\quad\text{and}\quad
g_{\omega-}(n,q,z) = \frac{d^n}{dz^n} g_{\omega-}(0,q,z).
\]
Then define $g_\Omega(n,q) = g_\Omega(n,q,0)$ and $g_{\omega/}(n,q) = g_{\omega/}(n,q,0)$ and $g_{\omega-}(n,q) = g_{\omega-}(n,q,0)$.

Further define
\[
S_\Omega(n,z) = \sum_q g_\Omega(n,q,z)
\quad\text{and}\quad
S_{\omega/}(n,z) = \sum_q g_{\omega/}(n,q,z)
\quad\text{and}\quad
S_{\omega-}(n,z) = \sum_q g_{\omega-}(n,q,z),
\]
so that $S_\Omega(0,z) = \log P_\Omega(z)$ and $S_{\omega/}(0,z) = \log P_{\omega/}(z)$ and $S_{\omega-}(0,z) = \log P_{\omega-}(z)$, and also
\[
S_\Omega(n,z) = \frac{d^n}{dz^n} S_\Omega(0,z)
\quad\text{and}\quad
S_{\omega/}(n,z) = \frac{d^n}{dz^n} S_{\omega/}(0,z)
\quad\text{and}\quad
S_{\omega-}(n,z) = \frac{d^n}{dz^n} S_{\omega-}(0,z).
\]
Finally, define $S_\Omega(n) = S_\Omega(n,0) = \sum_q g_\Omega(n,q)$ and $S_{\omega/}(n) = S_{\omega/}(n,0) = \sum_q g_{\omega/}(n,q)$ and $S_{\omega-}(n) = S_{\omega-}(n,0) = \sum_q g_{\omega-}(n,q)$; and define
\begin{equation} \label{tilde S defns}
\tilde S_\Omega(n) = \frac1{(n-1)!} S_\Omega(n)
\quad\text{and}\quad
\tilde S_{\omega/}(n) = \frac1{(n-1)!} S_{\omega/}(n)
\quad\text{and}\quad
\tilde S_{\omega-}(n) = \frac1{(n-1)!} S_{\omega-}(n).
\end{equation}
\end{notn}

\begin{prop} \label{calc the cs}
For any $n\ge1$,
\begin{align*}
P^{(n)}_\Omega(z) &= \sum_{k=0}^{n-1} \binom{n-1}k P^{(k)}_\Omega(z) S_\Omega(n-k,z) \\
P^{(n)}_{\omega/}(z) &= \sum_{k=0}^{n-1} \binom{n-1}k P^{(k)}_{\omega/}(z) S_{\omega/}(n-k,z) \\
P^{(n)}_{\omega-}(z) &= \sum_{k=0}^{n-1} \binom{n-1}k P^{(k)}_{\omega-}(z) S_{\omega-}(n-k,z).
\end{align*}
In particular, for $n\ge1$,
\[
D^\Omega(n) = \frac1n \sum_{k=0}^{n-1} D^\Omega(k) \tilde S_\Omega(n-k)
\text{ and }
D^{\omega/}(n) = \frac1n \sum_{k=0}^{n-1} D^{\omega/}(k) \tilde S_{\omega/}(n-k)
\text{ and }
D^{\omega-}(n) = \frac1n \sum_{k=0}^{n-1} D^{\omega-}(k) \tilde S_{\omega-}(n-k).
\]
\end{prop}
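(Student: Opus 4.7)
The plan is to invoke Proposition~\ref{recursive general} three times---once with $P = P_\Omega$, once with $P = P_{\omega/}$, and once with $P = P_{\omega-}$---and then verify that the quantities $S(n,z)$ produced by that proposition coincide with the sums over primes $S_\Omega(n,z)$, $S_{\omega/}(n,z)$, $S_{\omega-}(n,z)$ introduced in Notation~\ref{notation including S tilde}.

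For this verification, taking logarithms converts each infinite product into a sum over primes, e.g.
\[
\log P_\Omega(z) = \sum_q \log f_\Omega(q,z) = \sum_q g_\Omega(0,q,z),
\]
and likewise in the $\omega/$ and $\omega-$ cases. Near $z=0$ each factor $f_\bullet(q,z)$ equals $1 + O(1/q^2)$, uniformly for $z$ in some fixed disc around the origin, so the termwise log series converges absolutely and uniformly there. The same $O(1/q^2)$ decay transfers to the formal termwise derivatives $g_\bullet(n,q,z)$ (using standard bounds on the derivatives of $\log(1+w)$), which lets me differentiate term by term to conclude
\[
\frac{d^n}{dz^n} \log P_\bullet(z) = \sum_q g_\bullet(n,q,z) = S_\bullet(n,z),
\]
matching the definition of $S(n,z)$ in Proposition~\ref{recursive general}.

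Once this identification is made, the three derivative identities in the statement follow immediately from the first conclusion of Proposition~\ref{recursive general}. Evaluating at $z=0$ and using $D^\bullet(n) = P^{(n)}_\bullet(0)/n!$ together with $\tilde S_\bullet(n) = S_\bullet(n)/(n-1)!$ then produces the three recursive formulas. The only real obstacle is the termwise-differentiation step, but this is routine given the uniform $O(1/q^2)$ decay of the factors; no new ideas beyond Proposition~\ref{recursive general} are required.
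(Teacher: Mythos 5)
Your proposal is correct and follows the same route as the paper, which simply cites Proposition~\ref{recursive general} as immediate. The extra paragraph justifying termwise differentiation (via the uniform $O_z(1/q^2)$ decay of each $f_\bullet(q,z) - 1$) is a reasonable thing to spell out, but the paper has already folded that identification into Notation~\ref{notation including S tilde}, which asserts both $S_\bullet(n,z) = \sum_q g_\bullet(n,q,z)$ and $S_\bullet(n,z) = \frac{d^n}{dz^n}\log P_\bullet(z)$; given that, the proposition really is just a direct instantiation.
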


\begin{proof}
These are particular instances of Proposition~\ref{recursive general}.
\end{proof}

The values of these coefficients for $n=0$ are recorded in equation~\eqref{D at 0}; to use these recursive formulas for the coefficients, it remains to exhibit formulas for the~$\tilde S$ quantities.

\begin{lemma} \label{S values}
For $n\ge1$,
\begin{align*}
g_\Omega(n,q,z) &= \frac{(n-1)!}{(q^2-z)^n} - \frac{(n-1)!}{(q+q^2-q^3-z)^n} \\
g_{\omega/}(n,q,z) &= -\frac{(n-1)!}{(1+q-q^2-z)^n} \\
g_{\omega-}(n,q,z) &= -\frac{(n-1)!}{(2-q^2-z)^n}.
\end{align*}
In particular,
\[
\tilde S_\Omega(n) = \sum_q \biggl( \frac1{q^{2n}} - \frac1{(q+q^2-q^3)^n} \biggr) \text{\, and \,}
\tilde S_{\omega/}(n) = - \sum_q \frac1{(1+q-q^2)^n} \text{\, and \,}
\tilde S_{\omega-}(n) = - \sum_q \frac1{(2-q^2)^n}.
\]
\end{lemma}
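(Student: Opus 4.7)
The plan is to prove Lemma~\ref{S values} by direct computation: each factor $f_\Omega(q,z)$, $f_{\omega/}(q,z)$, $f_{\omega-}(q,z)$ is a linear function of $z$ divided by a linear (or constant) function of $z$, so its logarithm splits into a sum of terms of the form $\log(a\pm z)$ plus an additive constant. Differentiating these is elementary, and the constant term drops out after the first derivative, so the $n$th derivative has a clean closed form for every $n\ge1$. Summing over primes is then automatic because the denominators grow like fixed powers of $q$.

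Concretely, the first step is to rewrite
\[
f_\Omega(q,z) = \frac{q^3-q^2-q+z}{(q-1)(q^2-z)},\qquad f_{\omega/}(q,z)=\frac{q^2-q-1+z}{q(q-1)},\qquad f_{\omega-}(q,z)=\frac{q^2-2+z}{q^2-1},
\]
so that
\begin{align*}
\log f_\Omega(q,z) &= \log(q^3-q^2-q+z) - \log(q^2-z) - \log(q-1),\\
\log f_{\omega/}(q,z) &= \log(q^2-q-1+z) - \log\bigl(q(q-1)\bigr),\\
\log f_{\omega-}(q,z) &= \log(q^2-2+z) - \log(q^2-1).
\end{align*}
(Any branch of $\log$ may be chosen; the derivatives are unaffected, and at $z=0$ each quantity $f(q,0)$ is a positive real number.)

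The second step is to apply the standard identities, valid for $n\ge1$,
\[
\frac{d^n}{dz^n}\log(a+z) = \frac{(-1)^{n-1}(n-1)!}{(a+z)^n},\qquad \frac{d^n}{dz^n}\log(a-z) = -\frac{(n-1)!}{(a-z)^n}.
\]
The constant logarithm contributions vanish, yielding
\[
g_\Omega(n,q,z) = \frac{(-1)^{n-1}(n-1)!}{(q^3-q^2-q+z)^n} + \frac{(n-1)!}{(q^2-z)^n},
\]
and analogously for $g_{\omega/}$ and $g_{\omega-}$. To match the statement of the lemma one rewrites the first summand using the identity $(q+q^2-q^3-z)^n = (-1)^n(q^3-q^2-q+z)^n$, which turns the $(-1)^{n-1}$ factor into the $-$ sign seen in the displayed formula; identical sign manipulations (using $1+q-q^2-z = -(q^2-q-1+z)$ and $2-q^2-z = -(q^2-2+z)$) yield the claimed formulas for $g_{\omega/}(n,q,z)$ and $g_{\omega-}(n,q,z)$.

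The third step is to set $z=0$, divide by $(n-1)!$, and sum over primes, which gives the stated expressions for $\tilde S_\Omega(n)$, $\tilde S_{\omega/}(n)$, $\tilde S_{\omega-}(n)$. Interchanging the sum over $q$ with the derivative (to identify the derivatives of the finite partial products with the sum $S_\bullet(n,z)$ appearing in Notation~\ref{notation including S tilde}) is legitimate because each of the series $\sum_q q^{-2n}$, $\sum_q (q+q^2-q^3)^{-n}$, $\sum_q (1+q-q^2)^{-n}$, $\sum_q (2-q^2)^{-n}$ converges absolutely for $n\ge1$ (the summands are $O(q^{-2n})$), and this absolute convergence persists uniformly on small disks around $z=0$, justifying the termwise differentiation. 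There is no serious obstacle in this proof; the only thing to be careful about is the sign bookkeeping between the $(-1)^{n-1}$ that naturally arises from differentiating $\log(a+z)$ and the minus sign with shifted argument in the stated form.
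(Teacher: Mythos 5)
Your proof is correct and follows essentially the same approach as the paper: you compute $g_\bullet(n,q,z)$ by differentiating the logarithm of each local factor, where the paper verifies the $n=1$ case and notes the rest follow by induction, while you invoke the standard closed form for $\frac{d^n}{dz^n}\log(a\pm z)$ directly. The additional remark about absolute convergence justifying termwise differentiation is a harmless extra precaution that the paper leaves implicit.
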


\begin{proof}
The primary calculation is to verify that
\begin{align*}
g_\Omega(1,q,z) &= \frac d{dz} g_\Omega(0,q,z) = \frac1{q^2-z} - \frac1{q+q^2-q^3-z} \\
g_{\omega/}(1,q,z) &= \frac d{dz} g_{\omega/}(0,q,z) = -\frac1{1+q-q^2-z} \\
g_{\omega-}(1,q,z) &= \frac d{dz} g_{\omega-}(0,q,z) = -\frac1{2-q^2-z};
\end{align*}
the rest follow trivially by induction on~$n$. The expressions for the $\tilde S$ quantities are direct consequences of Notation~\ref{notation including S tilde}.
\end{proof}

\subsection{Numerical calculation of the coefficients} \label{subsection numerical data}

In this section we record some numerical values of the Maclaurin coefficients defined in equation~\eqref{defining coefficients}, conditional on GRH. The main part of these calculations is computing the ``uncorrected'' coefficients $D^\Omega(n)$, $D^{\omega/}(n)$, and $D^{\omega-}(n)$ using Proposition~\ref{calc the cs}, which uses the formulas for $\tilde S_\Omega(n)$, $\tilde S_{\omega/}(n)$, and $\tilde S_{\omega-}(n)$ given in Lemma~\ref{S values}. In computing approximations to these infinite sums, we used the first $10^6$ primes in the case $n=1$ and the first $10^5$ primes in the cases $n\ge2$, which gave us sufficient precision to report the decimal places appearing in Figure~\ref{D values figure}.
(As mentioned earlier, extremely precise values of $D^{\omega-}(0)$, $D^\Omega(0)$, and $D^{\omega/}(0)$ have been computed by Moree~\cite{MoreeWebpage}.) The final row of Figure~\ref{D values figure}, labeled~$E$, records the expectation of the sequence in question. For example, the bottom-right entry denotes the expectation of the random variable that takes the value~$n$ with probability $D^{\omega-}(n)$ for each $n\ge0$, which is equal to $\sum_{n=0}^\infty nD^{\omega-}(n)$. See Section~\ref{subsection expectations} for more details about these expectations.

\begin{figure}[ht]
$\displaystyle
\begin{array}{c|ccc}
n & D^\Omega(n) & D^{\omega/}(n) & D^{\omega-}(n) \\ \hline
0 & 0.373955 & 0.373955 & 0.530711 \\
1 & 0.387002 & 0.489828 & 0.391986 \\
2 & 0.167049 & 0.125687 & 0.072349 \\
3 & 0.052465 & 0.010164 & 0.004800 \\
4 & 0.014466 & 0.000356 & 0.000147 \\
5 & 0.003774 & 0.000006 & 0.000002 \\
\hline
E & 0.96337 & 0.77315 & 0.55169
\end{array}
$
\caption{Coefficients and expectations for the ``uncorrected'' infinite products}\label{D values figure}
\end{figure}

In light of Theorems~\ref{theorem generating function omega of quotient}, \ref{theorem generating function Omega of quotient}, and~\ref{theorem generating function omega minus omega}, the coefficients in equation~\eqref{defining coefficients} can be written as
\begin{align*}
 \sum_{n=0}^\infty D_a^{\omega/}(n)z^n&=F^{\omega/}_a(z) H^{\omega/}_a(z) \prod_q \biggl( 1 + \frac{z-1}{q(q-1)} \biggr)\\
 \sum_{n=0}^\infty D_a^{\Omega}(n)z^n&=F^{\Omega}_a(z)H^{\Omega}_a(z)\prod\limits_{\substack{q}}\biggl(1+\frac{(z-1)q}{(q-1)(q^2-z)}\biggr)\\
 \sum_{n=0}^\infty D_a^{\omega-}(n)z^n &= F^{\omega-}_a(z)H^{\omega-}_a(z)\prod\limits_{\substack{q}}\biggl(1+\frac{z-1}{q^2-1}\biggr).
\end{align*}
We have already calculated the Maclaurin coefficients of these infinite products in Figure~\ref{D values figure}. Since the correction factors~$F_a$ and~$H_a$ are simply rational functions, it is an easy matter to use those previous calculations to determine the Maclaurin coefficients of these right-hand sides for any particular value of~$a$. Figures~\ref{D3 values figure}--\ref{D5 values figure} provide three example sets of coefficients with $a=3,4,5$. (In Figure~\ref{D4 values figure}, note that the values $D_4^\Omega(0) = D_4^{\omega/}(0) = 0$ reflect the fact that the perfect square $a=4$ cannot be a primitive root modulo any prime; however, it is still possible for $\omega(\ord_p(4))$ to equal $\omega(p-1)$, which is reflected in the positive value for $D_4^{\omega-}(0)$.) Again the final rows labeled~$E$ record the expectations of these three statistics; for example, the bottom-right entry of Figure~\ref{D3 values figure} records the average value of $\omega(p-1) - \omega(\ord_p(3))$ as $p$ ranges over all primes exceeding~$3$, which is equal to $\sum_{n=0}^\infty nD_3^{\omega-}(n)$.

\begin{rmk}
We calculated the coefficients in Figures~\ref{D values figure}--\ref{D5 values figure} for values of~$n$ beyond $n=5$; while the associated computational errors became large enough (relative to the small coefficients) that we didn't want to report possibly incorrect decimal places, we did observe that the $\Omega$ column decayed significantly less rapidly than the other two columns in each case, consistent with Corollary~\ref{decay rate corollary}.
\end{rmk}

\begin{figure}[ht]
$\displaystyle
\begin{array}{c|ccc}
n & D_3^\Omega(n) & D_3^{\omega/}(n) & D_3^{\omega-}(n) \\ \hline
0 & 0.373955 & 0.373955 & 0.511757 \\
1 & 0.405700 & 0.489828 & 0.428079 \\
2 & 0.138409 & 0.125687 & 0.056962 \\
3 & 0.056421 & 0.010164 & 0.003112 \\
4 & 0.018447 & 0.000356 & 0.000085 \\
5 & 0.005215 & 0.000006 & 0.000001 \\
\hline
E & 0.96337 & 0.77315 & 0.55169
\end{array}
$
\caption{Coefficients and expectations for the case $a=3$}\label{D3 values figure}
\end{figure}

\begin{figure}[ht]
$\displaystyle
\begin{array}{c|ccc}
n & D_4^\Omega(n) & D_4^{\omega/}(n) & D_4^{\omega-}(n) \\ \hline
0 & 0 & 0 & 0.331694 \\
1 & 0.560933 & 0.747911 & 0.543517 \\
2 & 0.253293 & 0.231746 & 0.116448 \\
3 & 0.122297 & 0.019629 & 0.008083 \\
4 & 0.045042 & 0.000700 & 0.000252 \\
5 & 0.013501 & 0.000012 & 0.000004 \\
\hline
E & 1.71337 & 1.27315 & 0.80169
\end{array}
$
\caption{Coefficients and expectations for the case $a=4$}\label{D4 values figure}
\end{figure}

\begin{figure}[ht]
$\displaystyle
\begin{array}{c|ccc}
n & D_5^\Omega(n) & D_5^{\omega/}(n) & D_5^{\omega-}(n) \\ \hline
0 & 0.393637 & 0.393637 & 0.542249 \\
1 & 0.357959 & 0.455527 & 0.371163 \\
2 & 0.169510 & 0.135494 & 0.079483 \\
3 & 0.056907 & 0.014732 & 0.006858 \\
4 & 0.016216 & 0.000596 & 0.000241 \\
5 & 0.004294 & 0.000011 & 0.000004 \\
\hline
E & 0.96337 & 0.77315 & 0.55169
\end{array}
$
\caption{Coefficients and expectations for the case $a=5$}\label{D5 values figure}
\end{figure}

\subsection{A non-recursive formula} \label{subsection non-recursive}

Note that Proposition~\ref{recursive general} gives a recursive formula for the $C(n)$ in terms of $C(0)$ and the $\tilde S(n)$. We take this opportunity to record an explicit (non-recursive) version, which can be derived using Fa\`a di Bruno's formula generalizing the chain rule.

\begin{prop} \label{non recursive general}
For $n\ge1$,
\[
C(n) = P(0) \sum_{\substack{0\le m_1,m_2,\dots,m_n\le n \\ m_1+2m_2+\cdots+nm_n = n}} \frac{S(1)^{m_1}S(2)^{m_2}\cdots S(n)^{m_n}}{m_1!m_2!\cdots m_n! \cdot 2^{m_2}3^{m_3}\cdots n^{m_n}}.
\]
\end{prop}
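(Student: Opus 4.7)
The plan is to apply Faà di Bruno's formula to the composition $P(z) = \exp(S(0,z))$, which holds because $S(0,z) = \log P(z)$ by the definition of $S(0,z)$ in Proposition~\ref{recursive general}. Setting $f(x)=e^x$ and $g(z)=S(0,z)$, the standard combinatorial version of the formula reads
\[
\frac{d^n}{dz^n}(f\circ g)(z) = \sum_{\substack{m_1,\ldots,m_n\ge 0\\ m_1+2m_2+\cdots+nm_n=n}} \frac{n!}{\prod_{j=1}^n m_j!\,(j!)^{m_j}}\, f^{(m_1+\cdots+m_n)}(g(z)) \prod_{j=1}^n \bigl(g^{(j)}(z)\bigr)^{m_j}.
\]

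Two simplifications now kick in: every derivative of $f$ equals $f$ itself, so every factor $f^{(m_1+\cdots+m_n)}(g(z))$ collapses to $\exp(S(0,z)) = P(z)$; and by Notation~\ref{notation including S tilde} we have $g^{(j)}(z) = S(j,z)$. Evaluating at $z=0$ therefore yields
\[
P^{(n)}(0) \;=\; P(0) \sum_{\substack{m_1,\ldots,m_n\ge 0\\ m_1+2m_2+\cdots+nm_n=n}} \frac{n!\,S(1)^{m_1}S(2)^{m_2}\cdots S(n)^{m_n}}{\prod_{j=1}^n m_j!\,(j!)^{m_j}},
\]
and dividing by $n!$ gives a closed form for $C(n) = P^{(n)}(0)/n!$. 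To match the denominator in the proposition exactly, one splits $(j!)^{m_j} = j^{m_j}\cdot \bigl((j-1)!\bigr)^{m_j}$ and absorbs the factor $\bigl((j-1)!\bigr)^{m_j}$ into the numerator using $\tilde S(j) = S(j)/(j-1)!$; since $1^{m_1}=1$, the surviving denominator is $m_1!\cdots m_n! \cdot 2^{m_2}3^{m_3}\cdots n^{m_n}$, as claimed.

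No convergence issues arise because $P(z)$ is a finite product of rational functions with $P(0)\neq 0$, so $\log P(z)$ is holomorphic near the origin and Faà di Bruno's formula applies in the classical sense. The one step requiring care is the combinatorial bookkeeping of the weights $n!/\prod m_j!(j!)^{m_j}$; the simplest way to verify it is to recall that this multinomial counts the partitions of $\{1,\ldots,n\}$ into blocks of sizes $1^{m_1}2^{m_2}\cdots n^{m_n}$, which is the classical proof of Faà di Bruno via partition counting. As a sanity check, an alternative route is to induct on $n$ and show that the right-hand side of the proposed identity satisfies the recursion $C(n) = \tfrac1n\sum_{k=0}^{n-1} C(k)\tilde S(n-k)$ of Proposition~\ref{recursive general}; this reduces to a short manipulation of the partition-counting identity separating off the block containing the element $n$, and confirms that the initial condition $C(0)=P(0)$ propagates correctly.
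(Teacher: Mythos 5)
Your proof is correct and is essentially the paper's argument: both apply Fa\`a di Bruno's formula to $P = \exp\circ\log P$ and use that every derivative of $\exp$ is $\exp$, evaluated at $z=0$; the paper merely phrases the intermediate step in Bell-polynomial notation, whereas you write the multinomial form of the formula directly. One point worth being explicit about: after your absorption step using $\tilde S(j)=S(j)/(j-1)!$, the numerator carries $\tilde S(j)$, not $S(j)$, so the formula you actually derive (and the one the paper's own displayed calculation yields) has $\tilde S(1)^{m_1}\cdots\tilde S(n)^{m_n}$ in the numerator---the proposition as printed appears to have a typo ($S$ in place of $\tilde S$), and your ``as claimed'' glosses over this mismatch rather than calling it out.
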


\begin{proof}
Define the complete exponential Bell polynomial
\[
B_n(x_1,\dots,x_n) = \sum_{\substack{0\le m_1,m_2,\dots,m_n\le n \\ m_1+2m_2+\cdots+nm_n = n}} \frac{n!}{m_1!m_2!\cdots m_n!} \biggl( \frac{x_1}{1!} \biggr)^{m_1} \biggl( \frac{x_2}{2!} \biggr)^{m_2} \cdots \biggl( \frac{x_n}{n!} \biggr)^{m_n},
\]
as well as the incomplete exponential Bell polynomials
\[
B_{n,k}(x_1,\dots,x_n) = \sum_{\substack{0\le m_1,m_2,\dots,m_n\le n \\ m_1+2m_2+\cdots+nm_n = n \\ m_1+m_2+\cdots+m_n=k}} \frac{n!}{m_1!m_2!\cdots m_n!} \biggl( \frac{x_1}{1!} \biggr)^{m_1} \biggl( \frac{x_2}{2!} \biggr)^{m_2} \cdots \biggl( \frac{x_n}{n!} \biggr)^{m_n},
\]
so that $B_n(x_1,\dots,x_n) = \sum_{k=1}^n B_{n,k}(x_1,\dots,x_n)$. We can check that $B_{n,k}(x_1,\dots,x_n)$ depends only on its first $n-k+1$ variables.

We begin with Fa\`a di Bruno's formula for the high-order derivatives of a composition of two functions:
\[
\frac{d^n}{dz^n} h\bigl( g(z) \bigr) = \sum_{k=1}^n h^{(k)}(g(z)) B_{n,k}\bigl( g'(z),g''(z),\dots,g^{(n)}(z) \bigr).
\]
When applied with $h(u)=e^u$, this becomes
\[
\frac{d^n}{dz^n} e^{g(z)} = \sum_{k=1}^n e^{g(z)} B_{n,k}\bigl( g'(z),g''(z),\dots,g^{(n)}(z) \bigr) = e^{g(z)} B_n\bigl( g'(z),g''(z),\dots,g^{(n)}(z) \bigr).
\]
We apply this with $g(z) = S(0,z)$, so that $e^{g(z)} = P(z)$ and $g^{(k)}(z) = S(k,z)$:
\[
\frac{d^n}{dz^n} P(z) = P(z) B_n\bigl( S(1,z), S(2,z), \dots, S(n,z) \bigr).
\]
In particular, since $S(k,z) = (k-1)! \tilde S(k)$,
\begin{align*}
\frac{d^n}{dz^n} P(z) \biggr|_{z=0} &= P(0) B_n\bigl( S(1), S(2), \dots, S(n) \bigr) \\
&= P(0) \sum_{\substack{0\le m_1,m_2,\dots,m_n\le n \\ m_1+2m_2+\cdots+nm_n = n}} \frac{n!}{m_1!m_2!\cdots m_n!} \biggl( \frac{\tilde S(1)}{1} \biggr)^{m_1} \biggl( \frac{\tilde S(2)}{2} \biggr)^{m_2} \cdots \biggl( \frac{\tilde S(n)}{n} \biggr)^{m_n}
\end{align*}
(without the factorials in the denominators), and the proposition follows.
\end{proof}

\section{Decay rates and expectations}

Our generating functions allow us to gather even more information about the coefficients $D_a^{\omega/}(n)$, $D_a^\Omega(n)$, and $D_a^{\omega-}(n)$. The decay rates for these sequences of coefficients, as described in Corollary~\ref{decay rate corollary}, are proved in Section~\ref{decay section}; the expectations of those sequences, which can be thought of as the average values of the statistics $\omega\Bigl( (p-1)/\ord_p(a) \Bigr)$ and $\Omega\Bigl( (p-1)/\ord_p(a) \Bigr)$ and $\omega(p-1) - \omega\Bigl( \ord_p(a) \Bigr)$ when~$p$ is chosen at random, are determined in Section~\ref{subsection expectations}.

\subsection{Decay rates} \label{decay section}

We begin with a precise statement of the well-known phenomenon that the size of the Maclaurin coefficients of an analytic function is determined by the distance from the origin to the nearest singularity of the function.

\begin{lemma} \label{two poles lemma}
Let $F(z) = \sum_{n=0}^\infty a_n z^n$ be a meromorphic function on a neighbourhood of the closed disk of radius~$R$ centred at the origin. Suppose that there are two complex numbers~$z_1$ and~$z_2$ with $0<|z_1|<|z_2|=R$ such that $F(z)$ has simple poles at $z=z_1$ and $z=z_2$, with residues~$r_1$ and~$r_2$ respectively, and no other poles in $\{|z|\le R\}$. Then $a_n = -r_1 z_1^{-n-1} + O_F( R^{-n} )$.
\end{lemma}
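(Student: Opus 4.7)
The plan is to remove the principal parts of $F$ at its two poles and apply Cauchy's coefficient estimates to the resulting analytic function. Specifically, I would introduce
\[
G(z) = F(z) - \frac{r_1}{z-z_1} - \frac{r_2}{z-z_2},
\]
which by construction has removable singularities at $z_1$ and $z_2$ and hence extends to a function analytic on some open disk $\{|z|<R'\}$ with $R'>R$ (using the hypothesis that $F$ is meromorphic on a neighbourhood of the closed disk of radius~$R$ with no other poles in that closed disk).

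Next I would expand the two rational correction terms as geometric series on $\{|z|<|z_1|\}$:
\[
\frac{r_1}{z-z_1} = -\frac{r_1}{z_1}\sum_{n=0}^{\infty}\left(\frac{z}{z_1}\right)^{\!n}
= -\sum_{n=0}^{\infty} r_1 z_1^{-n-1} z^n,
\]
and similarly $\frac{r_2}{z-z_2} = -\sum_{n=0}^{\infty} r_2 z_2^{-n-1} z^n$. Writing the Maclaurin expansion $G(z)=\sum_{n=0}^{\infty} b_n z^n$, which is valid at least on $\{|z|<R'\}$, and comparing coefficients with $F(z)=\sum a_n z^n$ yields the identity
\[
a_n = -r_1 z_1^{-n-1} - r_2 z_2^{-n-1} + b_n.
\]

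The main (and only) estimate needed is a bound for $b_n$. Since $G$ is analytic on $\{|z|\le R\}$, Cauchy's formula $b_n = \frac{1}{2\pi i}\int_{|z|=R} G(z) z^{-n-1}\,dz$ gives
\[
|b_n| \le \frac{M_G}{R^n}, \qquad \text{where } M_G = \max_{|z|=R}|G(z)|,
\]
and $M_G$ depends only on $F$, $R$, $z_1$, $z_2$, $r_1$, $r_2$. Since $|z_2|=R$, the second principal-part contribution satisfies $|r_2 z_2^{-n-1}| = |r_2|R^{-n-1} = O_F(R^{-n})$, which can therefore be absorbed into the error term. Combining these observations yields $a_n = -r_1 z_1^{-n-1} + O_F(R^{-n})$, as claimed. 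No step here is a genuine obstacle; the only subtlety is making sure to use the hypothesis that $F$ is meromorphic on a \emph{neighbourhood} of the closed disk (not just on the open disk), so that the Cauchy estimate on the circle $|z|=R$ can legitimately be applied to the extended analytic function~$G$.
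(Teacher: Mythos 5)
Your proof is correct and essentially identical to the paper's: both subtract the principal parts to form the same auxiliary function $G(z) = F(z) - r_1/(z-z_1) - r_2/(z-z_2)$, expand $F$ into the two geometric series plus the Maclaurin series of $G$, bound the coefficients $b_n$ of $G$ by the Cauchy estimate on $\{|z|=R\}$, and absorb the $r_2 z_2^{-n-1}$ term (of size $|r_2| R^{-n-1}$) into the $O_F(R^{-n})$ error. No differences of substance.
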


\begin{proof}
If we define $G(z) = F(z) - r_1/(z-z_1) - r_2/(z-z_2)$, then $G(z)$ is analytic on a neighbourhood of $\{|z|\le R\}$. When we write $G(z) = \sum_{n=0}^\infty b_n z^n$, the standard Cauchy integral formula calculation gives
\[
b_n = \frac{G^{(n)}}{n!} = \frac1{2\pi i} \oint_{|z|=R} \frac{G(z)}{z^{n+1}} \,dz \ll_G \oint_{|z|=R} \frac1{R^{n+1}} \,d|z| \ll_G R^{-n}.
\]
But $F(z) = G(z) - r_1/z_1(1-z/z_1) - r_2/z_2(1-z/z_2)$ means that
\[
\sum_{n=0}^\infty a_n z^n = - \frac{r_1}{z_1} \sum_{n=0}^\infty \biggl( \frac z{z_1} \biggr)^n - \frac{r_2}{z_2} \sum_{n=0}^\infty \biggl( \frac z{z_2} \biggr)^n + \sum_{n=0}^\infty b_n z^n,
\]
and therefore
\[
a_n = - \frac{r_1}{z_1} \biggl( \frac 1{z_1} \biggr)^n - \frac{r_2}{z_2} \biggl( \frac 1{z_2} \biggr)^n + b_n = -\frac{r_1}{z_1^{n+1}} + O_F(R^{-{n+1}}) + O_F(R^{-n})
\]
as claimed.
\end{proof}

This lemma is relevant to the generating function appearing in Theorem~\ref{theorem generating function Omega of quotient}, whose poles we take some care to describe exactly.

\begin{lemma} \label{4 and 9}
The main term in equation~\eqref{equation generating function Omega of quotient} is a meromorphic function on all of~$\C$, with at most simple poles at $z=q^2$ for every prime~$q$ and no other poles.
\end{lemma}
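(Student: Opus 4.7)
The plan is to analyze the three factors appearing in the main term of \eqref{equation generating function Omega of quotient} --- the rational functions $H^\Omega_h(z)$ and $F^\Omega_a(h,z)$, together with the infinite product $P(z) := \prod_q\bigl(1 + q(z-1)/((q-1)(q^2-z))\bigr)$ --- and to verify that every potential pole of their product outside the set $\{q^2 : q \text{ prime}\}$ is cancelled. Each factor of $P(z)$ rewrites as $(q^3 - q^2 - q + z)/((q-1)(q^2 - z))$, which has a simple pole at $z = q^2$ and a simple zero at $z_q := -q^3 + q^2 + q$, and is holomorphic elsewhere. Since $q(z-1)/((q-1)(q^2-z)) = O_K(q^{-2})$ uniformly on any compact set $K \subset \C \setminus \{q^2 : q \text{ prime}\}$, the product converges uniformly on such compacts and defines a meromorphic function on $\C$ whose only poles are simple poles at $z = q^2$ and whose zeros include simple zeros at each $z_q$.

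The key observation for the rational factors is that the numerator of the $q = 2$ factor of $H^\Omega_h(z)$ (whenever $2 \mid h$) is exactly $D(z) := 4z - z^2 - 4(z-1)(z/2)^{\nu_2(h)} = g^\Omega(\infty) \cdot (z-2)(z-4)$ in the notation of Definition~\ref{definition generating function Omega of quotient}. Since $g^\Omega(\infty)$ appears as the denominator of $F^\Omega_a(h,z) - 1$ after the $(z-2)(z-4)$ in the numerator of \eqref{secretly g(infty) Omega} cancels the inner $1/(z-2)$ and $1/(4-z)$ of the bracket, the (possibly complex) zeros of $g^\Omega(\infty)$ get cancelled by this $H^\Omega_h$ numerator whenever $2 \mid h$. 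When $2 \nmid h$, one has $D(z) = (2-z)(2+z)$, whose root $z = 2$ is absorbed by the $(z-2)$ in the numerator of $F^\Omega_a - 1$ and whose root $z = -2 = z_2$ is cancelled by the zero of $P(z)$ at its $q = 2$ factor. Any root of $D(z)$ at $z = 0$ (which arises when $\nu_2(h) \geq 1$) is cancelled by the factor $z^{\nu_2(h)}$ or $z^2$ appearing in $\tau^\Omega_\pm(a,z)$.

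The remaining denominators of $H^\Omega_h(z)$ are $(q-z)(q^3-q^2-q+z)$ for $q \mid h$: direct substitution shows the numerator vanishes at $z = q$ (giving $q^2(q-1)^2 - (q-1)^2 q^2 = 0$), so the apparent pole there is removable, while the pole at $z_q$ is cancelled by $P(z)$. The sub-factor $I^\Omega_h(\gamma, z)$ inside $F^\Omega_a(h,z)$ is handled analogously: its denominator at each prime $q \mid \gamma$ is the same polynomial as the numerator of $H^\Omega_h$ at~$q$, so these cancel directly when $q \mid (\gamma, h)$. When $q \mid \gamma$ but $q \nmid h$, a short computation factors that denominator as $-(z - q)(z - z_q)$, whose first factor cancels with an explicit $(q - z)$ in the numerator of $I^\Omega_h$ and whose second factor is cancelled by the zero of $P(z)$ at $z_q$.

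The main obstacle is bookkeeping rather than any single difficult step. Because $F^\Omega_a(h,z)$, $\tau^\Omega_\pm(a,z)$, and $I^\Omega_h(\gamma,z)$ are piecewise-defined according to $\sgn(a)$, $b_0 \bmod 4$, and $\nu_2(h)$, the cancellations above must be verified branch by branch; this is most delicate when $\nu_2(h) \geq 3$, where $D(z)$ acquires complex roots that are cancelled only via the $q = 2$ factor of $H^\Omega_h$. Each individual verification is direct, but the collection of cases forms the bulk of the proof.
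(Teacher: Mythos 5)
Your proof follows essentially the same route as the paper's: identify the factored form of each piece (the infinite product $P(z)$ with simple poles only at $z=q^2$ and zeros at $z_q$, the $q$-factors of $H^\Omega_h$, and the $D(z)$ denominator of $F^\Omega_a-1$) and then verify that every potential pole outside $\{q^2\}$ is cancelled, using the identity $D(z)=$ ($q=2$ numerator of $H^\Omega_h$) when $2\mid h$, and the factorization $D(z)=(2-z)(2+z)$ together with the zero of $P(z)$ at $z_2=-2$ when $2\nmid h$. The one aside that is off the mark is the claim that the root of $D(z)$ at $z=0$ (when $\nu_2(h)\ge1$) is ``cancelled by the factor $z^{\nu_2(h)}$ or $z^2$ appearing in $\tau^\Omega_\pm$'': this cannot be the mechanism, because in the $a<0$ branch the bracket contains the $\tau$-free summand $(1-(z/2)^{\nu_2(h)})/(z-2)$ which still carries the $D(z)$ denominator to $z=0$, and the $\tau$ factor in the other summand vanishes to higher order than $D$ does at $z=0$. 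Fortunately the $H^\Omega_h$-numerator cancellation you invoked already handles $z=0$ (since $\nu_2(h)\ge1$ forces $2\mid h$), so this aside is redundant rather than a gap, but you should drop it to avoid implying two inconsistent cancellation counts.
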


\begin{proof}
We refer freely to the notation from Definition~\ref{definition generating function Omega of quotient}. The main term in question includes the product
\[
\prod_q \biggl( 1 + \frac{(z-1)q}{(q-1)(q^2-z)} \biggr) = \prod_q \biggl( 1 + \frac{z-q}{q^2(q-1)} \biggr) \bigg/ \prod_q \biggl( 1 - \frac{z}{q^2} \biggr).
\]
Since $\sum_q 1/q(q-1)$ and $\sum_q 1/q^2$ converge, both products converge absolutely and uniformly on compact sets and are therefore entire functions; therefore their quotient is a meromorphic function on all of~$\C$ with at most simple poles at $z=q^2$ for all primes~$q$. We also note that
\begin{align}
H^\Omega_h(z) & \prod_q \biggl( 1 + \frac{(z-1)q}{(q-1)(q^2-z)} \biggr) = H^\Omega_h(z) \prod_q \frac{q^3-q^2-q+z}{(q-1)(q^2-z)} \notag \\
&= \prod_{q\mid h} \biggl( \frac{q^4-2q^3+2qz-z^2-(z-1)(q-1)z^{\nu_q(h)}q^{-\nu_q(h)+2}}{(q-z)(q-1)(q^2-z)} \biggr) \prod_{q\nmid h} \frac{q^3-q^2-q+z}{(q-1)(q^2-z)} \label{schwartz's}
\end{align}
has no additional poles---the factor $q-z$ in the first denominator on the right-hand side does not produce a pole since the corresponding numerator vanishes at $z=q$.

Next we consider what happens when equation~\eqref{schwartz's} is multiplied by the factor $I_h^\Omega(\gamma,z)$ that is present in both cases of $F_a^\Omega(\gamma,z)$. Again both its numerator and denominator vanish to first order at $z=q$, so no pole will be introduced there. For any $q\mid\gamma$, if $q\mid h$ as well then the denominator of $I_h^\Omega(\gamma,z)$ is canceled by the identical numerator of $H_h^\Omega(\gamma,z)$; on the other hand, if $q\nmid h$ then the denominator of $I_h^\Omega(\gamma,z)$ equals $(q-z)(q^3-q^2-q+z)$, and the second factor is canceled by the second numerator on the right-hand side of equation~\eqref{schwartz's}.

We now consider what happens when equation~\eqref{schwartz's} is multiplied by the first fraction in the definition of $F_a^\Omega(h,z)$. When $2\mid h$, its denominator is identical to the numerator of the $q=2$ term in the definition of $H_h^\Omega(z)$ and is thus canceled. (Note that~$2$ never divides~$\gamma$ by definition, since~$b_0$ is squarefree, and so we are not using this numerator to cancel two different denominators.) On the other hand, when $2\nmid h$ then its denominator equals $4-z^2=-(z-2)(z+2)$; the $z-2$ is canceled by the numerator above it, while the $z+2$ is canceled by the numerator $2^3-2^2-2+z=2+z$ of the $q=2$ term in the infinite product in equation~\eqref{schwartz's}.

We are almost done examining the various ways that poles might have been introduced when multplying equation~\eqref{schwartz's} by $F_a^\Omega(h,z)$ to obtain the main term in equation~\eqref{equation generating function Omega of quotient}. The functions $\tau_\pm^\Omega(a,z)$ are entire in all cases. Both occurrences of $1/(4-z)$ in the definition of $F_a^\Omega(h,z)$ are canceled by the $z-4$ factor in the large fraction. Finally, the term $(1-(\frac z2)^{\nu_2(h)})/(z-2)$ in the second case of $F_a^\Omega(h,z)$ has a removable singularity at $z=2$. These observations collectively establish the lemma.
\end{proof}

\begin{proof}[Proof of Corollary~\ref{decay rate corollary}(a)]
By Lemma~\ref{4 and 9}, we may apply Lemma~\ref{two poles lemma} to the main term in equation~\eqref{equation generating function Omega of quotient}, whose Maclaurin coefficients are the $D_a^\Omega(n)$. The conclusion is that $D_a^\Omega(n) = -r_1\cdot 4^{-n-1} + O_a(9^{-n})$, where~$r_1$ is the residue of the simple pole at $z=4$ of the main term in equation~\eqref{equation generating function Omega of quotient}. The infinite product in that main term is
\[
\prod_q \biggl( 1 + \frac{q(z-1)}{(q-1)(q^2-z)} \biggr) = -\frac{2+z}{z-4} \prod_{q\ge3} \biggl( 1 + \frac{q(z-1)}{(q-1)(q^2-z)} \biggr),
\]
and so its residue at $z=4$ equals
\[
-6 \prod_{q\ge3} \biggl( 1 + \frac{3q}{(q-1)(q^2-4)} \biggr).
\]
While $F_a^\Omega(h,z)$ has a removable singularity at $z=4$, it can be checked that in every case its limiting value there equals $1+I_h^\Omega(\gamma,4)$. Therefore the residue of the main term in equation~\eqref{equation generating function Omega of quotient} at $z=4$ equals
\[
H_h^\Omega(4) \Bigl( 1+I_h^\Omega(\gamma,4) \Bigr) \cdot -6 \prod_{q\ge3} \biggl( 1 + \frac{3q}{(q-1)(q^2-4)} \biggr),
\]
which completes the proof.
\end{proof}

As it turns out, the generating functions in Theorems~\ref{theorem generating function omega of quotient} and~\ref{theorem generating function omega minus omega} are entire functions, which necessitates a different approach to determining the size of their Maclaurin coefficients.

\begin{lemma} \label{growth rate lemma}
The main terms in equations~\eqref{equation generating function omega of quotient} and~\eqref{equation generating function omega minus omega} are entire functions, and their logarithms are $\asymp_a z^{1/2}\log z$ as $z\to\infty$ through real numbers.
\end{lemma}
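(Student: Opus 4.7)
The plan has two stages: verifying entirety and bounding growth. I will describe the $\omega/$ case in detail; the $\omega-$ case is handled by the same method, replacing $q(q-1)$ throughout by $q^2-1$ and adjusting bookkeeping for the correction factors.

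For entirety, the infinite product $\prod_q\bigl(1+(z-1)/(q(q-1))\bigr)=\prod_q(q^2-q+z-1)/(q(q-1))$ converges absolutely and uniformly on compact subsets of $\C$, since the $q$th factor is $1+O_K(1/q^2)$ on any compact $K\subset\C$; hence it is entire with simple zeros at $z=1-q(q-1)$, one per prime $q$. The rational functions $F_a^{\omega/}(z)H_a^{\omega/}(z)$ have only finitely many poles; by inspection of Definition~\ref{definition generating function omega of quotient}, every such pole lies at some $z=1-q(q-1)$ or $z=2-q$ for a prime $q$. Poles of the first kind are annihilated by the corresponding zero of the infinite product, and poles of the second kind (arising only from $f_a^{\omega/}$ with $q$ odd, via the factor $q+z-2$) are annihilated by the numerator factors $q(q+z-2)$ appearing in $H_a^{\omega/}$. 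The special case $q=2$ requires a short separate check: when $2\mid h$ and $a>0$ the denominator $z+1$ of $2z/(z+1)$ is canceled by the $q=2$ factor of the infinite product, and when $2\nmid h$ and $\sgn(a)b_0\equiv 1\pmod 4$ the pole of $f_a^{\omega/}$ at $z=-1$ is canceled in the same way. For the $\omega-$ case, a parallel bookkeeping handles the apparent pole of $F_a^{\omega-}(h,z)$ at the zero of $g(\infty)=1+(z-1)(1-2^{1-\nu_2(h)}/3)$: when $2\mid h$ this zero coincides with a zero of the $q=2$ numerator of $H_h^{\omega-}$, and when $2\nmid h$ it coincides with $z=-2$, which is the $q=2$ zero of the infinite product $\prod_q(q^2+z-2)/(q^2-1)$.

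For the growth estimate, fix real $z>0$ large and split the logarithm of the infinite product at the threshold $q\asymp\sqrt z$:
\[
\sum_q \log\Bigl(1+\tfrac{z-1}{q(q-1)}\Bigr) = \sum_{q\le\sqrt z}\log\Bigl(1+\tfrac{z-1}{q(q-1)}\Bigr) + \sum_{q>\sqrt z}\log\Bigl(1+\tfrac{z-1}{q(q-1)}\Bigr).
\]
In the tail, $\log(1+x)\le x$ for $x\ge0$ gives a contribution $\ll z\sum_{q>\sqrt z}1/q^2\ll \sqrt z/\log z$ by partial summation and the prime number theorem. In the head, each term lies between $\log z-\log(q(q-1))$ and $\log(z+1)$, so the head sum is $\ll \pi(\sqrt z)\log z\ll\sqrt z$ and is bounded below by $\pi(\sqrt z)\log z-2\theta(\sqrt z)$; the PNT identity $\pi(X)\log X-\theta(X)\sim X/\log X$ then yields a lower bound of order $\sqrt z/\log z$. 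The rational factors $F_a^{\omega/}$ and $H_a^{\omega/}$ contribute only $O_a(\log z)$ to the logarithm, which is absorbed on both sides. The complex-modulus version, $\log|\text{main term}(z)|\ll\sqrt{|z|}\log|z|$ uniformly for $|z|$ large, follows from $|1+w|\le 1+|w|$ applied termwise, and is what is actually needed for the Cauchy-integral argument leading to Corollary~\ref{decay rate corollary}(b). The same splitting works verbatim for the $\omega-$ product after replacing $q(q-1)$ by $q^2-1$.

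The main obstacle is the case analysis required for entirety: the piecewise definitions of $F_a^{\omega/}$ and $F_a^{\omega-}(h,z)$ branch on $\sgn(a)$, $b_0\bmod 4$, and $\nu_2(h)$, and each branch produces a slightly different collection of rational poles that must each be matched against a zero elsewhere in the main term. Once this matching is recorded, the growth bound reduces to a standard application of the prime number theorem.
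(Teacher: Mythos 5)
Your entirety argument is the same approach as the paper's (explicit pole-against-zero bookkeeping for the rational correction factors against the entire infinite product), just presented more compactly; it correctly identifies all the pole locations and their cancellations for both statistics, including the slightly subtle observation that the zero of $g(\infty)$ is exactly where the $q=2$ numerator of $H_h^{\omega-}$ vanishes (when $2\mid h$) or where the $q=2$ factor of $\prod_q(q^2+z-2)/(q^2-1)$ vanishes (when $2\nmid h$).

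For the growth rate your route is genuinely different from the paper's: the paper simply cites \cite[Proposition~3.2]{EM} for the size of the infinite product, while you derive it directly from the prime number theorem by splitting at $q\asymp\sqrt z$. That is a nice, self-contained alternative. However, two remarks are in order. First, your direct computation actually yields $\asymp\sqrt z/\log z$ (which is what the paper's \emph{proof} asserts, and which is the correct order of magnitude), not $\asymp\sqrt z\log z$ as in the \emph{statement} of the lemma — the statement appears to contain a misprint, and your own lower-bound analysis inadvertently exposes it. Second, your head-sum upper bound is slack: $\pi(\sqrt z)\log z\sim 2\sqrt z$, so your bound is $\ll\sqrt z$, not $\ll\sqrt z/\log z$, and your upper and lower bounds are therefore off by a factor of $\log z$. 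To close this gap you need to use the same cancellation in the upper bound that you already exploit in the lower bound, namely write each head term as $\log\bigl(z/q(q-1)\bigr)+O(1)$ so that $\sum_{q\le\sqrt z}\log\bigl(z/q(q-1)\bigr)=\pi(\sqrt z)\log z-2\theta(\sqrt z)+O(\pi(\sqrt z))\asymp\sqrt z/\log z$. None of this matters for Corollary~\ref{decay rate corollary}(b), where only the order $1/2$ of the entire function is needed and a stray power of $\log z$ is invisible, but a factor-of-$\log z$ gap between your upper and lower bounds means you have not actually proved the two-sided $\asymp$ the lemma asserts.
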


\begin{proof}
We include the proof only for the main term in equation~\eqref{equation generating function omega of quotient}; the proof for the main term in equation~\eqref{equation generating function omega minus omega} follows exactly the same structure (but with messier algebraic details).

First note that since $\sum_q 1/q(q-1)$ converges, the product
\begin{equation} \label{omega/ product}
\prod_q \biggl( 1 + \frac{z-1}{q(q-1)} \biggr) = \prod_q \frac{z+q^2-q-1}{q(q-1)}
\end{equation}
converges absolutely and uniformly on compact sets and is therefore an entire function of~$z$. Note also that
\begin{equation} \label{H omega/ product}
H^{\omega/}_a(z) \prod_q \biggl(1+\frac{z-1}{q(q-1)}\biggr) = \frac{z+1}2 \prod_{\substack{q\mid h\\ q\neq 2}} \frac{z+q-2}{q-1} \prod_{q\nmid 2h} \frac{z+q^2-q-1}{q(q-1)}.
\end{equation}
is also entire. In the case $2\mid h$, we then have
\begin{equation} \label{CH omega/ product even h}
F^{\omega/}_a(z) H^{\omega/}_a(z) \prod_q \biggl(1+\frac{z-1}{q(q-1)}\biggr) = 
\prod_{\substack{q\mid h\\ q\neq 2}} \frac{z+q-2}{q-1} \prod_{q\nmid 2h} \frac{z+q^2-q-1}{q(q-1)}
\begin{cases}
z, &\text{if } a>0, \\
(z+1)/2, &\text{if } a<0,
\end{cases}
\end{equation}
which remains entire. In the case $2\nmid h$, we have
\begin{multline} \label{CH omega/ product odd h}
F^{\omega/}_a(z) H^{\omega/}_a(z) \prod_q \biggl(1+\frac{z-1}{q(q-1)}\biggr) \\
= H^{\omega/}_a(z) \prod_q \biggl(1+\frac{z-1}{q(q-1)}\biggr) +
\begin{cases}
0 , &\text{if } \sgn(a)b_0 \not\equiv 1\mod4, \\
f^{\omega/}_a(z) H^{\omega/}_a(z) \prod_q \bigl(1+\frac{z-1}{q(q-1)}\bigr) , &\text{if } \sgn(a)b_0 \equiv 1\mod4;
\end{cases}
\end{multline}
we have already seen in equation~\eqref{H omega/ product} that the first summand is entire. Therefore it remains only to examine
\begin{multline} \label{fH omega/ product odd h}
f^{\omega/}_a(z) H^{\omega/}_a(z) \prod_q \biggl(1+\frac{z-1}{q(q-1)}\biggr) \\
= \frac{z-1}2
\prod_{q\mid (b_0,h)} \frac{z-1}{q-1}
\prod_{\substack{q\mid h \\ q\nmid b_0}} \frac{z+q-2}{q-1}
\prod_{\substack{q\mid b_0 \\ q\nmid 2h}} \frac{z-1}{q(q-1)}
\prod_{q\nmid 2hb_0} \frac{z+q^2-q-1}{q(q-1)}
\end{multline}
which is also entire.

In addition, when~$z$ is real and large, the logarithm of the product in equation~\eqref{omega/ product} is $\asymp z^{1/2}/\log z$ by~\cite[Proposition~3.2]{EM}. Since the other terms are all rational functions of~$z$, the logarithms of their values are $\ll \log z$ when~$z$ is large and real, which does not change the growth rate of the right-hand side.
\end{proof}

\begin{proof}[Proof of Corollary~\ref{decay rate corollary}(b)]
It is a classical fact from complex analysis (see~\cite[Definition~2.1.1 and Theorem~2.2.2]{Boas}) that the rate of growth of an entire function $f(z) = \sum_{m=0}^\infty b_m z^m$ is connected to the decay rate of its Maclaurin coefficients~$b_m$ by
\[
\limsup_{r\to \infty} \frac{\log\log \bigl( \max_{|z|=r} |f(z)| \bigr)}{\log r} = \limsup_{\substack{m\to \infty \\ b_m\ne0}} \frac{m\log m}{\log(1/|b_m|)}
\]
(both sides, if finite, equal the ``order'' of $f(z)$). It is straightforward to deduce the proposition from this identity and the growth rates in Lemma~\ref{growth rate lemma}.
(See~\cite[Proof of Corollary~4]{EM} for the details of deriving such an upper bound for the coefficients of an analogous function, including the argument that the growth rate for~$z$ real and positive is indeed the maximal growth rate on the circle $|z|=r$.)
\end{proof}

\subsection{Expectations}\label{subsection expectations}

Our functions $D_a^{\omega/}(n;x)$, $D_a^{\omega-}(n;x)$, and $D_a^{\Omega}(n;x)$ (and their limits $D_a^{\omega/}(n)$, $D_a^{\omega-}(n)$, and $D_a^{\Omega}(n)$ if they exist) represent proportions of primes having certain properties. We can view these proportions as probabilities associated with discrete random variables, and study the expectations of these random variables.

\begin{notn}
Write
\begin{align*}
\mathbb{E}_{a}^{\omega/}(x)&=\sum_{n=0}^\infty nD_a^{\omega/}(n;x),
\end{align*}
which is the expectation of random variable $X$ with Prob$(X=n)=D_a^{\omega/}(n;x)$ for all $n\ge0$.
(Technically, for this to be a random variable, in equation~\eqref{defining coefficients} we should be dividing not by $\pi(x)$ but by the number of primes $p\le x$ for which $\nu_p(a)=0$, which is $\pi(x) + O_a(1)$; but we shall ignore this technicality as it is insignificant in the limit.)
Similarly define
\begin{align*}
\mathbb{E}_{a}^{\omega-}(x)=\sum_{n=0}^\infty nD_a^{\omega-}(n;x)
\quad\text{and}\quad
\mathbb{E}_{a}^{\Omega}(x)=\sum_{n=0}^\infty nD_a^{\Omega}(n;x).
\end{align*}
Furthermore, if the appropriate limits exist, define
\begin{align*}
\mathbb{E}_{a}^{\omega/}=\sum_{n=0}^\infty nD_a^{\omega/}(n),
\quad
\mathbb{E}_{a}^{\omega-}=\sum_{n=0}^\infty nD_a^{\omega-}(n),
\quad\text{and}\quad
\mathbb{E}_{a}^{\Omega}=\sum_{n=0}^\infty nD_a^{\Omega}(n).
\end{align*}
\end{notn}

\begin{thm} \label{Theorem conditional expectations}
Under GRH, the expectations $\mathbb{E}_{a}^{\omega/}$, $\mathbb{E}_{a}^{\omega-}$, and $\mathbb{E}_{a}^{\Omega}$ exist, and
\begin{align*}
\mathbb{E}_{a}^{\omega/}&=\frac{dF^{\omega/}_a}{dz}(1) + \frac{dH^{\omega/}_a}{dz}(1)+\sum\limits_q\frac{1}{q(q-1)} \\
\mathbb{E}_{a}^{\omega-}&=\frac{dF^{\omega-}_a}{dz}(1) + \frac{dH^{\omega-}_a}{dz}(1)+\sum\limits_q\frac{1}{q^2-1} \\
\mathbb{E}_{a}^{\Omega}&=\frac{dF^{\Omega}_a}{dz}(1) + \frac{dH^{\Omega}_a}{dz}(1)+\sum\limits_q\frac{q}{q^3-q^2-q+1}.
\end{align*}
\end{thm}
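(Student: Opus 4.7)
The plan is to differentiate each of the generating-function identities from Theorems~\ref{theorem generating function omega of quotient}, \ref{theorem generating function Omega of quotient}, and~\ref{theorem generating function omega minus omega} and evaluate at $z=1$. Explicitly, under GRH those theorems together with the existence of the limits defining $D_a(n)$ yield exact identities of the form
\[
G_a^{\omega/}(z) := \sum_{n\ge0} D_a^{\omega/}(n) z^n = F_a^{\omega/}(z)\, H_a^{\omega/}(z) \prod_q \left(1+\frac{z-1}{q(q-1)}\right),
\]
and analogously for $G_a^{\Omega}(z)$ and $G_a^{\omega-}(z)$. Since the expectation $\E_a = G_a'(1)$ whenever the Maclaurin series for $G_a$ has radius of convergence exceeding $1$, the proof reduces to verifying this analytic fact and then applying the product rule to the factorisation on the right-hand side.

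The radius-of-convergence claim is supplied by Lemmas~\ref{4 and 9} and~\ref{growth rate lemma}: $G_a^{\omega/}$ and $G_a^{\omega-}$ are entire functions of $z$, while $G_a^{\Omega}$ is meromorphic with its nearest singularities on the circle $|z|=4$. Consequently $\sum_{n\ge1} nD_a(n) z^{n-1}$ converges absolutely on a neighbourhood of $z=1$ and represents $G_a'(z)$ there; in particular $\E_a$ exists and equals $G_a'(1)$ in each of the three cases.

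I would next verify that $F_a(1) = H_a(1) = P(1) = 1$ for each statistic, writing $P(z)$ for the corresponding infinite product. For $P(z)$ and $H_a(z)$ this is immediate, since every factor equals $1$ at $z=1$. For $F_a(z)$ one has to inspect the piecewise definitions case by case, but in each branch either the expression is manifestly $1$, or the ``correction'' part contains a factor $(z-1)^{\omega(\cdot)}$ (e.g.\ in $f_a^{\omega/}$, or in $I_h^{\Omega}(\gamma,z)$ when $\gamma>1$), or a factor $(z-1)(z-2)(z-4)$ (in $F_a^{\Omega}$), or it is the rational function $2z/(z+1)$ which already equals $1$ at $z=1$. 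Given these evaluations, the ordinary product rule applied to $G_a = F_a H_a P$ yields
\[
\E_a = G_a'(1) = \tfrac{dF_a}{dz}(1) + \tfrac{dH_a}{dz}(1) + P'(1).
\]

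For the last step I would evaluate $P'(1)$ by logarithmic differentiation: writing $P(z)=\prod_q P_q(z)$, one has $P'(1) = P(1) \cdot \sum_q P_q'(1)/P_q(1) = \sum_q P_q'(1)$, which is valid because $P_q'(1)=O(q^{-2})$ uniformly in $q$ so the series is absolutely convergent. A direct computation gives
\[
P_q'(1) = \frac{1}{q(q-1)}, \quad \frac{1}{q^2-1}, \quad \text{and} \quad \frac{q(q+1)}{(q^2-1)^2} = \frac{q}{q^3-q^2-q+1}
\]
in the three respective cases, matching the claimed series. The only real difficulty I anticipate is the routine-but-lengthy case analysis needed to verify $F_a(1)=1$ in every branch of Definitions~\ref{definition generating function omega of quotient}, \ref{definition generating function Omega of quotient}, and~\ref{definition generating function omega minus omega}; all other steps are standard once the exact generating-function identities are in hand.
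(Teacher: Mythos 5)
Your proposal is correct and takes essentially the same route as the paper: factor the generating function as $F_a(z)H_a(z)\prod_q P_a(q,z)$, note $F_a(1)=H_a(1)=P_a(q,1)=1$, apply the product rule, and compute $\sum_q P_a'(q,1)$ (the paper points to Lemma~\ref{S values} for this last sum, which is exactly your logarithmic-differentiation computation). The only difference is that you spell out the radius-of-convergence justification for $\E_a = G_a'(1)$ a bit more explicitly than the paper does, which is a reasonable addition but not a different approach.
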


\begin{proof}
In this proof we use $D_a$ and $\mathbb{E}_a$ to examine the three cases simultaneously.
The formulas in Theorems~\ref{theorem generating function omega of quotient}, \ref{theorem generating function Omega of quotient}, and~\ref{theorem generating function omega minus omega} imply that
\begin{equation*}
    \sum\limits_n D_a(n)z^n=F_a(z)H_a(z)\prod\limits_q P_a(q,z)
\end{equation*}
for certain functions $P_a(p,z)$ depending on the statistic being examined; therefore 
\begin{equation*}
\mathbb{E}_a = \biggl( \frac{d}{dz}F_a(z)H_a(z)\prod\limits_q P_a(q,z) \biggr) \bigg|_{z=1}.
\end{equation*}
But $F_a(1)=H_a(1)=P_a(p,1)=1$ always, and so the product rule yields
\begin{equation*}
\mathbb{E}_a=\frac{dF_a}{dz}(1) + \frac{dH_a}{dz}(1)+ \biggl( \frac{d}{dz}\prod\limits_q P_a(q,z) \biggr) \bigg|_{z=1},
\end{equation*}
where the last derivative can be easily computed in each of our cases using Lemma~\ref{S values}.
\end{proof}

We apply this theorem to the special cases treated in Corollaries~\ref{corollary generating function omega of quotient a=3,4,5}, \ref{corollary generating function Omega of quotient a=3,4,5}, and~\ref{corollary generating function omega minus omega a=3,4,5}.

\begin{cor}
Assuming GRH,
\begin{align*}
\mathbb{E}_{3}^{\omega/} = \mathbb{E}_{5}^{\omega/} = \sum_q \frac1{q(q-1)}
&\quad\text{and}\quad
\mathbb{E}_{4}^{\omega/} = \frac12 + \sum_q \frac1{q(q-1)} \\
\mathbb{E}_{3}^{\Omega} = \mathbb{E}_{5}^{\Omega} = \sum_q \frac{q}{q^3-q^2-q+1}
&\quad\text{and}\quad
\mathbb{E}_{4}^{\Omega} = \frac34 + \sum_q \frac{q}{q^3-q^2-q+1} \\
\mathbb{E}_{3}^{\omega-} = \mathbb{E}_{5}^{\omega-} = \sum_q \frac1{q^2-1}
&\quad\text{and}\quad
\mathbb{E}_{4}^{\omega-} = \frac14 + \sum_q \frac1{q^2-1}.
\end{align*}
\end{cor}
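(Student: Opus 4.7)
The plan is to apply Theorem~\ref{Theorem conditional expectations} directly: since each infinite product $P(z)$ appearing in Theorems~\ref{theorem generating function omega of quotient}, \ref{theorem generating function Omega of quotient}, \ref{theorem generating function omega minus omega} satisfies $P(1)=1$, and since the logarithmic derivative $P'(1)/P(1)$ of such a product over primes $q$ is (by Lemma~\ref{S values} or by a trivial term-by-term computation) precisely $\sum_q \frac{1}{q(q-1)}$, $\sum_q \frac{q}{q^3-q^2-q+1} = \sum_q \frac{q}{(q-1)^2(q+1)}$, or $\sum_q \frac{1}{q^2-1}$ respectively, the desired infinite-sum terms appear automatically. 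What remains is to compute $F_a'(1) + H_a'(1)$ for each $a\in\{3,4,5\}$ and each of the three statistics.

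Rather than unwinding Definitions~\ref{definition generating function omega of quotient}, \ref{definition generating function Omega of quotient}, \ref{definition generating function omega minus omega} for these specific $a$, I would read off $F_a(z)H_a(z)$ directly from the explicit rational factors $R_a(z)$ displayed in Corollaries~\ref{corollary generating function omega of quotient a=3,4,5}, \ref{corollary generating function Omega of quotient a=3,4,5}, \ref{corollary generating function omega minus omega a=3,4,5}. Since $F_a(1)=H_a(1)=1$, the product rule gives $F_a'(1)+H_a'(1) = R_a'(1)$, and each $R_a(z)$ is a ratio of small-degree polynomials, so $R_a'(1)$ is a one-line quotient-rule computation.

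Carrying this out, one observes the following pattern: in every one of the six cases with $a\in\{3,5\}$, the numerator and denominator of $R_a(z)$ take the same value at $z=1$ (consistent with $R_a(1)=1$) \emph{and} have the same first derivative at $z=1$, so that $R_a'(1)=0$. For example, for $a=5$ with the statistic $\Omega$, the factor is $\tfrac{11z^2+77z+200}{z^2+97z+190}$, and both polynomials evaluate to $288$ at $z=1$ and both have derivatives equal to $99$ at $z=1$. For $a=4$ the three factors $\tfrac{2z}{z+1}$, $\tfrac{z^3-z^2+12z}{4z+8}$, $\tfrac{7z+5}{4z+8}$ give $R_4'(1) = \tfrac12$, $\tfrac34$, $\tfrac14$ respectively by direct quotient-rule computation. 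Plugging these values of $R_a'(1)$ into Theorem~\ref{Theorem conditional expectations} yields exactly the nine identities in the corollary.

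There is no real obstacle here: the content of the argument is Theorem~\ref{Theorem conditional expectations} together with the explicit computations of the generating functions already done in the earlier corollaries. The only mild surprise worth remarking on in the write-up is the uniform cancellation $R_a'(1)=0$ for $a=3,5$, which is the reason the expectations in those cases reduce to the "bare" prime-by-prime sums with no correction term.
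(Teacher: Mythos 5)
Your proposal is correct and takes essentially the same approach the paper intends: the corollary is stated immediately after Theorem~\ref{Theorem conditional expectations} without separate proof, and you simply apply that theorem, streamlining the computation by differentiating the combined rational factor $R_a(z)=F_a(z)H_a(z)$ read off from Corollaries~\ref{corollary generating function omega of quotient a=3,4,5}, \ref{corollary generating function Omega of quotient a=3,4,5}, \ref{corollary generating function omega minus omega a=3,4,5} rather than unwinding $F_a$ and $H_a$ separately. All your numerical values ($R_a'(1)=0$ for $a\in\{3,5\}$, and $R_4'(1)=\tfrac12,\tfrac34,\tfrac14$) check out, as does the identification of the log-derivative of each infinite product at $z=1$ with the displayed prime sum.
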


\section{Unconditional bounds}

Our conditional results can actually be converted into unconditional bounds, of which Corollary~\ref{unconditional inequality corollary k=0} is an example. We concentrate on the first statistic $\omega\Bigl( (p-1)/\ord_p(a) \Bigr)$, but all our results have immediate analogues for the other two statistics $\Omega\Bigl( (p-1)/\ord_p(a) \Bigr)$ and $\omega(p-1) - \omega\Bigl( \ord_p(a) \Bigr)$, with the same proofs.

In the notation of Definition~\ref{definition generating function omega of quotient}, define the Maclaurin series coefficients $\tilde D_a^{\omega/}(n)$ by
\begin{equation*}
\sum_{n=0}^\infty \tilde D_a^{\omega/}(n) z^n = F^{\omega/}_a(z) H^{\omega/}_a(z) \prod_q \biggl(1+\frac{z-1}{q(q-1)}\biggr),
\end{equation*}
so that Theorem~\ref{theorem generating function omega of quotient} is the statement (conditional on GRH) that $\lim_{x\to\infty} D_a^{\omega/}(n;x) = \tilde D_a^{\omega/}(n)$ for all $n\ge0$. We can modify the proof to establish:

\begin{thm} \label{welcome both lim sup and tilde notation. sigh}
Unconditionally, for all $k\ge0$,
\[
\limsup_{x\to\infty} \sum_{n=0}^k D_a^{\omega/}(n;x) \le \sum_{n=0}^k \tilde D_a^{\omega/}(n).
\]
\end{thm}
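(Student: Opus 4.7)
The plan is to generalize the Vinogradov--Van der Waall upper-bound argument (which handles the $k=0$ case in Corollary~\ref{unconditional inequality corollary k=0}) by truncating Hooley's sieve at a finite parameter~$\xi$, applying the unconditional Chebotarev density theorem to each of the resulting fixed cyclotomic-Kummer extensions (no uniformity in the conductor is required, so GRH is avoided), and then letting $\xi\to\infty$ via dominated convergence. Throughout, I would write $e(p)$ for the largest squarefree divisor of $(p-1)/\ord_p(a)$ and $e_\xi(p)=\gcd(e(p),Q_\xi)$, as in the proof of Proposition~\ref{proposition omega of quotient}.

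First I would introduce the indicator $g_k(n)=\id(\omega(n)\le k)$ on squarefree $n$ together with its M\"obius transform $g_k^*(d)=\sum_{\ell\mid d}\mu(d/\ell)g_k(\ell)$, so that $g_k=1*g_k^*$ on squarefree inputs. Since $e_\xi(p)$ is a divisor of the squarefree integer $e(p)$, we have $\omega(e_\xi(p))\le\omega(e(p))$ and hence $g_k(e(p))\le g_k(e_\xi(p))$. Summing this inequality over primes $p\le x$ with $\nu_p(a)=0$ and expanding via M\"obius yields the key inequality
\[
\sum_{n=0}^k D_a^{\omega/}(n;x) \;\le\; \frac{1}{\pi(x)}\sum_{\substack{p\le x\\ \nu_p(a)=0}}g_k(e_\xi(p)) \;=\; \frac{1}{\pi(x)}\sum_{\substack{d\mid Q_\xi\\ \mu^2(d)=1}}g_k^*(d)\,P_{a,d,1}(x),
\]
where $P_{a,d,1}(x)$ counts primes splitting completely in the single number field $\Q(\sqrt[d]{a},\zeta_d)$. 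Because only finitely many squarefree $d$ divide $Q_\xi$, I would apply the unconditional Chebotarev density theorem to each such field and then take $\limsup_{x\to\infty}$, obtaining
\[
\limsup_{x\to\infty}\sum_{n=0}^k D_a^{\omega/}(n;x) \;\le\; \sum_{\substack{d\mid Q_\xi\\ \mu^2(d)=1}}\frac{g_k^*(d)}{[\Q(\sqrt[d]{a},\zeta_d):\Q]}.
\]

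Finally I would send $\xi\to\infty$. The crude bound $|g_k^*(d)|\le2^{\omega(d)}$ combined with $[\Q(\sqrt[d]{a},\zeta_d):\Q]\gg_a d\varphi(d)$ from Proposition~\ref{proposition degree} makes the full series $\sum_{\mu^2(d)=1}g_k^*(d)/[\Q(\sqrt[d]{a},\zeta_d):\Q]$ absolutely convergent, so its partial sums over $d\mid Q_\xi$ converge to the full sum as $\xi\to\infty$. The remaining task, which I expect to be the main obstacle, is identifying this limit with $\sum_{n=0}^k\tilde D_a^{\omega/}(n)$; for this I would invoke the generating-function identity
\[
\sum_{n=0}^\infty \tilde D_a^{\omega/}(n)\,z^n \;=\; \sum_{\substack{d\ge1\\ \mu^2(d)=1}}\frac{(z-1)^{\omega(d)}}{[\Q(\sqrt[d]{a},\zeta_d):\Q]}
\]
coming from Proposition~\ref{proposition main term omega of quotient}, and extract the sum of the first $k+1$ Maclaurin coefficients on each side. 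The elementary identity $\sum_{n=0}^k[z^n](z-1)^m=\sum_{j=0}^{\min(k,m)}\binom{m}{j}(-1)^{m-j}=g_k^*(d)$, valid for squarefree $d$ with $\omega(d)=m$, then shows that this coefficient extraction produces exactly $\sum_d\mu^2(d)g_k^*(d)/[\Q(\sqrt[d]{a},\zeta_d):\Q]$, completing the proof.
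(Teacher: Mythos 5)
Your argument is correct, but it takes a genuinely different route from the paper's. The paper reuses its effective machinery: it lets $\xi=\xi(x)=\log\log\log\log x$ grow with $x$, invokes the \emph{uniform} unconditional Chebotarev bound of Lagarias and Odlyzko (Theorem~\ref{Lagarias, Odlyzko}, through Theorem~\ref{thm_asymptoticprimesatisfyingstuff_unconditional} and Corollary~\ref{prop_asymptoticprimesatisfyingstuff_unconditional_loglogloglogx}) to control all of the fields $\Q(\sqrt[d]{a},\zeta_d)$ with $d\mid Q_{\xi(x)}$ simultaneously, and thereby proves the asymptotic formula of Theorem~\ref{theorem generating functions unconditional} for the generating function of $D_{a,\xi(x)}^{\omega/}(n;x)$ with an explicit error; combining this with the cumulative-sum inequality~\eqref{cumulative xi inequality2} and extracting coefficients finishes the proof. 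You instead freeze $\xi$, apply only the \emph{qualitative} prime ideal theorem to the finitely many fixed fields $\Q(\sqrt[d]{a},\zeta_d)$ with squarefree $d\mid Q_\xi$ (so no uniformity in the conductor is needed), take $x\to\infty$ to get a $\xi$-dependent upper bound on the $\limsup$, and only afterwards send $\xi\to\infty$. For this particular theorem your version is more elementary, avoiding effective Chebotarev entirely; the trade-off is that the paper's quantitative intermediate result also drives the expectation bound of Theorem~\ref{Theorem unconditional lower bound expectation}, which requires uniform control on a small circle about $z=1$ (hence $|z|>1$) and genuinely needs an explicit rate. Two things worth stating explicitly in your write-up: your inequality $g_k(e(p))\le g_k(e_\xi(p))$ is exactly the paper's observation $\omega_\xi\le\omega$ re-packaged through the indicator $g_k$; and the final interchange of the infinite sum over $\ell$ with extraction of the first $k+1$ Maclaurin coefficients should be justified, which is immediate because for each fixed $n$ the series $\sum_\ell \mu^2(\ell)\binom{\omega(\ell)}{n}[\Q(\sqrt[\ell]{a},\zeta_\ell):\Q]^{-1}$ converges absolutely, using $\binom{\omega(\ell)}{n}\le 2^{\omega(\ell)}$ together with Proposition~\ref{proposition degree}.
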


\noindent Note that the first assertion of Corollary~\ref{unconditional inequality corollary k=0} is the special case $k=0$.

\begin{thm}\label{Theorem unconditional lower bound expectation}
Unconditionally,
\begin{align*}
\liminf_{x\to\infty} \sum\limits_{n=0}^\infty nD_a^{\omega/}(n;x)&\ge\frac{dF^{\omega/}_a}{dz}(1) + \frac{dH^{\omega/}_a}{dz}(1)+\sum\limits_q\frac{1}{q(q-1)},
\end{align*}
where $H^{\omega/}_a(z)$ and $F^{\omega/}_a(z)$ are as in Definition~\ref{definition generating function omega of quotient}.
\end{thm}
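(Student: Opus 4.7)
The plan is to convert the unconditional upper bounds on the cumulative distribution function provided by Theorem~\ref{welcome both lim sup and tilde notation. sigh} into a lower bound on the expectation by expressing the mean as an infinite sum of tail probabilities. Since every $D_a^{\omega/}(n;x)$ is nonnegative, reversing the order of summation gives
\[
\sum_{n=0}^\infty n D_a^{\omega/}(n;x) = \sum_{n=0}^\infty \sum_{k=n+1}^\infty D_a^{\omega/}(k;x) = \sum_{n=0}^\infty \Bigl( S(x) - \sum_{k=0}^n D_a^{\omega/}(k;x) \Bigr),
\]
where $S(x) = \sum_{k\ge 0} D_a^{\omega/}(k;x) = 1 + O_a(1/\pi(x))$ (this is just the proportion of primes $p\le x$ with $\nu_p(a)=0$, divided by $\pi(x)$).

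Each summand on the right-hand side is nonnegative, so Fatou's lemma applied to counting measure on $\N$ allows us to interchange $\liminf_{x\to\infty}$ with the outer sum. Combining this with $\lim_{x\to\infty} S(x) = 1$ and with Theorem~\ref{welcome both lim sup and tilde notation. sigh}, we obtain
\[
\liminf_{x\to\infty} \sum_{n=0}^\infty n D_a^{\omega/}(n;x) \;\ge\; \sum_{n=0}^\infty \Bigl( 1 - \sum_{k=0}^n \tilde D_a^{\omega/}(k) \Bigr).
\]
Evaluating the generating-function identity of Theorem~\ref{theorem generating function omega of quotient} at $z=1$ shows that $\sum_{k\ge 0} \tilde D_a^{\omega/}(k) = F^{\omega/}_a(1)H^{\omega/}_a(1) = 1$, so the inner parenthetical equals $\sum_{k>n} \tilde D_a^{\omega/}(k)$. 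Reversing the order of summation once more yields
\[
\liminf_{x\to\infty} \sum_{n=0}^\infty n D_a^{\omega/}(n;x) \;\ge\; \sum_{k=1}^\infty k\, \tilde D_a^{\omega/}(k).
\]

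The right-hand side is exactly the derivative at $z=1$ of $F^{\omega/}_a(z)H^{\omega/}_a(z)\prod_q\bigl(1+(z-1)/q(q-1)\bigr)$, which, by the same product-rule computation used in the proof of Theorem~\ref{Theorem conditional expectations}, collapses to $\frac{dF^{\omega/}_a}{dz}(1) + \frac{dH^{\omega/}_a}{dz}(1) + \sum_q \frac{1}{q(q-1)}$. The main obstacle is justifying the Fatou interchange, which is clean here because the summands $S(x) - \sum_{k\le n}D_a^{\omega/}(k;x)$ are nonnegative; a minor subtlety is that the tail sum $\sum_{k>n}\tilde D_a^{\omega/}(k)$ must be interpreted correctly in the limit, but this is harmless since Corollary~\ref{decay rate corollary}(b) gives super-exponential decay of $\tilde D_a^{\omega/}(k)$, so the iterated series are absolutely convergent and the double-sum swap is valid.
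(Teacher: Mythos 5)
Your proposal is correct and proves the theorem, but it takes a genuinely different route from the paper's. The paper's proof works at finite~$x$: it invokes inequality~\eqref{cumulative xi inequality2} (the cumulative-distribution inequality comparing $D_a^{\omega/}(\cdot\,;x)$ with the $\xi$-truncated variant $D_{a,\xi}^{\omega/}(\cdot\,;x)$), applies Lemma~\ref{compare expectations lemma} (stochastic ordering implies the reverse ordering of expectations, itself proved via the same tail-sum identity you use), and only then passes to the limit using the explicit asymptotic from Theorem~\ref{theorem generating functions unconditional around 1}, which is a separate Cauchy-integral argument near $z=1$. Your approach instead takes the limit first: you write the expectation as a sum of tail probabilities, invoke Fatou's lemma (valid since the tails $\sum_{k>n}D_a^{\omega/}(k;x)$ are nonnegative), apply the already-proved limsup bound of Theorem~\ref{welcome both lim sup and tilde notation. sigh} term by term, check that the resulting series is $\sum_k k\,\tilde D_a^{\omega/}(k)$ (using $F^{\omega/}_a(1)H^{\omega/}_a(1)=1$ and the super-exponential decay of the Maclaurin coefficients of the entire generating function, which holds unconditionally even though Corollary~\ref{decay rate corollary} is phrased under GRH), and then evaluate the derivative at $z=1$ as in Theorem~\ref{Theorem conditional expectations}. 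Your route bypasses Theorem~\ref{theorem generating functions unconditional around 1} entirely and is marginally shorter; the paper's route has the feature of passing through a quantitative intermediate statement with an explicit (if weak) error term, which you give up by going directly to $\liminf/\limsup$ via Fatou. Both are sound; the only caution worth recording is that your term-by-term summation $\sum_n\bigl(1-\sum_{k\le n}\tilde D_a^{\omega/}(k)\bigr)$ requires absolute convergence of the iterated series to justify the final swap, and you correctly supply this via the decay of the $\tilde D_a^{\omega/}(k)$.
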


\noindent The analogues of these two theorems for the other statistics appear in Theorems~\ref{welcome both lim sup and tilde notation. no sigh} and~\ref{Corollary unconditional lower bound expectation} below.

To approach these theorems, we define
\begin{equation*}
\omega_\xi(n)=\#\{p\mid n\colon p\leq\xi\}
\end{equation*}
so that $\omega_\xi(n)$ is an increasing function of~$\xi$ and $\omega_n(n) = \omega(n)$.
For all $k\in\N$ and all $\xi\le n$, we thus have the trivial inequality
\begin{align}
\#\biggl\{p\leq x\colon \omega\biggl(\frac{p-1}{\ord_p(a)}\biggr)\le k\biggr\} &\le \#\biggl\{p\leq x\colon \omega_\xi\biggl(\frac{p-1}{\ord_p(a)}\biggr)\le k\biggr\}.
\label{eqn_remarkunconditional2}
\end{align}
Analogously to \eqref{defining coefficients}, we write
\begin{equation} \label{D a xi omega quotient def}
\frac{1}{\pi_a(x)}\sum_{\substack{p\le x \\ \nu_p(a)=0}} z^{\omega_\xi((p-1)/\ord_p(a))} = \sum\limits_{n=0}^\infty D_{a,\xi}^{\omega/}(n;x) z^n,
\end{equation}
so that the inequality~\eqref{eqn_remarkunconditional2} becomes
\begin{equation} \label{cumulative xi inequality2}
\sum_{n=0}^k D_a^{\omega/}(n;x) \le \sum_{n=0}^k D_{a,\xi}^{\omega/}(n;x)
\end{equation}
for any $x\ge\xi\ge1$ and $k\in\N$.

To prove Theorem~\ref{welcome both lim sup and tilde notation. sigh}, therefore, it suffices to show that for a suitable function $\xi=\xi(x)$,
\begin{equation} \label{oral exams take long}
\lim_{x\to\infty} D_{a,\xi}^{\omega/}(n;x) = \tilde D_a^{\omega/}(n)
\end{equation}
for all $n\ge0$. We accomplish this goal in the next section. These tools can also be used to prove Theorem~\ref{Theorem unconditional lower bound expectation}, which we do in Section~\ref{Unconditional lower bound for the expectation}.

\subsection{Unconditional asymptotics for suitable $\xi$}

We have already seen (in the proof of Theorem~\ref{thm_asymptoticprimesatisfyingstuff}) that versions of the prime ideal theorem can be seen as special cases of the Chebotarev density theorem. In particular, we may adapt an unconditional Chebotarev density theorem by Lagarias and Odlyzko~\cite{LagariasOdlyzko} to obtain the following prime ideal theorem:

\begin{thm} \label{Lagarias, Odlyzko}
Let~$K$ be a number field with discriminant~$\Delta_K$ and $[K:\Q] = n_K$. There exist absolute positive constants~$c_1$ and~$c_2$ such that the prime ideal counting function $\pi_K(x)$ satisfies
\[
\pi_K(x) = \pi(x) + \bo\Bigl(\pi(x^{1-1/(4\log|\Delta_K|)}) + \pi(x^{1-c_1|\Delta_k|^{-1+1/n_K}}) + x\exp(-c_2n_K^{-1/2}(\log x)^{1/2})\Bigr)
\]
uniformly in~$K$.
\end{thm}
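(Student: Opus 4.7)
The plan is to deduce Theorem~\ref{Lagarias, Odlyzko} from the analytic properties of the Dedekind zeta function $\zeta_K(s)$, essentially by repackaging the three main technical ingredients established in the Lagarias--Odlyzko paper. The three error terms in the statement correspond to three distinct contributions in the explicit formula for the Chebyshev-type function $\psi_K(x) = \sum_{N\mathfrak n \le x} \Lambda_K(\mathfrak n)$.

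First I would invoke the truncated explicit formula
\[
\psi_K(x) = x - \sum_{|\Im\rho|\le T} \frac{x^\rho}{\rho} + R(x,T,K),
\]
where $\rho$ ranges over nontrivial zeros of $\zeta_K(s)$ and the remainder $R(x,T,K)$ is controlled uniformly in $K$ in terms of $|\Delta_K|$, $n_K$, and $T$. Second, I would apply the unconditional Lagarias--Odlyzko zero-free region: there exists an absolute $c>0$ such that $\zeta_K(s)\ne 0$ in the region $\sigma \ge 1 - c/(\log|\Delta_K| + n_K\log(|t|+2))$, with at most one real exceptional zero $\beta_0$, which satisfies both the trivial estimate $\beta_0 \le 1 - 1/(4\log|\Delta_K|)$ and Stark's refinement $\beta_0 \le 1 - c_1|\Delta_K|^{-1+1/n_K}$. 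Choosing $T$ optimally against the zero-free region and summing over the non-exceptional zeros produces an error of size $x\exp(-c_2 n_K^{-1/2}(\log x)^{1/2})$; the possible exceptional $\beta_0$ contributes a term $x^{\beta_0}/\beta_0 \asymp \pi(x^{\beta_0})$, and inserting each of the two bounds on $\beta_0$ yields the two terms $\pi(x^{1-1/(4\log|\Delta_K|)})$ and $\pi(x^{1-c_1|\Delta_K|^{-1+1/n_K}})$ in the stated error.

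Third, I would pass from $\psi_K(x)$ to $\pi_K(x)$ by partial summation, separating off the contribution of prime ideals $\mathfrak p$ with $N\mathfrak p = p^f$ for $f\ge 2$; that contribution is trivially $O(n_K\sqrt x)$ and is absorbed into the dominant error term. The resulting formula $\pi_K(x) = \pi(x) + (\text{three error terms})$ is exactly the claim.

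The main obstacle is not conceptual but bookkeeping: one must verify that the explicit constants $1/(4\log|\Delta_K|)$ and $|\Delta_K|^{-1+1/n_K}$ really are the ones furnished by the Lagarias--Odlyzko estimate (respectively Stark's lower bound on the exceptional zero), and that all the pieces combine uniformly in $K$ without any additional dependence on auxiliary data such as the discriminant of the Galois closure. Since the paper is merely citing and adapting the result, nothing deeper is required here.
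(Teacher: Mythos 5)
The paper does not prove this statement; it simply cites the unconditional Chebotarev density theorem of Lagarias and Odlyzko~\cite{LagariasOdlyzko} and asserts that this prime ideal theorem follows by adaptation. Your sketch of the underlying argument---truncated explicit formula, zero-free region with a possible exceptional zero, partial summation from $\psi_K$ to $\pi_K$---is the right framework, so in that sense your approach matches the (external) proof being invoked.

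The flaw is in your account of where the two terms involving $\pi(\cdot)$ come from. The exceptional zero $\beta_0$, when it exists, lies \emph{inside} the region $\sigma \ge 1 - (4\log|\Delta_K|)^{-1}$, $|t| \le (4\log|\Delta_K|)^{-1}$, so it satisfies $\beta_0 > 1 - 1/(4\log|\Delta_K|)$; the inequality $\beta_0 \le 1 - 1/(4\log|\Delta_K|)$ that you call a ``trivial estimate'' is the reverse and is false whenever $\beta_0$ exists. Inserting it would give a \emph{lower} bound, not an upper bound, on $x^{\beta_0}$, and so cannot control the error. The actual mechanism is a dichotomy: if $\beta_0$ does not exist, the low-lying zeros all lie to the left of $\sigma = 1 - 1/(4\log|\Delta_K|)$, and their contribution is absorbed into $\pi(x^{1-1/(4\log|\Delta_K|)})$; if $\beta_0$ does exist, Stark's lower bound on $1-\beta_0$ is the only upper bound on $x^{\beta_0}$ available and produces the second term. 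Summing the two covers both cases unconditionally. You might also keep in mind that the Lagarias--Odlyzko estimate is proved only under the hypothesis $\log x \ge 10\,n_K(\log|\Delta_K|)^2$, so the uniformity over all $K$ and $x$ claimed here requires a separate check before the statement can be used as written.
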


\noindent
We can then derive an unconditional version of Theorem~\ref{thm_asymptoticprimesatisfyingstuff}:

\begin{thm}\label{thm_asymptoticprimesatisfyingstuff_unconditional}
Let $a\in\Q\setminus\{-1,0,1\}$, and let $m$ and $\ell$ be positive integers. Unconditionally,
\begin{align*}
\#\Big\{ &p\le x \colon \nu_p(a)=0,\, \ell\mid (p-1)/\ord_p(a),\ m\ell\mid (p-1)\Big\} =\frac{\pi(x)}{[\Q(\sqrt[\ell]{a},\zeta_m):\Q]} \\
&\qquad{}+\bo\biggl(\frac{\log\log m\ell}{m\ell^2}\biggl(\pi(x^{1-1/(c_3 m\ell^2\log m\ell)})+\pi(x^{1-c_1(m\ell)^{c_4(1-m\ell^2)}})+x\exp\biggl(-\frac{c_2(\log x)^{1/2}}{m^{1/2}\ell}\biggr)\biggr)\biggr),
\end{align*}
where the implied constant depends only on~$a$.
\end{thm}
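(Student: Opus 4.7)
The plan is to follow the same structure as the proof of Theorem~\ref{thm_asymptoticprimesatisfyingstuff}, but with the unconditional Lagarias--Odlyzko prime ideal theorem (Theorem~\ref{Lagarias, Odlyzko}) in place of Lang's GRH-conditional version. As before, I would take $K = \Q(\sqrt[\ell]{a},\zeta_{m\ell})$, and exploit the identification~\eqref{palm fields} together with Hooley's analysis~\cite[Section 4, equation~(17)]{Hooley} to write
\[
\Palm(x) = \frac{\pi_K(x)}{[K:\Q]} + O\!\left( \frac{\pi(\sqrt{x})}{[K:\Q]} + \omega(m\ell) \right),
\]
where the extra terms account for higher-degree prime ideals of $K$ lying above split rational primes and for the ramified primes (which are at most those dividing $am\ell$); these are negligible relative to the main error below.

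The next step is to feed the bounds from Propositions~\ref{proposition degree} and~\ref{propdiscriminant} into Theorem~\ref{Lagarias, Odlyzko}. Proposition~\ref{proposition degree} gives $n_K = [K:\Q] \le m\ell^2$, and Proposition~\ref{propdiscriminant} gives
\[
\log|\Delta_K| \ll_a n_K \log(m\ell) \ll_a m\ell^2 \log(m\ell).
\]
Substituting these into the three error contributions of Theorem~\ref{Lagarias, Odlyzko}, one obtains (absorbing absolute constants into the $c_i$)
\begin{align*}
\frac{1}{4\log|\Delta_K|} &\gg_a \frac{1}{c_3\, m\ell^2\log(m\ell)}, \\
c_1|\Delta_K|^{-1+1/n_K} &\gg_a (m\ell)^{c_4(1-m\ell^2)}, \\
c_2\, n_K^{-1/2}(\log x)^{1/2} &\gg \frac{c_2(\log x)^{1/2}}{m^{1/2}\ell}.
\end{align*}
Each of these gives, after monotonicity of $\pi$, exactly one of the three summands appearing in the claimed error term.

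Finally, I would divide through by $[K:\Q]$ to pass from $\pi_K(x)$ to $\Palm(x)$. Since $[K:\Q] = \ell'\varphi(m\ell)/\varepsilon_a(m\ell,\ell) \asymp_a m\ell^2/\log\log(m\ell)$ in the worst case (because of the standard $\varphi(n)\gg n/\log\log n$ lower bound for $\varphi$), dividing the $\bo(\cdots)$ term from Theorem~\ref{Lagarias, Odlyzko} by $[K:\Q]$ produces precisely the prefactor $\log\log(m\ell)/(m\ell^2)$ that appears in the statement. The main term $\pi(x)/[K:\Q]$ is exactly the first summand of the formula.

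The main obstacle I anticipate is the bookkeeping in the last step: tracking whether the $\log\log(m\ell)$ factor arises from the $\varphi(m\ell)$ in the denominator of $[K:\Q]^{-1}$, or from the auxiliary ramified-prime term $\omega(m\ell)\ll \log(m\ell)/\log\log(m\ell)$, and confirming that neither it nor the contribution from $\pi(\sqrt{x})/[K:\Q]$ overwhelms the genuine Lagarias--Odlyzko error. Once this is verified, the three terms line up with the stated error and no further delicacy is required; all remaining work is purely symbolic substitution.
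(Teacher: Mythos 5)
Your proposal is correct and follows essentially the same route as the paper: substitute the unconditional Lagarias--Odlyzko bound for Lang's GRH estimate, feed in the degree and discriminant bounds of Propositions~\ref{proposition degree} and~\ref{propdiscriminant}, and divide through by $[K:\Q]\gg_a m\ell^2/\log\log(m\ell)$ exactly as in the proof of Theorem~\ref{thm_asymptoticprimesatisfyingstuff}. The only cosmetic issue is the term $\pi(\sqrt{x})/[K:\Q]$ in your first display, which should be $O(\pi(\sqrt{x}))$ (the degree $\ge 2$ prime ideals are at most $[K:\Q]\pi(\sqrt{x})$ before dividing), but this is harmless since $\pi(\sqrt{x})$ is comfortably dominated by the stated error.
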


\begin{proof}
The proof is the same as the proof of Theorem~\ref{thm_asymptoticprimesatisfyingstuff}, except that in place of equation~\eqref{equation_googd_GRH2} we instead use
\[
\pi_{\Q(\sqrt[\ell]{a},\zeta_{m\ell})}(x) = \pi(x) + \bo\Bigl(\pi(x^{1-1/(c_3 m\ell^2\log m\ell)}) + \pi(x^{1-c_1m^{c_4(1-m\ell^2)}}) + x\exp(-c_2(m\ell^2)^{-1/2}(\log x)^{1/2})\Bigr)
\]
which follows from Theorem~\ref{Lagarias, Odlyzko} once we use our upper bound (Proposition~\ref{propdiscriminant}) for the discriminant of $\Q(\sqrt[\ell]{a},\zeta_{m\ell})$.
\end{proof}

\begin{cor} \label{prop_asymptoticprimesatisfyingstuff_unconditional_loglogloglogx}
Let $a\in\Q\setminus\{-1,0,1\}$, and let $m$ and $\ell$ be positive integers with $m\leq\ell\leq\log\log\log x$. Unconditionally,
\begin{align*}
\#\Big\{ p\le x \colon \nu_p(a)=0,\, \ell\mid (p-1)/\ord_p(a),\ m\ell\mid (p-1)\Big\} &=\frac{\pi(x)}{[\Q(\sqrt[\ell]{a},\zeta_{m\ell}):\Q]}+\bo_\varepsilon\bigl(x\exp(-(\log x)^{1/2-\varepsilon})\bigr)
\end{align*}
for any $\varepsilon>0$.
\end{cor}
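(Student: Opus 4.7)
The plan is to deduce this corollary directly from Theorem~\ref{thm_asymptoticprimesatisfyingstuff_unconditional}, showing that each of the three error terms in that theorem is majorized by $x\exp(-(\log x)^{1/2-\varepsilon})$ under the assumption $m\le\ell\le\log\log\log x$. The leading factor $\log\log(m\ell)/(m\ell^2)$ is bounded, so we can ignore it. The key input is that $m\ell^2\le(\log\log\log x)^3 = o((\log x)^\delta)$ for every $\delta>0$, so any polynomial expression in $m$ and $\ell$ is swamped by powers of $\log x$.

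The dominant (and simplest) error term is the third one, $x\exp\bigl(-c_2(\log x)^{1/2}/(m^{1/2}\ell)\bigr)$. Since $m^{1/2}\ell\le(\log\log\log x)^{3/2}\le(\log x)^{\varepsilon/2}$ for large~$x$, the exponent satisfies $c_2(\log x)^{1/2}/(m^{1/2}\ell)\ge(\log x)^{1/2-\varepsilon}$, giving the desired bound. For the first error term, I would write $x^{1-1/(c_3 m\ell^2\log m\ell)} = x\exp\bigl(-\log x/(c_3 m\ell^2\log m\ell)\bigr)$; since $m\ell^2\log(m\ell) = o((\log x)^\varepsilon)$, the exponent dominates $(\log x)^{1-\varepsilon}$, which is far stronger than needed.

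The main obstacle — though still routine — is the second error term $x^{1-c_1(m\ell)^{c_4(1-m\ell^2)}}$, where the exponent $c_4(1-m\ell^2)$ is negative and large in magnitude, so the factor $(m\ell)^{c_4(1-m\ell^2)}$ decays super-polynomially. Writing it as $\exp\bigl(-c_4(m\ell^2-1)\log(m\ell)\bigr)$ and using the bounds $m\ell^2-1\le(\log\log\log x)^3$ and $\log(m\ell)\le 2\log\log\log\log x$, I obtain
\[
(m\ell)^{c_4(1-m\ell^2)} \ge \exp\bigl(-2c_4(\log\log\log x)^3\log\log\log\log x\bigr).
\]
The crucial observation is that $(\log\log\log x)^3\log\log\log\log x = o(\log\log x)$, so multiplying by $\log x$ gives
\[
c_1(\log x)\cdot(m\ell)^{c_4(1-m\ell^2)} \ge \exp\bigl(\log\log x - o(\log\log x)\bigr) \gg (\log x)^{1-\varepsilon},
\]
and the second error term is likewise bounded by $x\exp(-(\log x)^{1-\varepsilon})$.

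Combining the three estimates produces $\bo_\varepsilon(x\exp(-(\log x)^{1/2-\varepsilon}))$ as claimed, with the third error term being the binding one. No deeper input beyond Theorem~\ref{thm_asymptoticprimesatisfyingstuff_unconditional} and elementary manipulations of iterated logarithms is required; the entire argument is a careful tracking of how the constraint $\ell\le\log\log\log x$ tames the $m,\ell$-dependence of the three pieces.
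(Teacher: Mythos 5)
Your proof is correct and is the natural verification the paper leaves to the reader (no proof is supplied in the paper; the corollary is stated immediately after Theorem~\ref{thm_asymptoticprimesatisfyingstuff_unconditional} as a direct plug-in). You correctly identify the third error term as the binding one and handle the second — the only one requiring genuine care, because of the super-polynomially decaying exponent $(m\ell)^{c_4(1-m\ell^2)}$ — by the right chain of bounds, using $(\log\log\log x)^3\log\log\log\log x = o(\log\log x)$ so that $(\log x)\cdot(m\ell)^{c_4(1-m\ell^2)} \gg (\log x)^{1-o(1)}$.
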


We can obtain the following unconditional result, which establishes equation~\eqref{oral exams take long} and thus Theorem~\ref{welcome both lim sup and tilde notation. sigh}.

\begin{thm}\label{theorem generating functions unconditional}
Let $a\in\Q\setminus\{-1,0,1\}$, let~$x$ be a sufficiently large real number, and set $\xi=\log\log\log\log x$. Unconditionally,
\begin{align*}
\frac1{\pi(x)} \sum_{\substack{p\le x \\ \nu_p(a)=0}} z^{\omega_\xi((p-1)/\ord_p(a))} &= F^{\omega/}_a(z)H^{\omega/}_a(z) \prod\limits_{\substack{q}}\biggl(1+\frac{z-1}{q(q-1)}\biggr)+\bo_\varepsilon\biggl(\frac{1}{(\log\log\log\log x)^{1-\varepsilon}}\biggr)
\end{align*}
uniformly for all $z\in\C$ with $|z|\leq 1$ and all $\varepsilon >0$,
where $F^{\omega/}_a(z)$ and $H^{\omega/}_a(z)$
are as in Definition~\ref{definition generating function omega of quotient}.
\end{thm}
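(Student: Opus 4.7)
The plan is to adapt the proof of Theorem~\ref{theorem generating function omega of quotient}, exploiting the fact that the truncated statistic $\omega_\xi$ admits an exact finite expansion that completely bypasses the sieve-cutoff machinery of Section~\ref{combinatorics sec}. The trade-off is that the unconditional Chebotarev-type estimate from Section~\ref{error section} must be applied to every relevant $\ell$ simultaneously, which forces $\xi$ to be taken small enough that the associated cyclotomic-Kummer extensions fall within the range of validity of Corollary~\ref{prop_asymptoticprimesatisfyingstuff_unconditional_loglogloglogx}.

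First I would record the exact identity
\[
z^{\omega_\xi(n)}=\prod_{q\le\xi}\bigl(1+(z-1)\id(q\mid n)\bigr)=\sum_{\substack{\ell\text{ squarefree}\\ p\mid\ell\Rightarrow p\le\xi}}(z-1)^{\omega(\ell)}\id(\ell\mid n),
\]
which carries no error term and no sieve cutoff. Summing over $p\le x$ with $\nu_p(a)=0$ and recalling the notation of~\eqref{palm counts} yields
\[
\sum_{\substack{p\le x\\\nu_p(a)=0}}z^{\omega_\xi((p-1)/\ord_p(a))}=\sum_{\substack{\ell\text{ squarefree}\\ p\mid\ell\Rightarrow p\le\xi}}(z-1)^{\omega(\ell)}\Pali(x).
\]
With $\xi=\log\log\log\log x$, every $\ell$ appearing here satisfies $\ell\le\prod_{q\le\xi}q\le e^{2\xi}$ and is in particular only a small power of $\log\log\log x$, which permits a uniform application of (a mild extension of) Corollary~\ref{prop_asymptoticprimesatisfyingstuff_unconditional_loglogloglogx}. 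Since there are at most $2^{\pi(\xi)}\ll(\log\log\log x)^{\log 2}$ terms, and $|z-1|^{\omega(\ell)}\le 2^{\pi(\xi)}$ on the unit disk, the aggregate error from these pointwise Chebotarev estimates is $O_\varepsilon(x\exp(-(\log x)^{1/2-2\varepsilon}))$, which is dwarfed by the target $\pi(x)/(\log\log\log\log x)^{1-\varepsilon}$.

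The third step is to extend the resulting finite sum over restricted $\ell$ to the full series $\sum_{\ell\ge 1}\mu^2(\ell)(z-1)^{\omega(\ell)}/[\Q(\sqrt[\ell]{a},\zeta_\ell):\Q]$. Using Proposition~\ref{proposition degree}, the tail contribution is controlled by
\[
\sum_{\substack{\ell\text{ squarefree}\\\exists q\mid\ell,\,q>\xi}}\frac{2^{\omega(\ell)}(\ell,h)}{\ell\varphi(\ell)}\ll_a\sum_{q>\xi}\frac{1}{q^2}\ll\frac{1}{\xi\log\xi},
\]
an estimate in the same spirit as Lemma~\ref{WO lemma}. With $\xi=\log\log\log\log x$, this tail is $O_a\bigl(1/(\log\log\log\log x\cdot\log\log\log\log\log x)\bigr)$, comfortably inside the stated error. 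Proposition~\ref{proposition main term omega of quotient} then evaluates the extended infinite sum as $F_a^{\omega/}(z)H_a^{\omega/}(z)\prod_q\bigl(1+(z-1)/(q(q-1))\bigr)$, producing the theorem.

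The hard part will be verifying that Corollary~\ref{prop_asymptoticprimesatisfyingstuff_unconditional_loglogloglogx}, or more precisely Theorem~\ref{thm_asymptoticprimesatisfyingstuff_unconditional} with suitably chosen parameters, remains valid uniformly for $\ell\le(\log\log\log x)^{O(1)}$ rather than only for $\ell\le\log\log\log x$: a bookkeeping inspection of the three pieces of the Lagarias--Odlyzko error shows that the exponent $1/2-\varepsilon$ is essentially insensitive to such a polynomial loss in $\ell$, but this must be written out carefully. The remaining subtleties (uniformity in $z$ on the closed unit disk through both the pointwise Chebotarev error and the tail bound) are routine, and once these are handled the proof assembles exactly as in Proposition~\ref{proposition all done but the main terms}.
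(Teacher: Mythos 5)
Your proposal is correct and follows essentially the same route as the paper: the exact finite expansion of $z^{\omega_\xi}$ over divisors of $Q_\xi$ (which is just the $y=\xi$ instance of equation~\eqref{apres summing} in Lemma~\ref{cumulative lemma}, with no cutoff error), followed by the unconditional Chebotarev estimate term by term, the error aggregation via Proposition~\ref{proposition error sum chebotarev error term combined}, the tail completion as in Proposition~\ref{proposition combined after error terms}, and the evaluation of the completed series via Proposition~\ref{proposition main term omega of quotient}. Your flag that Corollary~\ref{prop_asymptoticprimesatisfyingstuff_unconditional_loglogloglogx} as stated covers $\ell\le\log\log\log x$ whereas divisors of $Q_\xi$ can be as large as $e^{\theta(\xi)}=(\log\log\log x)^{1+o(1)}$ is an apt catch that the paper glosses over; as you note, the Lagarias--Odlyzko error has ample slack to absorb such a polynomial loss, so this is a bookkeeping refinement rather than a gap.
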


\begin{proof}
First we compute the sum over $\ell$ of the error term of Corollary~\ref{prop_asymptoticprimesatisfyingstuff_unconditional_loglogloglogx}: it follows directly from Proposition~\ref{proposition error sum chebotarev error term combined} (with both sides divided by $\sqrt x\log x$) that for all $|z|\le 2$,
\begin{align}
 \sum\limits_{\ell\mid Q_\xi}\mu^2(\ell)|z-1|^{\omega(\ell)}\left(x\exp(-\ell^{-1}(\log x)^{1/2-\varepsilon})\right)&\ll\left(x\exp(-(\log x)^{1/2-\varepsilon})\right)\sum\limits_{\ell\mid Q_\xi}\mu^2(\ell)|z-1|^{\omega(\ell)} \notag \\
 &\ll\left(x\exp(-(\log x)^{1/2-\varepsilon})\right) 4^\xi \notag \\
 &\ll \left(x\exp(-(\log x)^{1/2-\varepsilon})\right).
\label{later z'll exceed 1}
\end{align}
Then we need to take into account the error arising from completing the sum over $\ell$ for the main term as in Proposition~\ref{proposition combined after error terms}, which in this case gives an error term of $\bo_\varepsilon\left( (\log\log\log\log x)^{1-\varepsilon}\right)$. Finally the computations for the main term are exactly the same as for the conditional result.
\end{proof}

\begin{rmk}
These error terms are certainly not best possible: with more work one should be able to obtain something similar to Wiertelak's result~\cite{Wiertelak} for the upper bound in the classical Artin's conjecture setting. Other results on unconditional upper bounds are discussed in~\cite{Moree} as well.
\end{rmk}

We state without further proof the analogous results for the other two statistics.
In the notation of Definitions~\ref{definition generating function Omega of quotient} and~\ref{definition generating function omega minus omega}, define the Maclaurin series coefficients $\tilde D_a^{\Omega}(n)$ and $\tilde D_a^{\omega-}(n)$ by
\begin{equation} \label{other two tildes}
\begin{split}
\sum_{n=0}^\infty \tilde D_a^{\Omega}(n) z^n &= F^{\Omega}_a(z)H^{\Omega}_a(z)\prod_{\substack{q}}\biggl(1+\frac{(z-1)q}{(q-1)(q^2-z)}\biggr) \\
\sum_{n=0}^\infty \tilde D_a^{\omega-}(n) z^n &= F^{\omega-}_a(z)H^{\omega-}_a(z)\prod\limits_{\substack{q}}\biggl(1+\frac{z-1}{q^2-1}\biggl),
\end{split}
\end{equation}
so that Theorems~\ref{theorem generating function Omega of quotient} and~\ref{theorem generating function omega minus omega} are the statements (conditional on GRH) that $\lim_{x\to\infty} D_a^{\Omega}(n;x) = \tilde D_a^{\Omega}(n)$ and $\lim_{x\to\infty} D_a^{\omega-}(n;x) = \tilde D_a^{\omega-}(n)$ for all $n\ge0$.

\begin{thm} \label{welcome both lim sup and tilde notation. no sigh}
Unconditionally, for all $k\ge0$,
\[
\limsup_{x\to\infty} \sum_{n=0}^k D_a^{\Omega}(n;x) \le \sum_{n=0}^k \tilde D_a^{\Omega}(n)
\quad\text{and}\quad
\limsup_{x\to\infty} \sum_{n=0}^k D_a^{\omega-}(n;x) \le \sum_{n=0}^k \tilde D_a^{\omega-}(n).
\]
\end{thm}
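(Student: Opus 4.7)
The plan is to mimic closely the argument that established Theorem~\ref{welcome both lim sup and tilde notation. sigh}, transferring each step to the other two statistics. First I would introduce truncated versions of the two statistics: for a real parameter $\xi\ge 2$, let $\Omega_\xi(n)=\sum_{q^k\Vert n,\, q\le\xi}k$ and let $\omega_\xi^-(p)$ denote the number of primes $q\le\xi$ dividing $p-1$ but not $\ord_p(a)$ (equivalently $\omega_\xi(p-1)-\omega_\xi(\ord_p(a))$). In both cases we have the pointwise trivial inequalities $\Omega_\xi((p-1)/\ord_p(a))\le\Omega((p-1)/\ord_p(a))$ and $\omega_\xi^-(p)\le\omega(p-1)-\omega(\ord_p(a))$, which, once we define truncated Maclaurin coefficients $D_{a,\xi}^{\Omega}(n;x)$ and $D_{a,\xi}^{\omega-}(n;x)$ exactly as in equation~\eqref{D a xi omega quotient def}, yield the analogues of~\eqref{cumulative xi inequality2}:
\[
\sum_{n=0}^{k}D_a^{\Omega}(n;x)\le\sum_{n=0}^{k}D_{a,\xi}^{\Omega}(n;x),\qquad \sum_{n=0}^{k}D_a^{\omega-}(n;x)\le\sum_{n=0}^{k}D_{a,\xi}^{\omega-}(n;x).
\]

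The main content is then an unconditional asymptotic formula for the truncated generating functions, paralleling Theorem~\ref{theorem generating functions unconditional}. Choose $\xi=\xi(x)=c\log\log\log\log x$ with $c$ sufficiently small so that $Q_\xi^2\le\log\log\log x$; this ensures that whenever $\ell\mid Q_\xi$ and $m\mid\ell$ appear in the combinatorial decompositions, we have $m\le\ell\le\log\log\log x$ and Corollary~\ref{prop_asymptoticprimesatisfyingstuff_unconditional_loglogloglogx} applies. The proof then follows the template of Proposition~\ref{proposition all done but the main terms}: start from Proposition~\ref{proposition Omega of quotient} (resp.~Proposition~\ref{proposition difference of omega}) but now with the truncated statistic replacing the full one, which effectively restricts the sum to $\ell\mid Q_\xi$ and produces no secondary error (the ``$q^k>\xi$'' error term of those propositions is absent when the statistic itself is truncated at $\xi$). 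Insert the unconditional estimate for $\Palm(x)$ from Corollary~\ref{prop_asymptoticprimesatisfyingstuff_unconditional_loglogloglogx}; sum the error terms over $\ell\mid Q_\xi$ (and over $m\mid\ell$ in the $\omega-$ case) using Proposition~\ref{proposition error sum chebotarev error term combined} exactly as in the derivation of equation~\eqref{later z'll exceed 1}, noting that the $4^\xi$-type factors are dwarfed by the saving $\exp(-(\log x)^{1/2-\varepsilon})$; and complete the finite sum over divisors of $Q_\xi$ to the full infinite sum via Proposition~\ref{proposition combined after error terms}, losing only $O_a(\log\xi/\xi)=o(1)$. The resulting main term is precisely the right-hand side of equation~\eqref{equation generating function Omega of quotient} (resp.~\eqref{equation generating function omega minus omega}), by Propositions~\ref{proposition main term Omega} and~\ref{proposition main term omega minus omega}, so in both cases
\[
\sum_{n=0}^{\infty}D_{a,\xi}^{\Omega}(n;x)z^n=\sum_{n=0}^{\infty}\tilde D_a^{\Omega}(n)z^n+o(1),\qquad \sum_{n=0}^{\infty}D_{a,\xi}^{\omega-}(n;x)z^n=\sum_{n=0}^{\infty}\tilde D_a^{\omega-}(n)z^n+o(1)
\]
uniformly for $|z|\le1$, with the tilde-notation from equation~\eqref{other two tildes}.

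With the uniform asymptotic in hand, Cauchy's integral formula applied on $|z|=1$ gives, for each fixed $n\ge 0$, the coefficient-wise convergence $D_{a,\xi(x)}^{\Omega}(n;x)\to\tilde D_a^{\Omega}(n)$ and $D_{a,\xi(x)}^{\omega-}(n;x)\to\tilde D_a^{\omega-}(n)$ as $x\to\infty$. Summing these $k+1$ limits and inserting into the trivial inequality from the first paragraph yields
\[
\limsup_{x\to\infty}\sum_{n=0}^{k}D_a^{\Omega}(n;x)\le\sum_{n=0}^{k}\tilde D_a^{\Omega}(n),
\]
together with the analogous statement for $\omega-$, which is the theorem. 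The main obstacle is the $\omega-$ case: unlike the other two statistics, its combinatorial setup (Proposition~\ref{proposition difference of omega}) already carries a double sum over $m\mid\ell$ for which one must verify that both factors fit under the Chebotarev threshold $\log\log\log x$ of Corollary~\ref{prop_asymptoticprimesatisfyingstuff_unconditional_loglogloglogx}; the factor-of-two loss in the growth rate of $m\ell$ versus $\ell$ is the reason for choosing $\xi$ one step smaller than in Theorem~\ref{theorem generating functions unconditional}, but otherwise the bookkeeping is identical.
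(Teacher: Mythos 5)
Your proposal is correct and follows the same approach the paper uses for the $\omega/$ case (which the paper then invokes ``without further proof'' for these two statistics): truncate the statistic at a slowly growing threshold $\xi$ so that it is pointwise dominated by the full statistic and the cumulative/disjoint decomposition carries no ``$q^k>\xi$'' error, apply the unconditional Lagarias--Odlyzko estimate via Corollary~\ref{prop_asymptoticprimesatisfyingstuff_unconditional_loglogloglogx}, complete the sum over $\ell\mid Q_\xi$ via Proposition~\ref{proposition combined after error terms}, extract coefficients by Cauchy's formula on $|z|=1$, and feed the limits into the trivial inequality. One small slip: the truncation that makes the cumulative decomposition exact is $\Omega\bigl(\gcd((p-1)/\ord_p(a),Q_\xi)\bigr)=\sum_{q\le\xi}\min\bigl(\nu_q((p-1)/\ord_p(a)),\lfloor\log\xi/\log q\rfloor\bigr)$ rather than your $\sum_{q\le\xi}\nu_q(\cdot)$, but both satisfy the needed pointwise bound and the argument is unaffected.
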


\noindent Note that the two assertions in Corollary~\ref{unconditional inequality corollary k=0} are the special case $k=0$ of these two inequalities.

Furthermore, these unconditional results lead to an unconditional lower bound for the expected number of prime factors of $(p-1)/\ord_p(a)$ (and similarly for the other two statistics) described in Section~\ref{subsection expectations}, which is the subject of the next section.

\subsection{Unconditional lower bound for the expectation} \label{Unconditional lower bound for the expectation}

Recall that if $X$ is a nonnegative discrete random variable with $\mathbb P(X=n)=a_n$, and $f(z)=\sum_{n=0}^\infty a_n z^n$ is the generating function for these probabilities, then
\begin{equation} \label{Cauchy}
    \mathbb E(X) = \sum_{n=0}^\infty na_n=f'(1)=\int_C \frac{f(z)}{(z-1)^2} \,dz
\end{equation}
by the Cauchy integral formula, where~$C$ is any simple closed curve around~$1$ on which $f$ is analytic.
We apply this identity in the following result, for which we need the following analogue of Theorem~\ref{theorem generating functions unconditional} for~$z$ on an open set containing~$z=1$.

\begin{thm}\label{theorem generating functions unconditional around 1}
Let $a\in\Q\setminus\{-1,0,1\}$, let~$x$ be a sufficiently large real number, and set $\xi=\log\log\log\log x$. Unconditionally,
\begin{align*}
\sum_{n=0}^\infty nD_{a,\xi}^{\omega/}(n;x)&=\frac{dF^{\omega/}_a}{dz}(1) + \frac{dH^{\omega/}_a}{dz}(1)+\sum\limits_q\frac{1}{q(q-1)}+\bo_\varepsilon\biggl(\frac{1}{(\log\log\log\log x)^{1-\varepsilon}}\biggr)
\end{align*}
uniformly for all $\varepsilon >0$,
where $F^{\omega/}_a(z)$ and $H^{\omega/}_a(z)$ are as in Definitions~\ref{definition generating function omega of quotient}.
\end{thm}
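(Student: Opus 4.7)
The plan is to derive the expectation formula via the Cauchy integral formula, using an extension of Theorem~\ref{theorem generating functions unconditional} to a small disk around~$z=1$. The starting observation is that $f(z) := \sum_{n\ge 0} D_{a,\xi}^{\omega/}(n;x)\, z^n$ is a polynomial in~$z$ (since $\omega_\xi\bigl((p-1)/\ord_p(a)\bigr) \le \pi(\xi)$ is bounded), and the main term $G(z) := F^{\omega/}_a(z)\, H^{\omega/}_a(z) \prod_q \bigl(1+(z-1)/q(q-1)\bigr)$ is entire by Lemma~\ref{growth rate lemma}. The desired expectation equals $f'(1)$, so by the Cauchy integral formula for derivatives we may write, for any $r>0$,
\[
f'(1)-G'(1) = \frac{1}{2\pi i}\oint_{|z-1|=r}\frac{f(z)-G(z)}{(z-1)^2}\,dz.
\]

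First I would extend Theorem~\ref{theorem generating functions unconditional} to hold uniformly for $|z|\le A$, for any fixed constant $A>1$, with the implied constant then depending on~$A$. The proof of that theorem already permits $|z|\le 2$ in the Chebotarev error-term analysis (via Proposition~\ref{proposition error sum chebotarev error term combined}, whose bound $(2+A)^{4\xi}$ is negligible when $\xi=\log\log\log\log x$), and the analogue of the first claim of Proposition~\ref{proposition combined after error terms} for $|z|\le A$ follows by bounding $|z-1|^{\omega(\ell)} \le (A+1)^{\omega(\ell)}$ in the tail, which remains summable thanks to the Wirsing--Odoni estimate Lemma~\ref{WO lemma}. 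The algebraic identity producing the main term in Proposition~\ref{proposition main term omega of quotient} is valid for all complex~$z$, so the extended version of Theorem~\ref{theorem generating functions unconditional} gives the same main term $G(z)$ with error $O_{A,\varepsilon}\bigl((\log\log\log\log x)^{-(1-\varepsilon)}\bigr)$ uniformly on $|z|\le A$.

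Fixing any $r\in(0,1)$ and applying the extended theorem on the circle $|z-1|=r$ yields $f(z)-G(z) \ll_\varepsilon (\log\log\log\log x)^{-(1-\varepsilon)}$ uniformly there. Substituting into the Cauchy integral formula and estimating trivially gives
\[
f'(1) = G'(1) + O_{r,\varepsilon}\biggl(\frac{1}{(\log\log\log\log x)^{1-\varepsilon}}\biggr).
\]
It remains to compute $G'(1)$, which I would do exactly as in the proof of Theorem~\ref{Theorem conditional expectations}: since $F^{\omega/}_a(1)=H^{\omega/}_a(1)=1$ and each factor $1+(z-1)/q(q-1)$ equals~$1$ at $z=1$, the product rule (combined with Lemma~\ref{S values} to differentiate the Euler product term-by-term) gives
\[
G'(1) = \frac{dF^{\omega/}_a}{dz}(1) + \frac{dH^{\omega/}_a}{dz}(1) + \sum_q \frac{1}{q(q-1)},
\]
matching the stated main term.

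The principal technical obstacle is verifying that every estimate in the proof of Theorem~\ref{theorem generating functions unconditional} remains uniformly valid on the enlarged disk $|z|\le A$; each individual bound generalizes routinely, but book-keeping the uniformity through the chain of reductions requires some care. Once that extension is in hand, the remaining steps are a direct application of the Cauchy integral formula and the product rule.
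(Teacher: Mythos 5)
Your proposal is correct and follows essentially the same strategy as the paper: express the expectation as a Cauchy integral of the generating function over a small circle centered at $z=1$, extend the unconditional asymptotic of Theorem~\ref{theorem generating functions unconditional} to a small disk around $z=1$, and read off the derivative of the explicit main term. The one genuine difference is in the choice of contour radius. The paper takes a \emph{shrinking} circle $|z-1|=A/2$ with $A=1/\log\log\log\log\log x$; this keeps $z$ within distance $A/2$ of $1$, which is trivially inside $|z|\le 2$ where the error bound~\eqref{later z'll exceed 1} applies, but incurs a factor $1/A^2$ from the $(z-1)^{-2}$ in the integrand that must then be absorbed into the $\varepsilon$-loss. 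You instead fix a radius $r\in(0,1)$, which avoids the $1/A^2$ penalty entirely; the price is that you must verify the error estimates on the fixed disk $|z|\le 1+r$, but as you observe this stays within $|z|<2$, so the needed uniformity (Proposition~\ref{proposition error sum chebotarev error term combined} with a constant $A\le 2$, together with a straightforward modification of Lemma~\ref{WO lemma} to handle $|z-1|^{\omega(\ell)}\le 3^{\omega(\ell)}$ in the tail of Proposition~\ref{proposition combined after error terms}) is exactly what the paper has already established. In fact, your claim of needing an extension to arbitrary $A>1$ is unnecessary generality, since $1+r<2$ for any $r<1$. Both approaches are sound; yours is marginally cleaner since the $1/A^2$ book-keeping disappears. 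You also correctly note that, because the statistic on the left is $\omega_\xi$ rather than $\omega$, the generating function is an honest polynomial of bounded degree $\pi(\xi)$, so no convergence or boundedness issues arise inside the combinatorial identity~\eqref{apres summing}. The derivative computation of $G'(1)$ via the product rule (as in Theorem~\ref{Theorem conditional expectations}) is the same in both treatments.
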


\begin{proof}
Our strategy is to use equation~\eqref{Cauchy} for the function
\[
\sum_{n=0}^\infty nD_{a,\xi}^{\omega/}(n;x) = \frac{1}{\pi(x)} \sum_{\substack{p\le x \\ \nu_p(a)=0}} z^{\omega_\xi((p-1)/\ord_p(a))}.
\]
Set $A=1/\log\log\log\log\log x$.
By equation~\eqref{later z'll exceed 1}, the error term from Theorem~\ref{theorem generating functions unconditional} is admissible for all $z\in\C$ with $|z-1|\le A$.
Therefore in this range,
\begin{equation*}
    \frac{1}{\pi(x)}\sum_{\substack{p\le x \\ \nu_p(a)=0}} z^{\omega_\xi((p-1)/\ord_p(a))}=F^{\omega/}_a(z)H^{\omega/}_a(z)\prod\limits_{\substack{q}}\biggl(1+\frac{z-1}{q(q-1)}\biggr)+\bo_\varepsilon\left(\frac{1}{(\log\log\log\log x)^{1-\varepsilon}}\right).
\end{equation*}
Now applying equation~\eqref{Cauchy}, and doing the same computation for the main term as in Theorem~\ref{Theorem conditional expectations}, we obtain
\begin{align*}
\sum_{n=0}^\infty nD_{a,\xi}^{\omega/}(n;x)&=\frac{1}{2i\pi}\int_{|z-1|=A/2} \frac{1}{\pi(x)}\sum_{\substack{p\le x \\ \nu_p(a)=0}} z^{\omega_\xi((p-1)/\ord_p(a))} \cdot \frac{dz}{(z-1)^2}\\
&=\frac{dF^{\omega/}_a}{dz}(1) + \frac{dH^{\omega/}_a}{dz}(1)+\sum\limits_q\frac{1}{q(q-1)}+\bo_\varepsilon\left(\frac{1}{(\log\log\log\log x)^{1-\varepsilon}}\int_{|z-1|=A/2}\frac{dz}{|z-1|^2}\right)\\
&=\frac{dF^{\omega/}_a}{dz}(1) + \frac{dH^{\omega/}_a}{dz}(1)+\sum\limits_q\frac{1}{q(q-1)}+\bo_\varepsilon\left(\frac{1}{A^2(\log\log\log\log x)^{1-\varepsilon}}\right)\\
&=\frac{dF^{\omega/}_a}{dz}(1) + \frac{dH^{\omega/}_a}{dz}(1)+\sum\limits_q\frac{1}{q(q-1)}+\bo_\varepsilon\left(\frac{1}{(\log\log\log\log x)^{1-\varepsilon}}\right)
\end{align*}
as claimed.
\end{proof}

To derive a lower bound for $\sum_{n=0}^\infty nD_{a}(n;x)$ from this result we will need the following simple result on positive discrete random variables.

\begin{lemma} \label{compare expectations lemma}
Let $a_n,b_n\ge0$ with $\sum_{n=0}^\infty a_n = \sum_{n=0}^\infty b_n = 1$. Suppose that $\sum_{n=0}^j a_n \le \sum_{n=0}^j b_n$ for all $j\ge0$. Then $\mathbb E(a_n) \ge \mathbb E(b_n)$.
\end{lemma}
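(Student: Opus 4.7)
The plan is to use the standard tail-sum representation of the expectation of a nonnegative integer-valued random variable, namely that if $X$ has $\mathbb{P}(X=n)=a_n$ for $n\geq 0$, then
\[
\mathbb{E}(X) = \sum_{n=0}^\infty n a_n = \sum_{j=0}^\infty \mathbb{P}(X>j) = \sum_{j=0}^\infty \biggl(1-\sum_{n=0}^j a_n\biggr).
\]
This identity follows from a routine swap of summation order: $\sum_{n=0}^\infty n a_n = \sum_{n=0}^\infty \sum_{j=0}^{n-1} a_n = \sum_{j=0}^\infty \sum_{n>j} a_n$.

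Applying this identity to both sequences $(a_n)$ and $(b_n)$, we obtain
\[
\mathbb{E}(a_n) - \mathbb{E}(b_n) = \sum_{j=0}^\infty \biggl(\sum_{n=0}^j b_n - \sum_{n=0}^j a_n\biggr).
\]
Each summand is nonnegative by hypothesis, so the whole sum is nonnegative, which gives $\mathbb{E}(a_n) \geq \mathbb{E}(b_n)$.

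There is essentially no obstacle here, but one should be mindful of allowing both expectations to be $+\infty$: if $\mathbb{E}(b_n) = +\infty$ then the tail-sum representation still makes sense as an identity in $[0,+\infty]$, and the pointwise inequality on tails forces $\mathbb{E}(a_n) = +\infty$ as well, so the conclusion holds trivially. The only other minor care is to check that all rearrangements are justified by Tonelli's theorem (or the nonnegativity of all terms), which is immediate.
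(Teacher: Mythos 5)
Your proof is correct and uses essentially the same tail-sum identity as the paper: the paper writes $\mathbb E(a_n) = \sum_{k=1}^\infty \bigl(1 - \sum_{n=0}^{k-1} a_n\bigr)$, which after the index shift $j=k-1$ is your $\sum_{j=0}^\infty \bigl(1 - \sum_{n=0}^j a_n\bigr)$, and then compares termwise. Your remarks about Tonelli and the possibly infinite expectations are a welcome extra care but not a departure from the paper's argument.
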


\begin{proof}
We have
\begin{equation} \label{from expectation to double sum}
\mathbb E(a_n) = \sum_{n=0}^\infty na_n = \sum_{n=0}^\infty a_n \sum_{k=1}^n 1 = \sum_{k=1}^\infty \sum_{n=k}^\infty a_n = \sum_{k=1}^\infty \biggl( 1 - \sum_{n=0}^{k-1} a_n \biggr).
\end{equation}
Therefore
\[
\mathbb E(b_n) = \sum_{k=1}^\infty \biggl( 1 - \sum_{n=0}^{k-1} b_n \biggr) \le \sum_{k=1}^\infty \biggl( 1 - \sum_{n=0}^{k-1} a_n \biggr) = \mathbb E(a_n).
\qedhere
\]
\end{proof}

\begin{proof}[Proof of Theorem~\ref{welcome both lim sup and tilde notation. sigh}]
From the inequality~\eqref{cumulative xi inequality2} and Lemma~\ref{compare expectations lemma},
\begin{equation}
\sum_{n=0}^\infty nD^{\omega/}_{a}(n;x)\geq \sum_{n=0}^\infty nD^{\omega/}_{a,\xi}(n;x),
\end{equation}
and taking the $\liminf$ on both sides and using Theorem~\ref{theorem generating functions unconditional around 1} yields the result.
\end{proof}

The method immediately yields analogous statements for the expectations of the other two statistics:

\begin{thm}\label{Corollary unconditional lower bound expectation}
Unconditionally,
\begin{align*}
\liminf_{x\to\infty} \sum_{n=0}^\infty nD_a^{\Omega}(n;x)&\ge\frac{dF^{\Omega}_a}{dz}(1) + \frac{dH^{\Omega}_a}{dz}(1)+\sum\limits_q\frac{q}{q^3-q^2-q+1} \\
\liminf_{x\to\infty} \sum_{n=0}^\infty nD_a^{\omega-}(n;x)&\ge\frac{dF^{\omega-}_a}{dz}(1) + \frac{dH^{\omega-}_a}{dz}(1)+\sum\limits_q\frac{1}{q^2-1},
\end{align*}
where $F^{\Omega}_a(z)$, $H^{\Omega}_a(z)$, $F^{\omega-}_a(z)$ and $H^{\omega-}_a(z)$ are as in Definitions~\ref{definition generating function Omega of quotient} and~\ref{definition generating function omega minus omega}.
\end{thm}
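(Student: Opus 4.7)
The plan is to mirror the proof of Theorem~\ref{Theorem unconditional lower bound expectation} for each of the two remaining statistics. The combinatorial and analytic ingredients carry over with only notational changes; no new ideas are required.

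First I would introduce truncated statistics for each case. Set $\Omega_\xi(n) = \sum_{q\le \xi}\nu_q(n)$, so that $\Omega_\xi(n)\le\Omega(n)$ pointwise; and (since $\ord_p(a)\mid p-1$) define the truncated $\omega{-}\omega$ statistic by $\omega_\xi(p-1)-\omega_\xi(\ord_p(a))$, which counts primes $q\le\xi$ dividing $p-1$ but not $\ord_p(a)$ and is bounded above by $\omega(p-1)-\omega(\ord_p(a))$. Define the coefficients $D_{a,\xi}^{\Omega}(n;x)$ and $D_{a,\xi}^{\omega-}(n;x)$ in complete analogy with~\eqref{D a xi omega quotient def}. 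Pointwise domination of each statistic by its truncation then gives $\sum_{n=0}^k D_a^{\Omega}(n;x) \le \sum_{n=0}^k D_{a,\xi}^{\Omega}(n;x)$, and similarly for the $\omega-$ case. Lemma~\ref{compare expectations lemma} converts these into
\[
\sum_{n=0}^\infty n D_a^{\Omega}(n;x) \ge \sum_{n=0}^\infty n D_{a,\xi}^{\Omega}(n;x), \qquad \sum_{n=0}^\infty n D_a^{\omega-}(n;x) \ge \sum_{n=0}^\infty n D_{a,\xi}^{\omega-}(n;x).
\]

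Next, I would establish unconditional asymptotic formulas for these truncated generating functions, analogues of Theorem~\ref{theorem generating functions unconditional} valid in a fixed complex neighbourhood of $z=1$ (as in Theorem~\ref{theorem generating functions unconditional around 1}). With $\xi=\log\log\log\log x$, apply Proposition~\ref{proposition Omega of quotient} for the $\Omega$ case and Proposition~\ref{proposition difference of omega} for the $\omega-$ case with cutoff~$\xi$; estimate each $\Palm(x)$ in the main sum by the unconditional Corollary~\ref{prop_asymptoticprimesatisfyingstuff_unconditional_loglogloglogx} (the slow growth of~$\xi$ ensures uniformity); bound the aggregated Chebotarev error using Proposition~\ref{proposition error sum chebotarev error term combined}; and complete the finite sum over $\ell\mid Q_\xi$ to the full infinite series using Proposition~\ref{proposition combined after error terms}. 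The resulting infinite series simplifies to the Euler products appearing in equation~\eqref{other two tildes} by the same algebra performed in Propositions~\ref{proposition main term Omega} and~\ref{proposition main term omega minus omega}. This yields
\[
\frac{1}{\pi(x)}\sum_{\substack{p\le x\\\nu_p(a)=0}} z^{\Omega_\xi((p-1)/\ord_p(a))} = F^{\Omega}_a(z)H^{\Omega}_a(z)\prod_q\biggl(1+\frac{(z-1)q}{(q-1)(q^2-z)}\biggr) + O_\varepsilon\bigl((\log\log\log\log x)^{-1+\varepsilon}\bigr),
\]
and an analogous formula for the $\omega-$ case.

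Finally, I would apply Cauchy's integral formula exactly as in the proof of Theorem~\ref{theorem generating functions unconditional around 1}: integrating on the circle $|z-1|=A/2$ with $A=1/\log\log\log\log\log x$, on which the error term remains $o(1)$, produces
\[
\sum_{n=0}^\infty n D_{a,\xi}^{\Omega}(n;x) = \frac{dF^{\Omega}_a}{dz}(1) + \frac{dH^{\Omega}_a}{dz}(1) + \sum_q\frac{q}{q^3-q^2-q+1} + o(1),
\]
where the derivative of the infinite product at $z=1$ is computed exactly as in the proof of Theorem~\ref{Theorem conditional expectations} using Lemma~\ref{S values}. The analogous identity holds for the $\omega-$ case with $\sum_q 1/(q^2-1)$ in place of $\sum_q q/(q^3-q^2-q+1)$. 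Taking $\liminf$ as $x\to\infty$ and combining with the stochastic-dominance inequalities from the first step yields both claimed lower bounds. The argument has no substantive obstacle beyond bookkeeping; the only technical point worth checking is that the Euler products extend holomorphically to a fixed neighbourhood of $z=1$, which is immediate from the convergence of $\sum_q q/(q^3-q^2-q+1)$ and $\sum_q 1/(q^2-1)$.
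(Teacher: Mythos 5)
Your proposal follows precisely the strategy the paper implicitly invokes (the paper's ``proof'' is a single sentence asserting that the argument for the $\omega/$ case carries over), and you correctly unpack what that analogy entails: truncate, dominate, apply Lemma~\ref{compare expectations lemma}, establish an unconditional asymptotic for the truncated generating function in a neighbourhood of $z=1$, and integrate by Cauchy.

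One technical slip worth flagging: for the $\Omega$ statistic you define $\Omega_\xi(n)=\sum_{q\le\xi}\nu_q(n)$, which does not truncate the prime-power exponents. The exact combinatorial identity underlying the truncated generating function (equation~\eqref{apres summing} in Lemma~\ref{cumulative lemma}) applies to $h(e_\xi(p))=z^{\Omega(\gcd((p-1)/\ord_p(a),\,Q_\xi))}$, which caps $\nu_q$ at $\lfloor\log\xi/\log q\rfloor$ for each $q\le\xi$. Your $\Omega_\xi$ is not of the form $h(\gcd(\cdot,Q_\xi))$ for any multiplicative $h$, so the identity $N_\xi(x)=\sum_{\ell\mid Q_\xi}g(\ell)P_\ell(x)$ does not hold verbatim for your truncation, and the appeal to Propositions~\ref{proposition error sum chebotarev error term combined} and~\ref{proposition combined after error terms} would need an extra step to bridge the gap. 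The fix is trivial: take the truncated statistic to be $\Omega(\gcd((p-1)/\ord_p(a),Q_\xi))$, which still satisfies the pointwise domination $\le\Omega((p-1)/\ord_p(a))$ needed for inequality~\eqref{cumulative xi inequality2} and for which everything you wrote goes through unchanged. (Note that in the $\omega/$ and $\omega-$ cases this issue does not arise, because $\omega(\gcd(n,Q_\xi))$ agrees with $\#\{q\le\xi:q\mid n\}$; your $\omega-$ truncation matches $\omega(e^*_\xi(p))$ exactly.) With that substitution your argument is correct and matches the paper's intended proof.
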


\printbibliography

\end{document}